\documentclass[reqno, 11pt, a4paper]{amsart}
\usepackage{amsmath, amssymb, amsthm}  

\usepackage{appendix}
\usepackage{graphicx}
\usepackage[table]{xcolor}
\usepackage{enumitem}
\usepackage{fancyhdr}
\usepackage{mathrsfs}
\usepackage[truedimen,top=3truecm, bottom=3truecm, left=2.5truecm, right=2.5truecm, includefoot]{geometry}
\usepackage{dsfont} 
\usepackage{pifont}

\usepackage{mathtools}
\usepackage{mathdots}

\usepackage{tikz}
\usetikzlibrary{shapes,arrows,positioning,fit,backgrounds,calc}
\usepackage{pgfplots}

\usepackage{diagbox}
\usepackage{makecell}

\usepackage{aliascnt}
\usepackage{hyperref}
\usepackage[capitalise,noabbrev,nameinlink]{cleveref}

\hypersetup{
  colorlinks=false,
  pdfborder={0 0 1},
  linkbordercolor={1 0 0},
  citebordercolor={0 1 0}
}

\theoremstyle{plain}
\newtheorem{theorem}{Theorem}

\newaliascnt{proposition}{theorem}
\newtheorem{proposition}[proposition]{Proposition}
\aliascntresetthe{proposition}

\newaliascnt{lemma}{theorem}
\newtheorem{lemma}[lemma]{Lemma}
\aliascntresetthe{lemma}

\newaliascnt{corollary}{theorem}
\newtheorem{corollary}[corollary]{Corollary}
\aliascntresetthe{corollary}

\theoremstyle{definition}

\newaliascnt{definition}{theorem}
\newtheorem{definition}[definition]{Definition}
\aliascntresetthe{definition}

\theoremstyle{remark}

\newaliascnt{remark}{theorem}
\newtheorem{remark}[remark]{Remark}
\aliascntresetthe{remark}

\crefname{theorem}{Theorem}{Theorems}
\crefname{proposition}{Proposition}{Propositions}
\crefname{lemma}{Lemma}{Lemmas}
\crefname{corollary}{Corollary}{Corollaries}
\crefname{definition}{Definition}{Definitions}
\crefname{remark}{Remark}{Remarks}
\crefname{appendix}{Appendix}{Appendices}

\newcommand{\N}{\mathbb{N}}
\newcommand{\Z}{\mathbb{Z}}
\newcommand{\R}{\mathbb{R}}

\newcommand{\p}{\mathbb{P}}
\newcommand{\E}{\mathbb{E}}
\newcommand{\ind}{\mathds{1}}
\newcommand{\diff}{\,\mathrm{d}}
\renewcommand{\bar}{\overline}
\newcommand{\schwartz}{\mathscr{S}(\R )}
\newcommand{\schwartzprime}{\mathscr{S}'(\R )}

\newcommand{\modif}[1]{\textcolor{red}{#1}}

\title[Stationary fluctuations for an exclusion process with two conservation laws]{Stationary fluctuations for an exclusion process with mass and energy conservation}

\author[H. Da Cunha]{Hugo Da Cunha}
\address{Hugo Da Cunha -- Université Lyon 1, Centrale Lyon, INSA Lyon, Université Jean Monnet, CNRS, ICJ UMR5208, 69622 Villeurbanne, France and JSPS International Research Fellow at Graduate School of Mathematical Sciences, The University of Tokyo, 3-8-1 Komaba, Meguro-ku, Tokyo 153-8914, Japan}
\email{\href{mailto:dacunha@math.univ-lyon1.fr}{dacunha@math.univ-lyon1.fr}}

\author[M. Sasada]{Makiko Sasada}
\address{Makiko Sasada -- Graduate School of Mathematical Sciences, The University of Tokyo, 3-8-1 Komaba, Meguro-ku, Tokyo 153-8914, Japan}
\email{\href{mailto:sasada@ms.u-tokyo.ac.jp}{sasada@ms.u-tokyo.ac.jp}}

\keywords{Interacting particle systems, exclusion process, multiple conservation laws, stationary fluctuations, KPZ/SBE equation, nonlinear fluctuating hydrodynamics}

\begin{document}
\maketitle

\begin{abstract}
We introduce a novel exclusion process with two conservation laws, mass and energy, designed to mimic the essential features of continuous systems like interacting oscillators within the framework of interacting particle systems. This distinguishes our model from conventional multi-species processes where only particle numbers are conserved. As a basis for our fluctuation analysis, we first show that applying nonlinear fluctuating hydrodynamics (NFH) to this model reveals a wide variety of universality classes depending on the parameter choices.
    The main objective of this work is to study the stationary fluctuations of these conserved quantities.
    For a suitable choice of parameters, we rigorously show that the fluctuation fields converge to uncoupled stochastic Burgers equations (SBE) in the scaling limit. The proof relies on the second-order Boltzmann-Gibbs principle that we establish for this model, along with the spectral gap estimate and the equivalence of ensembles. Of independent interest is our general proof of the diagonalizability of the Jacobian matrix for the macroscopic current with distinct real eigenvalues. While this property is often taken as given in the physics literature, we establish it rigorously for multi-component systems even when the eigenvectors cannot be explicitly computed, offering a firm mathematical foundation for a broad class of models. 
\end{abstract}

\section{Introduction}

One of the main questions in statistical mechanics is to understand how macroscopic evolution equations arise from microscopic dynamics. In the context of interacting particle systems, this question is addressed through the so-called \emph{hydrodynamic limit}, which aims at deriving deterministic partial differential equations (PDEs) describing the evolution of the density of some conserved quantities in the system, after some space-time rescaling and when the number of particles goes to infinity. The limit being deterministic, a hydrodynamic limit corresponds to a law of large numbers for such systems. A natural next step is to study the fluctuations around this typical behaviour, \textit{i.e.}~to establish a central limit theorem, usually deriving a convergence towards the solution of a stochastic partial differential equation (SPDE). In the case of a single conservation law, this program has been widely studied and is now well-understood for a large class of models. However, when there are several conservation laws, things get more complicated as it is not clear which fluctuation fields one should consider. In particular, the fluctuation fields associated to the different conserved quantities might be considered in diverse timescales, and different limiting behaviours can emerge. 

This is the question we tackle in this paper, by introducing the Exclusion Process with Energy~(EPE), a new model of exclusion process in which particles carry energy and hence displays two conserved quantities: the number of particles and the total energy. This model is inspired by the one introduced in \cite{nagahata_fluctuation_2003}, where particles that have an integer-valued energy between~1 and some~$\kappa\ge 2$ can hop to its neighbouring sites on a discrete lattice, and can also exchange energy with each other when they are located at neighbouring sites. The main difference and novelty with respect to the model of \cite{nagahata_fluctuation_2003} is that we choose here the rates of the dynamics so that it belongs to the class of \emph{gradient models}, which makes the analysis more tractable.
A particular feature of this model is also that, on the contrary to previous multi-component lattice gas models, there is no symmetry between the two conservation laws. The EPE can somehow be seen as a discrete version of the \emph{Bernardin-Stoltz model}, a model of interacting oscillators with two conservation laws (volume and energy) introduced in \cite{bernardin_anomalous_2012} and further studied in \cite{goncalves_derivation_2023,goncalves_stochastic_2025,spohn_nonlinear_2015}. Moreover, setting $\kappa =2$ in the EPE, 
one recovers exactly the ABC model, which is a two-species exclusion process, whose stationary fluctuations have been studied in \cite{cannizzaro_abc_2025}. As we discuss later, the predictions via \emph{nonlinear fluctuating hydrodynamics} (NFH) suggest that this model is richer than the ABC model, in the sense that a broader variety of universality classes can be observed depending on the choice of the parameters. 

NFH is a theoretical framework developed in the physics literature and systematically formulated by Spohn~\cite{spohn2014nonlinear}, which provides predictions for the stochastic differential equations arising as scaling limits of the fluctuations of conserved quantities in multi-component systems. The starting point is the hydrodynamic limit of such system in the strongly asymmetric version of the dynamics, which is usually a system of $n$ conservation laws of the form
\begin{equation*}
    \partial_t\vec{\rho} + \nabla\mathbf{j}(\vec{\rho} ) = 0,
\end{equation*}
where $\vec{\rho}$ is the vector containing the $n$ macroscopic conserved quantities depending on both time and space, and $\mathbf{j}$ is the macroscopic current vector. The next step is to linearize this system around its stationary state $\bar{\rho}$, \textit{i.e.}~to write $\vec{\rho}= \bar{\rho}+ \vec{\mathcal{Y}}$, and to expand the current $\mathbf{j}$ to second order around $\bar{\rho}$, neglecting higher-order terms. Adding a diffusion term $\Delta\vec{\mathcal{Y}}$, and a noise term~$\nabla\dot{\mathcal{W}}$ to account for microscopic fluctuations, one gets the following system of SPDEs for the~$i$-th coordinate of the perturbation~$\vec{\mathcal{Y}}$:
\begin{equation*}
    \partial_t \mathcal{Y}_i +\nabla \left( (\mathbf{J}(\bar{\rho})\vec{\mathcal{Y}})_i + \frac12  \vec{\mathcal{Y}}^\top \mathbf{H}^i(\bar{\rho}) \vec{\mathcal{Y}}\right)  -\Delta \mathcal{Y}_i+ \nabla\dot{\mathcal{W}_i} = 0
\end{equation*}
where $\mathbf{J}$ is the Jacobian matrix of $\mathbf{j}$, and $\mathbf{H}^i$ is the Hessian matrix of the $i$-th component of~$\mathbf{j}$. The idea of NFH is then to diagonalize the linear part of this system, by considering the normal modes $\vec{\phi} = R\vec{\mathcal{Y}}$, where $R$ is the matrix that diagonalizes $\mathbf{J}(\bar{\rho})$, \textit{i.e.}~$R\mathbf{J}(\bar{\rho})R^{-1}=\mathrm{diag}(v_i)$. The~$i$-th normal mode then evolves according to the equation
\begin{equation*}
    \partial_t\phi_i + v_i\nabla\phi_i + \nabla\big(\vec{\phi}^\top\mathbf{G}^i\vec{\phi }\big) - \Delta\phi_i + \nabla(R\dot{\mathcal{W}})_i =0,
\end{equation*}
where $\mathbf{G}^i$ is the \emph{coupling matrix} defined by
\begin{equation*}
    \mathbf{G}^i = \frac12\sum_{j=1}^n R_{ij}(R^{-1})^\top \mathbf{H}^jR^{-1}.
\end{equation*}
Depending on whether some diagonal entries of the coupling matrices $\mathbf{G}^i$ vanish or not, NFH predicts the emergence of different universality classes for the fluctuations of each normal mode. In \cite{spohn_nonlinear_2015}, a complete classification is proposed for the case of two conservation laws and it is predicted that only five possible behaviours can emerge: diffusive, KPZ and $\alpha$-Lévy for $\alpha$ being either~$\frac32$,~$\frac53$ or the golden ratio. We summarise these predictions in pairs in the following \cref{table:classification}. 
\begin{table}
\begin{tabular}{c|c|c|c|c}
    \diagbox{$\mathbf{G}^2$}{$\mathbf{G}^1$} & \makecell{$\begin{pmatrix}
        * & \\
         & * \end{pmatrix}$} & \makecell{$\begin{pmatrix}
        0 & \\
         & * \end{pmatrix}$} & \makecell{$\begin{pmatrix}
        * & \\
         & 0 \end{pmatrix}$} & \makecell{$\begin{pmatrix}
        0 & \\
         & 0 \end{pmatrix}$} \\
         \hline
        \makecell{$\begin{pmatrix}
        * & \\
         & * \end{pmatrix}$} & \cellcolor{red!15} (KPZ,KPZ) & \cellcolor{blue!15}($\frac53$-Lévy,KPZ) & (KPZ,KPZ) & (diff,mod-KPZ) \\
         \hline
         \makecell{$\begin{pmatrix}
        * & \\
         & 0 \end{pmatrix}$} & \cellcolor{blue!15}(KPZ,$\frac53$-Lévy) & (Gold,Gold) & (KPZ,$\frac53$-Lévy) & (diff,$\frac32$-Lévy) \\
         \hline 
         \makecell{$\begin{pmatrix}
        0 & \\
         & * \end{pmatrix}$} & (KPZ,KPZ) & ($\frac53$-Lévy,KPZ) & \cellcolor{green!15}(KPZ,KPZ) & \cellcolor{green!15}\textcolor{orange}{(diff,KPZ)}\\
         \hline
         \makecell{$\begin{pmatrix}
        0 & \\
         & 0 \end{pmatrix}$} & (mod-KPZ,diff) & ($\frac32$-Lévy,diff) & \cellcolor{green!15} (KPZ,diff) & \cellcolor{green!15}(diff,diff) 
\end{tabular}
\vspace{.5cm}
\caption{Classification of the predictions of NLFH for systems with two conservation laws. Here, ``diff'' stands for diffusive behaviour, ``KPZ'' for Kardar-Parisi-Zhang behaviour, ``mod-KPZ'' for modified KPZ behaviour, and ``Gold'' for golden-ratio Lévy behaviour. An entry $*$ means that the corresponding coefficient of the coupling matrix is non-zero. This table is adapted from \cite{spohn_nonlinear_2015}.}
\label{table:classification}
\end{table}
The computation of the coupling matrices for the EPE can be found in \cref{appendix:couplingmatrices}, and we can see at first sight that some cancellations occur when $\kappa=2$, recovering the case of the ABC model for which the computation of the coupling matrices has been carried out in \cite{cannizzaro_abc_2025} (and for which the predictions are depicted in the green shaded cells of \cref{table:classification}). However, when $\kappa >2$ the matrices are more complicated and we can expect wider variety of universality classes. In the most general case where none of the diagonal terms of the coupling matrices vanish, NFH predicts that both modes should display a KPZ behaviour (in red in \cref{table:classification}). We also prove in \cref{prop:vanishingG11} of \cref{appendix:couplingmatrices} that for some choice of parameters, the coefficient $\mathbf{G}_{ii}^i$ can be made to vanish so that we fall in one of the purplish-blue  cells of \cref{table:classification} in which one mode is KPZ whereas the other one displays $\frac53$-Lévy behaviour. For some simpler choice of parameters, one can also fall the green cell with orange text where one mode is diffusive whereas the other one exhibits KPZ behaviour (\textit{cf.} \cref{prop:couplingmatricesalpha_e0}). While a $\frac53$-Lévy behaviour has been rigorously derived in other contexts, such as interacting charged harmonic oscillators under a magnetic field with stochastic noise~\cite{SaitoSasadaSuda2019}, the mechanism in the lattice gas with multiple conservation laws seems different, and a rigorous derivation in this setting remains an open problem. Hence, this case is particularly interesting and will be addressed in future work. The focus of the present paper is rather on the former case, where both modes are expected to display KPZ behaviour. 

Above, we have repeatedly referred to ``KPZ behaviour''; let us elaborate further on this topic. The Kardar-Parisi-Zhang (KPZ) equation was first introduced in \cite{kardar_dynamic_1986} as a phenomenological model for the growth of random interfaces. In a unidimensional setting, it is a nonlinear SDE that reads
\begin{equation*}
    \partial_t\mathfrak{h} = \nu\partial_x^2\mathfrak{h} + \lambda (\partial_x\mathfrak{h})^2 + \sqrt{D}\dot{W},
\end{equation*}
for some parameters $\nu,D >0$ and $\lambda\in\R$, where $t\in\R_+$, $x\in\R$ and $\dot{W}$ is a one-dimensional space-time white noise. This equation is ill-posed due to the nonlinearity and the roughness of the noise, but setting formally $\mathfrak{u}=\partial_x\mathfrak{h}$ leads to the \emph{stochastic Burgers equation} (SBE) for $\mathfrak{u}$, namely
\begin{equation*}
    \partial_t\mathfrak{u}=\nu\partial_x^2\mathfrak{u} + \lambda \partial_x(\mathfrak{u}^2)+ \sqrt{D}\partial_x\dot{W}
\end{equation*}
for which a whole theory of existence and uniqueness of \emph{energy solutions} has been developed in \cite{goncalves_nonlinear_2014,gubinelli_energy_2018,gubinelli_infinitesimal_2020}. These equations are quite universal as they have been shown to arise as scaling limits of a wide variety of weakly asymmetric one-component models, see for instance \cite{BertiniGiacomin1997,goncalves_nonlinear_2014,GoncalvesJaraSethuraman2015}. 

In this paper, we derive two uncoupled SBEs as the scaling limit of the equilibrium fluctuation fields of the EPE (like in \cite{hayashi_nonlinear_2025} for oscillator chains). The main tool to derive the KPZ behaviour is the so-called \emph{second-order Boltzmann-Gibbs principle} for this two-component system, which allows us to replace local functions by a second-order expansion in terms of the locally averaged conserved quantities of the system. Proving this principle is itself a technical challenge, and this requires in particular to establish a \emph{spectral gap estimate} of this Markov process, together with some \emph{equivalence of ensembles} for the stationary measures. All these results are proved in the present paper for this new model, and will undoubtedly be of great interest for future work on this model, in particular to tackle the more challenging case where one mode displays KPZ behaviour while the other exhibits $\frac53$-Lévy fluctuations.

We further provide a quite general proof that, for models sharing similar features, the Jacobian matrix of the macroscopic current vector is always diagonalizable with real eigenvalues, which is a crucial property to define the normal modes, and that moreover both modes satisfy some orthogonality relation that allows us to get independence between them in the macroscopic limit.

\subsection*{Outline of the paper}

The paper is organized as follows. In \cref{sec:model}, we properly introduce the model we are interested in, namely the exclusion process with energy. We define the dynamics in both symmetric and weakly asymmetric versions, and detail about its stationary measures. We also define therein the notion of energy solutions to the SBE. Finally, we state and prove \cref{thm:diagonalizability} about the diagonalizability of the Jacobian, and state the results about the fluctuations of the EPE, namely \cref{thm:main} and \cref{thm:main2}. Then, we prove equivalence of ensembles for the family of aforementioned stationary measures in \cref{sec:equivalenceofensembles}, and establish a spectral gap estimate for the dynamics in \cref{sec:spectralestimates}. These two results are the key ingredients to prove the second-order Boltzmann-Gibbs principle in \cref{sec:2BG-Principle}. With these results at hand, we can proceed with the proof of the main theorem in the remaining of the article, which relies on usual procedures to prove convergence of stochastic processes. In \cref{sec:martingaledecomposition}, we introduce the martingale decomposition of the fluctuation fields, and use the second-order Boltzmann-Gibbs principle to rewrite the different terms appearing therein. Then, we prove tightness of the sequence of fluctuation fields in \cref{sec:tightness}, and finally identify the limit points as energy solutions to the SBE  in \cref{sec:identification}. In \cref{appendix:couplingmatrices}, we collect the explicit computations of macroscopic current, its Jacobian, and the coupling matrices, and discuss the emerging universality classes predicted by NFH for the EPE.

\subsection*{General notation}

\begin{itemize}
    \item For $f,g\in L^2(\R )$, we denote by $\langle f,g\rangle_{L^2}$ their standard inner product, and by $\| f\|_{L^2}$ the associated norm. More generally, for any measure space $(X,\mathcal{A},\mu )$, $\langle \cdot ,\cdot\rangle_\mu$ denotes the inner product in $L^2(\mu )$.
    \item We denote by $\schwartz$ the Schwartz space of smooth functions whose derivatives of all orders are rapidly decreasing on $\R$, and by $\schwartzprime$ its topological dual, \textit{i.e.} the space of tempered distributions on $\R$.
    \item We denote by $\mathcal{D}([0,T],E)$ the Skorokhod space of càdlàg (\textit{i.e.}~right-continuous with left limits) functions from $[0,T]$ to a topological space $E$.  Similarly, we denote by $C([0,T],E)$ the space of continuous functions from $[0,T]$ to $E$.
    \item We denote by $C_c^\infty (\R )$ the space of smooth compactly supported functions on $\R$.
    \item  For a non-negative sequence $(u_k)_{k\in\N}$, we write $v_k=O(u_k)$ if $|v_k|\le Cu_k$ for some constant $C>0$ that is independent of $k$ but may depend on other parameters.
\end{itemize}

\section{Exclusion process with energy}
\label{sec:model}

\subsection{The model}

Let $N\ge 1$ be an integer scaling parameter that will tend to infinity, and let~$\kappa\ge 2$ be an integer. Consider an exclusion process on the one-dimensional lattice $\Z$, where each particle carries a certain amount of energy which has an integer value between $1$ and $\kappa$. In other words, we consider configurations of particles $\eta = (\eta_x)_{x\in\Z}$ belonging to $\Omega\coloneq\{ 0,1,\hdots ,\kappa\}^\Z$, where $\eta_x=0$ means that there is no particle at site $x$, whereas $\eta_x=\ell\ge 1$ means that there is one particle at site $x$ with energy $\ell$. Then, the dynamics is as follows:
\begin{itemize}
    \item If there is a particle at the site $x\in\Z$, and if the site $y\in\Z$ is empty, the particle can jump from $x$ to $y$ with a jump rate denoted by $c_{x\to y}^p (\eta )$, in which the particle's energy remains unchanged.
    \item If the sites $x$ and $y$ both carry particles, then one unit of energy can be transferred from the particle at $x$ to the particle at $y$ provided that the particle at the site $x$ has energy greater than~$1$ (particles are not allowed to disappear), and the one at the site $y$ has energy less than~$\kappa$ (so that it can receive energy). This energy transfer rate is denoted by $c_{x\to y}^e(\eta )$.
\end{itemize}
In what follows, we consider a model where particle jumps and energy transfers occur only between nearest-neighbor sites. Consider first the \emph{symmetric dynamics}, driven by the generator~$\mathcal{L}_S$ that acts on local functions\footnote{We say that a function $f:\Omega\longrightarrow\R$ is \emph{local} if it depends only on a finite number of coordinates.} $f:\Omega\longrightarrow\R$ as
\begin{equation}\label{def:symmetricgenerator}
    \mathcal{L}_Sf(\eta ) = \sum_{\substack {x,y\in\Z\\ |x-y|=1}} c_{x\to y}^p(\eta )\big[ f(\eta^{x,y})-f(\eta )\big] + \sum_{\substack {x,y\in\Z\\ |x-y|=1}} c_{x\to y}^e(\eta )\big[ f(\eta^{x\to y})-f(\eta )\big].
\end{equation}
Above, $\eta^{x,y}$ stands for the configuration obtained from $\eta$ after the exchange of the occupation variables $\eta_x$ and $\eta_y$, and $\eta^{x\to y}$ is the one obtained from $\eta$ after the transfer of one unit of energy from $x$ to $y$ (whenever well-defined), namely
\begin{equation}\label{eq:transformedconfig}
    \eta_z^{x,y}=\begin{cases}
        \eta_y & \mbox{ if }z=x,\\
        \eta_x & \mbox{ if }z=y,\\
        \eta_z & \mbox{ otherwise},
    \end{cases}
    \qquad\mbox{ and }\qquad \eta_z^{x\to y}=\begin{cases}
        \eta_x-1 & \mbox{ if }z=x,\\
        \eta_x+1 & \mbox{ if }z=y,\\
        \eta_z & \mbox{ otherwise}.
    \end{cases}
\end{equation}

To each configuration $\eta$, we associate an exclusion configuration $\xi=\xi (\eta )\in \{0,1\}^{\Z}$ defined by $\xi_x : = \mathbf{1}_{\{\eta_x\ge 1\}}$, which indicates the presence of a particle at each site $x$. Under this dynamics, the total number of particles and the total energy, given respectively by  $\sum_{x}\xi_x$ and $\sum_{x}\eta_x$, are conserved.
Moreover, these quantities satisfy the inequality
\begin{equation*}
    \sum_{x}\xi_x \le\sum_{x}\eta_x \le\kappa\sum_{x}\xi_x.
\end{equation*}
We choose the rates $c^p$ and $c^e$ such that the model belongs to the class of \emph{gradient models}. This means that the instantaneous particle and energy currents across an oriented bond $\{x,x+1\}$ can be expressed as the discrete gradient of some local functions. Specifically, we set the particle jump rate to be a constant $\kappa -1$, namely,
\begin{equation}\label{def:particlerate}
    c_{x\to x+1}^p(\eta ) := (\kappa -1)\xi_x(1-\xi_{x+1}).
\end{equation}
The energy transfer rate is defined by the formula
\begin{equation}\label{def:energyrate}
    c_{x\to x+1}^e(\eta ) := \xi_x\xi_{x+1} (\eta_x-1)(\kappa -\eta_{x+1}) = (\eta_x-\xi_x)(\kappa\xi_{x+1}-\eta_{x+1})
\end{equation}
where the last identity comes from the fact that $\eta_x\xi_x=\eta_x$. Notice that expression \eqref{def:energyrate} reflects the fact that an energy transfer cannot happen if the source particle has the minimal energy or if the target particle has the maximal energy. With this choice of rates, the instantaneous particle current~$j_{x, x+1}^p$ across the bond $\{x,x+1\}$, defined as
\begin{equation}\label{def:jp}
    j_{x,x+1}^p(\eta )\coloneq  c_{x\to x+1}^p(\eta )-c_{x+1\to x}^p(\eta ),
\end{equation}
reduces to the discrete gradient
\begin{equation}\label{eq:gradientjump}
    j_{x,x+1}^p(\eta ) = (\kappa -1)(\xi_x-\xi_{x+1}).
\end{equation}
Similarly, the instantaneous energy current, defined as
\begin{equation}\label{def:je}
    j_{x,x+1}^e(\eta )\coloneq  \eta_xc_{x\to x+1}^p(\eta )-\eta_{x+1}c_{x+1\to x}^p(\eta ) + c_{x\to x+1}^e(\eta )-c_{x+1\to x}^e(\eta ),
\end{equation}
satisfies
\begin{equation}\label{eq:gradientenergy}
    j_{x,x+1}^e(\eta ) = (\kappa -1)(\eta_x-\eta_{x+1}).
\end{equation}

\medskip

We now introduce an asymmetric version of the dynamics by considering the infinitesimal generator $\mathcal{L}\coloneq\mathcal{L}_S+N^{-\gamma}\mathcal{L}_A$, where $\gamma >  0$ and the antisymmetric part $\mathcal{L}_A$ acts on local functions~$f:\Omega\longrightarrow\R$ via 
\begin{multline}
    \mathcal{L}_Af(\eta ) = \frac12\sum_{\substack {x,y\in\Z\\ |x-y|=1}} (y-x) (\alpha_p+\alpha_e\eta_x)c_{x\to y}^p(\eta )\big[ f(\eta^{x,y})-f(\eta )\big]\\ + \frac12\sum_{\substack {x,y\in\Z\\ |x-y|=1}} (y-x)\alpha_ec_{x\to y}^e(\eta )\big[ f(\eta^{x\to y})-f(\eta )\big]
\end{multline}
for some parameters $(\alpha_p,\alpha_e)\in\R^2\setminus\{(0,0)\}$. 
In words, the asymmetry modifies the jump rates: with respect to before, a particle with energy $\ell$ jumps to the right with a factor~$1+\frac{\alpha_p+\alpha_e\ell}{2N^\gamma}$ and to the left with a factor $1-\frac{\alpha_p+\alpha_e\ell}{2N^\gamma}$, while energy transfers are biased by a factor~$1 + \frac{\alpha_e}{2N^\gamma}$ to the right and~$1 - \frac{\alpha_e}{2N^\gamma}$ to the left, respectively. These rules are illustrated in Figure 1. Since~$\ell \le \kappa$ and~$\gamma >0$, the operator $\mathcal{L}$ indeed becomes the generator of a Markov process when $N$ is sufficiently large.

The specific form of $\mathcal{L}_A$
  is chosen to ensure that the dynamics preserves the same invariant measures as in the symmetric case and satisfies the Einstein relation \eqref{def:macroscopiccurrent}. Identifying such an asymmetric dynamics is generally non-trivial. Here, following the approach in \cite{Olla}, we derive this generator by formally considering the local reversal of an inhomogeneous Gibbs state, where small gradients are imposed on the chemical potentials associated with particles and energy, proportional to $\alpha_p$ and $\alpha_e$ respectively.


\begin{figure}
    \centering
    \includegraphics[scale=.75]{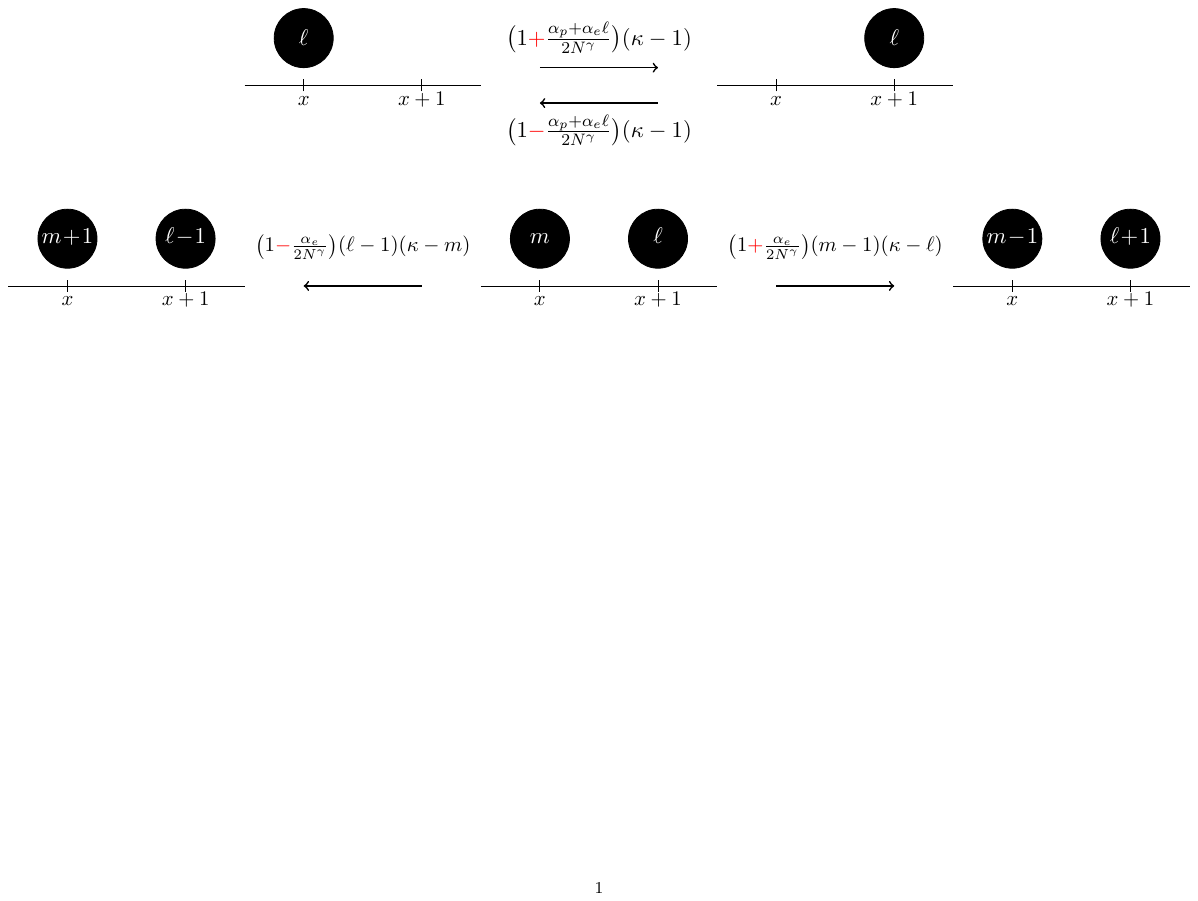}
    \label{fig:model}
    \caption{Schematic illustration of the asymmetric dynamics of the EPE: particle jump rates (top) and energy transfer rates (bottom).}
\end{figure}

\begin{remark}
    Throughout the article, we put a superscript $p$ to refer to quantities related to particle jumps, and a superscript $e$ to refer to quantities related to energy transfers. Even though it is not emphasized in the notations, the generator $\mathcal{L}$ strongly depends on the scaling parameter $N$.
\end{remark}

\subsection{Invariant measures}

\begin{definition}\label{defin:invariantmeasures}
    Take $\rho\in [0,1]$ and $\mathcal{E}\in [\rho ,\kappa\rho]$ corresponding respectively to particle and energy density parameters. Define the marginal $\nu_{\rho ,\mathcal{E}}$ to be the probability measure on $\{0,\hdots ,\kappa\}$ under which:
    \begin{itemize}
        \item the variable $\xi_x =\xi (\eta_x )$ is distributed according to a Bernoulli distribution of parameter~$\rho$,
        \item conditionally on $\{\xi_x=1\}$, the variable $\eta_x-1$ is distributed according to a Binomial distribution of parameters $\kappa -1$ and 
        \begin{equation}\label{def:p}
        \mathtt{p}=\mathtt{p}(\rho ,\mathcal{E}) := \frac{\frac{\mathcal{E}}{\rho} -1}{\kappa -1} \in [0,1].
        \end{equation}
    \end{itemize}
    Namely,
    \begin{equation*}
        \nu_{\rho ,\mathcal{E}}(\eta_x=0)=1-\rho \qquad\mbox{ and }\qquad \nu_{\rho ,\mathcal{E}}(\eta_x=\ell )=\rho {{\kappa -1}\choose{\ell -1}}\mathtt{p}^{\ell -1}(1-\mathtt{p})^{\kappa -\ell} \quad \mbox{ for }1\le \ell \le\kappa .
    \end{equation*}
    Let also $\mu_{\rho ,\mathcal{E}} := \nu_{\rho ,\mathcal{E}}^{\otimes\Z}$ be the corresponding product measure on $\Z$.
\end{definition}

In what follows, we will denote by $\E_{\rho ,\mathcal{E}}$ the expectation, $\mathrm{Var}_{\rho ,\mathcal{E}}$ the variance and $\mathrm{Cov}_{\rho ,\mathcal{E}}$ the covariance with respect to $\mu_{\rho ,\mathcal{E}}$. Then, we have the following result.

\begin{proposition}[Invariant measures]\label{prop:invariantmeasures}
    For any $\rho\in [0,1]$ and any $\mathcal{E}\in [\rho ,\kappa\rho]$, the measure $\mu_{\rho ,\mathcal{E}}$ is reversible for the symmetric generator $\mathcal{L}_S$, and invariant for the generator $\mathcal{L}$. Moreover, for all~$x\in\Z$, it satisfies
    \begin{equation*}
        \E_{\rho ,\mathcal{E}}[\xi_x] = \rho \qquad\mbox{ and }\qquad \E_{\rho ,\mathcal{E}}[\eta_x] =\mathcal{E}.
    \end{equation*}
\end{proposition}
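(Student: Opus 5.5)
Since $\mu_{\rho,\mathcal{E}}$ is a product measure and all rates are nearest-neighbour, I will reduce everything to single-bond identities on the two-site marginal $\nu_{\rho,\mathcal{E}}\otimes\nu_{\rho,\mathcal{E}}$. The expectations are immediate: $\E[\xi_x]=\rho$ by construction, and $\E[\eta_x]=\rho\,\E[1+\mathrm{Bin}(\kappa-1,\mathtt{p})]=\rho(1+(\kappa-1)\mathtt{p})=\mathcal{E}$ by \eqref{def:p}. The degenerate cases $\rho=0$, or $\mathtt{p}\in\{0,1\}$, can be handled separately, or by continuity in $(\rho,\mathcal{E})$ since invariance is a closed condition under weak limits of product measures; so I may assume $\rho\in(0,1)$ and $\mathtt{p}\in(0,1)$, where $\nu$ charges every point.

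For \textbf{reversibility under $\mathcal{L}_S$}, I will check detailed balance bond by bond, writing $\nu(\ell)=\nu_{\rho,\mathcal{E}}(\eta_x=\ell)$.

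For a particle jump with $\eta_x=\ell\ge1$ and $\eta_{x+1}=0$, the configuration $\eta^{x,x+1}$ has the values swapped. Both directions have the same rate $\kappa-1$, and $\nu(\ell)\nu(0)$ is symmetric under the swap, so detailed balance holds.

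For an energy transfer from $(a,b)$ to $(a-1,b+1)$ with $a\ge2$ and $b\le\kappa-1$, I need
\[
\nu(a)\nu(b)(a-1)(\kappa-b)=\nu(a-1)\nu(b+1)\,b\,(\kappa-a+1).
\]
Set $w(\ell)=\binom{\kappa-1}{\ell-1}r^{\ell-1}$ with $r=\mathtt{p}/(1-\mathtt{p})$. The ratio $w(a)/w(a-1)=r\,(\kappa-a+1)/(a-1)$ and the ratio $w(b+1)/w(b)=r\,(\kappa-b)/b$. Substituting these, both sides reduce to the same expression, which settles this case.

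Summing over bonds then gives $\langle f,\mathcal{L}_Sg\rangle_\mu=\langle \mathcal{L}_Sf,g\rangle_\mu$ for local $f,g$. Since all functions involved are local and each sum is effectively finite, no convergence issues arise.

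For \textbf{invariance under $\mathcal{L}$}, it suffices to show $\E_\mu[\mathcal{L}_Af]=0$ for all local $f$; invariance under $\mathcal{L}_S$ already follows from reversibility. Write $\mathcal{L}_A=\sum_x \mathcal{L}_A^{x,x+1}$ and use the change of variables $\eta\mapsto\eta^{x,x+1}$ or $\eta\mapsto\eta^{x\to x+1}$ together with detailed balance. This gives
\[
\E_\mu[\mathcal{L}_Af]=-\E_\mu[f\,\mathcal{L}_A^*\mathbf 1],
\]
where, by antisymmetry of the bias, $\mathcal{L}_A^*\mathbf1(\eta)$ equals minus the total biased rate out of $\eta$ plus the biased rate into $\eta$. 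Because the rates are reversible with the sign $(y-x)$ flipping, this reduces to the condition
\[
\sum_x\big[(\alpha_p+\alpha_e\eta_x)c^p_{x\to x+1}-(\alpha_p+\alpha_e\eta_{x+1})c^p_{x+1\to x}+\alpha_e(c^e_{x\to x+1}-c^e_{x+1\to x})\big]=0,
\]
that is, $\sum_x\big(\alpha_p j^p_{x,x+1}+\alpha_e j^e_{x,x+1}\big)=0$, understood as $\mathcal{L}_A^*\mathbf1=0$ locally.

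By the gradient identities \eqref{eq:gradientjump} and \eqref{eq:gradientenergy}, this sum telescopes: $\alpha_p(\kappa-1)(\xi_x-\xi_{x+1})+\alpha_e(\kappa-1)(\eta_x-\eta_{x+1})$. Against a local $f$ supported on $\Lambda$, one has to justify the telescoping rigorously. I will do this by computing $\E_\mu[\mathcal{L}_A f]$ directly as a finite sum over bonds meeting $\Lambda$; the boundary terms then cancel in expectation because $\E[\xi_x]$ and $\E[\eta_x]$ are independent of $x$ and the sites are independent.

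\textbf{Main obstacle.} The main obstacle is this last bookkeeping step: computing $\mathcal{L}_A^*\mathbf 1$ with the correct signs and handling the infinite-volume telescoping. The bond-wise detailed balance itself is a routine binomial identity.
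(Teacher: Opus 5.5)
Your proposal is correct and follows essentially the same route as the paper. You check detailed balance bond by bond, which is trivial for particle jumps and a binomial-ratio identity for energy transfers. You then use a change of variables to get $\E_{\rho,\mathcal{E}}[\mathcal{L}_Af]=-\E_{\rho,\mathcal{E}}\bigl[f\sum_x(\alpha_p j^p_{x,x+1}+\alpha_e j^e_{x,x+1})\bigr]$, which vanishes by the gradient identities \eqref{eq:gradientjump}--\eqref{eq:gradientenergy}. Restricting to the finitely many bonds meeting the support of $f$, with the two boundary terms cancelling by independence and homogeneity, is a more explicit version of the paper's appeal to translation invariance. The stray minus sign in your adjoint formula is harmless, since the quantity is shown to be zero.
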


\begin{proof}
    By definition of the measure $\mu_{\rho ,\mathcal{E}}$ and of $\mathtt{p}(\rho ,\mathcal{E})$ given in \eqref{def:p}, a direct computation shows that $\E_{\rho ,\mathcal{E}}[\xi_x]=\rho$ and $\E_{\rho ,\mathcal{E}}[\eta_x]=\mathcal{E}$ for any $x\in\Z$. We start proving the reversibility with respect to $\mathcal{L}_S$, by proving the \emph{detailed balance condition}.

    As the measure $\mu_{\rho ,\mathcal{E}}$ is a homogeneous product measure, it follows that $\eta$ and $\eta^{x,x+1}$ have the same distribution for any $x\in\Z$. Therefore, since the dynamics of $\mathcal{L}_S$ consists of symmetric particle jumps that occur at constant rate, the measure is reversible with respect to the particle jump dynamics. Now, let us check the detailed balance condition for the energy transfer dynamics. Fix any $x\in\Z$ and $m,\ell\in\{1,\hdots ,\kappa -1\}$. As the measure $\mu_{\rho ,\mathcal{E}}$ is a product measure, it is sufficient to compute
    \begin{multline*}
        \mu_{\rho ,\mathcal{E}}(\eta_x=m+1)\mu_{\rho ,\mathcal{E}}(\eta_{x+1}=\ell)c_{x\to x+1}^e(\eta ) \\ = \rho{{\kappa -1}\choose{m}}\mathtt{p}^m(1-\mathtt{p})^{\kappa -1-m} \times \rho {{\kappa -1}\choose{\ell-1}}\mathtt{p}^{\ell-1} (1-\mathtt{p})^{\kappa -\ell} \times m(\kappa -\ell )
    \end{multline*}
    on the one hand, and
    \begin{multline*}
        \mu_{\rho ,\mathcal{E}}(\eta_x=m)\mu_{\rho ,\mathcal{E}}(\eta_{x+1}=\ell +1)c_{x+1\to x}^e(\eta^{x\to x+1} ) \\ = \rho{{\kappa -1}\choose{m-1}}\mathtt{p}^{m-1}(1-\mathtt{p})^{\kappa -m} \times \rho {{\kappa -1}\choose{\ell}}\mathtt{p}^{\ell} (1-\mathtt{p})^{\kappa -1-\ell} \times \ell (\kappa -m )
    \end{multline*}
    on the other hand. Using standard binomial coefficients identities, one easily checks that these quantities are equal, hence, formally
    \begin{equation}\label{eq:reversibility1}
        \mu_{\rho ,\mathcal{E}}(\eta )c_{x\to x+1}^e(\eta )=\mu_{\rho ,\mathcal{E}}(\eta^{x\to x+1})c_{x+1\to x}^e(\eta^{x\to x+1}).
    \end{equation}
    By the same reasoning, one can establish the other formal reversibility relation
    \begin{equation}\label{eq:reversibility2}
        \mu_{\rho ,\mathcal{E}}(\eta )c_{x+1\to x}^e(\eta )=\mu_{\rho ,\mathcal{E}} (\eta^{x+1\to x})c_{x\to x+1}^e(\eta^{x+1\to x}).
    \end{equation}
    This proves the reversibility for the symmetric generator $\mathcal{L}_S$. We now turn to the proof of the invariance with respect to the asymmetric generator $\mathcal{L}$, showing that $\E_{\rho ,\mathcal{E}}\big[\mathcal{L}_Af(\eta)\big]=0$ for any local function $f:\Omega\longrightarrow\R$. The part coming from the particle jump dynamics in $\E_{\rho ,\mathcal{E}}\big[\mathcal{L}_Af(\eta )\big]$ writes 
    \begin{align*}
        \E_{\rho ,\mathcal{E}} & \left[ \frac12 \sum_{x\in\Z} \Big( (\alpha_p +\alpha_e\eta_x)c_{x\to x+1}^p (\eta ) - (\alpha_p+\alpha_e\eta_{x+1})c_{x+1\to x}^p(\eta )\Big) \big[ f(\eta^{x,x+1})-f(\eta )\big]\right]\\ 
        & = \E_{\rho ,\mathcal{E}}\left[ \frac12 \sum_{x\in\Z} \Big( (\alpha_p+\alpha_e\eta_{x+1})c_{x\to x+1}^p(\eta^{x,x+1})-(\alpha_p+\alpha_e\eta_x)c_{x+1\to x}^p(\eta^{x,x+1})\Big) f(\eta )\right] \\ 
        & \qquad - \E_{\rho ,\mathcal{E}}\left[ \frac12 \sum_{x\in\Z} \Big( (\alpha_p+\alpha_e\eta_{x})c_{x\to x+1}^p(\eta )-(\alpha_p+\alpha_e\eta_{x+1})c_{x+1\to x}^p(\eta )\Big) f(\eta )\right] \\ 
        & = -\alpha_p \E_{\rho ,\mathcal{E}}\left[ \sum_{x\in\Z} j_{x,x+1}^p(\eta )f(\eta )\right] + \alpha_e \E_{\rho ,\mathcal{E}} \left[ \sum_{x\in\Z} \big( \eta_{x+1}c_{x+1\to x}^p(\eta )-\eta_xc_{x\to x+1}^p(\eta )\big)f(\eta )\right]
    \end{align*}
    where to obtain the second line, we split the first expectation into two terms, and performed the change of variable $\eta\rightsquigarrow\eta^{x,x+1}$ in the first part and used the fact that~${\mu_{\rho ,\mathcal{E}}(\eta^{x,x+1}) = \mu_{\rho ,\mathcal{E}}(\eta )}$. To obtain the third line, we noticed that~$c_{x\to x+1}^p(\eta^{x,x+1}) = c_{x+1\to x}^p (\eta )$ and recognized the instantaneous current defined in \eqref{def:jp}. Next, the part of $\E_{\rho ,\mathcal{E}}\big[\mathcal{L}_Af(\eta )\big]$ relative to the energy transfer writes
    \begin{align*}
            \E_{\rho ,\mathcal{E}} \bigg[ \frac12\alpha_e\sum_{x\in\Z} \Big( &c_{x\to x+1}^e(\eta )  \big[f(\eta^{x\to x+1})-f(\eta )\big] - c_{x+1\to x}^e(\eta )\big[f(\eta^{x+1\to x})-f(\eta )\big]\Big)\bigg] \\ 
            & = \frac12 \alpha_e \E_{\rho ,\mathcal{E}}\left[ \sum_{x\in\Z}  \left(c_{x\to x+1}^e(\eta^{x+1\to x})\frac{\mu_{\rho ,\mathcal{E}}(\eta^{x+1\to x})}{\mu_{\rho ,\mathcal{E}}(\eta )} - c_{x\to x+1}^e(\eta ) \right)f(\eta )\right] \\
            & \qquad -\frac12\alpha_e\E_{\rho ,\mathcal{E}}\left[\sum_{x\in\Z}\left(  c_{x+1\to x}^e(\eta^{x\to x+1} )\frac{\mu_{\rho ,\mathcal{E}}(\eta^{x\to x+1})}{\mu_{\rho ,\mathcal{E}}(\eta )} - c_{x+1\to x}^e(\eta )\right)f(\eta )\right]\\
            & =  \alpha_e \E_{\rho ,\mathcal{E}}\left[ \sum_{x\in\Z} \big( c_{x+1\to x}^e(\eta )-c_{x\to x+1}^e(\eta )\big)f(\eta )\right]
    \end{align*}
    where to obtain the second line we split the first expectation and performed the changes of variables~$\eta\rightsquigarrow\eta^{x+1\to x}$ and $\eta\rightsquigarrow\eta^{x\to x+1}$ to express everything in terms of $f(\eta )$, and to obtain the third line we used the reversibility relations \eqref{eq:reversibility1} and \eqref{eq:reversibility2}. Summing the obtained expressions and using the definition of the instantaneous current in \eqref{def:je}, we obtain
    \begin{equation*}
        \E_{\rho ,\mathcal{E}}\big[\mathcal{L}_Af(\eta )\big] = - \alpha_p \E_{\rho ,\mathcal{E}}\left[ \sum_{x\in\Z}j_{x,x+1}^p(\eta )f(\eta )\right] - \alpha_e \E_{\rho ,\mathcal{E}}\left[ \sum_{x\in\Z}j_{x,x+1}^e(\eta )f(\eta )\right].
    \end{equation*}
    Combining the gradient decompositions \eqref{eq:gradientjump} and \eqref{eq:gradientenergy} with the translation invariance of the product measure $\mu_{\rho ,\mathcal{E}}$, we conclude that this expectation is zero. This concludes the proof. 
\end{proof} 

\begin{remark}
    When $\rho =0$, the system is empty, and when $\rho =1$ the system reduces to a SEP($\kappa -1$). When $\mathcal{E}=\rho$ or $\mathcal{E}=\kappa\rho$, the system reduces to a SEP. In what follows, we will only consider the non-trivial cases $\rho \in (0,1)$ and $\mathcal{E}\in (\rho ,\kappa\rho )$. 
\end{remark}

\begin{definition}[Compressibility matrix]\label{defin:compressibilitymatrix}
    For any $\rho\in (0,1)$ and $\mathcal{E}\in (\rho ,\kappa\rho)$, we define the \emph{compressibility matrix} to be the covariance matrix
    \begin{equation*}
        \chi (\rho ,\mathcal{E} )\coloneq \begin{pmatrix}
            \mathrm{Var}_{\rho ,\mathcal{E}}(\xi_x) & \mathrm{Cov}_{\rho ,\mathcal{E}}(\xi_x;\eta_x) \\
            \mathrm{Cov}_{\rho ,\mathcal{E}}(\xi_x;\eta_x) & \mathrm{Var}_{\rho ,\mathcal{E}}(\eta_x)
        \end{pmatrix}.
    \end{equation*}
    After calculation, it is given by
    \begin{equation}\label{eq:compressibilitymatrix}
        \chi (\rho ,\mathcal{E}) = \begin{pmatrix}
            \rho (1-\rho) & \mathcal{E}(1-\rho)\\
            \mathcal{E}(1-\rho ) & \frac{(\mathcal{E}-\rho )(\kappa\rho -\mathcal{E})}{(\kappa -1)\rho} + \frac{(1-\rho )\mathcal{E}^2}{\rho }
        \end{pmatrix}.
    \end{equation}
    This matrix is symmetric and positive definite.
\end{definition}

\subsection{Hydrodynamic limit and fluctuation fields}

Let $a>0$, and consider the Markov process $(\eta (t))_{t\ge 0}$ on $\Omega$ driven by the time-rescaled generator $N^a\mathcal{L}$. Let $( \xi (t))_{t\ge 0}$ be the corresponding Markov process on $\{ 0,1\}^{\Z}$. For any $t\ge 0$, we define the following random measures on $\R$, called \emph{empirical measures}, by
\begin{equation*}
    \pi_t^{p,N}(\!\diff u) \coloneq \frac1N\sum_{x\in\Z}\xi_x(t) \delta_\frac {x}{N}(\!\diff u) \qquad \mbox{ and }\qquad \pi_t^{e,N}(\!\diff u) \coloneq \frac1N\sum_{x\in\Z}\eta_x(t) \delta_\frac{x}{N}(\!\diff u),
\end{equation*}
where $\delta_v(\!\diff u)$ denotes the Dirac mass at $v\in\R$. In the diffusive timescale $a=2$, and when~$\gamma =1$, the following convergences in probability, in the sense of weak convergence of measures, is expected to hold for these random measures
\begin{equation*}
    \pi_t^{p,N}(\!\diff u)\xrightarrow[N\to\infty]{\p}\rho (t,u)\diff u\qquad \mbox{ and }\qquad \pi_t^{e,N}(\!\diff u)\xrightarrow[N\to\infty]{\p} \mathcal{E}(t,u)\diff u
\end{equation*}
where the profiles $\rho (t,\cdot )$ and $\mathcal{E}(t,\cdot )$ satisfy the partial differential equation (PDE)
\begin{equation*}
    \partial_t\begin{pmatrix}
        \rho \\ 
        \mathcal{E}
    \end{pmatrix} = (\kappa -1)\Delta\begin{pmatrix}
        \rho \\ 
        \mathcal{E}
    \end{pmatrix} - \nabla \mathbf{j}(\rho ,\mathcal{E})
\end{equation*}
starting from some compatible initial profiles. In this equation, the macroscopic current $\mathbf{j}$ is determined by the expectation of the asymmetric currents under the stationary measures $\mu_{\rho,\mathcal{E}}$ (see \cref{appendix:couplingmatrices} for the explicit form). A direct calculation shows that it satisfies the \emph{Einstein relation}
\begin{equation}\label{def:macroscopiccurrent}
    \mathbf{j}(\rho ,\mathcal{E}) = D \, \chi(\rho,\mathcal{E}) \begin{pmatrix}
    \alpha_p\\
    \alpha_e
    \end{pmatrix} = (\kappa -1) \chi (\rho ,\mathcal{E})\begin{pmatrix}
    \alpha_p\\
    \alpha_e
    \end{pmatrix}
\end{equation}
with the diffusion matrix $D_{ij}=(\kappa-1)\delta_{ij}$.
This convergence result, known as \emph{hydrodynamic limit}, corresponds to a law of large numbers for the process. It is expected to be proved by following Guo, Papanicolaou and Varadhan's \emph{entropy method} \cite{guo1988nonlinear}, together with the equivalence of ensembles proved in the present paper, but we do not provide details here since it is not the focus of the present article. Our aim is rather to establish a central limit theorem around this typical state, in the \emph{stationary} case where the process starts from its invariant measure. For the remainder of the paper, we fix~$\rho\in (0,1)$ and~$\mathcal{E}\in (\rho ,\kappa\rho )$, and assume that the process starts from the measure $\mu_{\rho ,\mathcal{E}}$. Since $\mu_{\rho ,\mathcal{E}}$ is an invariant measure, the law of $\eta(t)$ is~$\mu_{\rho ,\mathcal{E}}$ for all $t \ge 0$. To study the limiting behaviour of~$\sqrt{N}\big( \langle\pi_t^{\alpha ,N},G\rangle - \E_{\rho ,\mathcal{E}}[\langle\pi_t^{\alpha,N},G\rangle]\big)$ for~$\alpha=p,e$, we define the \emph{fluctuation fields} as
\begin{equation}\label{def:fluctuationfields}
\begin{aligned}
    \mathcal{Y}_t^{p,N}(\!\diff u) \coloneq \frac{1}{\sqrt{N}}\sum_{x\in\Z} \bar\xi_x(t)\delta_\frac{x}{N}(\!\diff u), \\
    \mathcal{Y}_t^{e,N}(\!\diff u)\coloneq \frac{1}{\sqrt{N}}\sum_{x\in\Z}\bar\eta_x(t)\delta_\frac{x}{N}(\!\diff u)
\end{aligned}
\end{equation}
where $\bar\xi_x(t)=\xi_x(t)-\rho$ and $\bar\eta_x(t)=\eta_x(t)-\mathcal{E}$ denote the centered variables. These fluctuation fields act on functions in the Schwartz space $\schwartz$, and are thus regarded as elements of the space~$\schwartzprime$ of tempered distributions. The main result of this work characterizes the limit of these fields as~$N\to\infty$ as the solution to a stochastic partial differential equation. Before stating it, we define the notion of solution to this SPDE.

\subsection{Energy solution to the Stochastic Burgers Equation}
\label{subsec:energysolution}

The one-dimensional \emph{Stochastic Burgers Equation} (SBE) is the SPDE of the form
\begin{equation}\label{eq:SBE}
    \partial_t\mathcal{Z}_t = \mathfrak{a}\Delta\mathcal{Z}_t + \mathfrak{g} \nabla\mathcal{Z}_t^2 +\sqrt{2\mathfrak{a}\sigma^2}\nabla\dot{\mathcal{W}}_t
\end{equation}
characterized by parameters $\mathfrak{a} >0$, $\mathfrak{g}\in\R$, $\sigma\neq 0$, where $\mathcal{W}_t$ is an $\schwartzprime$-valued Brownian motion with covariance kernel given by 
\begin{equation*}
    \E\big[ \mathcal{W}_s(\varphi )\mathcal{W}_t(\psi)\big] = (s\wedge t)\langle \varphi ,\psi\rangle_{L^2}
\end{equation*}
for any $s,t\ge 0$ and any $\varphi ,\psi\in\schwartz$. The notion of solution we adopt is that of \emph{energy solutions}, first proposed in \cite{goncalves_nonlinear_2014}, where existence was established, while its uniqueness was subsequently proved in \cite{gubinelli_energy_2018,gubinelli_infinitesimal_2020}. 

Assume that for any $t\in [0,T]$, the random variable $\mathcal{Z}_t$ is distributed as a spatial white noise on $\R$ with variance $\sigma^2$. Then, the quantity defined for any $0\le s<t\le T$, any~$\varphi\in\schwartz$ and~$\varepsilon >0$ by 
\begin{equation}\label{eq:A_energysol}
    \mathcal{A}_{s,t}^{\varepsilon} (\varphi ) \coloneq \int_s^t\int_{\R} \mathcal{Z}_r(\iota_\varepsilon (u;\cdot ))^2\nabla\varphi (u)\diff u\diff r
\end{equation}
exists, even though the function $\iota_\varepsilon (u;\cdot ):= \iota_\varepsilon (u-\cdot )$ where $\iota_\varepsilon (u):=\frac{1}{\varepsilon}\ind_{[0,\varepsilon )}(u)$ does not belong to the Schwartz space. For details we
refer the reader to, for instance \cite[Section 2.2]{GoncalvesJaraSethuraman2015}. To identify the non-linear term of the equation as the limit of this quantity as $\varepsilon\to 0$, we rely on \cite[Theorem 1]{goncalves_nonlinear_2014}, which ensures that the $\schwartzprime$-valued stochastic process with continuous trajectories~$(\mathcal{A}_t)_{t\ge 0}$ can be defined as the $L^2$-limit 
\begin{equation}\label{eq:Atlimit}
    \mathcal{A}_t(\varphi ) = \lim_{\varepsilon\to 0}\ \mathcal{A}_{0,t}^{\varepsilon }(\varphi ) 
\end{equation}
for any $t\in [0,T]$ and $\varphi\in\schwartz$, provided that the following energy estimate holds.
\begin{definition}[Energy estimate]\label{defin:energyestimate}
    We say that the process $(\mathcal{Z}_t)_{t\in [0,T]}$ satisfies the \emph{energy estimate} if there exists a finite constant $C_0>0$ such that for any $0\le s<t\le T$, any $0<\delta\le \varepsilon <1$ and any $\varphi\in\schwartzprime$, we have 
    \begin{equation}\label{eq:energyestimate}
        \E\big[ (\mathcal{A}_{s,t}^{\varepsilon} (\varphi )-\mathcal{A}_{s,t}^{\delta}(\varphi ))^2\big] \le C_0\varepsilon (t-s)\|\nabla\varphi\|_{L^2}^2.
    \end{equation}
\end{definition}

\noindent We are now ready to define the notion of energy solution to the SBE.

\begin{definition}[Energy solution to the SBE]\label{defin:energysolution}
    We say that a process $(\mathcal{Z}_t)_{t\in [0,T]}$ with continuous~$\schwartzprime$-valued trajectories is an \emph{energy solution} to the SBE \eqref{eq:SBE} if the following four conditions are satisfied:
    \begin{enumerate}[label=(\roman*)]
        \item\label{energysol_item1} For any $t\in [0,T]$, the random variable $\mathcal{Z}_t$ is distributed as a spatial white noise on $\R$ with variance $\sigma^2$ ;
        \item\label{energysol_item2} The process $(\mathcal{Z}_t)_{t\in [0,T]}$ satisfies the energy estimate \eqref{eq:energyestimate} ;
        \item\label{energysol_item3} For any $\varphi\in\schwartz$, the process defined for any $t\in [0,T]$ by
        \begin{equation*}
            \mathcal{M}_t(\varphi) = \mathcal{Z}_t(\varphi )-\mathcal{Z}_0(\varphi )-\mathfrak{a}\int_0^t \mathcal{Z}_s(\Delta\varphi )\diff s  + \mathfrak{g}\mathcal{A}_t(\varphi )
        \end{equation*}
        is a continuous mean-zero martingale with respect to the natural filtration of $(\mathcal{Z}_t)_{t\in [0,T]}$, and where $\mathcal{A}_t$ has been defined as the $L^2$-limit \eqref{eq:Atlimit}. Moreover, its quadratic variation is given by $\langle\mathcal{M}(\varphi )\rangle_t = 2\mathfrak{a}\sigma^2t\|\nabla\varphi\|_{L^2}^2$ ;
        \item\label{energysol_item4} The reversed process $(\widehat{\mathcal{Z}}_t=\mathcal{Z}_{T-t})_{t\in [0,T]}$ also satisfies \ref{energysol_item3} with the process $(\mathcal{A}_t)_{t\in [0,T]}$ replaced by the process $(\widehat{\mathcal{A}}_t = \mathcal{A}_{T-t}-\mathcal{A}_T)_{t\in [0,T]}$.
    \end{enumerate}
\end{definition}

Then, we have the following result about existence and uniqueness of energy solutions to the SBE.

\begin{theorem}[\cite{goncalves_nonlinear_2014,gubinelli_energy_2018}]\label{thm:existenceuniquenessenergysolutionsSBE}
       For any initial condition $\mathcal{Z}_0$ distributed as a spatial white noise on $\R$ with variance $\sigma^2$, there exists a unique (up to indistinguishability) energy solution to the SBE \eqref{eq:SBE} in the sense of \cref{defin:energysolution}.
\end{theorem}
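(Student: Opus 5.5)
This theorem is cited from \cite{goncalves_nonlinear_2014,gubinelli_energy_2018}, so the plan is to reconstruct the two cited arguments: existence by a scaling limit, uniqueness by a Cole--Hopf reduction. First, by scaling space, time and amplitude, we reduce to a canonical choice of $(\mathfrak{a},\mathfrak{g},\sigma)$. If $\mathfrak{g}=0$ the equation is the linear Ornstein--Uhlenbeck SPDE, whose white-noise stationary solution is explicit and unique by a Lévy characterization. So assume $\mathfrak{g}\neq 0$.

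\textbf{Existence.} I would construct the solution as the scaling limit of a stationary weakly asymmetric simple exclusion process started from a Bernoulli product measure (equivalently, a Galerkin-truncated Burgers equation with white-noise invariant law). The steps are as follows. Write the martingale decomposition of the fluctuation field. Prove tightness in $\mathcal{D}([0,T],\schwartzprime)$ by Mitoma's criterion together with Kolmogorov--Centsov bounds. Control the quadratic term by the second-order Boltzmann--Gibbs principle, which rests on the Kipnis--Varadhan inequality and a multiscale spectral-gap argument. This yields the $L^2$ bound \eqref{eq:energyestimate} in the limit and identifies the drift as $\mathfrak{g}\mathcal{A}_t$. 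Item \ref{energysol_item1} follows from stationarity. Item \ref{energysol_item4} follows because the time-reversed microscopic process is the exclusion process with opposite asymmetry, which has the same invariant measure.

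\textbf{Uniqueness.} Following \cite{gubinelli_energy_2018}, I would mollify the solution, $\mathcal{Z}^\varepsilon=\mathcal{Z}*\rho_\varepsilon$, define the height $h^\varepsilon$ as its antiderivative, and set $\phi^\varepsilon=e^{\lambda h^\varepsilon}$ with a suitable $\lambda$. Itô's formula, applied using the forward martingale of item \ref{energysol_item3}, expresses $\phi^\varepsilon$ as a solution of a stochastic heat equation plus error terms. The quadratic term $(\partial_x h^\varepsilon)^2$ cancels against the Itô correction, up to a renormalization constant and remainders. These remainders are expressed as additive functionals $\int_0^t F(\mathcal{Z}_s)\diff s$. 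Combining the forward martingale with the backward one from item \ref{energysol_item4} (the Itô trick) bounds such a functional by an $H^{-1}$ norm of $F$ with respect to the Ornstein--Uhlenbeck generator. Gaussian chaos computations under the white-noise law then show that these bounds vanish as $\varepsilon\to 0$. Hence $e^{\lambda h}$ solves the multiplicative stochastic heat equation, whose solution is pathwise unique. This gives uniqueness in law of $h$, and therefore of $\mathcal{Z}=\partial_x h$, in the class of energy solutions.

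\textbf{Main obstacle.} The hard step is controlling the remainders in the Cole--Hopf expansion, in particular the cubic and quartic chaos terms and the renormalization. It requires the precise forward/backward Itô-trick estimate and handling $h$ globally on $\R$, which needs weighted spaces. I would first try to close these estimates on the torus and then pass to the line via localization.
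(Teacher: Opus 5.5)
The paper gives no proof of this statement; it imports existence from \cite{goncalves_nonlinear_2014} (limit points of stationary weakly asymmetric particle systems, identified through the second-order Boltzmann--Gibbs principle) and uniqueness from \cite{gubinelli_energy_2018} (a Cole--Hopf transform whose remainders are controlled by the forward/backward It\^o trick), and your outline is a faithful sketch of exactly these two arguments. Two adjustments: \cite{gubinelli_energy_2018} work directly on $\R$ with weighted norms, so no torus-then-localize step is needed (an energy solution on $\R$ does not restrict to one on a torus anyway); and the height must be the unpinned KPZ height, with its integration constant fixed dynamically by extending item~\ref{energysol_item3} (with the renormalized nonlinearity) to antiderivatives of test functions of nonzero mass, because with a pinned antiderivative of $\mathcal{Z}$ the function $e^{\lambda h}$ does not solve a closed equation.
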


We further provide the following definition to cover the case of diffusive fluctuations, which also arises in the context of the EPE.

\begin{definition}[Solution to the Ornstein-Uhlenbeck equation]\label{defin:OUsolution}
    We say that a process $(\mathcal{Z}_t)_{t\in [0,T]}$ with continuous $\schwartzprime$-valued trajectories is a solution to the \emph{Ornstein-Uhlenbeck equation}
    \begin{equation}\label{eq:OU}
    \partial_t\mathcal{Z}_t = \mathfrak{a}\Delta\mathcal{Z}_t +\sqrt{2\mathfrak{a}\sigma^2}\nabla\dot{\mathcal{W}}_t
    \end{equation}
    if it satisfies conditions \ref{energysol_item1} and \ref{energysol_item3} with $\mathfrak{g}=0$ in \cref{defin:energysolution}. The existence and uniqueness of solutions to the Ornstein-Uhlenbeck equation is a classical result, see \cite{holley_generalized_1978}.
\end{definition}

\begin{remark}[About uncoupled SBEs]\label{remark:uncoupledSBEs}
    More generally, if one considers a system of $n$ uncoupled SBEs, that is, the equation that reads for $\vec{\mathcal{Z}}\in C([0,T],\schwartzprime^n)$ as
    \begin{equation*}        
        \partial_t\vec{\mathcal{Z}}_t^i = \mathfrak{a}_i\Delta\vec{\mathcal{Z}}_t^i + \mathfrak{g}_i \nabla\big( \vec{\mathcal{Z}}_t^i\big)^2 + \sqrt{\smash[b]{2\mathfrak{a}_i\sigma_i^2}}\nabla\dot{\mathcal{W}}_t^i,\qquad i = 1,\hdots ,n
    \end{equation*}
    where $(\mathcal{W}^1,\hdots ,\mathcal{W}^n)$ are independent $\schwartzprime$-valued Brownian motions started from spatial white noises with respective variances $\sigma_1^2,\hdots ,\sigma_n^2$ that are mutually independent, and independent of the Brownian motions, then uniqueness of the distribution of~$\mathcal{Z}$ also holds, and it is given by the product of the distributions of the energy solutions to each scalar SBE. Indeed, in \cite[Theorem 2.4]{gubinelli_energy_2018}, the authors proved that there is a unique (up to indistinguishability) strong solution to each scalar SBE, and that it is given by the distributional derivative of the solution to the Stochastic Heat Equation (SHE) with multiplicative noise. Then, each coordinate of $\vec{\mathcal{Z}}$ is a strong solution to a scalar SBE, and as the solution to the SHE with multiplicative noise can be constructed as a mild solution, the assumption that the initial conditions and noises are independent ensures that the solutions of the corresponding multiplicative SHEs are independent. Therefore, the distribution of $\vec{\mathcal{Z}}$ is given by the product of the distributions of the energy solutions to each scalar SBE, and in particular, uniqueness of the distribution of $\vec{\mathcal{Z}}$ holds.
\end{remark}

\subsection{Main results}

Let $\mathbf{J}=\mathbf{J}(\rho ,\mathcal{E})$ be the Jacobian matrix of the macroscopic current $\mathbf{j}$ defined in \eqref{def:macroscopiccurrent}, and of which we give an explicit expression in \cref{appendix:couplingmatrices}. The first notable contribution of this paper is the following result, which establishes the existence of normal modes for the system.

\begin{theorem}[Diagonalizability of the Jacobian]\label{thm:diagonalizability}
    For any $\rho\in (0,1)$ and any $\mathcal{E}\in (\rho ,\kappa\rho)$, the Jacobian matrix $\mathbf{J}(\rho ,\mathcal{E})$ is diagonalizable with real eigenvalues. Under the additional condition~$(\alpha_p,\alpha_e)\neq (0,0)$, these eigenvalues are distinct.

    Moreover, if $u$ and $v$ are two left eigenvectors of $\mathbf{J}(\rho ,\mathcal{E})$ associated with distinct eigenvalues, then they satisfy the orthogonality relation
    \begin{equation}\label{eq:orthogonalityrelation}
        u\chi(\rho ,\mathcal{E})v^\top =0
    \end{equation}
    where $\chi (\rho ,\mathcal{E})$ is the compressibility matrix defined in \eqref{eq:compressibilitymatrix}.
\end{theorem}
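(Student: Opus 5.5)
The plan is to exploit the exponential-family structure of the invariant measures together with the Einstein relation \eqref{def:macroscopiccurrent}. This writes $\mathbf{J}$ as a product of a symmetric matrix and a symmetric positive definite matrix, after which diagonalizability, reality of the spectrum and the $\chi$-orthogonality \eqref{eq:orthogonalityrelation} follow from linear algebra. Distinctness of the eigenvalues will then be checked directly on the explicit current.

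\emph{Step 1 (chemical potentials).} The one-site marginal of \cref{defin:invariantmeasures} can be written as
\begin{equation*}
\nu(\eta_x=0)=\frac{1}{Z(\lambda)},\qquad \nu(\eta_x=\ell)=\frac{1}{Z(\lambda)}{{\kappa-1}\choose{\ell-1}}e^{\lambda_1+\lambda_2\ell}\quad (1\le\ell\le\kappa),
\end{equation*}
with $\lambda=(\lambda_1,\lambda_2)\in\R^2$ and $Z(\lambda)=1+e^{\lambda_1+\lambda_2}(1+e^{\lambda_2})^{\kappa-1}$. Matching with $(\rho,\mathtt{p})$ gives $e^{\lambda_2}=\mathtt{p}/(1-\mathtt{p})$ and then fixes $\lambda_1$ through $\rho$, so $\lambda\mapsto(\rho,\mathcal{E})$ is a bijection from $\R^2$ onto $\{\rho\in(0,1),\ \mathcal{E}\in(\rho,\kappa\rho)\}$.

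With $\Lambda=\log Z$, standard exponential-family identities give
\begin{equation*}
(\rho,\mathcal{E})=\nabla\Lambda(\lambda),\qquad \chi(\rho,\mathcal{E})=\nabla^2\Lambda(\lambda).
\end{equation*}
Since $\chi$ is positive definite, the inverse-function theorem gives $\partial\lambda/\partial(\rho,\mathcal{E})=\chi^{-1}$.

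\emph{Step 2 (factorization of $\mathbf{J}$).} By \eqref{def:macroscopiccurrent}, as a function of $\lambda$,
\begin{equation*}
\mathbf{j}=(\kappa-1)\nabla^2\Lambda(\lambda)\,(\alpha_p,\alpha_e)^\top=\nabla g(\lambda),\qquad g:=(\kappa-1)\big(\alpha_p\partial_{\lambda_1}\Lambda+\alpha_e\partial_{\lambda_2}\Lambda\big).
\end{equation*}
The chain rule then gives $\mathbf{J}=H\chi^{-1}$, where $H=\nabla^2g(\lambda)$ is symmetric.

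Consequently $\chi^{-1/2}\mathbf{J}\chi^{1/2}=\chi^{-1/2}H\chi^{-1/2}$ is real symmetric. Hence $\mathbf{J}$ is diagonalizable with real eigenvalues.

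For the orthogonality, let $u$ and $v$ be left eigenvectors with eigenvalues $\mu_u\neq\mu_v$. The relation $u\mathbf{J}=\mu_u u$ is equivalent to $uH=\mu_u u\chi$, and similarly $vH=\mu_v v\chi$. Using the symmetry of $H$ and $\chi$,
\begin{equation*}
\mu_u\, u\chi v^\top=uHv^\top=(vHu^\top)^\top=\mu_v\, v\chi u^\top=\mu_v\, u\chi v^\top .
\end{equation*}
This forces $u\chi v^\top=0$.

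\emph{Step 3 (distinct eigenvalues).} Since $\mathbf{J}$ is diagonalizable, a repeated eigenvalue would force $\mathbf{J}=c\,\mathrm{Id}$. I will rule this out using the explicit first component of the current from \eqref{eq:compressibilitymatrix}:
\begin{equation*}
j_1=(\kappa-1)\big(\alpha_p\rho(1-\rho)+\alpha_e\mathcal{E}(1-\rho)\big).
\end{equation*}
\begin{itemize}
\item The off-diagonal entry $\partial_{\mathcal{E}}j_1=(\kappa-1)\alpha_e(1-\rho)$ must vanish, which forces $\alpha_e=0$.
\item With $\alpha_e=0$, we have $j_2=(\kappa-1)\alpha_p\mathcal{E}(1-\rho)$.
\item Equality of the diagonal entries then reads $\alpha_p(1-2\rho)=\alpha_p(1-\rho)$, i.e.\ $\alpha_p\rho=0$, hence $\alpha_p=0$.
\end{itemize}
This contradicts $(\alpha_p,\alpha_e)\neq(0,0)$, so the eigenvalues are distinct.

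The main point needing care is Step 1: verifying that the binomial-times-Bernoulli marginals form an exponential family whose natural sufficient statistic is exactly $(\xi_x,\eta_x)$, so that $\chi$ is genuinely the Hessian of $\Lambda$. Once this is in place, the remaining steps are short computations.
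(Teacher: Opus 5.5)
Your proposal is correct and follows the paper's proof essentially step for step. The paper also rewrites $\nu_{\rho,\mathcal{E}}$ in Gibbs form and uses $\chi=\mathrm{Hess}\log Z$ together with the Einstein relation to get $\mathbf{J}=Q\chi^{-1}$, where $Q_{ij}=\partial_{\beta_j}\mathbf{j}_i$ (your $\nabla^2 g$) is symmetric by symmetry of the third derivatives of $\log Z$; it then conjugates by $\chi^{1/2}$ and excludes a repeated eigenvalue because $\mathbf{J}$ would have to be scalar. Your explicit partition function, direct orthogonality computation and explicit check that $\alpha_e=\alpha_p=0$ are more detailed versions of these same steps.
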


\begin{remark}
    Even if the Einstein's relation \eqref{def:macroscopiccurrent} for the asymmetric macroscopic current $\mathbf{j}$ holds, it does not necessarily imply that the Jacobian matrix $\mathbf{J}$ equals $D=\sigma \chi^{-1}$ where $D$ is the diffusion matrix and $\sigma$ is the Onsager matrix satisfying \[\mathbf{j}=\sigma \begin{pmatrix}
    \alpha_p\\
    \alpha_e
    \end{pmatrix}.\] Therefore, the diagonalizability of $\mathbf{J}$ is generally not a priori clear. However, in the proof of \cref{thm:diagonalizability}, we demonstrate that $\mathbf{J}$ can be written in the form $\mathbf{J}=Q\chi^{-1}$ for a certain symmetric matrix $Q$. It should be noted that, in our setting, $Q$ is not necessarily positive semi-definite.
\end{remark}

This result is proved for our particular model, but we believe the same arguments would hold in great generality for gradient models with multiple conservation laws, even when the diffusion matrix is not diagonal. We defer its proof to \cref{sec:proofdiagonalizability} and proceed to state the results about fluctuations of the EPE, starting with the case $\alpha_e\neq 0$. Beforehand, we need to introduce some additional notations. 

For any $v\in\R$, we denote the translation operator $T_v$ that acts on functions $\varphi\in\schwartz$ through
\begin{equation*}
    T_v\varphi (u) := \varphi ( u\,;v) =\varphi (u-v) ,\qquad \forall u\in\R.
\end{equation*}
We focus on linear combinations of the fluctuation fields defined in \eqref{def:fluctuationfields} of the form
\begin{equation}\label{eq:linearcombinationfields}
    \mathcal{Z}_t^N(\varphi )= \mathfrak{c}\mathcal{Y}_t^{p,N}(T_{vN^{a-1}t}\varphi) + \mathcal{Y}_t^{e,N}(T_{vN^{a-1}t}\varphi)
\end{equation}
for appropriate parameters $v, \mathfrak{c}\in\R$. As shown in \cref{appendix:couplingmatrices} and \cref{thm:diagonalizability}, when $\alpha_e\neq 0$, the left eigenvectors of the Jacobian matrix $\mathbf{J}$ can be normalized in the form
\begin{equation*}
    \begin{pmatrix}
         \mathfrak{c}_i & 1
    \end{pmatrix}\mathbf{J} = \lambda_i \begin{pmatrix}
         \mathfrak{c}_i & 1
    \end{pmatrix} \qquad \mbox{ for }i=1,2,
\end{equation*} 
where, without loss of generality, we order the eigenvalues as $\lambda_1>\lambda_2$. For each $i \in \{1,2\}$, let~$\mathcal{Z}^{N,i}$ denote the linear combination \eqref{eq:linearcombinationfields} corresponding to $v=\frac{\lambda_i}{N^\gamma}$ and $\mathfrak{c}=\mathfrak{c}_i$. Our main result then reads as follows.

\begin{theorem}[Stationary fluctuations for the EPE, case $\alpha_e\neq 0$]\label{thm:main}
    Assume that the parameters~$(\alpha_p,\alpha_e,\rho ,\mathcal{E})$ are chosen such that all the diagonal entries of the coupling matrices are non-zero (\textit{cf}.~\cref{prop:noncancellation} in \cref{appendix:couplingmatrices}). In the diffusive timescale $a=2$, and with a weak asymmetry given by $\gamma =\frac12$, the couple of processes $(\mathcal{Z}^{N,1},\mathcal{Z}^{N,2})_{N\ge 1}$ converges in distribution, as $N\to\infty$, to a pair of independent processes $(\mathcal{Z}^1,\mathcal{Z}^2)$, where each $\mathcal{Z}^i$ is an energy solution to the SBE
    \begin{equation}\label{eq:SBE_main}
        \partial_t\mathcal{Z}_t^i = (\kappa -1)\Delta\mathcal{Z}_t^i + \mathfrak{g}_i\nabla (\mathcal{Z}_t^i)^2 + \sqrt{\smash[b]{2(\kappa -1)\sigma_i^2}} \nabla\dot{\mathcal{W}}_t^i
    \end{equation}
    starting from independent spatial white noises with variances $\sigma_1,\sigma_2$ respectively. Here, $(\mathcal{W}^1,\mathcal{W}^2)$ are two independent $\schwartzprime$-valued Brownian motions, and the parameters are given by 
    \begin{itemize}
        \item $\mathfrak{g}_i=\mathbf{G}_{ii}^i$ ;
        \item $\sigma_i^2 = \begin{pmatrix}
            \mathfrak{c}_i & 1
        \end{pmatrix}\chi (\rho ,\mathcal{E}) \begin{pmatrix}
            \mathfrak{c}_i \\ 
            1
        \end{pmatrix}$ where $\chi (\rho ,\mathcal{E})$ is the compressibility matrix defined in \eqref{eq:compressibilitymatrix}.
    \end{itemize}
    Above, the matrix $\mathbf{G}^i$ is the coupling matrix defined in \eqref{def:couplingmatrices}, and discussed in \cref{appendix:couplingmatrices}.
\end{theorem}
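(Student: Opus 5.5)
The proof follows the standard scheme for energy-solution convergence, organised around the outline of the paper.

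\emph{Martingale decomposition.} For $\varphi\in\schwartz$ and each $i$, Dynkin's formula applied to $\mathcal{Z}^{N,i}_t(\varphi)$ under $N^2\mathcal{L}$ yields
\begin{equation*}
\mathcal{Z}^{N,i}_t(\varphi)=\mathcal{Z}^{N,i}_0(\varphi)+\int_0^t(\partial_s+N^2\mathcal{L})\mathcal{Z}^{N,i}_s(\varphi)\diff s+\mathcal{M}^{N,i}_t(\varphi).
\end{equation*}
The symmetric part comes from the gradient identities \eqref{eq:gradientjump} and \eqref{eq:gradientenergy}. It produces $(\kappa-1)\int_0^t\mathcal{Z}^{N,i}_s(\Delta\varphi)\diff s$ up to an $L^2$-vanishing error.

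The antisymmetric part, of order $N^{2-1/2}$, is a sum of $N^{3/2}\cdot N^{-1/2}\sum_x\nabla^N\varphi(x/N)\,\tau_x(\mathfrak c_i w^p+w^e)$. Here $w^p,w^e$ are the local asymmetric currents, whose mean is $\mathbf{j}$. Their linear projection onto $(\bar\xi_x,\bar\eta_x)$ is given by $\mathbf{J}$. Since $(\mathfrak c_i\ 1)$ is a left eigenvector, this linear part is exactly $\lambda_i(\mathfrak c_i\ 1)(\bar\xi,\bar\eta)^\top$. It is cancelled by the time derivative of the moving frame $T_{\lambda_iN^{1/2}t}$.

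The remaining piece is the degree-two-and-higher part of the local function. I will invoke the second-order Boltzmann--Gibbs principle of \cref{sec:2BG-Principle}, which rests on the spectral gap of \cref{sec:spectralestimates} and the equivalence of ensembles of \cref{sec:equivalenceofensembles}. It replaces this part by
\begin{equation*}
\tfrac12\,(\vec{\mathcal Y}^{\varepsilon N}_x)^\top \mathbf{H}(\vec{\mathcal Y}^{\varepsilon N}_x)-\text{(renormalising constant)},
\end{equation*}
where $\vec{\mathcal Y}^{\varepsilon N}_x$ is the local average over boxes of size $\varepsilon N$. The error is bounded by $C(t\varepsilon+t/(\varepsilon N))\|\nabla\varphi\|^2$.

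\emph{Diagonalising the quadratic term.} I rewrite $\vec{\mathcal Y}$ in terms of normal modes $\vec\phi=R\vec{\mathcal Y}$. The quadratic term for mode $i$ becomes $\vec\phi^\top\mathbf G^i\vec\phi$. Its diagonal entry $\mathbf G^i_{ii}(\phi_i^{\varepsilon N})^2$ gives the Burgers nonlinearity $\mathfrak g_i\mathcal A^\varepsilon$.

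This step is the main obstacle: showing that the cross terms $\phi_i\phi_j$ and $\phi_j^2$ ($j\neq i$) vanish in the limit. Mode $j$ travels at relative velocity $(\lambda_j-\lambda_i)N^{1/2}\neq0$ in the frame of mode $i$, where $\lambda_1\ne\lambda_2$ by \cref{thm:diagonalizability}. I will first try the approach of \cite{cannizzaro_abc_2025}. After applying Boltzmann--Gibbs at a mesoscopic scale, the product is written as a time derivative plus a small term. Concretely, I solve a Poisson-type equation along the fast transport and control it via the Kipnis--Varadhan inequality, exploiting the extra factor $N^{1/2}$ of relative drift. The orthogonality relation \eqref{eq:orthogonalityrelation} makes the modes uncorrelated at equal times, so the quadratic functional $\phi_i\phi_j$ is mean zero and has no linear projection, which is what makes this work.

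\emph{Quadratic variation and tightness.} The martingale has quadratic variation
\begin{equation*}
\langle\mathcal M^{N,i}(\varphi)\rangle_t\to 2(\kappa-1)\,t\,(\mathfrak c_i\ 1)\chi(\mathfrak c_i\ 1)^\top\|\nabla\varphi\|^2_{L^2}.
\end{equation*}
This follows from an $L^2$ ergodic computation under $\mu_{\rho,\mathcal E}$. The cross-variation between $\mathcal M^{N,1}$ and $\mathcal M^{N,2}$ tends to $2(\kappa-1)t\,u_1\chi u_2^\top\langle\nabla\varphi,\nabla\psi\rangle=0$ by \eqref{eq:orthogonalityrelation}. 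The initial fields converge to independent white noises for the same reason, since Gaussian fields with zero covariance are independent.

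For tightness in $\mathcal D([0,T],\schwartzprime)$ I use Mitoma's criterion together with Aldous' criterion, applied termwise. The integral terms are controlled through the Kipnis--Varadhan inequality combined with the Boltzmann--Gibbs bounds, and the martingales through Burkholder--Davis--Gundy and a vanishing bound on their jumps. These estimates also show that limit points are continuous.

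\emph{Identification of the limit.} The energy estimate \eqref{eq:energyestimate} passes to the limit from the uniform Boltzmann--Gibbs bound. This gives items \ref{energysol_item1}--\ref{energysol_item3} of \cref{defin:energysolution}. Item \ref{energysol_item4} follows by repeating the argument for the time-reversed process, which is generated by $\mathcal L_S-N^{-1/2}\mathcal L_A$ with velocities reversed. The limiting martingales are continuous, Gaussian by Lévy's characterisation, and have zero cross-variation, so they are independent. By \cref{thm:existenceuniquenessenergysolutionsSBE} and \cref{remark:uncoupledSBEs}, the joint law is the product of the two SBE laws, which identifies the limit and proves convergence of the whole sequence.
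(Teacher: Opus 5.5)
Apart from one step, your outline matches the paper's proof. The shared parts are:
\begin{itemize}
\item the Dynkin decomposition in the frames $T_{\lambda_iN^{1/2}t}$, with the linear part of the current removed by the left eigenvectors;
\item the second-order Boltzmann--Gibbs principle at $\ell=\varepsilon N$, and the rewriting through $\mathbf G^i$;
\item the quadratic and cross variations via \eqref{eq:orthogonalityrelation};
\item tightness, the energy estimate, the reversed process, and uniqueness via \cref{remark:uncoupledSBEs}.
\end{itemize}

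\textbf{The gap.} It is exactly the step you flag as the main obstacle: proving that the terms with coefficients $\mathbf G^i_{ij}+\mathbf G^i_{ji}$ and $\mathbf G^i_{jj}$, $j\neq i$, vanish. You give no argument there, and the mechanism you name does not supply one.
\begin{itemize}
\item The Kipnis--Varadhan bound \eqref{eq:kipnisvaradhan_ourcontext} involves only $\mathcal L_S$. It is therefore blind to the relative drift, which comes from $N^{3/2}\mathcal L_A$.
\item Exploiting the drift requires a Dynkin corrector: a quadratic functional $G_s$ of the two fields, with prefactor $N^{-1/2}$, such that $(\partial_s+N^2\mathcal L)G_s$ reproduces the cross term.
\item But $N^{3/2}\mathcal L_A$ acting on $G_s$ also generates cubic terms, roughly $N^{-1/2}\int_0^t\sum_x\phi^i_s(x)\,\tilde h_x(s)\,\nabla\varphi(\tfrac xN)\,\mathrm{d}s$. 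Here $\phi^i_s(x)=\mathcal Z^{N,i}_s(\iota_\varepsilon(\cdot\,;\tfrac xN))$ and $\tilde h=\mathfrak c_j\tilde h^p+\tilde h^e$, with frame shifts suppressed.
\item The weight multiplying the local current is now random. So \cref{thm:2BG-Principle}, which requires deterministic weights $v_s$, does not apply, and a direct Cauchy--Schwarz bound on this term does not tend to zero as $N\to\infty$.
\end{itemize}
Closing your route would require a Boltzmann--Gibbs principle with configuration-dependent weights, which you neither state nor prove.

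Also, \eqref{eq:orthogonalityrelation} is not what makes these terms disappear; only $\lambda_1\neq\lambda_2$ matters there. Orthogonality is used for the independence of the limiting noises and initial data.

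\textbf{The paper's route.} The missing idea is the Riemann--Lebesgue-type \cref{thm:crossedfields}, which needs neither a corrector nor cubic estimates.
\begin{itemize}
\item Replace $\iota_\varepsilon$ by a Schwartz approximation $\iota_{\varepsilon,\delta}$; the error vanishes as $\delta\to0$ uniformly in $N$.
\item Each unwanted term then has the form $\int_0^t\frac1N\sum_x\nabla_N\varphi(\tfrac xN)\,\mathscr X^{N,1}_s(f_1(\cdot\,;\tfrac{x+v_1^Ns}{N}))\,\mathscr X^{N,2}_s(f_2(\cdot\,;\tfrac{x+v_2^Ns}{N}))\,\mathrm{d}s$. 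Here $\mathscr X^{N,k}\in\{\mathcal Z^{N,i},\mathcal Z^{N,j}\}$ and $v_k^N\in\{0,(\lambda_i-\lambda_j)N^{3/2}\}$, with the field $\mathcal Z^{N,j}$ always carrying the nonzero shift.
\item Hence condition \ref{itema} holds, precisely because $\lambda_1\neq\lambda_2$.
\item In Fourier variables this is an oscillatory time integral with diverging frequency. It is killed by the $L^1$ time-H\"older regularity of products of fields, which is condition \ref{itemb}.
\item Condition \ref{itemb} follows by Cauchy--Schwarz from \eqref{eq:boundZ}--\eqref{eq:timeincrementZ}, i.e.\ from the increment bounds of \cref{prop:incrementIt,prop:incrementLambda,prop:incrementMn} that you already need for tightness.
\end{itemize}

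\textbf{A smaller omission.} \cref{remark:uncoupledSBEs} also requires each initial field to be independent of both noises. The paper obtains this because the limiting martingales are martingales for the filtration generated by $(\mathcal Z^1,\mathcal Z^2)$, with deterministic brackets, hence independent of $\mathcal F_0$.
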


This result is the main contribution of this work, and the remaining of the paper will be devoted mainly to its proof. For completeness, we also state the result of fluctuations in the case $\alpha_e=0$, but we do not prove it in detail as it can be adapted from the proof of \cref{thm:main}.

\begin{theorem}[Stationary fluctuations for the EPE, case $\alpha_e=0$]\label{thm:main2}
    Assume that $\alpha_e=0$, and for any $t\in [0,T]$, $\varphi\in\schwartz$ define
    \begin{equation*}
        \mathcal{Z}_t^{N ,1}(\varphi ) = \mathcal{Y}_t^{p,N}(T_{v_1N^{a-1}t}\varphi) - \frac{\rho}{\mathcal{E}}\mathcal{Y}_t^{e,N}(T_{v_1N^{a-1}t}\varphi) \qquad \mbox{ and }\qquad  \mathcal{Z}_t^{N ,2}(\varphi ) = \mathcal{Y}_t^{p,N}(T_{v_2N^{a-1}t}\varphi)
    \end{equation*}
    where $v_1 = \frac{\alpha_p (\kappa -1)(1-\rho )}{N^\gamma}$ and $v_2 = \frac{\alpha_p(\kappa -1)(1-2\rho )}{N^\gamma}$. Then, in the diffusive timescale $a=2$ and with a weak asymmetry given by $\gamma =\frac12$, the couple of processes $(\mathcal{Z}^{N,1},\mathcal{Z}^{N,2})_{N\ge 1}$ converges in distribution, as $N\to\infty$ to a pair of independent processes $(\mathcal{Z}^1,\mathcal{Z}^2)$ starting from independent spatial white noises with respective variances $\sigma_1$ and $\sigma_2$, where $\mathcal{Z}^1$ is a solution to the Ornstein-Uhlenbeck equation 
    \begin{equation*}
        \partial_t\mathcal{Z}_t^1 = (\kappa -1)\Delta \mathcal{Z}_t^1 +\sqrt{\smash[b]{2(\kappa -1)\sigma_1^2}}\nabla\dot{\mathcal{W}}_t^1
    \end{equation*}
    in the sense of \cref{defin:OUsolution}, while $\mathcal{Z}^2$ is an energy solution to the SBE
    \begin{equation*}
        \partial_t\mathcal{Z}_t^2 = (\kappa -1)\Delta\mathcal{Z}_t^2 + \mathfrak{g}_2\nabla (\mathcal{Z}_t^2)^2 + \sqrt{\smash[b]{2(\kappa -1)\sigma_2^2}} \nabla\dot{\mathcal{W}}_t^2
    \end{equation*}
    in the sense of \cref{defin:energysolution}. Above, $(\mathcal{W}^1,\mathcal{W}^2)$ are two independent $\schwartzprime$-valued Brownian motions, and the parameters are given by
    \begin{equation*}
        \mathfrak{g}_2 = -\alpha_p(\kappa -1) ,\qquad \sigma_1^2 = \frac{\rho \big( 1-\frac{\rho}{\mathcal{E}}\big) \big(\kappa\frac{\rho}{\mathcal{E}} -1\big)}{\kappa -1} \qquad \mbox{ and }\qquad \sigma_2^2 = \rho (1-\rho ).
    \end{equation*}
\end{theorem}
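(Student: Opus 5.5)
The proof follows the strategy used for \cref{thm:main}, so I only indicate the modifications. Throughout, the key simplification is that when $\alpha_e=0$ the asymmetric part $\mathcal{L}_A$ only biases particle jumps, with weight $\alpha_p$, and does not see the energy. I first check that $(1,-\frac{\rho}{\mathcal{E}})$ and $(1,0)$ are left eigenvectors of $\mathbf{J}$ with eigenvalues $\alpha_p(\kappa-1)(1-\rho)$ and $\alpha_p(\kappa-1)(1-2\rho)$. With $\alpha_e=0$ we have $\mathbf{j}=\alpha_p(\kappa-1)\big(\rho(1-\rho),\mathcal{E}(1-\rho)\big)^\top$, hence
\[
\mathbf{J}=\alpha_p(\kappa-1)\begin{pmatrix}1-2\rho & 0\\ -\mathcal{E} & 1-\rho\end{pmatrix},
\]
and the eigenvector claims follow by direct computation. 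The orthogonality relation \eqref{eq:orthogonalityrelation} of \cref{thm:diagonalizability} then gives that the two fields are uncorrelated at equal times. The variances are $\sigma_2^2=\chi_{11}=\rho(1-\rho)$ and
\[
\sigma_1^2=\chi_{11}-2\tfrac{\rho}{\mathcal{E}}\chi_{12}+\tfrac{\rho^2}{\mathcal{E}^2}\chi_{22},
\]
which simplifies to the stated expression.

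Next, I apply Dynkin's formula to each $\mathcal{Z}^{N,i}_t(\varphi)$, taking the moving frame into account, as in \cref{sec:martingaledecomposition}. The symmetric part produces $(\kappa-1)\int_0^t\mathcal{Z}^{N,i}_s(\Delta\varphi)\,ds$ exactly, thanks to the gradient identities \eqref{eq:gradientjump} and \eqref{eq:gradientenergy}. The martingale has quadratic variation converging to $2(\kappa-1)\sigma_i^2t\|\nabla\varphi\|_{L^2}^2$, by the law of large numbers under $\mu_{\rho,\mathcal{E}}$.

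The asymmetric part contributes $N^{3/2}\frac1N\sum_x\nabla^N\varphi\,(\cdot)$ applied to the centered current $\alpha_p(\kappa-1)\bar\xi_x(1-\xi_{x+1})$ for particles, and to $\alpha_p(\kappa-1)\big(\eta_x(1-\xi_{x+1})-\eta_{x+1}(1-\xi_x)\big)/2$-type terms for energy. For each of these local functions I apply the second-order Boltzmann--Gibbs principle of \cref{sec:2BG-Principle}.

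\begin{itemize}
\item The linear parts combine into $(\mathbf{J}\vec{\mathcal{Y}})$ and are exactly cancelled by the drift $v_i$ of the frame, by the eigenvector property.
\item The quadratic part of $\mathcal{Z}^{N,2}$ gives $-\alpha_p(\kappa-1)\nabla(\mathcal{Z}^2)^2$, since the particle current depends only on $\xi$, for which the Hessian is that of $\rho(1-\rho)$.
\item The quadratic part of $\mathcal{Z}^{N,1}$ is the combination $(1,-\frac{\rho}{\mathcal{E}})$ applied to the Hessians of $\mathbf{j}$.
\end{itemize}

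Expressed in normal modes, the quadratic term of $\mathcal{Z}^{N,1}$ involves $\mathbf{G}^1$. Its $(1,1)$ entry vanishes, so the self-interaction of mode $1$ disappears; this is consistent with \cref{prop:couplingmatricesalpha_e0}. The cross terms $\mathcal{Z}^1\mathcal{Z}^2$ and $(\mathcal{Z}^2)^2$ remain and must be shown to vanish in the limit. This is the main obstacle. I would handle it exactly as for the off-diagonal terms in \cref{thm:main}: because $v_1\neq v_2$ (as $\rho>0$), the relative velocity $(v_1-v_2)N=\alpha_p(\kappa-1)\rho N^{1/2}\to\infty$. Quadratic functionals that mix differently moving frames are then averaged out by the second-order Boltzmann--Gibbs bound combined with a time-decorrelation estimate, namely the $H^{-1}$/Kipnis--Varadhan bound with the shift. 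For $(\mathcal{Z}^2)^2$ seen in frame $1$, the argument is the same, since the relevant quantity is transported at speed $v_2$.

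Tightness follows as in \cref{sec:tightness}, via Mitoma's criterion, Kolmogorov--Centsov bounds for the integral terms coming from the Boltzmann--Gibbs estimates, and the martingale estimates. For the identification step, any limit point satisfies condition \ref{energysol_item1}, from stationarity and the equivalence of ensembles CLT. The mode $\mathcal{Z}^2$ satisfies \ref{energysol_item2}--\ref{energysol_item4} with $\mathfrak{g}_2=-\alpha_p(\kappa-1)$, and $\mathcal{Z}^1$ satisfies \ref{energysol_item3} with $\mathfrak{g}=0$.

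It remains to prove joint independence. The limiting martingales have vanishing cross-variation, because the cross-variation is proportional to $(1,-\frac{\rho}{\mathcal{E}})\chi(1,0)^\top=0$. Since the initial fields are jointly Gaussian and uncorrelated, they are independent. Uniqueness then follows from \cref{thm:existenceuniquenessenergysolutionsSBE}, the Ornstein--Uhlenbeck uniqueness of \cref{defin:OUsolution}, and the product-structure argument of \cref{remark:uncoupledSBEs}. This identifies the limit as the stated independent pair.
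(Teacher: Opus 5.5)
Your outline is the one the paper intends. The paper gives no separate proof of \cref{thm:main2}; it only says that the argument for \cref{thm:main} adapts, using \cref{prop:couplingmatricesalpha_e0}. Your computations are correct: $\mathbf{J}$, the left eigenvectors $(1,-\frac{\rho}{\mathcal{E}})$ and $(1,0)$, their $\chi$-orthogonality, and $\sigma_1^2,\sigma_2^2$. However, at the step you call the main obstacle you defer to \cref{thm:main} and then describe a mechanism that would not work. A few details also need fixing.

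\textbf{The mixed term.} It is not removed by a Kipnis--Varadhan/$H^{-1}$ bound.
\begin{itemize}
\item That inequality controls $\int_0^t f_s(\eta(s))\,ds$ through $\int_0^t\|f_s\|_{-1}^2\,ds$, one time slice at a time.
\item At a fixed time, the product of the two box averages is a mean-zero quadratic functional of order one in $N$. Its $H^{-1}$ norm does not become small because one factor is displaced by $|\alpha_p|(\kappa-1)\rho N^{3/2}s$ sites, so you would only get a non-vanishing bound.
\end{itemize}
The vanishing is a time-averaging effect of the diverging relative transport, and the paper obtains it from \cref{thm:crossedfields}. Concretely:
\begin{itemize}
\item By \eqref{eq:boundvarianceR} with $\ell=\varepsilon N$, $\Lambda^{N,1}=\mathscr{B}^{N,1,\varepsilon}+\mathcal{R}^{N,1,\varepsilon N}$ with $\E_{\rho,\mathcal{E}}[\mathcal{R}^{N,1,\varepsilon N}_t(\varphi)^2]\le Ct\varepsilon\|\nabla\varphi\|_{L^2}^2$.
\item Since $\mathbf{G}^1_{11}=\mathbf{G}^1_{22}=0$, $\mathscr{B}^{N,1,\varepsilon}$ is only the mixed term, with coefficient $\mathbf{G}^1_{12}+\mathbf{G}^1_{21}=-\alpha_p(\kappa-1)$. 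So there is no $(\mathcal{Z}^2)^2$ term in mode $1$. Mode $2$ has no mixed term at all, since $\mathbf{G}^2=-\alpha_p(\kappa-1)\,\mathrm{diag}(0,1)$.
\item After approximating $\iota_\varepsilon$ by smooth functions, condition \ref{itema} of \cref{thm:crossedfields} holds. The relative microscopic velocity divided by $N$ is $|\alpha_p|(\kappa-1)\rho N^{1/2}\to\infty$, which needs $\alpha_p\neq0$.
\item Condition \ref{itemb} follows from \eqref{eq:boundZ}--\eqref{eq:timeincrementZ}, which hold for both modes.
\item Sending $N\to\infty$ and then $\varepsilon\to0$ gives $\Lambda^1=0$.
\end{itemize}
In the independence step, you should also record that $\mathcal{M}^1,\mathcal{M}^2$ are martingales for the joint filtration of $(\mathcal{Z}^1,\mathcal{Z}^2)$. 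This is what makes L\'evy's characterization applicable. It also makes the noises independent of $(\mathcal{Z}^1_0,\mathcal{Z}^2_0)$, which \cref{remark:uncoupledSBEs} requires.

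\textbf{Two smaller points.}
\begin{itemize}
\item The energy current is the symmetrized $h^e_x=\frac{\alpha_p(\kappa-1)}{2}\big(\eta_x(1-\xi_{x+1})+\eta_{x+1}(1-\xi_x)\big)$, whose mean is $\mathbf{j}_2$. The difference you wrote has expectation identically zero in $(\rho,\mathcal{E})$, so it contributes neither to $\mathbf{J}$ nor to the Hessians.
\item Derive the sign of the Burgers term rather than reading it off the statement. The quadratic part of $h^p_x$ is $-\alpha_p(\kappa-1)\bar\xi_x\bar\xi_{x+1}$. It enters $\int_0^tN^2\mathcal{L}\mathcal{Z}^{N,2}_s(\varphi)\,ds$ paired with $+\nabla_N\varphi$. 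So Dynkin's martingale contains $+\alpha_p(\kappa-1)\mathcal{A}_t(\varphi)$, which in the convention of item \ref{energysol_item3} of \cref{defin:energysolution} means $\mathfrak{g}_2=-\mathbf{G}^2_{22}=+\alpha_p(\kappa-1)$.
\end{itemize}
The paper's identification $\mathfrak{g}_i=\mathbf{G}^i_{ii}$, which produces the stated $-\alpha_p(\kappa-1)$, carries this sign slip. Your sentence that the quadratic part ``gives $-\alpha_p(\kappa-1)\nabla(\mathcal{Z}^2)^2$'' inherits it.
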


\begin{remark}
    The choice of linear combinations of the fluctuation fields in \cref{thm:main2} and the values of the parameters are explained by the computations made in \cref{prop:couplingmatricesalpha_e0} of \cref{appendix:couplingmatrices}.
\end{remark}

\begin{remark}
When $\alpha_e=0$, the process $(\xi(t))_{t \ge 0}$ reduces to the weakly asymmetric exclusion process (WASEP), and our result for its stationary fluctuation $\mathcal{Z}_t^{N ,2}(\varphi ) = \mathcal{Y}_t^{p,N}(T_{v_2N^{a-1}t}\varphi)$ in \cref{thm:main2} is fully consistent with established results for the scaling limit for the WASEP \cite{goncalves_nonlinear_2014}. 
\end{remark}

In the next subsection, we prove \cref{thm:diagonalizability}. Then, the remainder of the article is dedicated to the proof of \cref{thm:main}. 

Even though the choice $a=2$ and $\gamma =\frac12$ is made in these results, we will keep the parameters~$a, \gamma >0$ free as much as possible in the remainder of the article, so that the intermediate results we prove can be useful in other contexts.

\subsection{Diagonalizability of the Jacobian: Proof of \cref{thm:diagonalizability}}
\label{sec:proofdiagonalizability}

We prove a decomposition of the form $\mathbf{J} = Q\chi^{-1}$ for the Jacobian matrix, where $Q$ is a symmetric matrix and $\chi$ is the compressibility matrix defined in \eqref{eq:compressibilitymatrix}. This decomposition implies that $\mathbf{J}$ is diagonalizable with real eigenvalues since it writes as the conjugate of a symmetric matrix. Indeed, as $\chi$ is symmetric and positive definite, it admits a unique symmetric positive definite square root $\chi^\frac12$ and then
\begin{equation*}
    \mathbf{J} = \chi^\frac12 (\chi^{-\frac12}Q\chi^{-\frac12})\chi^{-\frac12}.
\end{equation*}
Moreover, this immediately implies the second part of \cref{thm:diagonalizability} since if $u$ and $v$ are two left eigenvectors of $\mathbf{J}$ associated with distinct eigenvalues, then $u\chi^{\frac12}$ and $v\chi^{\frac12}$ are two left eigenvectors of the symmetric matrix $\chi^{-\frac12} Q\chi^{-\frac12}$ associated with distinct eigenvalues, so that they are orthogonal. 

Thus, we only need to prove the decomposition $\mathbf{J} =Q\chi^{-1}$. Recall the definition of the measure $\nu_{\rho ,\mathcal{E}}$ given in \cref{defin:invariantmeasures}. We reparametrize it by two parameters $(\beta_1,\beta_2)$ depending on $(\rho ,\mathcal{E})$ under the Gibbs form
\begin{equation*}
    \nu_{\rho ,\mathcal{E}} (\eta_x) = \tilde{\nu}_{\beta_1,\beta_2}(\eta_x) = \frac{1}{Z_{\beta_1,\beta_2}}e^{\beta_1\xi_x+\beta_2\eta_x}\bar{\nu}(\eta_x )
\end{equation*}
where $\bar{\nu}$ is a fixed reference\footnote{\textit{i.e.}~a probability measure whose support is the set $\{0,1,\hdots ,\kappa\}$.} measure on $\{ 0,1,\hdots ,\kappa\}$, and $Z_{\beta_1,\beta_2}$ is the associated partition function. The parameters $\beta_1,\beta_2$ are chosen such that the expectations of $\xi_x$ and $\eta_x$ under~$\tilde{\nu}_{\beta_1,\beta_2}$ are equal to $\rho$ and $\mathcal{E}$ respectively. For simplicity, temporarily denote $(\rho ,\mathcal{E}) = (\rho_1,\rho_2)$. Under this parametrization, the compressibility matrix $\chi (\rho ,\mathcal{E})$ writes as
\begin{equation}\label{eq:chi_beta_partitionfunction}
        \chi_{ij}(\beta_1,\beta_2)=\chi_{ij}(\rho_1 ,\rho_2) = \partial_{\beta_i}\partial_{\beta_j} \log Z_{\beta_1,\beta_2} =\frac{\partial\rho_j}{\partial\beta_i}\qquad \mbox{ for }i,j\in\{1,2\}. 
\end{equation}
By the chain rule,
    \begin{equation*}
        \mathbf{J}_{ij} = \frac{\partial\mathbf{j}_i}{\partial\rho_j} = \frac{\partial\beta_1}{\partial\rho_j}\frac{\partial\mathbf{j}_i}{\partial\beta_1} + \frac{\partial\beta_2}{\partial\rho_j}\frac{\partial\mathbf{j}_i}{\partial\beta_2}
    \end{equation*}
for any $i,j\in\{1,2\}$. Hence, setting $Q_{ij} := \frac{\partial\mathbf{j}_i}{\partial\beta_j}$, we have that the matrix $\mathbf{J}$ writes as $\mathbf{J}=Q\chi^{-1}$ thanks to \eqref{eq:chi_beta_partitionfunction}. It remains to show that the matrix $Q$ is symmetric. Recall the expression of the macroscopic current vector $\mathbf{j}(\rho ,\mathcal{E})$ given in \eqref{def:macroscopiccurrent}, we have
\begin{equation*}
    \mathbf{j}_i = (\kappa -1)\big( \alpha_p \chi_{i1}+\alpha_e\chi_{i2}\big)
\end{equation*}
hence
\begin{equation*}
    Q_{ij}= \frac{\partial\mathbf{j}_i}{\partial\beta_j} = (\kappa -1)\left( \alpha_p \frac{\partial\chi_{i1}}{\partial\beta_j}+\alpha_e\frac{\partial\chi_{i2}}{\partial\beta_j}\right).
\end{equation*}
In order to show that $Q$ is symmetric, it is therefore sufficient to prove that for any $i,j,k\in\{1,2\}$, we have 
\begin{equation*}
    \frac{\partial\chi_{ik}}{\partial\beta_j} = \frac{\partial\chi_{jk}}{\partial\beta_i},
\end{equation*}
which is immediate from the relation \eqref{eq:chi_beta_partitionfunction}. This concludes the proof of the diagonalizability with real eigenvalues. 

Notice that if two eigenvalues are equal, then the Jacobian matrix must be a scalar multiple of the identity matrix. In particular, the off-diagonal elements of $\mathbf{J}(\rho ,\mathcal{E})$ must be zero, which immediately implies that $(\alpha_p,\alpha_e)=(0,0)$ according to the explicit expression of $\mathbf{J}$ given in \cref{appendix:couplingmatrices}. \hfill\qed

\section{Equivalence of ensembles}
\label{sec:equivalenceofensembles}

\noindent We start by introducing some notations. For an integer $\ell\ge 1$, and for any $x\in\Z$, define the local averages
\begin{equation}\label{eq:averages}
    \xi_x^\ell = \frac 1\ell\sum_{y=0}^{\ell -1} \xi_{x+y}\qquad \mbox{ and }\qquad \eta_x^\ell = \frac 1\ell\sum_{y=0}^{\ell -1} \eta_{x+y}.
\end{equation}
If $f:\Omega\longrightarrow\R$ is a local function supported in $\{0 ,\hdots ,\ell_0-1\}$ for some $\ell_0\ge 1$, we define the function $\psi_f (\rho ,\mathcal{E}) = \E_{\rho ,\mathcal{E}}[f]$. For $\ell\ge\ell_0$, we also denote by $\E_\ell [f]\coloneq \E_{\rho ,\mathcal{E}}[f|\xi_0^\ell ,\eta_0^\ell]$ the conditional expectation of $f$ with respect to the variables $\xi_0^\ell$ and $\eta_0^\ell$ under the stationary measure $\mu_{\rho ,\mathcal{E}}$. The next result, called \emph{equivalence of ensembles}, gives an approximation of the conditional expectation $\E_\ell [f]$ as an explicit expression involving $\xi_0^\ell$, $\eta_0^\ell$ and the function $\psi_f$.
\begin{proposition}[Equivalence of ensembles]\label{prop:equivalenceofensembles}
    Let $f:\Omega\longrightarrow\R$ be a local function whose support is included in $\{0,\hdots ,\ell_0-1\}$ for some $\ell_0\ge 1$. Then, there exists a constant $C_1=C_1(f)>0$ such that for any $\ell\ge\ell_0$, 
    \begin{equation}\label{eq:equivensembles}
        \sup_{\eta\in\Omega}\left| \E_\ell [f] - \left(\psi_f(\xi_0^\ell ,\eta_0^\ell ) - \frac{1}{2\ell }\mathrm{Tr}\big( \chi \mathrm{Hess}(\psi_f)\big) (\xi_0^\ell ,\eta_0^\ell)\right)\right| \le \frac{C_1}{\ell^2}.
    \end{equation}
    where $\chi$ is the compressibility matrix defined in \cref{defin:compressibilitymatrix}, and $\mathrm{Hess}(\psi_f)$ denotes the Hessian matrix of $\psi_f$. 
\end{proposition}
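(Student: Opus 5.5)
Since $\mu_{\rho,\mathcal{E}}$ is a product measure and $(\ell\xi_0^\ell,\ell\eta_0^\ell)$ is a sufficient statistic for the two-parameter exponential family $\tilde\nu_{\beta_1,\beta_2}$ introduced in the proof of \cref{thm:diagonalizability}, the conditional law of $(\eta_0,\hdots,\eta_{\ell-1})$ given $(\xi_0^\ell,\eta_0^\ell)=(k/\ell,m/\ell)$ does not depend on $(\rho,\mathcal{E})$. It is the canonical measure on configurations with $k$ particles and total energy $m$. So we may condition under the product measure whose parameters match the observed densities, $\mu_{\rho',\mathcal{E}'}$ with $(\rho',\mathcal{E}')=(\xi_0^\ell,\eta_0^\ell)$. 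Here the structure of the model helps a lot. Conditionally on $k$ particles among $\ell$ sites, the positions are uniform (a hypergeometric/exclusion canonical measure). Given the positions, the excess energies $\eta_x-1$ are i.i.d.\ Binomial$(\kappa-1,\mathtt{p})$ conditioned on their sum $m-k$. The conditioned law is then multivariate hypergeometric: it is the law of drawing $m-k$ units among $k(\kappa-1)$ slots. Hence the canonical measure is completely explicit, namely ``sample without replacement'' twice. I would exploit this rather than a general local CLT.

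Concretely, I would first reduce to monomials. Any local $f$ supported in $\{0,\hdots,\ell_0-1\}$ is a finite linear combination of products $\prod_{x<\ell_0}\ind_{\{\eta_x=a_x\}}$, and both sides of \eqref{eq:equivensembles} are linear in $f$. Equivalently, one may use the basis of functions $\prod_{x\in A}\xi_x\prod_{x\in B}\binom{\eta_x-1}{j_x}\xi_x$ (falling-factorial type). For these, the canonical expectation is an exact ratio of falling factorials, e.g.
\[
\E\Big[\prod_{x\in A}\xi_x\Big]=\frac{k(k-1)\cdots(k-|A|+1)}{\ell(\ell-1)\cdots(\ell-|A|+1)},
\]
and similarly for the energy part with the hypergeometric moments in $(m-k)$ out of $k(\kappa-1)$, averaged over particle positions. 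Meanwhile $\psi_f(\rho,\mathcal{E})$ is the corresponding product $\rho^{|A|}\mathtt{p}^{\sum j}\cdots$. Expanding each ratio $\frac{(k)_r}{(\ell)_r}=\rho'^r\big(1-\frac{1}{\ell}\binom{r}{2}\frac{1-\rho'}{\rho'}+O(\ell^{-2})\big)$ with $\rho'=k/\ell$, uniformly in $\rho'\in[0,1]$ (the product form gives uniform bounds with no division issues), yields an expansion $\psi_f(\rho',\mathcal{E}')+\frac{1}{\ell}\Phi_f(\rho',\mathcal{E}')+O(\ell^{-2})$ with explicit $\Phi_f$.

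The key step is to identify $\Phi_f=-\frac12\mathrm{Tr}(\chi\,\mathrm{Hess}\,\psi_f)$. Rather than verifying it term by term, I would use a general argument based on the consistency identity. Averaging the canonical expectation against the grand-canonical law $\mu_{\rho,\mathcal{E}}$ gives back $\psi_f(\rho,\mathcal{E})$ exactly. Under $\mu_{\rho,\mathcal{E}}$, the vector $(\xi_0^\ell,\eta_0^\ell)$ has mean $(\rho,\mathcal{E})$ and covariance $\chi/\ell$, with higher centered moments $O(\ell^{-2})$. Taylor expansion of $\psi_f+\Phi_f/\ell$ to second order then gives
\[
\psi_f+\frac{1}{2\ell}\mathrm{Tr}(\chi\,\mathrm{Hess}\,\psi_f)+\frac{1}{\ell}\Phi_f+O(\ell^{-2})=\psi_f
\]
for all $(\rho,\mathcal{E})$ in the interior. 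This forces $\Phi_f=-\frac12\mathrm{Tr}(\chi\,\mathrm{Hess}\,\psi_f)$, provided $\Phi_f$ is a polynomial, or at least smooth with bounded derivatives, which holds by the explicit computation. Equivalently, I could simply check the identity on the basis monomials. A monomial of particle degree $r$ and energy degree $s$ is a polynomial in $(\rho,\rho\mathtt{p})$, and $\chi$ is explicit, so the check is algebraic.

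The main obstacle I expect is uniformity of the $O(\ell^{-2})$ remainder in $\eta$, including near the boundary of the admissible region ($k$ small, or $m$ near $k$ or $\kappa k$). There $\mathtt{p}$ and the hypergeometric ratios must be expanded with denominators like $k(\kappa-1)$ rather than $\ell$, which could produce errors of order $1/k$ instead of $1/\ell$. I would handle this by working in the variables $(k,\,m-k)$ and noticing that the product of the particle factor $(k)_r/(\ell)_r$ with the energy factor $(m-k)_s/(k(\kappa-1))_s$ is exactly a polynomial-type ratio whose only denominators are falling factorials of $\ell$, once the position average is written out. I would verify this on the basis elements to ensure that every error term carries a factor $\rho'$ compensating the $1/k$. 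If that fails, the cruder fallback is to treat separately the configurations with $k\le \ell^{1/2}$, where both sides are small polynomials in $\rho'$ and can be bounded directly.
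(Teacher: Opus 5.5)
Your reduction matches the paper's. Under the canonical measure the particle positions are uniform and the excess energies are multivariate hypergeometric, so $\E_\ell[f]$ factors into a Bernoulli part and a hypergeometric part. Your consistency argument for identifying the $1/\ell$ coefficient is a genuinely different and cleaner substitute for the paper's direct check of the trace identity, and in the bulk $\xi_0^\ell\ge\delta$ your whole argument goes through.

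The gap is exactly the uniformity step you flag, and neither fix works. The hypergeometric factor is an expansion in $1/k$ with $k=\ell\xi_0^\ell$, and its denominators do not reduce to falling factorials of $\ell$. For $f=\xi_0\binom{\eta_0-1}{2}$ the canonical expectation is $\frac{k}{\ell}\binom{\kappa-1}{2}\frac{(m-k)(m-k-1)}{k(\kappa-1)(k(\kappa-1)-1)}$, and the factor $k(\kappa-1)-1$ survives. The second-order remainder of this factor is $O(k^{-2})$. The occupation probability supplies only a factor of order $(k/\ell)^y$, where $y$ is the number of sites whose energy is prescribed. For $y=1$ this leaves an error of order $1/(k\ell)$, not $\ell^{-2}$. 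The fallback also fails. For single-site $f$ and $k\le\ell^{1/2}$, both $\E_\ell[f]$ and $\psi_f$ are of order $k/\ell$. Moreover $\mathrm{Tr}(\chi\,\mathrm{Hess}(\psi_f))$ does not vanish as $(\rho,\mathcal{E})\to(0,0)$ with $\mathcal{E}/\rho$ fixed. So the correction term is of order $1/\ell$ even for bounded $k$.

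In fact the uniform bound is false for $\kappa\ge 3$, so no argument can close this step. Take $\kappa=3$ and $f=\ind\{\eta_0=1\}$. Then $\psi_f=\frac{(3\rho-\mathcal{E})^2}{4\rho}$ and $\mathrm{Tr}(\chi\,\mathrm{Hess}(\psi_f))=\frac{(\mathcal{E}-\rho)(3\rho-\mathcal{E})}{4\rho^2}$. If the box contains $k\ge1$ particles of total energy $m$, then $\E_\ell[f]=\frac{(3k-m)(3k-m-1)}{2\ell(2k-1)}$. A short computation gives
\[
\E_\ell[f]-\Big(\psi_f-\frac{1}{2\ell}\mathrm{Tr}\big(\chi\,\mathrm{Hess}(\psi_f)\big)\Big)(\xi_0^\ell,\eta_0^\ell)=-\frac{(3k-m)(m-k)}{8k^2(2k-1)\,\ell}.
\]
This is $O(\ell^{-2})$ when $k\asymp\ell$, but it equals $-\frac{1}{8\ell}$ when the box holds a single particle of energy $2$.

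What can be proved is a bound $C_\delta/\ell^2$ on $\{\xi_0^\ell\ge\delta\}$. That suffices for \cref{cor:equivalenceofensembles}, which is how the statement is used, since $\mu_{\rho,\mathcal{E}}(\xi_0^\ell<\rho/2)$ is exponentially small in $\ell$.

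The paper's own proof has the same hole. It applies \cref{lemma:equivensembles_bernoulli} to the binomial factor built from $r$ variables and obtains an $O(r^{-2})$ remainder. It then records this as $O(\ell^{-2})$ after multiplying by the particle factor, which is only legitimate when $r\ge c\ell$.
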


\begin{proof}
    Any local function $f$ supported in $\{0,\hdots ,\ell_0-1\}$ can be written as a linear combination of the constant function and the indicator functions of the form $\ind_{\{ \eta_{x_1}=k_1,\hdots ,\eta_{x_m}=k_m\}}$ with parameters~$1\le m\le\ell_0$,  $\{x_1,\hdots ,x_m\}\subseteq\{0,\hdots ,\ell_0-1\}$ and $k_1,\hdots ,k_m\in\{1,\hdots ,\kappa\}$. As the case of constant functions is trivial, it is sufficient to check the result for such indicator functions, and since the measure $\mu_{\rho ,\mathcal{E}}$ is invariant under permutation of sites, it is even sufficient to check it for a function of the form $g(\eta )=\ind_{\{ \eta_0=k_0 , \hdots ,\eta_{y-1}=k_{y-1}\}}$ for fixed $1\le y\le\ell_0$, and~$k_0,\hdots ,k_{y-1}\in\{1,\hdots ,\kappa\}$.

    Under the measure $\mu_{\rho ,\mathcal{E}}$, we have the identities in distribution
    \begin{equation*}
        \xi_x \overset{(d)}{=}X_x\qquad \mbox{ and }\qquad \eta_x \overset{(d)}{=} X_x(1+Y_x) \qquad \mbox{ for all }x\in\Z
    \end{equation*}
    where $\{X_x\}_{x \in \Z}$ and $\{Y_x\}_{x \in \Z}$ are two independent sequences of i.i.d. random variables such that
    \begin{equation*}
        X_x \sim\mathrm{Ber}(\rho )\qquad\mbox{ and }\qquad Y_x \sim\mathrm{Bin}(\kappa -1,\mathtt{p})
    \end{equation*}
    and the parameter $\mathtt{p}=\mathtt{p}(\rho ,\mathcal{E})$ is given in \eqref{def:p}. Our goal is to approximate 
    \begin{equation*}
        \E_\ell [g](r,q) = \mu_{\rho ,\mathcal{E}}\left( \eta_0=k_0,\hdots ,\eta_{y-1}=k_{y-1}\bigg| \begin{array}{l}
            \xi_0 + \hdots +\xi_{\ell -1} = r\\ 
            \eta_0 +\hdots +\eta_{\ell -1}=q
        \end{array}\right)
    \end{equation*}
    as an explicit expression of $r\in \{0,\hdots ,\ell\}$ and $q\in \{ r,\hdots ,\kappa r\}$. By definition of the conditional expectation, we have
    \begin{align*}
        \E_\ell [g](r,q) & = \frac{\mu_{\rho ,\mathcal{E}}(\eta_0=k_0,\hdots ,\eta_{y-1}=k_{y-1}, \xi_0+\hdots +\xi_{\ell -1}=r,\eta_0+\hdots +\eta_{\ell -1}=q)}{\mu_{\rho ,\mathcal{E}}(\xi_0+\hdots +\xi_{\ell -1}=r,\eta_0+\hdots +\eta_{\ell -1}=q)} \\
        & = \frac{\mu_{\rho ,\mathcal{E}}(\eta_0=k_0,\hdots , \eta_{y-1}=k_{y-1}, \xi_y+\hdots +\xi_{\ell -1}=r-y,\eta_y+\hdots +\eta_{\ell -1}=q-\bar{k})}{\mu_{\rho ,\mathcal{E}}(\xi_0+\hdots +\xi_{\ell -1}=r,\eta_0+\hdots +\eta_{\ell -1}=q)}\\
        & \!= \mu_{\rho ,\mathcal{E}} (\eta_0=k_0,\hdots ,\eta_{y-1}=k_{y-1})\frac{\mu_{\rho ,\mathcal{E}}(\xi_y+\hdots +\xi_{\ell-1}=r-y,\eta_y+\hdots +\eta_{\ell -1}=q-\bar{k})}{\mu_{\rho ,\mathcal{E}}(\xi_0+\hdots +\xi_{\ell -1}=r,\eta_0+\hdots +\eta_{\ell -1}=q)}
    \end{align*}
    where $\bar{k}\coloneq k_0+\hdots +k_{y-1}$, and we used the fact that the measure $\mu_{\rho ,\mathcal{E}}$ is product to obtain the third line. Let us express everything in terms of the random variables $X_i$ and $Y_i$. The first probability on the right-hand side is equal to
    \begin{align*}
        \mu_{\rho ,\mathcal{E}}(\eta_0=k_0,&\hdots ,\eta_{y-1}=k_{y-1}) \\ & = \mu_{\rho ,\mathcal{E}}(\eta_0=k_0,\hdots ,\eta_{y-1}=k_{y-1}|\xi_0=1,\hdots , \xi_{y-1}=1)\mu_{\rho ,\mathcal{E}}(\xi_0=1,\hdots ,\xi_{y-1}=1) \\
        & = \p (Y_0=k_0-1,\hdots ,Y_{y-1}=k_{y-1}-1)\p (X_0=1,\hdots ,X_{y-1}=1).
    \end{align*}
    The probability in the numerator is equal to
    \begin{align*}
        \mu_{\rho ,\mathcal{E}}(\eta_y+\hdots +\eta_{\ell -1} & =q-\bar{k}|\xi_y+\hdots +\xi_{\ell -1}=r-y)\mu_{\rho ,\mathcal{E}}(\xi_y+\hdots +\xi_{\ell -1}=r-y) \\ 
        & =\p (Y_y+\hdots +Y_{r-1}=q-\bar{k}-r+y)\p (X_y+\hdots + X_{\ell -1}=r-y)
    \end{align*}
    because conditionally on the event $\{\xi_y+\hdots +\xi_{\ell -1}=r-y\}$ only $r-y$ variables among $\eta_y,\hdots ,\eta_{\ell -1}$ are non-zero and distributed like $1+Y_i$, and by symmetry, we can assume without loss of generality that these variables are the variables $Y_y,\hdots ,Y_{r-1}$. Similarly, the probability in the denominator writes 
    \begin{equation*}
        \p (Y_0+\hdots +Y_{r-1}=q-r)\p (X_0+\hdots +X_{\ell -1}=r).
    \end{equation*}
    If we gather everything together, we find that 
    \begin{equation*}
        \E_\ell [g](r,q)=\E\big[g_1(X)\big| X_0+\hdots +X_{\ell -1}=r\big]\E\big[ g_2(Y)\big| Y_0+\hdots +Y_{r-1}=q-r \big]
    \end{equation*}
    where
    \begin{equation*}
        g_1(X)=\ind\{ X_0=1,\hdots ,X_{y-1}=1\}\qquad\mbox{ and }\qquad g_2(Y)=\ind\{Y_0=k_0-1,\hdots ,Y_{r-1}=k_{y-1}-1\}.
    \end{equation*}
    The equivalence of ensembles for the Bernoulli product measure is known, we state it in \cref{lemma:equivensembles_bernoulli} below, and we refer to \cite{goncalves_scaling_2013} for a proof.  Applying it to the function $g_1$, we get that 
    \begin{equation*}
        \E\big[ g_1(X)\big| X_0+\hdots +X_{\ell -1}=r\big] = \varphi_1\big(\tfrac{r}{\ell}\big) - \frac{\tfrac{r}{\ell} (1-\frac{r}{\ell})}{2\ell }\varphi_1''\big(\tfrac{r}{\ell}\big)+O(\ell^{-2})
    \end{equation*}
    and, as a Binomial random variable is a sum of independent Bernoulli random variables, we can also apply the same lemma to the function $g_2$ to get
    \begin{equation*}
        \E\big[ g_2(Y)\big| Y_0+\hdots +Y_{r-1}=q-r\big] = \varphi_2\big(\tfrac{q-r}{(\kappa -1)r}\big) - \frac{\tfrac{q-r}{(\kappa -1)r}\big( 1-\frac{q-r}{(\kappa -1)r}\big)}{2(\kappa -1)r}\varphi_2''\big(\tfrac{q-r}{(\kappa -1)r}\big) + O(r^{-2}).
    \end{equation*}
    Above, $\varphi_1(\rho)$ and $\varphi_2(\rho )$ stand respectively for the expectation of $g_1$ under the Bernoulli product measure of parameter $\rho$ and the expectation of $g_2$ under the Binomial product measure of parameters $\kappa -1$ and $\rho$. As a consequence, we find that
    \begin{multline*}
        \E_\ell [g](r,q) = \varphi_1\big(\tfrac{r}{\ell}\big)\varphi_2\big(\tfrac{q-r}{(\kappa -1)r}\big) - \frac{\tfrac{r}{\ell} (1-\frac{r}{\ell})}{2\ell } \varphi_1''\big(\tfrac{r}{\ell}\big)\varphi_2\big(\tfrac{q-r}{(\kappa -1)r}\big) \\ - \frac{\tfrac{\ell}{(\kappa -1)r}\frac{q-r}{(\kappa -1)r}\big( 1-\tfrac{q-r}{(\kappa -1)r}\big)}{2\ell }\varphi_1\big(\tfrac{r}{\ell}\big)\varphi_2''\big(\tfrac{q-r}{(\kappa -1)r}\big) + O(\ell^{-2}).
    \end{multline*}
    For this particular function $g$, one finds that $\psi_g(\rho ,\mathcal{E}) = \varphi_1(\rho )\varphi_2\big(\tfrac{\mathcal{E}-\rho}{(\kappa -1)\rho})$. Moreover, using the expression \eqref{eq:compressibilitymatrix}, a tedious but easy computation shows that 
    \begin{align*}
        \mathrm{Tr}(\chi\mathrm{Hess}(\psi_g))(\rho ,\mathcal{E}) & = \chi_{11}(\rho ,\mathcal{E})\frac{\partial^2\psi_g}{\partial\rho^2}(\rho ,\mathcal{E}) + 2\chi_{12}(\rho ,\mathcal{E})\frac{\partial^2\psi_g}{\partial\rho\partial \mathcal{E}}(\rho ,\mathcal{E}) + \chi_{22}(\rho ,\mathcal{E})\frac{\partial^2\psi_g}{\partial \mathcal{E}^2}(\rho ,\mathcal{E})\\ 
        & = \rho (1-\rho )\varphi_1''(\rho )\varphi_2\big( \tfrac{\mathcal{E}-\rho}{(\kappa -1)\rho}\big) + \frac{\tfrac{\mathcal{E}-\rho}{(\kappa -1)\rho}\big( 1-\tfrac{\mathcal{E}-\rho}{(\kappa -1)\rho})}{(\kappa -1)\rho} \varphi_1(\rho )\varphi_2''\big(\tfrac{\mathcal{E}-\rho}{(\kappa -1)\rho}\big)
    \end{align*}
    so it concludes the proof for this particular function $g$. As explained at the beginning of the proof, it allows to conclude for any local function $f$.
\end{proof}

During the proof, we used the equivalence of ensembles for the Bernoulli product measure, we state it here for completeness.

\begin{lemma}[Equivalence of ensembles for the Bernoulli product measure, \cite{goncalves_scaling_2013}]\label{lemma:equivensembles_bernoulli}
    Consider the Bernoulli product measure $\pi_\rho = \mathrm{Ber}(\rho )^{\otimes\Z}$ of parameter $\rho\in [0,1]$, and let $\xi$ be a random variable distributed according to $\pi_\rho$. Let $g:\{0,1\}^{\Z}\longrightarrow\R$ be a local function whose support is included in $\{0,\hdots ,\ell_0-1\}$ for some $\ell_0\ge 1$, and define the function $\varphi_g(\rho ) = \pi_\rho(g)$. Then, there exists a constant~$C_2=C_2(g)>0$ such that for any $\ell\ge\ell_0$, we have 
    \begin{equation*}
        \sup_{\xi\in\{0,1\}^{\Z}} \left| \pi_\rho [ g|\xi_0^\ell ] - \left(\varphi_g(\xi_0^\ell ) - \frac{1}{2\ell}\xi_0^\ell (1-\xi_0^\ell)\varphi_g''(\xi_0^\ell)\right)\right| \le \frac{C_2}{\ell^2}.
    \end{equation*}
\end{lemma}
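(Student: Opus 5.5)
We prove \cref{lemma:equivensembles_bernoulli} by reducing to monomials and computing the conditional law exactly. Since $\xi_x\in\{0,1\}$, any local function $g$ supported in $\{0,\hdots ,\ell_0-1\}$ is a finite linear combination of monomials $\xi_A\coloneq\prod_{x\in A}\xi_x$ with $A\subseteq\{0,\hdots ,\ell_0-1\}$. Both sides of the estimate are linear in $g$: the map $g\mapsto\varphi_g$ is linear, and so is $g\mapsto\varphi_g''$. It therefore suffices to treat $g=\xi_A$ with $|A|=m\le\ell_0$, for which $\varphi_g(\rho)=\rho^m$. The constant $C_2(g)$ is then the sum of the monomial constants weighted by the absolute values of the coefficients.

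For the monomial, conditionally on $\xi_0+\hdots+\xi_{\ell-1}=r$ the configuration on $\{0,\hdots,\ell-1\}$ is uniform among those with $r$ ones. This uses exchangeability under $\pi_\rho$, and $\rho$ drops out. Hence
\begin{equation*}
\pi_\rho[\xi_A\,|\,\xi_0^\ell=\tfrac r\ell]=\frac{r(r-1)\cdots(r-m+1)}{\ell(\ell-1)\cdots(\ell-m+1)}=\prod_{j=0}^{m-1}\frac{u-j/\ell}{1-j/\ell},\qquad u=\tfrac r\ell .
\end{equation*}
We expand this product in powers of $1/\ell$, uniformly in $u\in[0,1]$, as a polynomial identity. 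Using $\prod_j(u-j/\ell)=u^m-\frac{m(m-1)}{2\ell}u^{m-1}+O(\ell^{-2})$ and $\prod_j(1-j/\ell)^{-1}=1+\frac{m(m-1)}{2\ell}+O(\ell^{-2})$, we obtain
\begin{equation*}
u^m+\frac{m(m-1)}{2\ell}\big(u^m-u^{m-1}\big)+O(\ell^{-2})=u^m-\frac{1}{2\ell}u(1-u)\,m(m-1)u^{m-2}+O(\ell^{-2}).
\end{equation*}
This is exactly $\varphi_g(u)-\frac1{2\ell}u(1-u)\varphi_g''(u)+O(\ell^{-2})$.

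The only point needing care is the uniformity of the $O(\ell^{-2})$ remainder over $u\in[0,1]$ and over all $\ell\ge\ell_0$. Write the exact ratio as a rational function of $1/\ell$ with polynomial coefficients in $u$. For $\ell\ge 2\ell_0$ the denominator factors satisfy $1-j/\ell\ge 1/2$, so Taylor's theorem in $1/\ell$ gives a remainder bounded by a constant depending only on $m$. For the finitely many $\ell_0\le\ell<2\ell_0$, both sides are bounded, so the estimate holds after enlarging the constant.

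The edge cases $r<m$ cause no problem: the exact conditional expectation is $0$, and the product formula above also gives $0$ since one factor vanishes. So the computation covers all $r$ uniformly.
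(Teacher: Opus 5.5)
Your proof is correct and complete. You reduce to monomials $\xi_A$. You use that the canonical measure on $\{0,\hdots,\ell-1\}$ is uniform, so $\pi_\rho[\xi_A\,|\,\xi_0^\ell=u]$ is exactly $\prod_{j<m}(u-j/\ell)/(1-j/\ell)$. You then Taylor expand in $h=1/\ell$, uniformly in $u\in[0,1]$, with the denominator bounded below for $\ell\ge 2\ell_0$, and absorb the finitely many remaining $\ell$ into the constant. This gives a self-contained proof with explicit constants.

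The paper itself gives no argument for this lemma and only cites \cite{goncalves_scaling_2013}, so there is no in-paper route to match. The usual approach to such second-order equivalence of ensembles for general product measures is a local central limit theorem with an Edgeworth correction. That approach is needed when the canonical measure has no closed form, and it applies beyond $\{0,1\}$-valued marginals.

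Your exact computation is more elementary and even produces the full expansion in powers of $1/\ell$. It does, however, rely on the Bernoulli structure: multilinearity of functions of $\{0,1\}$-valued variables and the falling-factorial form of the uniform canonical measure. That is all the paper needs, since in \cref{prop:equivalenceofensembles} the binomial factor is also handled by viewing each $Y_x$ as a sum of $\kappa-1$ Bernoulli variables.
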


\noindent The following result is a consequence of \cref{prop:equivalenceofensembles}. 

\begin{corollary}\label{cor:equivalenceofensembles}
    Let $f:\Omega\longrightarrow\R$ be a local function whose support is included in $\{0,\hdots ,\ell_0-1\}$ for some $\ell_0\ge 1$. Then, there exists a constant $C_3=C_3(f,\rho ,\mathcal{E})>0$ such that for any $\ell\ge\ell_0$, we have
    \begin{multline}
        \int_{\Omega}\bigg( \E_\ell[f] - \psi_f(\rho ,\mathcal{E}) - \nabla\psi_f(\rho ,\mathcal{E})\cdot \bar{\mathbf{u}}^\ell \\ - \frac12 \left( (\bar{\mathbf{u}}^\ell)^\top \mathrm{Hess}(\psi_f)(\rho ,\mathcal{E})\bar{\mathbf{u}}^\ell -\frac{1}{\ell}\mathrm{Tr}\big(\chi\mathrm{Hess}(\psi_f)\big)(\rho ,\mathcal{E})\right)\bigg)^2\diff\mu_{\rho ,\mathcal{E}}(\eta )\le \frac{C_3}{\ell^3}
    \end{multline}
    where $\bar{\mathbf{u}}^\ell = \bar{\mathbf{u}}^\ell (\eta )$ is the column vector whose coordinates are $\xi_0^\ell -\rho$ and $\eta_0^\ell -\mathcal{E}$. In particular, 
    \begin{itemize}
        \item if $\psi_f(\rho ,\mathcal{E}) = 0$, we can choose the constant $C_3$ so that
        \begin{equation*}
            \mathrm{Var}_{\rho ,\mathcal{E}}\big( \E_\ell [f]\big) \le \frac{C_3}{\ell};
        \end{equation*}
        \item if $\psi_f(\rho ,\mathcal{E})=0$ and $\nabla\psi_f(\rho ,\mathcal{E})=0$, we can choose the constant $C_3$ so that
        \begin{equation*}
            \mathrm{Var}_{\rho, \mathcal{E}}\big( \E_\ell [f]\big) \le \frac{C_3}{\ell^2};
        \end{equation*}
        \item if $\psi_f(\rho ,\mathcal{E})=0$, $\nabla\psi_f(\rho ,\mathcal{E})=0$ and $\mathrm{Hess}(\psi_f)(\rho ,\mathcal{E})=0$, we can choose the constant $C_3$ so that
        \begin{equation*}
            \mathrm{Var}_{\rho ,\mathcal{E}}\big( \E_\ell [f]\big) \le \frac{C_3}{\ell^3}.
        \end{equation*}
    \end{itemize}
\end{corollary}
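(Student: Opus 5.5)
The plan is to combine the uniform estimate of \cref{prop:equivalenceofensembles} with a Taylor expansion of $\psi_f$ and of $\mathrm{Tr}(\chi\mathrm{Hess}(\psi_f))$ around $(\rho,\mathcal{E})$, evaluated at the random point $\mathbf{u}^\ell=(\xi_0^\ell,\eta_0^\ell)$, and then to bound the remaining terms with moment estimates for the centered averages $\bar{\mathbf{u}}^\ell$ under the product measure $\mu_{\rho,\mathcal{E}}$.

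\textbf{Step 1 (decomposition).} By \cref{prop:equivalenceofensembles},
\[
\E_\ell[f]=\psi_f(\mathbf{u}^\ell)-\tfrac{1}{2\ell}\mathrm{Tr}(\chi\mathrm{Hess}(\psi_f))(\mathbf{u}^\ell)+O(\ell^{-2})
\]
uniformly in $\eta$. The $O(\ell^{-2})$ error contributes $O(\ell^{-4})$ to the $L^2(\mu_{\rho,\mathcal{E}})$ norm squared.

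\textbf{Step 2 (Taylor expansion).} Expand $\psi_f(\mathbf{u}^\ell)$ to second order around $(\rho,\mathcal{E})$, with a remainder $R_3$ satisfying $|R_3|\le C|\bar{\mathbf{u}}^\ell|^3$. Expand the trace term to zeroth order, with a remainder bounded by $C\ell^{-1}|\bar{\mathbf{u}}^\ell|$.

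The subtle point is regularity. $\psi_f$ is a polynomial in $\rho$ and $\mathtt{p}=\frac{\mathcal{E}/\rho-1}{\kappa-1}$, so it is smooth only away from $\rho=0$. The entries of $\chi$ also involve $1/\rho$. Moreover, $\mathbf{u}^\ell$ ranges over the whole admissible region, including points with $\xi_0^\ell=0$. To handle this, split $\Omega$ into two events:
\begin{enumerate}[label=(\roman*)]
\item the good event $\{|\bar{\mathbf{u}}^\ell|\le\delta\}$, with $\delta$ small enough that the $\delta$-neighbourhood of $(\rho,\mathcal{E})$ stays in the interior region $\rho'\in(0,1)$, $\mathcal{E}'\in(\rho',\kappa\rho')$, where all derivatives up to order three are bounded;
\item the complement. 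There every term in the integrand is bounded: $\E_\ell[f]$ by $\|f\|_\infty$, and the explicit quadratic polynomial in $\bar{\mathbf{u}}^\ell$ trivially. By Hoeffding's inequality (a Chernoff bound for the bounded i.i.d. variables $\xi_x,\eta_x$), this event has probability $\le Ce^{-c\ell}$, which is $O(\ell^{-3})$.
\end{enumerate}

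\textbf{Step 3 (moment bounds).} On the good event, the integrand is bounded by $C\big(|\bar{\mathbf{u}}^\ell|^3+\ell^{-1}|\bar{\mathbf{u}}^\ell|+\ell^{-2}\big)$. Since $\bar{\mathbf{u}}^\ell$ is an average of $\ell$ i.i.d. centered bounded vectors, the standard moment bounds $\E|\bar{\mathbf{u}}^\ell|^{2k}\le C_k\ell^{-k}$ hold. Squaring and integrating then gives $O(\ell^{-3})+O(\ell^{-3})+O(\ell^{-4})$, which is the claimed bound. The main obstacle is precisely this care about the domain of smoothness near the boundary. The Chernoff splitting above is what I would use to dispose of it.

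\textbf{Step 4 (the three cases).} Write $\E_\ell[f]$ as the explicit approximation plus an $L^2$ error $e_\ell$ with $\|e_\ell\|_{L^2}^2\le C\ell^{-3}$. Then $\mathrm{Var}(\E_\ell[f])\le 2\,\mathrm{Var}(\text{approximation})+2C\ell^{-3}$, and each case follows from the approximation's leading term.
\begin{enumerate}[label=(\alph*)]
\item If only $\psi_f(\rho,\mathcal{E})=0$, the approximation is dominated by the linear term $\nabla\psi_f\cdot\bar{\mathbf{u}}^\ell$, whose variance is $O(\ell^{-1})$. The quadratic terms contribute at most $O(\ell^{-2})$.
\item If in addition $\nabla\psi_f(\rho,\mathcal{E})=0$, the approximation is the centered quadratic form
\[
\tfrac12\big((\bar{\mathbf{u}}^\ell)^\top H\bar{\mathbf{u}}^\ell-\tfrac1\ell\mathrm{Tr}(\chi H)\big),
\]
which has variance $O(\ell^{-2})$ by the fourth-moment bound. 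In fact its mean is exactly zero, since $\E[\bar{\mathbf{u}}^\ell(\bar{\mathbf{u}}^\ell)^\top]=\chi/\ell$.
\item If moreover $\mathrm{Hess}(\psi_f)(\rho,\mathcal{E})=0$, the approximation vanishes identically, and only the error $e_\ell$ remains, giving the $O(\ell^{-3})$ bound directly.
\end{enumerate}
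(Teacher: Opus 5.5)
Your proposal is correct and follows the same route as the paper: invoke \cref{prop:equivalenceofensembles}, Taylor-expand $\psi_f$ to second order and $\mathrm{Tr}(\chi\mathrm{Hess}(\psi_f))$ to zeroth order around $(\rho,\mathcal{E})$, and use that $|\bar{\mathbf{u}}^\ell|^3$ and $\ell^{-1}|\bar{\mathbf{u}}^\ell|$ have $L^2(\mu_{\rho,\mathcal{E}})$-norms of order $\ell^{-3/2}$. Your Hoeffding localization to a neighbourhood of $(\rho,\mathcal{E})$ adds rigor the paper lacks: the paper simply asserts that the Taylor remainders are uniform, which fails near $\xi_0^\ell=0$, where the derivatives of $\psi_f$ and the entries of $\chi$ blow up like negative powers of the density.
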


\begin{proof}
    By a second-order Taylor expansion of the function $\psi_f$ around the point $(\rho ,\mathcal{E})$, we have
    \begin{equation*}
        \psi_f(\xi_0^\ell ,\eta_0^\ell ) = \psi_f(\rho ,\mathcal{E})+\nabla\psi_f(\rho ,\mathcal{E})\cdot \bar{\mathbf{u}}^\ell + \frac12 (\bar{\mathbf{u}}^\ell)^\top \mathrm{Hess}(\psi_f)(\rho ,\mathcal{E})\bar{\mathbf{u}}^\ell + O \big( \|\bar{\mathbf{u}}^\ell\|^3\big).
    \end{equation*}
    where the error is uniform in $\rho$, $\mathcal{E}$, $\xi_0^\ell$ and $\eta_0^\ell$. Similarly, by a zero-order Taylor expansion of the function $\mathrm{Tr}(\chi\mathrm{Hess}(\psi_f))$, we have
    \begin{equation*}
        \mathrm{Tr}(\chi\mathrm{Hess}(\psi_f))(\xi_0^\ell ,\eta_0^\ell ) = \mathrm{Tr}(\chi\mathrm{Hess}(\psi_f))(\rho ,\mathcal{E}) + O\big( \|\bar{\mathbf{u}}^\ell\|\big).
    \end{equation*}
    where the error is once again uniform in $\rho$, $\mathcal{E}$, $\xi_0^\ell$ and $\eta_0^\ell$. As the $L^2(\mu_{\rho ,\mathcal{E}})$-norms of $\|\bar{\mathbf{u}}^\ell\|^3$ and~$\frac1\ell \|\bar{\mathbf{u}}^\ell\|$ are of order $\ell^{-3/2}$, we deduce the result from \cref{prop:equivalenceofensembles}. The other estimates follow easily.
\end{proof}

\section{Spectral estimates}
\label{sec:spectralestimates}

\subsection{Spectral tools and Kipnis-Varadhan inequality}

In this section, we introduce some spectral tools that will be useful in the sequel. We start with \emph{Kipnis-Varadhan inequality}, which gives an estimation of the variance of an additive functional of a Markov process under the invariant state.

\begin{proposition}[Kipnis-Varadhan inequality, \cite{chang_equilibrium_2001,kipnis_central_1986}]
    Let $(X_t)_{t\ge 0}$ be a Markov process evolving on a state-space $\mathscr{E}$, with generator $\mathscr{L}$ and invariant measure $\pi$. Then, there exists a universal constant $C_4>0$ such that for any smooth mean-zero function $f:[0,T]\times\mathscr{E}\longrightarrow\R$, we have 
    \begin{equation*}\label{eq:kipnisvaradhan}
        \E_\pi\left[ \sup_{0\le t\le T}\left(\int_0^t f(s,X_s)\diff s\right)^2\right] \le C_4\int_0^T \|f(t,\cdot )\|_{-1,\pi}^2\diff t
    \end{equation*}
    where the variational norm $\|\cdot\|_{-1,\pi}$ is defined by
    \begin{equation*}
        \| f\|_{-1,\pi}^2 = \sup_g\left\lbrace 2 \langle f,g\rangle_\pi -\langle g,(-\mathscr{L})g\rangle_\pi \right\rbrace.
    \end{equation*}
    Above, the supremum is taken over all local functions $g:\mathscr{E}\longrightarrow\R$, and $\langle\cdot ,\cdot\rangle_\pi$ stands for the scalar product in $L^2(\pi )$.
\end{proposition}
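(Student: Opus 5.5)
The approach is the forward–backward (Lyons--Zheng) martingale decomposition, applied to a resolvent equation for the \emph{symmetric part} of the generator. Write $\mathscr{S}=\frac12(\mathscr{L}+\mathscr{L}^*)$, where $\mathscr{L}^*$ is the $L^2(\pi)$-adjoint of $\mathscr{L}$, i.e.\ the generator of the time-reversed stationary process. Since $\langle g,(-\mathscr{L})g\rangle_\pi=\langle g,(-\mathscr{S})g\rangle_\pi$, the norm $\|\cdot\|_{-1,\pi}$ only sees $\mathscr{S}$. By a density argument (continuity of both sides in $f$), I would assume $f$ is smooth in $t$ and that all the functions below are regular enough for Dynkin's formula.

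\textbf{Step 1 (decomposition).} Fix $\lambda>0$ and, for each $t$, let $u_t$ solve the resolvent equation $\lambda u_t-\mathscr{S}u_t=f(t,\cdot)$. Smoothness of $t\mapsto u_t$ follows from that of $f$. Dynkin's formula for the space-time function $u(s,x)$ gives the forward martingale
\begin{equation*}
M_t=u_t(X_t)-u_0(X_0)-\int_0^t(\partial_s+\mathscr{L})u_s(X_s)\diff s .
\end{equation*}
Applying Dynkin's formula to the reversed process $\widehat X_s=X_{T-s}$, which is stationary with generator $\mathscr{L}^*$ because $\pi$ is invariant, gives a backward martingale $\widehat M$ satisfying
\begin{equation*}
u_0(X_0)-u_t(X_t)-\int_0^t(-\partial_s+\mathscr{L}^*)u_s(X_s)\diff s=\widehat M_{T}-\widehat M_{T-t}.
\end{equation*}
Adding the two identities, the boundary terms and the $\partial_s$ terms cancel, and we get
\begin{equation*}
\int_0^t 2\mathscr{S}u_s(X_s)\diff s=-M_t-(\widehat M_T-\widehat M_{T-t}).
\end{equation*}
Hence
\begin{equation*}
\int_0^t f(s,X_s)\diff s=\lambda\int_0^t u_s(X_s)\diff s+\tfrac12\big(M_t+\widehat M_T-\widehat M_{T-t}\big).
\end{equation*}

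\textbf{Step 2 (bounds).} By Doob's inequality, $\E_\pi[\sup_t M_t^2]\le 4\E_\pi[M_T^2]$. The martingale $M$ has quadratic variation $\int_0^T\Gamma(u_s)(X_s)\diff s$, where $\Gamma$ is the carré du champ, and under stationarity $\E_\pi[\Gamma(u_s)]=2\langle u_s,(-\mathscr{S})u_s\rangle_\pi$. The same computation, with the same Dirichlet form since $\mathscr{L}^*$ has symmetric part $\mathscr{S}$, controls $\sup_t(\widehat M_T-\widehat M_{T-t})^2$ by twice $\sup_r \widehat M_r^2$ and Doob. For the drift term, Cauchy--Schwarz gives
\begin{equation*}
\E_\pi\Big[\sup_t\Big(\lambda\int_0^tu_s\diff s\Big)^2\Big]\le\lambda^2T\int_0^T\|u_s\|_\pi^2\diff s .
\end{equation*}
Finally, testing the resolvent equation against $u_s$ gives
\begin{equation*}
\lambda\|u_s\|_\pi^2+\langle u_s,(-\mathscr{S})u_s\rangle_\pi=\langle f(s),u_s\rangle_\pi .
\end{equation*}
By the variational definition, $2\langle f(s),u_s\rangle_\pi-\langle u_s,(-\mathscr{S})u_s\rangle_\pi\le\|f(s)\|_{-1,\pi}^2$. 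Combining the two yields both $\langle u_s,(-\mathscr{S})u_s\rangle_\pi\le\|f(s)\|_{-1,\pi}^2$ and $\lambda\|u_s\|_\pi^2\le\|f(s)\|_{-1,\pi}^2$.

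\textbf{Step 3 (conclusion).} Choosing $\lambda=1/T$, the drift term is bounded by $\lambda T\int_0^T\|f(s)\|_{-1,\pi}^2\diff s=\int_0^T\|f(s)\|_{-1,\pi}^2\diff s$. Together with the martingale bounds from Step~2, this gives the claim with a universal constant $C_4$.

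The main obstacle I expect is technical rather than conceptual. One has to justify the backward Dynkin formula and the time-dependent resolvent solution $u_s$ when $u_s$ is not local (it lies only in the domain of $\mathscr{S}$), and make sure the variational supremum over local $g$ agrees with the one over that domain. I would handle this by first working on a finite box (finite state space, where everything is a matrix identity) and then passing to the limit using that local functions form a core.
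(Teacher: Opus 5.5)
The paper gives no proof of its own and simply imports this inequality from \cite{chang_equilibrium_2001,kipnis_central_1986}. Your argument (time-dependent resolvent for the symmetric part $\mathscr{S}$, forward--backward martingale decomposition, Doob's inequality for both martingales, and $\lambda=1/T$) is essentially the standard proof as in \cite{chang_equilibrium_2001}, and it is correct. The one slip is harmless: the bound is $\sup_t(\widehat M_T-\widehat M_{T-t})^2\le 4\sup_r\widehat M_r^2$ rather than twice.

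For the domain issue you flag, finite boxes are not the easiest route. Their $H_{-1}$ norms only dominate the infinite-volume one, so you would still need a limit argument. It is simpler to proceed in three steps:
\begin{enumerate}
\item Prove the bound in the form $C\int_0^T\langle f(s,\cdot),(\lambda-\mathscr{S})^{-1}f(s,\cdot)\rangle_\pi\diff s$ for $f=(\lambda-\mathscr{S})g$ with $g$ local, where Dynkin's formula applies directly.
\item Extend it by $L^2(\pi)$-density, using that local functions are a core for $\mathscr{S}$.
\item Conclude with $\langle h,(\lambda-\mathscr{S})^{-1}h\rangle_\pi\le\|h\|_{-1,\pi}^2$.
\end{enumerate}
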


For our Markov process $(\eta(t))_{t\ge 0}$ with generator $N^a\mathcal{L}$ and invariant measure $\mu_{\rho ,\mathcal{E}}$, this inequality reads as follows. There exists a universal constant $C_4>0$ such that for any smooth mean-zero function $f:[0,T]\times\Omega\longrightarrow\R$, we have
\begin{equation}\label{eq:kipnisvaradhan_ourcontext}
    \E_{\rho ,\mathcal{E}} \left[ \sup_{0\le t\le T}\left(\int_0^t f(s,\eta (s))\diff s\right)^2\right] \le \frac{C_4}{N^a} \int_0^T\|f(t,\cdot )\|_{-1}^2\diff t,
\end{equation}
with the norm $\|\cdot\|_{-1}$ defined by 
\begin{equation*}
    \| f\|_{-1}^2 = \sup_g\big\lbrace 2 \langle f,g\rangle_{\rho ,\mathcal{E}} - \langle g,(-\mathcal{L}_S)g\rangle_{\rho ,\mathcal{E}}\big\rbrace
\end{equation*}
where the supremum is taken over all local functions, and $\langle\cdot ,\cdot\rangle_{\rho ,\mathcal{E}}$ denotes the usual scalar product in $L^2(\mu_{\rho ,\mathcal{E}})$. Indeed, by reversibility of $\mu_{\rho ,\mathcal{E}}$ with respect to the generator $\mathcal{L}_S$ (\textit{cf.}~\cref{prop:invariantmeasures}), the generator $\mathcal{L}_S$ is self-adjoint in $L^2(\mu_{\rho ,\mathcal{E}})$, so it is the only part of the generator $\mathcal{L}$ that contributes to the variational norm.

Inequality \eqref{eq:kipnisvaradhan_ourcontext} reduces the problem of estimating the variance of a time-integrated functional of the process to that of controlling the variational norm of the functional itself. One possible approach to bound this norm is to have an estimate on the spectral gap of the generator $\mathcal{L}_S$. We state in \cref{prop:spectralgap} the \emph{spectral gap estimate} for our process, that will be proved in \cref{subsec:proof_spectral_gap}. But first, we have to introduce the \emph{Dirichlet forms} of the process.

\begin{definition}[Dirichlet forms]\label{defin:dirichletforms}
    Let $\mu$ be any probability measure on $\Omega$. For any $x,y\in\Z$ and any local function $f:\Omega\longrightarrow\R$ we define the Dirichlet forms with respect to the measure~$\mu$ by
    \begin{align}
        & \mathfrak{D}_{x\to y}^p (f;\mu ) \coloneq \frac12\int_{\Omega} c_{x\to y}^p(\eta )\big[ f(\eta^{x,y})-f(\eta )\big]^2\diff\mu (\eta ), \label{def:dirichletformjump}\\
        & \mathfrak{D}_{x\to y}^e(f;\mu )\coloneq \frac12\int_{\Omega}c_{x\to y}^e(\eta )\big[ f(\eta^{x\to y})-f(\eta )\big]^2\diff\mu (\eta )
    \end{align}
    where the jump rates $c^p$ and $c^e$ are given respectively in \eqref{def:particlerate} and \eqref{def:energyrate}, and the transformations of configurations are given by \eqref{eq:transformedconfig}. Finally, we define the total Dirichlet form by
    \begin{equation*}
        \mathfrak{D}(f;\mu ) = \sum_{\substack {x,y\in\Z\\ |x-y|=1}} \big( \mathfrak{D}_{x\to y}^p(f;\mu ) + \mathfrak{D}_{x\to y}^e(f;\mu )\big).
    \end{equation*}
    By reversibility of the measure $\mu_{\rho ,\mathcal{E}}$ with respect to the generator $\mathcal{L}_S$ (\textit{cf.}~\cref{prop:invariantmeasures}), we have the identity $\langle f,(-\mathcal{L}_S)f\rangle_{\rho ,\mathcal{E}} = \mathfrak{D}(f;\mu_{\rho ,\mathcal{E}})$ for any local function $f:\Omega\longrightarrow\R$.
\end{definition}

\noindent Then, the spectral gap inequality reads as follows.

\begin{proposition}[Spectral gap estimate]\label{prop:spectralgap}
    There exists a universal constant $C_5=C_5(\kappa )>0$ such that for any local function $f$ supported in $\{0,\hdots ,\ell -1\}$ for some $\ell\ge 1$, with $\psi_f(\rho ,\mathcal{E})=0$ for any $\rho \in [0,1]$ and $\mathcal{E}\in [\rho ,\kappa\rho]$, we have 
    \begin{equation}\label{eq:spectralgapineq}
        \mathrm{Var}_{\rho ,\mathcal{E}}(f) \le C_5\ell^2 \sum_{\substack {x,y\in\{0,\hdots ,\ell -1\}\\ |x-y|=1}} \Big( \mathfrak{D}_{x\to y}^p (f;\mu_{\rho ,\mathcal{E}}) + \mathfrak{D}_{x\to y}^e(f;\mu_{\rho ,\mathcal{E}})\Big).
    \end{equation}
\end{proposition}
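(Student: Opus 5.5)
The hypothesis $\psi_f(\rho,\mathcal{E})=0$ for \emph{all} $(\rho,\mathcal{E})$ means that $f$ has zero mean under every canonical measure on $\{0,\hdots,\ell-1\}$. To see this, take $\mathtt{p}$ and $\rho$ in $(0,1)$, parametrised as in the proof of \cref{thm:diagonalizability} by $(\beta_1,\beta_2)$. Then $\psi_f=0$ for all $(\beta_1,\beta_2)$ is a vanishing two-variable generating function in $e^{\beta_1},e^{\beta_2}$, and this forces $\E_{\rho,\mathcal{E}}[f\,|\,\sum\xi=r,\sum\eta=q]=0$ for every admissible $(r,q)$. Hence $\mathrm{Var}_{\rho,\mathcal{E}}(f)=\E_{\rho,\mathcal{E}}[\mathrm{Var}(f\,|\,r,q)]$. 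The Dirichlet forms also split along the canonical hyperplanes, since the dynamics conserves $(r,q)$. So it suffices to prove a \emph{canonical} spectral gap: for every $r\le\ell$ and $q\in\{r,\hdots,\kappa r\}$, the canonical measure $\mu_{\ell,r,q}$ (which no longer depends on $(\rho,\mathcal{E})$) satisfies $\mathrm{Var}_{\mu_{\ell,r,q}}(f)\le C\ell^2\,\mathfrak{D}_\ell(f;\mu_{\ell,r,q})$ with $C=C(\kappa)$ uniform in $(r,q)$.

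To get the canonical bound I would use the quantum representation already visible in the proof of \cref{prop:equivalenceofensembles}. Each occupied site carries $\kappa-1$ slots, and $\eta_x-1$ of them are filled. Under $\mu_{\ell,r,q}$ the particle positions are uniform, and given the positions the $q-r$ quanta are uniformly distributed among the $r(\kappa-1)$ slots. The rate $(\eta_x-1)(\kappa-\eta_{x+1})$ is exactly the number of (filled slot at $x$, empty slot at $x+1$) pairs, so energy transfer is a symmetric exclusion of quanta between neighbouring particles. Particle jumps at rate $\kappa-1$ move whole labelled sites.

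The first main step is to introduce a mean-field comparison dynamics on the box, with generator
\begin{equation*}
\mathcal{L}_{MF}=\frac1\ell\sum_{x,y\in\{0,\hdots,\ell-1\}}\Big(\big[f(\eta^{x,y})-f(\eta)\big]+c^e_{x\to y}(\eta)\big[f(\eta^{x\to y})-f(\eta)\big]\Big).
\end{equation*}
Here long-range swaps and long-range quantum transfers act between arbitrary sites, and $\mu_{\ell,r,q}$ is still reversible by the same detailed-balance computation as in \cref{prop:invariantmeasures}. I would prove a gap of order $1$ for $\mathcal{L}_{MF}$, uniformly in $(r,q)$, by conditioning on $\xi$:
\begin{equation*}
\mathrm{Var}(f)=\E\big[\mathrm{Var}(f\,|\,\xi)\big]+\mathrm{Var}\big(\E[f\,|\,\xi]\big).
\end{equation*}
For the first term, fix the occupied set. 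The quanta then perform a Bernoulli--Laplace-type exclusion on $r(\kappa-1)$ slots, grouped in blocks of $\kappa-1$, with complete-graph interaction between blocks. Its gap is bounded below uniformly, e.g.\ by comparison with the Bernoulli--Laplace or random-transposition gap, after first averaging within a block using the uniform distribution of quanta inside a site. For the second term, $\E[f\,|\,\xi]$ is a function of a uniform $r$-subset, and long-range swaps between an occupied and an empty site form the Bernoulli--Laplace chain, which has an $O(1)$ gap.

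The second main step is a canonical-path comparison from $\mathcal{L}_{MF}$ to the nearest-neighbour Dirichlet form, in the style of Diaconis--Saloff-Coste and Quastel. A long-range swap $\eta\mapsto\eta^{x,y}$ is written as $O(|x-y|)$ nearest-neighbour swaps, which are always allowed at rate $\ge1$ because $\eta^{x,x+1}$ exchanges whatever sits at $x$ and $x+1$. A long-range quantum transfer $x\to y$ is realised by carrying the content of site $y$ next to $x$ through successive swaps, performing one nearest-neighbour energy transfer, and carrying it back. Both paths have length $O(\ell)$ and preserve $\mu_{\ell,r,q}$-densities up to constants, since swaps preserve the measure exactly and the single transfer carries the same rate ratio as the long-range one. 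Cauchy--Schwarz along the paths then gives
\begin{equation*}
\langle f,-\mathcal{L}_{MF}f\rangle\le C\cdot\frac1\ell\cdot\ell\cdot\ell\sum_{|x-y|=1}\big(\mathfrak{D}^p_{x\to y}+\mathfrak{D}^e_{x\to y}\big),
\end{equation*}
where the factors account respectively for the $\frac1\ell$ normalisation, the path length, and the $O(\ell^2)$ pairs whose paths cross a given edge, divided by $\ell$. This is the factor $\ell^2$ in \eqref{eq:spectralgapineq}.

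I expect the main obstacle to be the uniform $O(1)$ mean-field gap in the first step, specifically the energy part near degenerate ensembles. When $r$ is small, or $q$ is close to $r$ or to $\kappa r$, the rates $c^e$ can vanish on many configurations, and one must check that the block structure of the slots does not spoil uniformity. I would handle this by treating the within-site configuration as already equilibrated. The slot-level chain then becomes the exchange of indistinguishable quanta between any two particles at rate proportional to (quanta at $x$)$\times$(holes at $y$), which is exactly the Bernoulli--Laplace chain on $r(\kappa-1)$ sites with gap bounded below independently of $r$ and $q$.
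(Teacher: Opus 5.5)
Your plan is essentially the paper's: reduce to the canonical measures $\mu_{\ell,r,q}$, split along $\E[f\,|\,\xi]$, prove a mean-field gap, then compare by paths. The canonical reduction is fine, but two steps fail as written.

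\textbf{Particle passing.} Nearest-neighbour swaps are \emph{not} always allowed. The rate $c^p_{x\to x+1}=(\kappa-1)\xi_x(1-\xi_{x+1})$ vanishes when both sites are occupied, so $\eta\mapsto\eta^{x,x+1}$ is a move of the EPE only for a particle--hole pair. Two neighbouring particles with different energies cannot simply be exchanged. Both of your paths make particles pass each other whenever other particles lie in between: the long-range swap applied to $f$, and carrying the particle from $y$ next to $x$. This is not free once $f$ depends on energies. The missing ingredient is \cref{lemma:movingparticle}. If $\eta_x>\eta_{x+1}\ge1$, realize $\eta^{x,x+1}$ by $\eta_x-\eta_{x+1}\le\kappa-1$ successive transfers $x\to x+1$. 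Each has rate at least $1$ and changes the canonical weight by at most a factor $c(\kappa)$. For the position part you can avoid this altogether, as the paper does. Compare the Dirichlet form of $g=\E[f\,|\,\xi]$ itself, which particle--particle swaps do not change, and apply Jensen only at the nearest-neighbour level.

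\textbf{The mean-field gap is not uniform.} The $O(1)$ gap you claim for $\mathcal{L}_{MF}$, uniformly in $(r,q)$, is false when $\kappa\ge3$ and $r\ll\ell$.
Swaps never change the multiset $\{\eta_x\}_{x\in\mathrm{Part}(\eta)}$. Energy transfers act only among the $r$ particles and carry the weight $\frac1\ell$.
Take $F=\sum_{x=0}^{\ell-1}\eta_x^2$ minus its canonical mean. It is swap-invariant, and its variance is of order $r$ when $(q-r)/r$ stays away from $0$ and $\kappa-1$. Its $\mathcal{L}_{MF}$-Dirichlet form is $O(r^2/\ell)$, so the gap is $O(r/\ell)$.
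Your slot picture gives the same answer: exclusion of quanta on $r(\kappa-1)$ slots at rate $\frac1\ell$ per pair has gap $r(\kappa-1)/\ell$, not a constant.

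With the correct gap, your count of $O(\ell^2)$ pairs per bond gives $\ell^3/r$, which is not uniform in $r$. Separately, your displayed product $\frac1\ell\cdot\ell\cdot\ell$ equals $\ell$, not $\ell^2$. For the swap part the right count is $\frac1\ell\cdot\ell\cdot\ell^2=\ell^2$, and that part is fine, since the Bernoulli--Laplace gap at rate $\frac1\ell$ per pair is indeed $1$.

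\textbf{Repair.}
\begin{itemize}
\item Normalize the energy part by $\frac1r$ on each hyperplane. This is what the paper's bound $\E[h^2\,|\,r]\le\frac{C_{11}}{r}\sum_{x,y\in\mathrm{Part}(\eta)}\cdots$ does.
\item With that normalization your slot argument does work. Lump the complete-graph exclusion on slots onto the occupation numbers; this gives a gap $\ge\kappa-1$ uniformly in $(r,q)$, an alternative proof of \cref{lemma:meanfieldspectralgap}.
\item In the comparison, use that a transfer path keeps its source particle fixed at an occupied site $x$. For a given intermediate configuration and bond, only $O(r\ell)$ pairs $(x,y)$ are admissible. The resulting factor is $\frac1r\cdot\ell\cdot r\ell=\ell^2$, as required.
\end{itemize}
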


The proof of this result requires some work, so we postpone it to the upcoming \cref{subsec:proof_spectral_gap}, we state first how we will use it to estimate the variational norm. Following for instance the proof of \cite[Proposition 7]{goncalves_nonlinear_2014}, we have the following result that gives an upper bound on the norm of a sum of local functions with disjoint supports in terms of their individual variances.

\begin{proposition}\label{prop:orthovariationalnorm}
    Let $f_1,\hdots ,f_m$ be a sequence of local functions from $\Omega$ to $\R$ that have disjoint supports, and assume that the support of the function $f_i$ has cardinality $\ell_i$ for any~$1\le i\le m$. Assume that $\psi_{f_i}(\rho ,\mathcal{E})=0$ for any $\rho\in [0,1]$, any $\mathcal{E}\in [\rho ,\kappa\rho]$ and any $1\le i\le m$. Then, there exists a generic constant $C_6=C_6(\kappa )>0$ such that
    \begin{equation}
        \| f_1+\hdots +f_m\|_{-1}^2 \le C_6 \sum_{i=1}^m \ell_i^2\mathrm{Var}_{\rho ,\mathcal{E}}(f_i).
    \end{equation}
\end{proposition}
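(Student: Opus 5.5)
The plan is to start from the variational definition of $\|\cdot\|_{-1}$ and localize each term $\langle f_i,g\rangle_{\rho,\mathcal{E}}$ onto the support of $f_i$. Write $\Lambda_i$ for the support of $f_i$, an interval of $\ell_i$ consecutive sites; by translation invariance of $\mu_{\rho,\mathcal{E}}$ we may assume each $\Lambda_i$ is such a box. I am implicitly assuming the supports are intervals, as in all later applications; otherwise I would replace $\Lambda_i$ by the smallest interval containing it and then need those intervals to be disjoint.

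Fix a local function $g$ and set $g_i:=\E_{\rho,\mathcal{E}}[g\,|\,\eta_x,\ x\in\Lambda_i]$. Since $f_i$ is $\Lambda_i$-measurable, $\langle f_i,g\rangle_{\rho,\mathcal{E}}=\langle f_i,g_i\rangle_{\rho,\mathcal{E}}$. Next I use that $\psi_{f_i}\equiv 0$ for every $(\rho,\mathcal{E})$. By the product structure, the conditional expectation of $f_i$ given the conserved quantities $(\sum_{\Lambda_i}\xi_x,\sum_{\Lambda_i}\eta_x)$ does not depend on the parameters. Since the family $\{\mu_{\rho',\mathcal{E}'}\}$ is a full exponential family in these sufficient statistics, completeness gives that this canonical expectation vanishes identically. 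So $f_i$ is orthogonal to every function of the conserved quantities in $\Lambda_i$, and therefore $\langle f_i,g_i\rangle=\langle f_i,g_i-\E[g_i\,|\,\text{conserved quantities in }\Lambda_i]\rangle$.

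Now apply Cauchy--Schwarz together with the spectral gap of \cref{prop:spectralgap}. I need that proposition in its canonical form: on each hyperplane of fixed particle number and energy in $\Lambda_i$, the conditional variance is bounded by $C_5\ell_i^2$ times the restricted Dirichlet form. The proof in \cref{subsec:proof_spectral_gap} presumably establishes exactly this canonical gap, and it is the statement I need. Writing $\mathfrak{D}_{\Lambda_i}$ for the Dirichlet form restricted to bonds inside $\Lambda_i$, this yields
\[
2\langle f_i,g\rangle \le 2\|f_i\|_{L^2}\big(C_5\ell_i^2\,\mathfrak{D}_{\Lambda_i}(g_i)\big)^{1/2}\le C_5\ell_i^2\|f_i\|_{L^2}^2+\mathfrak{D}_{\Lambda_i}(g_i).
\]
By convexity (Jensen applied to $(a-b)^2$ under conditioning), $\mathfrak{D}_{\Lambda_i}(g_i)\le\mathfrak{D}_{\Lambda_i}(g)$. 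This works because the rates and the transformations $\eta^{x,y},\eta^{x\to y}$ for bonds inside $\Lambda_i$ involve only coordinates in $\Lambda_i$, and the measure is product.

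Finally, sum over $i$. The boxes are disjoint, so the bond sets are disjoint and $\sum_i\mathfrak{D}_{\Lambda_i}(g)\le\mathfrak{D}(g;\mu_{\rho,\mathcal{E}})=\langle g,(-\mathcal{L}_S)g\rangle$. Hence
\[
2\langle \textstyle\sum_i f_i,g\rangle-\langle g,-\mathcal{L}_Sg\rangle\le C_5\sum_i\ell_i^2\|f_i\|^2_{L^2}.
\]
Since $\E[f_i]=0$, we have $\|f_i\|^2_{L^2}=\mathrm{Var}(f_i)$. Taking the supremum over $g$ gives the claim with $C_6=C_5$.

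The main obstacle I expect is the step where orthogonality to the conserved quantities is combined with the spectral gap. One has to check carefully that \cref{prop:spectralgap} is available in its canonical form, uniformly over every hyperplane of fixed particle number and energy, including degenerate ones such as $r=0$ or $q=r$. If only the grand-canonical statement is available, I would derive the canonical version from it by applying it to $g_i$ minus its canonical projection.
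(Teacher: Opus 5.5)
Your proof is correct and is essentially the standard argument behind \cite[Proposition 7]{goncalves_nonlinear_2014}, which the paper cites instead of giving a proof; there the box spectral gap enters through the local norm $\langle f_i,(-\mathcal{L}_{\Lambda_i})^{-1}f_i\rangle\le C_5\ell_i^2\mathrm{Var}_{\rho,\mathcal{E}}(f_i)$, which is equivalent to your Cauchy--Schwarz against the canonical fluctuation of $g_i$, and your caveat that the supports must be disjoint intervals is warranted, since $\xi_0-\xi_L$ has support of cardinality $2$ while $\|\xi_0-\xi_L\|_{-1}^2$ grows linearly in $L$. The obstacle you anticipate does not arise: your fallback of applying \cref{prop:spectralgap} to $h=g_i-\E_{\rho,\mathcal{E}}[g_i\,|\,\text{conserved quantities in }\Lambda_i]$ (centered under every $\mu_{\rho',\mathcal{E}'}$, with $\mathfrak{D}_{\Lambda_i}(h)=\mathfrak{D}_{\Lambda_i}(g_i)$ because moves inside $\Lambda_i$ preserve those quantities) gives exactly $\mathrm{Var}_{\rho,\mathcal{E}}(h)\le C_5\ell_i^2\,\mathfrak{D}_{\Lambda_i}(g_i)$, which is all your Cauchy--Schwarz step uses, so no bound uniform over individual hyperplanes is needed.
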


This result can be easily extended as follows to a sequence of infinitely many local functions,  For the proof of its generalization, we refer to \cite[Proposition 4.5]{goncalves_characterization_2025}.

\begin{corollary}\label{cor:spectral0}
    Let $(f_i)_{i\ge 1}$ be a sequence of local functions from $\Omega$ to $\R$ that have disjoint supports, and assume that the support of the function $f_i$ has cardinality $\ell_i$ for any $i\ge 1$. Assume that $\psi_{f_i}(\rho ,\mathcal{E})=0$ for any $\rho\in [0,1]$, any $\mathcal{E}\in [\rho ,\kappa\rho ]$ and any $i\ge 1$. Finally, assume that the series $\sum_{i=1}^\infty \mathrm{Var}_{\rho ,\mathcal{E}}(f_i)$ converges. Then, we have
    \begin{equation}
        \left\|\sum_{i=1}^\infty f_i\right\|_{-1}^2 \le C_6\sum_{i=1}^\infty \ell_i^2 \mathrm{Var}_{\rho ,\mathcal{E}}(f_i).
    \end{equation}
\end{corollary}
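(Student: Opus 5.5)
The plan is to reduce the infinite sum to the finite case of \cref{prop:orthovariationalnorm} by truncation, and then pass to the limit using lower semicontinuity of the variational norm. For $m\ge 1$, set $F_m=\sum_{i=1}^m f_i$ and $F=\sum_{i\ge 1}f_i$.

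The first step is to make sense of $F$ in $L^2(\mu_{\rho,\mathcal{E}})$. Each $f_i$ has mean zero, since $\psi_{f_i}(\rho,\mathcal{E})=0$. The supports are disjoint and $\mu_{\rho,\mathcal{E}}$ is a product measure, so the $f_i$ are independent and hence pairwise orthogonal in $L^2(\mu_{\rho,\mathcal{E}})$. It follows that
\[
\|F_m-F_n\|_{L^2(\mu_{\rho,\mathcal{E}})}^2=\sum_{i=n+1}^m\mathrm{Var}_{\rho,\mathcal{E}}(f_i)\quad\text{for } m>n,
\]
and this tends to zero because $\sum_i\mathrm{Var}_{\rho,\mathcal{E}}(f_i)$ converges. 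So $F_m\to F$ in $L^2$.

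The second step applies \cref{prop:orthovariationalnorm} to each finite family $f_1,\hdots,f_m$. This gives
\[
\|F_m\|_{-1}^2\le C_6\sum_{i=1}^m\ell_i^2\mathrm{Var}_{\rho,\mathcal{E}}(f_i)\le C_6\sum_{i=1}^\infty\ell_i^2\mathrm{Var}_{\rho,\mathcal{E}}(f_i)=:S .
\]
If $S=\infty$ there is nothing to prove, so we may assume $S<\infty$.

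The third step passes to the limit. Fix a local function $g$. Since $g$ is bounded and $F_m\to F$ in $L^2$, we have $\langle F_m,g\rangle_{\rho,\mathcal{E}}\to\langle F,g\rangle_{\rho,\mathcal{E}}$. Therefore
\[
2\langle F,g\rangle_{\rho,\mathcal{E}}-\langle g,(-\mathcal{L}_S)g\rangle_{\rho,\mathcal{E}}
=\lim_{m\to\infty}\Big(2\langle F_m,g\rangle_{\rho,\mathcal{E}}-\langle g,(-\mathcal{L}_S)g\rangle_{\rho,\mathcal{E}}\Big)\le \liminf_{m\to\infty}\|F_m\|_{-1}^2\le S .
\]
Taking the supremum over local $g$ gives $\|F\|_{-1}^2\le S$, which is the claim.

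The only delicate point is that the supremum defining $\|\cdot\|_{-1}$ runs over local test functions $g$. For such $g$ the pairing with the $L^2$ limit is continuous, which is why it suffices to know that $F$ exists as an $L^2$ limit. The summability hypothesis on the variances guarantees this; no summability of $\ell_i^2\mathrm{Var}_{\rho,\mathcal{E}}(f_i)$ is needed beyond what the bound itself displays.
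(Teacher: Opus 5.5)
Your proof is correct and complete: orthogonality of the independent mean-zero $f_i$ gives $F_m\to F$ in $L^2(\mu_{\rho,\mathcal{E}})$, \cref{prop:orthovariationalnorm} bounds $\|F_m\|_{-1}^2$ uniformly, and for each fixed local $g$ you may pass to the limit inside the variational formula. (In fact $\langle F_m,g\rangle_{\rho,\mathcal{E}}$ is eventually constant in $m$, since only finitely many of the disjoint supports meet that of $g$.) The paper gives no argument of its own beyond saying the finite case extends and citing \cite[Proposition 4.5]{goncalves_characterization_2025}, and your truncation argument is a self-contained version of that extension.
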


If we combine this with inequality \eqref{eq:kipnisvaradhan_ourcontext}, we get the following corollary that will be the main spectral estimate that we use in the sequel.

\begin{corollary}
    \label{cor:spectral}
    There exists a generic constant $C_7=C_7(\kappa )>0$ such that for any sequence of local functions $(f_i)_{i\ge 1}$ from $[0,T]\times \Omega \longrightarrow\R$ such that $f_i(t,\cdot )$ satisfies the hypotheses of \cref{cor:spectral0} for any $t\in [0,T]$ and any $i\ge 1$, we have
    \begin{equation}
        \E_{\rho ,\mathcal{E}}\left[\sup_{0\le t\le T}\left( \int_0^t \sum_{i=1}^\infty f_i(s,\eta (s))\diff s\right)^2\right] \le \frac{C_7}{N^a}\sum_{i=1}^\infty \ell_i^2 \int_0^T\mathrm{Var}_{\rho ,\mathcal{E}}(f_i(t,\cdot ))\diff t.
    \end{equation}
\end{corollary}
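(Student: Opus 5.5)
The plan is to obtain \cref{cor:spectral} by chaining the Kipnis-Varadhan inequality \eqref{eq:kipnisvaradhan_ourcontext} with \cref{cor:spectral0}, applied for each fixed time. Set $F(t,\eta)\coloneq\sum_{i\ge 1} f_i(t,\eta)$. For each $t\in[0,T]$ the family $(f_i(t,\cdot))_{i\ge1}$ satisfies the hypotheses of \cref{cor:spectral0}: the supports are disjoint, $\psi_{f_i(t,\cdot)}\equiv 0$, and the variance series converges.

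The first step is to check that $F(t,\cdot)$ is a legitimate mean-zero element of $L^2(\mu_{\rho,\mathcal{E}})$. Because the supports are disjoint and $\mu_{\rho,\mathcal{E}}$ is a product measure, the $f_i(t,\cdot)$ are independent and mean-zero. Hence they are orthogonal in $L^2(\mu_{\rho,\mathcal{E}})$, and the series converges in $L^2$ with $\mathrm{Var}(F(t,\cdot))=\sum_i\mathrm{Var}(f_i(t,\cdot))<\infty$.

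The second step is to apply \eqref{eq:kipnisvaradhan_ourcontext} to $F$, which gives
\[
\E_{\rho,\mathcal{E}}\Big[\sup_{0\le t\le T}\Big(\int_0^t F(s,\eta(s))\diff s\Big)^2\Big]\le \frac{C_4}{N^a}\int_0^T\|F(t,\cdot)\|_{-1}^2\diff t .
\]
The third step is to bound the integrand pointwise in $t$ using \cref{cor:spectral0}:
\[
\|F(t,\cdot)\|_{-1}^2\le C_6\sum_i \ell_i^2\,\mathrm{Var}_{\rho,\mathcal{E}}(f_i(t,\cdot)).
\]
Finally, exchanging the sum and the time integral, which Tonelli allows since every term is nonnegative, gives the claim with $C_7=C_4C_6$. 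This constant depends only on $\kappa$, since $C_4$ is universal and $C_6=C_6(\kappa)$.

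The only point requiring care is the application of Kipnis-Varadhan to $F$, which is not local and whose smoothness in time is only implicit in the hypotheses. To handle this, I would first apply the argument to the truncations $F_M=\sum_{i\le M} f_i$. These are local, so \eqref{eq:kipnisvaradhan_ourcontext} applies directly, and the right-hand side is bounded uniformly in $M$ by the full series.

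The remaining task is to let $M\to\infty$ on the left-hand side. By stationarity and Cauchy-Schwarz,
\[
\E\Big[\sup_t\Big(\int_0^t (F-F_M)(s,\eta(s))\diff s\Big)^2\Big]\le T\int_0^T\sum_{i>M}\mathrm{Var}(f_i(s,\cdot))\diff s .
\]
This tends to $0$ by dominated convergence, provided $\int_0^T\sum_i\mathrm{Var}(f_i(t,\cdot))\diff t<\infty$. That condition can be assumed without loss of generality: $\ell_i\ge 1$, so if it fails the right-hand side of the claim is already infinite. Passing to the limit, for instance via Fatou's lemma or the $L^2$ convergence, then yields the stated inequality.
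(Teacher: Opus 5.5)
Your proof is correct and takes the same route as the paper: the paper obtains this corollary by combining the Kipnis--Varadhan bound \eqref{eq:kipnisvaradhan_ourcontext} with \cref{cor:spectral0} at each fixed time, which gives $C_7 = C_4C_6$. Your truncation to $F_M$ (which in fact needs only the finite-sum \cref{prop:orthovariationalnorm}) and the passage $M\to\infty$ via stationarity and orthogonality are a valid extra layer of rigor that the paper leaves implicit.
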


\subsection{Proof of the spectral gap estimate (Proposition~\ref{prop:spectralgap})}
\label{subsec:proof_spectral_gap}

The proof relies on the spectral gap estimate for the simple exclusion process that we state just below, together with a mean-field version of it for the generalized exclusion process that we state and prove in \cref{subsec:meanfield_spectral_gap}, and two ``moving particle'' and ``moving energy'' lemmas that allow to compare Dirichlet forms.

\begin{lemma}[Spectral gap estimate for the SSEP, \cite{diaconis_comparison_1993,quastel_diffusion_1992}]
    \label{lemma:spectralgap_SSEP}
    Recall the notations introduced in \cref{lemma:equivensembles_bernoulli}. There exists a universal constant $C_8>0$ such that 
    \begin{equation}\label{eq:spectralgap}
        \mathrm{Var}_{\pi_\rho}(g)\le C_8\ell^2 \sum_{x=0}^{\ell -2} \int_{\{0,1\}^{\Z}} \big[ g(\xi^{x,x+1}) - g(\xi )\big]^2\diff\pi_{\rho}(\xi )
    \end{equation}
    for any local function $g:\{0,1\}^{\Z}\longrightarrow\R$ supported in $\{0,\hdots ,\ell -1\}$ for some~$\ell\ge 1$, and such that~$\varphi_g(\rho )=0$ for any $\rho\in [0,1]$.
\end{lemma}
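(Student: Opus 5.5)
The statement is \cref{lemma:spectralgap_SSEP}, the classical spectral gap for the symmetric simple exclusion process on the segment $\{0,\hdots,\ell-1\}$. My plan is to reduce it to the canonical (fixed particle number) spectral gap and then sum over sectors. The hypothesis $\varphi_g(\rho)=0$ for every $\rho\in[0,1]$ carries the key information. Since $\varphi_g(\rho)=\sum_{k=0}^{\ell}\rho^k(1-\rho)^{\ell-k}\sum_{|\xi|=k}g(\xi)$, where $|\xi|$ is the number of particles in the box, this polynomial identity in $\rho$ forces $\sum_{|\xi|=k}g(\xi)=0$ for every $k$. Indeed, the Bernstein basis polynomials are linearly independent. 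Hence the conditional expectation $\pi_\rho[g\mid |\xi|=k]$ vanishes for all $k$.

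It follows that $\mathrm{Var}_{\pi_\rho}(g)=\sum_k \pi_\rho(|\xi|=k)\,\mathrm{Var}_{k}(g)$, where $\mathrm{Var}_k$ is the variance under the uniform measure on configurations of $\{0,\hdots,\ell-1\}$ with $k$ particles. The Dirichlet form on the right-hand side of \eqref{eq:spectralgap} also decomposes, because swaps preserve $|\xi|$. Moreover $\pi_\rho$ conditioned on $\{|\xi|=k\}$ is uniform. It therefore suffices to show, uniformly in $k$ and $\ell$, the canonical bound
\begin{equation*}
\mathrm{Var}_k(g)\le C_8\ell^2\sum_{x=0}^{\ell-2}E_k\big[(g(\xi^{x,x+1})-g(\xi))^2\big].
\end{equation*}

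For this canonical bound I will use a comparison argument, in the spirit of Diaconis and Saloff-Coste. The key steps are as follows.
\begin{enumerate}
\item Start from the mean-field (complete graph) exchange dynamics on $k$-particle configurations. This is the Bernoulli--Laplace chain, whose gap is explicit. It gives $\mathrm{Var}_k(g)\le \frac{C}{\ell}\sum_{x<y}E_k[(g(\xi^{x,y})-g(\xi))^2]$.
\item Write each long-range swap $\xi\mapsto\xi^{x,y}$ as a composition of $O(|y-x|)$ nearest-neighbour swaps. This is the path $x\to y$ followed by the reverse path, which leaves the intermediate sites unchanged.
\item Apply Cauchy--Schwarz to this path decomposition. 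Each nearest-neighbour swap is the image of the uniform measure under a bijection, so it leaves $E_k$ invariant, and we get $E_k[(g(\xi^{x,y})-g(\xi))^2]\le C|y-x|\sum_{z=x}^{y-1}E_k[(g(\xi^{z,z+1})-g(\xi))^2]$.
\item Sum over pairs $x<y$. Each bond $\{z,z+1\}$ is used by $O(\ell^2)$ pairs, each with weight $|y-x|\le\ell$. This gives a total factor $O(\ell^3)$, and together with the $\frac1\ell$ from step 1 it yields $\ell^2$.
\end{enumerate}
The main obstacle is the mean-field gap in step 1. I would take it from the known spectrum of the Bernoulli--Laplace model (gap of order $\ell$ with generator $\sum_{x<y}$). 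Alternatively, I would prove it directly by induction on $\ell$ using a martingale/conditioning argument. Either route yields a constant independent of $k$ and $\ell$, hence a universal $C_8$.
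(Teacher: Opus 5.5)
Your proposal is correct and follows the standard route. The paper gives no proof of this lemma; it cites Diaconis--Saloff-Coste, whose comparison method is exactly your argument: reduce to canonical sectors via the hypothesis $\varphi_g\equiv 0$, use the Bernoulli--Laplace gap of order $\ell$, and compare Dirichlet forms along paths with Cauchy--Schwarz at a cost of order $\ell^3$. This is also the architecture the paper uses for \cref{prop:spectralgap}, where the mean-field bound of \cref{lemma:meanfieldspectralgap} is followed by the moving particle and moving energy path arguments.
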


Consider a local function $f:\Omega\longrightarrow\R$ that depends only on coordinates in $\{0,\hdots ,\ell -1\}$, and such that $\psi_f(\rho ,\mathcal{E})=0$ for any $\rho\in [0,1]$ and $\mathcal{E}\in [\rho ,\kappa\rho]$. Let $g\coloneq \E_{\rho ,\mathcal{E}}[f|\xi ]$ be the conditional expectation of $f$ with respect to the random variable $\xi$ under $\mu_{\rho ,\mathcal{E}}$, and define also $h=f-g$.  Immediately,
\begin{equation*}
    \mathrm{Var}_{\rho ,\mathcal{E}}(f) = \mathrm{Var}_{\rho ,\mathcal{E}}(g) + \mathrm{Var}_{\rho ,\mathcal{E}}(h).
\end{equation*}
As under $\mu_{\rho ,\mathcal{E}}$, the random variable $\xi$ is distributed according to the Bernoulli product measure~$\pi_\rho$, and as $\varphi_g(\rho )=\psi_f(\rho ,\mathcal{E})=0$ for any $\rho \in [0,1]$, we can apply the spectral gap estimate in \cref{lemma:spectralgap_SSEP} to get that 
\begin{align*}
    \mathrm{Var}_{\rho ,\mathcal{E}}(g) & \le \frac{C_8\ell^2}{\kappa -1} \sum_{x=0}^{\ell -2} \int_{\Omega}\big( c_{x\to x+1}^p(\eta )+c_{x+1\to x}^p (\eta ) \big) \big[ g(\xi^{x,x+1})-g(\xi )\big]^2\diff\mu_{\rho ,\mathcal{E}}(\eta )\\ 
    & = \frac{C_8\ell^2}{\kappa -1} \sum_{\substack {x,y\in\{0,\hdots ,\ell -1\}\\ |x-y|=1}} \mathfrak{D}_{x\to y}^p (g;\mu_{\rho ,\mathcal{E}})\\
    & \le  \frac{C_8\ell^2}{\kappa -1} \sum_{\substack {x,y\in\{0,\hdots ,\ell -1\}\\ |x-y|=1}} \mathfrak{D}_{x\to y}^p (f;\mu_{\rho ,\mathcal{E}})
\end{align*}
where the last estimate comes from Jensen's inequality as the Dirichlet form is convex and $g$ is a conditional expectation of $f$. We then only have to estimate the variance of $h$. Notice that~$h(0)=0$, so we have 
\begin{equation*}
    \mathrm{Var}_{\rho ,\mathcal{E}}(h)= \sum_{r=1}^\ell \E_{\rho ,\mathcal{E}}[h^2|r]\mu_{\rho ,\mathcal{E}} (\xi_0+\hdots +\xi_{\ell -1}=r)
\end{equation*}
where $\E_{\rho ,\mathcal{E}}[\cdot |r]$ is the expectation associated with $\mu_{\rho ,\mathcal{E}}(\cdot |r)$, the measure conditioned to the event~$\{\xi_0+\hdots +\xi_{\ell -1}=r\}$. Therefore, we only need to estimate $\E_{\rho ,\mathcal{E}}[h^2|r]$ for any $1\le r\le \ell$. For a configuration $\eta\in\Omega$, we denote by $\mathrm{Part}(\eta )\coloneq \big\{ 0\le x\le \ell -1 : \xi_x(\eta )=1\big\}$ the set that contains the coordinates of the particles in the box $\{0,\hdots ,\ell -1\}$ in $\eta$. Recall \cref{defin:invariantmeasures}. As under the measure $\mu_{\rho ,\mathcal{E}}$, the random variables $(\eta_x-1)_{x\in\mathrm{Part}(\eta )}$ are distributed according to a Binomial product measure, we can use the mean-field spectral gap estimate stated in \cref{lemma:meanfieldspectralgap} for SEP($\kappa -1$) to bound
\begin{equation}\label{eq:aftermeanfield}
    \E_{\rho ,\mathcal{E}}[h^2|r]  \le \int_{\Omega} \frac{C_{11}}{r}\sum_{x,y\in\mathrm{Part}(\eta )}(\eta_x -1)(\kappa -\eta_y)\big[ f(\eta^{x\to y})- f(\eta )\big]^2\diff\mu_{\rho ,\mathcal{E}}(\eta |r)
\end{equation}
for some universal constant $C_{11}>0$. Here, we also used that $h(\eta^{x\to y})-h(\eta )=f(\eta^{x\to y})-f(\eta )$ since the transformation~$\eta^{x\to y}$ does not change the value of $\xi$. The term on the right-hand side of \eqref{eq:aftermeanfield} does look like a Dirichlet form of the process, but it is not as it contains long-range energy transfer terms. The goal is then to estimate it by the Dirichlet form of the process by creating a deterministic path of transformations allowed by the dynamics of the process, that enables the transfer of an energy unit from any site $x$ to any site $y$. This is the objective of the following \emph{moving energy lemma}.

\begin{lemma}[Moving energy lemma]
    \label{lemma:movingenergy}
    There exists a universal constant $C_9=C_9(\kappa )>0$ such that for any $\eta\in\Omega$ and any $x,y\in\mathrm{Part}(\eta )$ with $x<y$, such that $\eta_x>1$ and $\eta_y<\kappa$, we have 
    \begin{equation*}
        \big[ f(\eta^{x\to y})-f(\eta )\big]^2\mu_{\rho ,\mathcal{E}}(\eta |r) \le C_9\ell \sum_{\substack {z,z'\in\{0,\hdots ,\ell -1\}\\ |z-z'|=1}} \Big( \mathfrak{D}_{z\to z'}^p(f|r)+\mathfrak{D}_{z\to z'}^e(f|r)\Big).
    \end{equation*}
\end{lemma}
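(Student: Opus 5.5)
We build an explicit path of at most $C\ell$ elementary moves, each allowed by the dynamics, that turns $\eta$ into $\eta^{x\to y}$. We then telescope and apply Cauchy--Schwarz. Writing $\eta=\eta^{(0)},\eta^{(1)},\hdots ,\eta^{(n)}=\eta^{x\to y}$ with $n\le C\ell$ and each step a nearest-neighbour particle swap or a nearest-neighbour energy transfer, we get
\begin{equation*}
    \big[ f(\eta^{x\to y})-f(\eta )\big]^2 \le n\sum_{k=0}^{n-1}\big[ f(\eta^{(k+1)})-f(\eta^{(k)})\big]^2 .
\end{equation*}
Each term, multiplied by $\mu_{\rho ,\mathcal{E}}(\eta |r)$, must then be bounded by a constant times one summand of $\mathfrak{D}^p_{z\to z'}(f|r)$ or $\mathfrak{D}^e_{z\to z'}(f|r)$, evaluated at the configuration $\eta^{(k)}$. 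Two facts make this work. First, the rates along the path are bounded below: particle swaps have rate $\kappa-1$, and we only perform energy transfers when the source has energy $>1$ and the target has energy $<\kappa$, so their rate is at least $1$. Second, the ratio $\mu_{\rho ,\mathcal{E}}(\eta|r)/\mu_{\rho ,\mathcal{E}}(\eta^{(k)}|r)$ is bounded by a constant depending only on $\kappa$ and $\mathtt p$. This holds because the path only ever changes the energies of a bounded number of particles, and it changes them by a bounded amount.

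The path is constructed by moving energy particle by particle. Let $x=z_0<z_1<\dots<z_m=y$ be the particle positions between $x$ and $y$. We transfer one unit from $z_0$ to $z_1$ across the empty sites in between. To do this, we first move the particle at $z_0$ rightwards by particle swaps until it is adjacent to $z_1$. We then perform the energy transfer, and move the particle back by the reversed swaps. We repeat this from $z_1$ to $z_2$, and so on.

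The intermediate particles are where the construction can fail. Particle $z_i$ receives a unit and must pass one on, and if $\eta_{z_i}=\kappa$ it cannot receive first. The fix is to order the operations: if $\eta_{z_i}>1$, it first gives to $z_{i+1}$ and then receives from $z_{i-1}$, and if $\eta_{z_i}<\kappa$, it receives first and then gives. Since $\kappa\ge2$, one of these always holds. The cleanest way to organise this is to process the chain from the right end, pushing a unit from $z_{m-1}$ to $z_m$ (allowed since $\eta_y<\kappa$), then from $z_{m-2}$ to $z_{m-1}$, and so on. Whenever some $z_i$ has $\eta_{z_i}=1$, the first step cannot start there. In that case we split the chain at such sites and process each segment in the appropriate direction. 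Each energy transfer is preceded and followed by at most $|z_{i+1}-z_i|$ swaps, so the total number of moves is $O(\ell)$. Along the path, every particle's energy differs from its original value by at most $1$, so the measure ratio is bounded by $C(\kappa,\mathtt p)$. This uses that $\mathtt p\in(0,1)$, and the constant is acceptable here since only the dependence on $\ell$ matters.

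The final step is the change of variables in the sum. For each elementary step type $(z,z')$, the map $\eta\mapsto\eta^{(k)}$ is injective, given the fixed data $x$, $y$ and $k$. Hence, after summing in the conditional measure, each Dirichlet term $\mathfrak D_{z\to z'}(f|r)$ is used at most a bounded number of times per $k$. This gives the factor $C_9\ell$, since there is one factor $n\le C\ell$ from Cauchy--Schwarz and each bond contributes at most $O(1)$ times per path. The statement is pointwise in $\eta$, so we interpret the right-hand side as bounding the single summands $\tfrac12 c\,[f(\eta^{(k+1)})-f(\eta^{(k)})]^2\mu(\eta^{(k)}|r)$, which are dominated by the full Dirichlet forms. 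The main obstacle is the combinatorial bookkeeping in the case where intermediate particles sit at the extreme energies $1$ or $\kappa$. I would handle this with the right-to-left ordering described above.
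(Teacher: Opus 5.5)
Your argument is correct in outline, but it follows a different path from the paper's. The paper does not relay the unit through the intermediate particles. It carries the particle at $y$ to $x+1$ by the fixed sequence of exchanges across the bonds $\{y-1,y\},\{y-2,y-1\},\hdots ,\{x+1,x+2\}$. It then performs the single transfer across $\{x,x+1\}$, which is legal since $\eta_x>1$ and $\eta_y<\kappa$, and carries the particle back. An exchange of two adjacent occupied sites is not a move of the dynamics, so the paper needs \cref{lemma:movingparticle}, which realises such a swap by at most $\kappa$ energy transfers across the same bond.

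You never exchange two particles, so you need no moving particle lemma. Instead you must schedule the transfers $T_i\colon z_i\to z_{i+1}$. The constraints from intermediate energies $1$ and $\kappa$ only link consecutive $T_i$'s, so they can always be satisfied simultaneously.

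The paper's choice buys simpler bookkeeping. Apart from the bounded local swaps, only the particles coming from $x$ and $y$ change energy, so the measure comparison is immediate. Moreover, $\eta\mapsto\eta^{(k)}$ is injective and determined by $(x,y)$. This makes the integrated form of the estimate, which is what is really used when the lemma is summed over $\eta$ in the proof of \cref{prop:spectralgap}, a plain change of variables. Your path depends on the positions and energies of all particles between $x$ and $y$, so your injectivity claim is not evident. It is, however, not needed for the pointwise statement. There you correctly bound each of the $O(1)$ uses of a bond by the full Dirichlet form.

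Two points in your write-up need repair.

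First, the measure comparison. Your sentence ``the path only ever changes the energies of a bounded number of particles'' is false for your path, since every intermediate particle is changed at some time. The observation that each particle is off by at most one unit does not bound $\mu_{\rho ,\mathcal{E}}(\eta |r)/\mu_{\rho ,\mathcal{E}}(\eta^{(k)}|r)$ either. Some admissible orders fail outright: doing all odd-indexed transfers first, with intermediate energies strictly between $1$ and $\kappa$, displaces unboundedly many particles at once. By log-concavity of binomial coefficients this gives a ratio exponentially large in their number. What holds, and what you should state, concerns your concrete schedule: cut the chain at the intermediate particles of energy $1$, treat the blocks from left to right, and treat each block from right to left. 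Under this schedule, at every time at most four particles deviate from $\eta$, each by one unit.

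Second, the constant. Every move preserves the number of particles and the total energy, so all powers of $\rho$ and $\mathtt{p}$ cancel. The ratio is then a product of at most four factors $\binom{\kappa -1}{a}/\binom{\kappa -1}{a\pm 1}$, each at most $\kappa -1$. This gives a constant depending on $\kappa$ only, as the lemma asserts, rather than your $C(\kappa ,\mathtt{p})$. It also removes any need for $\mathtt{p}\in (0,1)$.
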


\begin{figure}
    \centering
    \includegraphics[scale=0.7]{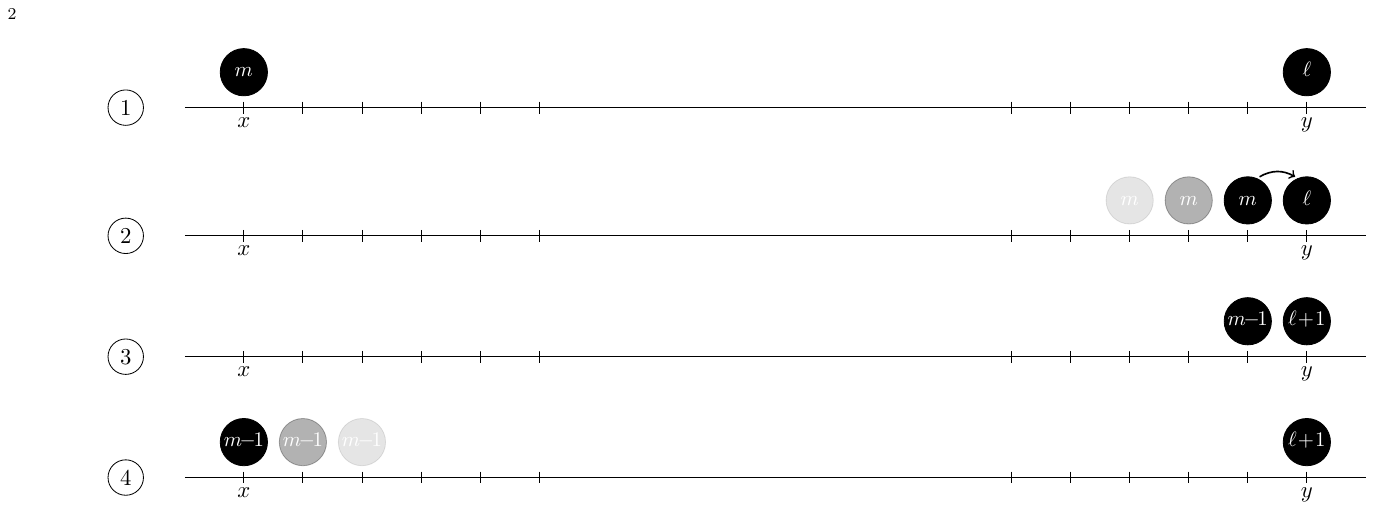}
    \label{fig:movingenergy}
    \caption{Illustation of the path of transformations used in the proof of the moving energy \cref{lemma:movingenergy}. The initial configuration $\eta$ is represented in \ding{192} where there is a particle with energy $m$ at $x$ and a particle with energy $\ell$ at $y$, with potentially some other particles in between. In step \ding{193}, we bring the particle at $x$ close to the particle at $y$ by performing nearest-neighbour exchanges. In step~\ding{194}, we transfer an energy unit from the particle at $x$ to the particle at $y$. Finally, in step \ding{195}, we bring back the particle to its initial position $y$ by performing again nearest-neighbour exchanges and we obtain the configuration $\eta^{x\to y}$.}
\end{figure}

\begin{proof}
    Take $\eta\in\Omega$ and $x,y\in\mathrm{Part}(\eta )$ as in the statement, and set $m=y-x$. As the dynamics of the process enables only energy transfer between nearest-neighbour sites, we first bring the particle at $y$ close to the particle at $x$. To do so, we define recursively a sequence of configurations by $\eta^{(0)}=\eta$ and $\eta^{(k+1)}=(\eta^{(k)})^{z_k,z_k+1}$ for any $0\le k\le m-2$, where $z_k=y-k-1$. After these nearest-neighbour exchanges, we have $(\eta_x^{(m-1)},\eta_{x+1}^{(m-1)})=(\eta_x ,\eta_y)$, \textit{i.e.}~the particle that was initially at $y$ is now at $x+1$. Then, we set $\eta^{(m)}=(\eta^{(m-1)})^{x\to x+1}$ to transfer an energy unit from the particle at $x$ to the particle initially at $y$ (which is possible under the dynamics of the process). Finally, we bring back the particle to its initial position $y$ by setting~$\eta^{(k+1)}=(\eta^{(k)})^{z_k,z_k+1}$ for any~$m\le k\le 2m-2$ where $z_k=x+k-m+1$. We illustrate this path in Figure 2. After all these exchanges, we have $\eta^{(2m-1)}=\eta^{x\to y}$. An application of Cauchy-Schwarz inequality then yields
    \begin{multline}\label{eq:CSmovingenergy}
        \big[ f(\eta^{x\to y})-f(\eta )\big]^2 \le 2m \sum_{k=0}^{m-2} \big[ f((\eta^{(k)})^{z_k,z_k+1})-f(\eta^{(k)})\big]^2  \\ + 2m \big[ f((\eta^{(m-1)})^{x\to x+1})-f(\eta^{(m-1)})\big]^2 \\ 
        + 2m \sum_{k=m}^{2m-2} \big[ f((\eta^{(k)})^{z_k,z_k+1})-f(\eta^{(k)})\big]^2.
    \end{multline}
    Note that the exchanges described here are not yet all permitted by the dynamics of the process, since they may involve swapping the positions of two consecutive particles. Even though the particle-jump dynamics of the process does not allow the exchange of two consecutive particles, the energy–transfer dynamics allows to swap their energy values, thereby effectively performing the exchange. This is the objective of the following \emph{moving particle lemma}.

    \begin{lemma}[Moving particle lemma]\label{lemma:movingparticle}
        There exists a positive constant $C_{10}=C_{10}(\kappa )$ such that for any $\eta\in\Omega$ and any $x\in\Z$, we have 
        \begin{multline*}
            \big[ f(\eta^{x,x+1})-f(\eta )\big]^2\mu_{\rho ,\mathcal{E}}(\eta |r )\\ \le C_{10}\big( \mathfrak{D}_{x\to x+1}^p (f|r) +\mathfrak{D}_{x+1\to x}^p (f|r) + \mathfrak{D}_{x\to x+1}^e (f|r) +\mathfrak{D}_{x+1\to x}^e (f|r)\big).
        \end{multline*}
    \end{lemma}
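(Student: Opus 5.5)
We split according to the occupation of the two sites $x,x+1$. Here $\mathfrak{D}(\cdot|r)$ denotes the Dirichlet forms of \cref{defin:dirichletforms} computed with respect to the conditioned measure $\mu_{\rho ,\mathcal{E}}(\cdot|r)$. Since $\eta^{x,x+1}$ keeps $\sum\xi$ in the box, $\mu_{\rho ,\mathcal{E}}(\cdot|r)$ is invariant under the swap, as it is under energy transfers.

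\textbf{Case 1: at most one of the two sites is occupied.} If both sites are empty, then $\eta^{x,x+1}=\eta$ and there is nothing to prove. If exactly one site is occupied, say $\xi_x=1,\xi_{x+1}=0$, then $c^p_{x\to x+1}(\eta)=\kappa-1\ge 1$. The left-hand side is therefore bounded by $2\,\mathfrak{D}^p_{x\to x+1}(f|r)$, keeping only the term at $\eta$ in the integral. The symmetric situation is handled the same way with $\mathfrak{D}^p_{x+1\to x}$.

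\textbf{Case 2: both sites are occupied, with energies $(m,k)$, $m,k\in\{1,\dots,\kappa\}$.} The swap $\eta^{x,x+1}$ turns $(m,k)$ into $(k,m)$, and this is a pure energy redistribution between neighbouring particles. Say $m>k$. We reach $(k,m)$ from $(m,k)$ by $m-k$ successive transfers $x\to x+1$, passing through $\zeta^{(j)}$ with energies $(m-j,k+j)$ for $j=0,\dots,m-k$ and all other coordinates unchanged.

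Each step $j\to j+1$ is allowed. Indeed $m-j\ge k+1\ge 2$, and $k+j\le m-1\le\kappa-1$, so the rate $c^e_{x\to x+1}(\zeta^{(j)})=(m-j-1)(\kappa-k-j)$ is at least $1$. By Cauchy–Schwarz,
\begin{equation*}
  \big[f(\eta^{x,x+1})-f(\eta)\big]^2\le (m-k)\sum_{j=0}^{m-k-1}\big[f((\zeta^{(j)})^{x\to x+1})-f(\zeta^{(j)})\big]^2 .
\end{equation*}
The case $m<k$ uses transfers $x+1\to x$ and $\mathfrak{D}^e_{x+1\to x}$; the case $m=k$ is trivial.

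The remaining issue is the comparison of measures. We multiply by $\mu_{\rho ,\mathcal{E}}(\eta|r)$ and bound each term by $2\,\mathfrak{D}^e_{x\to x+1}(f|r)$ times the ratio $\mu(\eta|r)/\mu(\zeta^{(j)}|r)$. Since the measure is product and the conditioning on $\xi$ is unaffected, this ratio equals
\begin{equation*}
  \frac{\nu_{\rho ,\mathcal{E}}(m)\,\nu_{\rho ,\mathcal{E}}(k)}{\nu_{\rho ,\mathcal{E}}(m-j)\,\nu_{\rho ,\mathcal{E}}(k+j)} .
\end{equation*}
In it, the Bernoulli factors $\rho$ cancel. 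The powers of $\mathtt{p}$ and $1-\mathtt{p}$ also cancel, because the total energy $(m-1)+(k-1)$ is preserved along the path. Only ratios of binomial coefficients ${\kappa-1\choose\cdot}$ remain, and these are bounded by a constant depending only on $\kappa$.

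This $\rho ,\mathcal{E}$-independence is the key point: the proposition claims a constant $C_{10}(\kappa)$, uniform in the density parameters, and it holds precisely because energy conservation makes the $\mathtt{p}$-dependence disappear. Summing at most $\kappa$ terms, each with weight at most $\kappa$, gives $C_{10}$ of order $\kappa^2\max{\kappa-1\choose i}^2/\min{\kappa-1\choose i}^2$. Combining Cases 1 and 2 yields the stated bound.
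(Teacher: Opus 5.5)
Your proposal is correct and follows essentially the same route as the paper. You split by occupation, using the particle-jump Dirichlet form when exactly one site is occupied; when both sites are occupied, you realize the swap as at most $\kappa$ nearest-neighbour energy transfers with rates at least $1$, apply Cauchy--Schwarz, and compare the conditioned measures along the path. Your explicit ratio of binomial coefficients, in which the powers of $\mathtt{p}$ and $1-\mathtt{p}$ cancel by energy conservation, is just a concrete version of the paper's ``calculation based on the reversibility relation.'' You also correctly evaluate $c^e_{x\to x+1}$ at the intermediate configurations $\zeta^{(j)}$, whereas the paper writes $c^e_{x\to x+1}(\eta)$.
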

    We first complete the proof of the moving energy \cref{lemma:movingenergy} using the moving particle \cref{lemma:movingparticle}, and then we prove the latter. Multiply both sides of inequality \eqref{eq:CSmovingenergy} by $\mu_{\rho ,\mathcal{E}}(\eta |r)$. Notice that~$\mu_{\rho ,\mathcal{E}}(\eta |r) = \mu_{\rho ,\mathcal{E}}(\eta^{(k)}|r)$ for any $k\in \{0,\hdots ,m-2\}$ as these configurations are obtained from each other only performing particle exchanges, and also $\mu_{\rho ,\mathcal{E}}(\eta^{(k)}|r)=\mu_{\rho ,\mathcal{E}}(\eta^{(m-1)}|r)$ for any~$k\in \{m,\hdots ,2m-2\}$ for the same reason. However, a computation based on the reversibility relation \eqref{eq:reversibility1} shows that $\mu_{\rho ,\mathcal{E}}(\eta |r) \le (\kappa -1)^2 \mu_{\rho ,\mathcal{E}}(\eta^{(m-1)}|r)$. Thus, applying \cref{lemma:movingparticle} to each term on the right-hand side of \eqref{eq:CSmovingenergy}, one gets 
    \begin{multline*}
        \big[ f(\eta^{x\to y})-f(\eta )\big]^2\mu_{\rho ,\mathcal{E}}(\eta |r) \\ \le 2mC_{10} \sum_{k=0}^{m-2}\Big( \mathfrak{D}_{z_k\to z_k+1}^p(f|r) +\mathfrak{D}_{z_k+1\to z_k}^p(f|r)  + \mathfrak{D}_{z_k\to z_k+1}^e(f|r) +\mathfrak{D}_{z_k+1\to z_k}^e(f|r)\Big) \\
        + 2m(\kappa -1)^2 \mathfrak{D}_{x\to x+1}^e(f|r)\\ 
        + 2m (\kappa -1)^2C_{10}\sum_{k=m}^{2m-2}\Big( \mathfrak{D}_{z_k\to z_k+1}^p(f|r) +\mathfrak{D}_{z_k+1\to z_k}^p(f|r)  + \mathfrak{D}_{z_k\to z_k+1}^e(f|r) +\mathfrak{D}_{z_k+1\to z_k}^e(f|r)\Big).
    \end{multline*}
    As $m\le \ell$, we can bound the right-hand side by $C_9\ell$ times the total Dirichlet form in the box~$\{0,\hdots ,\ell -1\}$ for some constant $C_9=C_9(\kappa )>0$, which concludes the proof of \cref{lemma:movingenergy}.
\end{proof}

\begin{proof}[Proof of \cref{lemma:movingparticle}]
    Notice that $f(\eta^{x,x+1})-f(\eta ) =0$ if $\eta_x=\eta_{x+1}$ so we only need to consider the case where $\eta_x\neq\eta_{x+1}$, and without loss of generality we assume that $\eta_x>\eta_{x+1}$, the converse case being treated similarly. First, write that
    \begin{multline}\label{eq:movingparticle1}
        \big[ f(\eta^{x,x+1})-f(\eta )\big]^2 = \big[ \xi_x(1-\xi_{x+1})+(1-\xi_x)\xi_{x+1}\big]\big[ f(\eta^{x,x+1})-f(\eta )\big]^2  \\ + \xi_x\xi_{x+1}\big[ f(\eta^{x,x+1})-f(\eta )\big]^2.
    \end{multline}
    The first term on the right-hand side of this inequality is equal to 
    \begin{equation*}       
        \frac{1}{\kappa -1}\big( c_{x\to x+1}^p(\eta )+c_{x+1\to x}^p(\eta )\big)\big[ f(\eta^{x,x+1})-f(\eta )\big]^2 
    \end{equation*}
    so we focus on the second term. When both sites $x$ and $x+1$ are occupied by a particle, it is possible to exchange the values of $\eta_x$ and $\eta_{x+1}$ by using the energy exchange dynamics. More precisely, we define a sequence $(\eta^{(k)})_{0\le k\le n}$ recursively by setting $\eta^{(0)}=\eta$, and at any step,~$\eta^{(k+1)}$ is obtained from $\eta^{(k)}$ by performing the transformation $x\to x+1$ until we indeed have $\eta^{(n)}=\eta^{x,x+1}$. This is illustrated in Figure 3. To do so, we need $n = \eta_x-\eta_{x+1}\le\kappa$ steps. Using Cauchy-Schwarz inequality, and using the fact that $(\eta_x^{(k)}-1)(\kappa -\eta_{x+1}^{(k)})\ge 1$ at any step~$k$, we can bound
    \begin{equation*}
        \xi_x\xi_{x+1}\big[ f(\eta^{x,x+1})-f(\eta )\big]^2 \le \kappa\sum_{k=0}^{n-1}c_{x\to x+1}^e(\eta )\big[ f((\eta^{(k)})^{x\to x+1})-f(\eta^{(k)})\big]^2.
    \end{equation*}
    Multiply both sides of the inequality \eqref{eq:movingparticle1} by $\mu_{\rho ,\mathcal{E}}(\eta |r)$, and use the fact that the integrand of the Dirichlet form is non-negative to get that
    \begin{multline}\label{eq:movingparticle2}
        \big[ f(\eta^{x,x+1})-f(\eta )\big]^2 \mu_{\rho ,\mathcal{E}}(\eta |r) \le \frac{1}{\kappa -1}\big( \mathfrak{D}_{x\to x+1}^p(f|r)+\mathfrak{D}_{x+1\to x}^p(f|r)\big)\\
        +\kappa\sum_{k=0}^{n-1} c_{x\to x+1}^e(\eta )\big[ f((\eta^{(k)})^{x\to x+1})-f(\eta^{(k)})\big]^2\mu_{\rho ,\mathcal{E}}(\eta |r).
    \end{multline}
    Now, using the fact that for any $0\le k\le n(\le\kappa)$, we have $(\eta_x^{(k)},\eta_{x+1}^{(k)})=(\eta_x -k,\eta_{x+1}+k)$, a calculation based on the reversibility relation \eqref{eq:reversibility1} shows that 
    \begin{equation*}
    \mu_{\rho ,\mathcal{E}}(\eta |r) \le c \mu_{\rho ,\mathcal{E}}(\eta^{(k)}|r)  ,\qquad \mbox{ for } k =1,\hdots ,n
    \end{equation*}
    for some constant $c=c(\kappa )>0$. Using once again the fact that $n\le\kappa$, we can bound the second term on the right-hand side of \eqref{eq:movingparticle2} by $c\kappa^2\mathfrak{D}_{x\to x+1}^e(f|r)$ and get the result.
\end{proof}

\begin{figure}
    \centering
    \includegraphics[scale=.9]{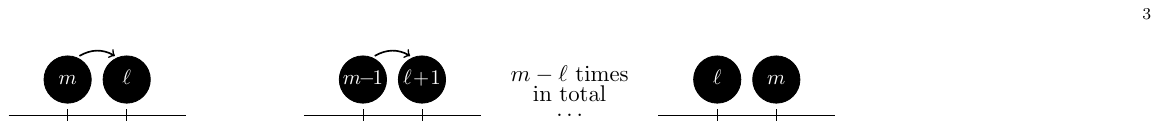}
    \label{fig:movingparticle}
    \caption{Illustration of the path of transformation used in the moving particle \cref{lemma:movingparticle} to exchange the values of $\eta_x$ and $\eta_{x+1}$ when both sites are occupied by a particle.}
\end{figure}

We are now in position to complete the proof of the spectral gap estimate in \cref{prop:spectralgap}. Recall \eqref{eq:aftermeanfield}. By \cref{lemma:movingenergy}, we get the bounds 
\begin{align*}
        \E_{\rho ,\mathcal{E}}[h^2|r] & \le \frac{C_{11}}{r}C_9\ell \left(\int_{\Omega}\sum_{x,y\in\mathrm{Part}(\eta)} (\eta_x-1)(\kappa -\eta_y)\diff\mu_{\rho ,\mathcal{E}}(\eta |r)\right) \\
        &\hspace{4cm}\times \sum_{\substack {z,z'\in\{0,\hdots ,\ell -1\}\\ |z-z'|=1}} \Big( \mathfrak{D}_{z\to z'}^p(f|r)+\mathfrak{D}_{z\to z'}^e(f|r)\Big)\\ 
        & \le \frac{C_9C_{11}\ell}{r}\kappa^2 r^2 \sum_{\substack {z,z'\in\{0,\hdots ,\ell -1\}\\ |z-z'|=1}} \Big( \mathfrak{D}_{z\to z'}^p(f|r)+\mathfrak{D}_{z\to z'}^e(f|r)\Big)
\end{align*}
which yields the result with respect to the measure $\mu_{\rho ,\mathcal{E}}(\cdot |r)$ as $r\le \ell$. Notice that the Dirichlet form with respect to $\mu_{\rho ,\mathcal{E}}(\cdot |r)$ is always zero if $r=0$. Then, putting this inequality into the decomposition of the variance of $h$, we get the desired estimate. This concludes the proof of \cref{prop:spectralgap}.\hfill\qed

\subsection{Mean-field spectral gap estimate for \texorpdfstring{SEP($\kappa$)}{kappa}}
\label{subsec:meanfield_spectral_gap}

The SEP($\kappa$) is a generalization of the simple exclusion process where each site can contain up to $\kappa$ particles instead of only one, it was first introduced in \cite{schutz_non-abelian_1994} and thoroughly studied in recent years, see for instance \cite{franceschini_non-equilibrium_2024}. During the proof of \cref{prop:spectralgap}, we needed a mean-field version of the spectral gap estimate, that we state and prove here.

\begin{lemma}[Mean-field spectral gap estimate]
    \label{lemma:meanfieldspectralgap}
    For $\alpha\in [0,1]$, define the binomial product measure $\varpi_\alpha = \mathrm{Bin}(\kappa ,\alpha )^{\otimes\Z}$. There exists a universal constant $C_{11}=C_{11}(\kappa )>0$ such that for any local function $f:\Omega\longrightarrow\R$ supported in $\{0,\hdots ,\ell -1\}$ for some $\ell\ge 1$, and such that~$\varpi_\alpha(f)=0$ for any $\alpha\in [0,1]$, we have
    \begin{equation}\label{eq:meanfieldspectralgap}
        \mathrm{Var}_{\varpi_\alpha}(f) \le \frac{C_{11}}{\ell}\sum_{x,y=0}^{\ell -1}\int_{\Omega}\eta_x(\kappa -\eta_y)\big[ f(\eta^{x\to y})-f(\eta )\big]^2\diff\varpi_\alpha(\eta ).
    \end{equation}
\end{lemma}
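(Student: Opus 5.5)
The plan is to realize the complete-graph dynamics on the right-hand side of \eqref{eq:meanfieldspectralgap} as a mean-field particle system. Each site $x\in\{0,\hdots,\ell-1\}$ holds $\kappa$ slots, of which $\eta_x$ are occupied. The operator
\begin{equation*}
\mathscr{L}_\ell f(\eta)=\frac1\ell\sum_{x,y=0}^{\ell-1}\eta_x(\kappa-\eta_y)\big[f(\eta^{x\to y})-f(\eta)\big]
\end{equation*}
is then exactly the image of the exclusion process on the complete graph with $\kappa\ell$ vertices (the slots), where each ordered pair of slots (occupied, empty) exchanges at rate $1/\ell$. Under $\varpi_\alpha$, the slot occupations are i.i.d.\ $\mathrm{Ber}(\alpha)$, and $\eta_x$ is the number of occupied slots in block $x$.

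Since $f$ depends only on the block counts, it lifts to a function $F$ on $\{0,1\}^{\kappa\ell}$ that is symmetric within blocks. Its variance under $\mathrm{Ber}(\alpha)^{\otimes\kappa\ell}$ equals $\mathrm{Var}_{\varpi_\alpha}(f)$, and the Dirichlet form of the slot process applied to $F$ equals the right-hand side of \eqref{eq:meanfieldspectralgap} up to a factor $1/2$. The hypothesis $\varpi_\alpha(f)=0$ for every $\alpha$ implies that $F$ has zero mean on each hyperplane $\{\sum_i\zeta_i=k\}$, since the family $\alpha\mapsto\mathrm{Ber}(\alpha)^{\otimes\kappa\ell}$ is complete for the total occupation. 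It therefore suffices to prove a spectral gap of order one, uniformly in $k$ and $\ell$, for the complete-graph exclusion (Bernoulli–Laplace model) on $n=\kappa\ell$ slots with exchange rate $1/\ell$ per pair, and then to average over $k$.

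The main step is this uniform gap. The generator $\sum_{i<j}\frac1\ell[F(\zeta^{i,j})-F]$ acting on the slice $\{\sum_i\zeta_i=k\}$ equals $\frac1\ell$ times the random-transposition walk on $S_n$ projected to $k$-subsets. For the random-transposition walk with rate $1$ per transposition, the spectral gap equals $n$ (Diaconis–Shahshahani); on the slice, the gap is also explicitly $n$ (Bernoulli–Laplace, Diaconis–Shahshahani 1987), attained by linear functions. Hence the gap of $\mathscr{L}$ on each slice is $n/\ell=\kappa$, which gives $\mathrm{Var}(F\mid k)\le \frac{1}{\kappa}\mathcal{D}(F\mid k)$. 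Summing over $k$ with weights $\mathrm{Bin}(n,\alpha)(k)$ and using that $F$ is mean-zero on each slice yields \eqref{eq:meanfieldspectralgap} with $C_{11}$ depending only on $\kappa$; the extra constants come from the factor $1/2$ in the Dirichlet form and from ordered versus unordered pairs.

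If the sharp Bernoulli–Laplace eigenvalue is not to be quoted, I would instead give a direct argument. The first option is to verify that linear statistics diagonalize $\mathscr{L}$ and to use the known decomposition of the slice into Johnson-scheme eigenspaces with eigenvalues $j(n-j+1)/\ell\ge n/\ell$ for $j\ge1$. The second option is a comparison/canonical-path argument, in which every transposition is used directly with congestion $O(1)$. Either route needs no moving-particle step, since the complete graph already allows any transfer.
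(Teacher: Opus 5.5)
Your proof is correct, but it takes a genuinely different route from the paper.

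\textbf{The paper's route.} The paper works on the canonical measures $\varpi_m$ and invokes the recursive criterion of \cite[Theorem 7]{sasada_spectral_2013}. This reduces a bound uniform in $\ell$ to two finite-volume checks. The first is $\lambda(2)>0$, obtained by triangularizing the two-site birth--death generator in the falling-factorial basis. The second is $\lambda^\star(3)>\frac13$, obtained via the Carlen--Carvalho--Loss reduction \cite{carlen_determination_2003} to the spectrum of the operator $\mathcal{K}$. The output is $\inf_\ell\lambda(\ell)>0$ with no explicit constant.

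\textbf{Your route.} You use the slot representation. Since $\eta_x(\kappa-\eta_y)$ counts (occupied, empty) slot pairs and $\mathrm{Bin}(\kappa,\alpha)$ is the image of $\mathrm{Ber}(\alpha)^{\otimes\kappa}$, the mean-field SEP($\kappa$) on $\ell$ sites is an exact lumping of the complete-graph exclusion on $n=\kappa\ell$ vertices with rate $1/\ell$ per pair. Its Dirichlet form on lifted functions is $\frac{1}{2\ell}\sum_{x,y}\int\eta_x(\kappa-\eta_y)[f(\eta^{x\to y})-f(\eta)]^2\diff\varpi_\alpha$. The hypothesis becomes $E[F\mid\sum_i\zeta_i=k]=0$ for every $k$: these conditional means do not depend on $\alpha$, and the Bernstein polynomials are linearly independent. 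Each slice is a quotient of the random-transposition walk, so its gap is at least $n$, which becomes $\kappa$ after the $1/\ell$ rescaling.

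\textbf{Comparison.} Your argument gives the sharp constant $C_{11}=\frac{1}{2\kappa}$, uniformly in $\ell$ and $m$, with no induction. It is essentially the complete-graph case of the exact result of \cite{kanegae_analogue_2026}, which the paper cites but does not use. As a consistency check, your eigenvalues $j(n-j+1)$ at $n=2\kappa$ coincide with the paper's two-site eigenvalues $n(2\kappa+1-n)$, up to the normalization $1/\ell$. What the paper's method buys is robustness. The Caputo--Sasada recursion needs only small-volume spectral information, so it applies to mean-field dynamics with no exactly solvable Bernoulli lift. Your proof depends on that special structure.

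\textbf{On your fallbacks.} You do not need them, but the canonical-path option would not work as sketched. When $k$ is of order $n$, typical pairs of $k$-subsets are at transposition distance of order $n$. A path-based Poincaré bound (canonical paths or multicommodity flows) picks up this length factor. It therefore gives at best a gap of order $1/\ell$, not order one. If you want to avoid quoting Diaconis--Shahshahani, a path-coupling argument is elementary and sufficient. For $A'=(A\setminus\{a\})\cup\{b\}$, fire $(a,c)$ on $A$ together with $(b,c)$ on $A'$, and vice versa, for each $c\notin\{a,b\}$. The two copies then merge at rate $(n-2)/\ell$ and otherwise stay adjacent. This yields a gap of at least $\kappa-2/\ell\ge\kappa-1$ on every slice for $\ell\ge2$; the case $\ell=1$ is trivial, since there the hypothesis forces $f\equiv0$.
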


In \cite{kanegae_analogue_2026}, the authors computed exactly the spectral gap of the SEP($\kappa$) on a general graph in terms of the spectral gap of a random walk on the same graph. Their result implies the estimate given in \cref{lemma:meanfieldspectralgap}. Here, we provide another proof relying on the general techniques developed in \cite{caputo_spectral_2008,sasada_spectral_2013}.

\begin{proof}
    It is sufficient to show inequality \eqref{eq:meanfieldspectralgap} with respect to the conditioning $\varpi_m$ of the measure~$\varpi_\alpha$ on the event $\{ \eta_0+\hdots +\eta_{\ell -1}=m\}$ for any $m\in \{0,\hdots ,\kappa\ell\}$. Indeed, the fact that~$\varpi_\alpha(f)=0$ for any $\alpha\in [0,1]$ is equivalent to $\varpi_m(f)=0$ for any $m\in \{0,\hdots ,\kappa\ell\}$, so if we prove \eqref{eq:meanfieldspectralgap} for all the measure $\varpi_m$, we obtain the result for $\varpi_\alpha$ by simply averaging over~$m$. For $\ell\ge 1$, let also $\varpi_m^\ell$ be the restriction of the measure $\varpi_m$ to the box $\{0,\hdots ,\ell -1\}$.

    Define the infinitesimal generator of the mean-field SEP($\kappa$) on the box $\{0,\hdots ,\ell -1\}$ to be the operator that acts on local functions $f: \{0,\hdots ,\kappa\}^\ell\longrightarrow\R$ through the formula 
    \begin{equation*}
        \mathbb{L}f(\eta )=\frac1\ell\sum_{x,y=0}^{\ell -1} \eta_x(\kappa -\eta_y)\big[ f(\eta^{x\to y})-f(\eta )\big]. 
    \end{equation*}
    For $\ell\ge 1$ and $0\le m\le \kappa\ell$, define 
    \begin{equation*}
        \lambda (\ell ,m) = \inf\left\{ \frac{\varpi_m(f(-\mathbb{L})f)}{\varpi_m(f^2)} : f\in L^2(\varpi_m),\; \varpi_m(f)=0\right\}\quad\mbox{ and }\quad \lambda (\ell )=\inf_{0\le m\le \kappa\ell}\lambda (\ell ,m).
    \end{equation*}
    Then, the proof of \eqref{eq:meanfieldspectralgap} amounts to showing that $\inf_{\ell\ge 2}\lambda (\ell )>0$. To do so, we will use general techniques developed in \cite{caputo_spectral_2008,sasada_spectral_2013}. More precisely, according to \cite[Theorem 7]{sasada_spectral_2013}, it suffices to show that $\lambda (2)>0$ and $\lambda^\star (3)>\frac13$ where $\lambda^\star(\ell )$ is defined similarly as $\lambda (\ell )$ but with respect to the generator $\mathbb{L}^\star$ of an auxiliary process given by
    \begin{equation*}
        \mathbb{L}^\star f(\eta )=\frac1\ell \sum_{x,y=0}^{\ell -1} \big[\varpi_m(f|\mathcal{F}_{xy})-f\big] (\eta ).
    \end{equation*}
    In this formula, $\varpi_m(f|\mathcal{F}_{xy})$ stands for the $\varpi_m$-conditional expectation of $f$ with respect to the variables $\eta_z$ for $z\neq x,y$. 

    We first prove that $\lambda^\star(3)>\frac13$. Following \cite[Section 2]{carlen_determination_2003}, we can show that 
    \begin{equation*}
        \lambda^\star (3,m)\ge \frac13\min\{2+\mu_1,2-2\mu_2\} \quad\mbox{ any }0\le m\le 3\kappa,
    \end{equation*}
    where $\mu_1$ and $\mu_2$ are respectively the smallest and largest eigenvalues different from 1 of the operator $\mathcal{K}$ that acts on functions $\phi : \{0,1,\hdots ,\kappa\}\longrightarrow\R$ through $\mathcal{K}\phi (\xi )= \varpi_m^3(\phi(\eta_1)|\eta_2=\xi)$. If we write the matrix of the operator on the basis of functions~$(\phi_i = \ind_{\{\eta =i\}})_{0\le i\le\kappa }$, we get that
    \begin{equation*}
        \mathcal{K}_{i,j} = \varpi_m^3(\eta_1=i|\eta_2=j) = \varpi_{m-j}^2(\eta_i=i) = \frac{1}{m-j+1}\ind\{0\le i\le m-j\}\ind\{0\le j\le k\}
    \end{equation*}
    for all $0\le i,j\le\kappa$. Alternatively, in matrix form,
    \begin{equation*}
      \mathcal{K} = \left(\begin{array}{c|c}
         J_{m+1} & 0 \\ 
         \hline
         0 & 0
      \end{array}\right)\in\R^{(\kappa +1)\times (\kappa +1)}\qquad\mbox{ where }\qquad J_n = \begin{pmatrix}
         \frac1n & \frac{1}{n-1} & \hdots & 1 \\
         \vdots & \vdots & \iddots & \\ 
         \vdots & \frac{1}{n-1} & (0) & \\ 
         \frac 1n & &  & 
      \end{pmatrix}\in \R^{n\times n}.
   \end{equation*}
   It follows that 
   \begin{equation*}
    \mathrm{Sp}(\mathcal{K})=\{0\}\cup\mathrm{Sp}(J_{m+1}) = \{0\}\cup\left\{ \tfrac{(-1)^{i+1}}{i} : 1\le i\le m+1\right\},
   \end{equation*}
   hence $\mu_1 = -\frac12$ and $\mu_2=\frac13$ and it proves that $\lambda^*(3)>\frac13$.

   To conclude, we show that $\lambda (2)>0$ by computing $\lambda (2,m)$ for any $m\in \{0,\hdots ,2\kappa\}$. We are reduced to the constrained state-space of configurations of the form $(\eta_1,\eta_2)=(i,m-i)$ for~$i\in \{0,\hdots ,m\}$. It suffices to look at the first coordinate, which evolves through a birth-death process with the following dynamics : it goes from $i$ to $i+1$ at rate $(m-i)(\kappa -i)$, and from $i$ to $i-1$ at rate $i(\kappa -(m-i))$. With an abuse of notation, it means that the generator $\mathbb{L}$ acts on functions $g:\{0,\hdots ,m\}\longrightarrow\R$ as follows:
   \begin{equation*}
    \forall i\in \{0,\hdots ,m\},\qquad\mathbb{L}g(i)=(m-i)(\kappa -i)\big[g(i+1)-g(i)]+i(\kappa-m+i)\big[g(i-1)-g(i)\big].
   \end{equation*}
   Consider the basis of functions $(g_n)_{0\le n\le m}$ defined for any $i\in \{0,\hdots m\}$ by $g_0(i)=1$ and for~$n\in \{1,\hdots ,m\}$ by $g_n(i)=i(i-1)\hdots (i-n+1)$ for $n\in\{1,\hdots m\}$. A calculation shows that 
   \begin{equation*}
    \mathbb{L}g_n(i)=-n(2\kappa +1-n)g_n(i)+\mbox{linear combination of }g_{n-1}(i),\hdots ,g_0(i).
   \end{equation*}
   In this basis, the matrix of $\mathbb{L}$ is triangular, and its eigenvalues are given by~$n(2\kappa +1-n)$ for any~{$0\le n\le m$}. The spectral gap is then given by $2\kappa$, which does not depend on $m$, so~{$\lambda (2)=2\kappa >0$} and it concludes the proof of \cref{lemma:meanfieldspectralgap}.
\end{proof}

\section{The second-order Boltzmann-Gibbs principle}
\label{sec:2BG-Principle}

In this section, we state and prove the main tool to prove result of the article, namely the \emph{second-order Boltzmann-Gibbs principle}, that allows to replace local functions of the process by an expression of the local averages of the conserved quantities. But before that, we need to introduce some notations.

For $x\in\Z$, define the \emph{translation operator} $\tau_x:\Omega\longrightarrow\Omega$ through $(\tau_x\eta)_y=\eta_{y+x}$ for every~$\eta\in\Omega$ and $y\in\Z$. If $f:\Omega\longrightarrow\R$ is a function, we denote by $\tau_xf:\Omega\longrightarrow\R$ the function defined by~$\tau_xf(\eta )=f(\tau_x\eta)$. If $v:\Z\longrightarrow\R$ is a discrete function, we denote simply by $\| v\|$  its $\ell^2(\Z)$-norm:
\begin{equation*}
    \| v\| = \sqrt{\sum_{x\in\Z}v(x)^2}.
\end{equation*}
For simplicity, if $f:\Omega\longrightarrow\R$ is a function, we write $f(t)=f(\eta(t))$ for any $t\ge 0$.

The result is the following.

\begin{theorem}[Second-order Boltzmann-Gibbs Principle]\label{thm:2BG-Principle}
    Let $f:\Omega\longrightarrow\R$ be a local function such that $\psi_f(\rho ,\mathcal{E})=0$ and $\nabla\psi_f(\rho ,\mathcal{E})=0$ for fixed $\rho\in [0,1]$ and $\mathcal{E}\in [\rho ,\kappa\rho]$. Then, there exists a constant $C_{12}=C_{12}(\kappa ,\rho ,\mathcal{E} ,f)>0$ such that for any $\ell\ge 1$, any $t\ge 0$ and any measurable function~$v:[0,T]\longrightarrow\ell^2(\Z )$, we have
    \begin{multline}
        \E_{\rho ,\mathcal{E}}\left[\left( \int_0^t\sum_{x\in\Z}\tau_x\left\{ f(s)-\frac12 \bar{\mathbf{u}}^\ell (s)^\top\mathrm{Hess}(\psi_f)(\rho ,\mathcal{E})\bar{\mathbf{u}}^\ell (s)\right.\right.\right. \\
        \left.\left.\left. + \frac{1}{2\ell}\mathrm{Tr}(\chi\mathrm{Hess}(\psi_f))(\rho ,\mathcal{E})\right\}v_s(x)\diff s\right)^2\right] 
        \le C_{12}\frac{\ell}{N^a}\int_0^t \|v_s\|^2\diff s.
    \end{multline}
    where we recall that $\bar{\mathbf{u}}^\ell$ is the column vector whose coordinates are the averages $\xi_0^\ell -\rho$ and $\eta_0^\ell -\mathcal{E}$ defined in \eqref{eq:averages}, and the compressibility matrix $\chi$ has been given in \eqref{eq:compressibilitymatrix}.
\end{theorem}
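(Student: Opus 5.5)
The proof follows the standard multiscale scheme of Gonçalves–Jara, adapted to the two-component setting. It uses three ingredients: \cref{cor:spectral} (Kipnis–Varadhan combined with the spectral gap), the equivalence of ensembles in \cref{cor:equivalenceofensembles}, and a doubling argument for the quadratic term. Write $f$ supported in $\{0,\dots,\ell_0-1\}$. We decompose
\begin{equation*}
\tau_x f-\tfrac12 (\tau_x\bar{\mathbf{u}}^\ell)^\top \mathrm{Hess}(\psi_f)\,\tau_x\bar{\mathbf{u}}^\ell+\tfrac{1}{2\ell}\mathrm{Tr}(\chi\mathrm{Hess}(\psi_f))
\end{equation*}
into three pieces. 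The first is $\tau_x(f-\E_{\ell_0}[f])$. The second is $\tau_x(\E_{\ell_0}[f]-\E_{\ell}[f])$, which we reach through a dyadic chain of conditional expectations $\E_{2^k\ell_0}$. The third is $\tau_x\big(\E_\ell[f]-Q_\ell\big)$, where $Q_\ell$ denotes the quadratic-plus-trace approximation. For the case $\ell<\ell_0$ the bound is trivial from Cauchy–Schwarz and stationarity, so we assume $\ell\ge \ell_0$.

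\textbf{Step 1.} The function $f-\E_{\ell_0}[f]$ has zero mean under every canonical measure, so $\psi\equiv0$ for all $(\rho,\mathcal{E})$. Summing over $x$, we split $\sum_x\tau_x$ into $\ell_0$ subfamilies with disjoint supports. \cref{cor:spectral} together with Cauchy–Schwarz then gives a bound $C\ell_0^2\,N^{-a}\int_0^t\|v_s\|^2\diff s$.

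\textbf{Step 2.} For the dyadic increments $\E_{2^{k}\ell_0}[f]-\E_{2^{k+1}\ell_0}[f]$, supported on boxes of size $2^{k+1}\ell_0$, the canonical means vanish again. We group translates into $2^{k+1}\ell_0$ disjoint families. \cref{cor:spectral} then bounds the contribution by
\begin{equation*}
C N^{-a}(2^{k+1}\ell_0)^2\cdot 2^{k+1}\ell_0\cdot \mathrm{Var}\big(\E_{2^k\ell_0}[f]-\E_{2^{k+1}\ell_0}[f]\big)\int\|v_s\|^2.
\end{equation*}
Because $\psi_f$ and $\nabla\psi_f$ vanish at $(\rho,\mathcal{E})$, the second bullet of \cref{cor:equivalenceofensembles} gives variance $O(2^{-2k})$. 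Each term is therefore $O(2^k)N^{-a}\int\|v_s\|^2$. We use Minkowski's inequality on the $L^2$ norms, so the square roots $2^{k/2}$ sum geometrically up to $2^{k}\approx\ell$, and this yields $C\ell N^{-a}\int\|v_s\|^2$. The constraint $\ell=2^K\ell_0$ is handled by a final comparison between $\E_{2^K\ell_0}$ and $\E_\ell$ for $\ell\in[2^K\ell_0,2^{K+1}\ell_0)$, treated in the same way.

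\textbf{Step 3.} The function $\E_\ell[f]-Q_\ell$ satisfies two facts. First, it has $L^2$ norm $O(\ell^{-3/2})$ by \cref{cor:equivalenceofensembles}. Second, it is orthogonal to constants under every canonical measure only approximately. Its canonical expectation is not identically zero, so \cref{cor:spectral} cannot be applied directly. To handle this, we replace it by $\E_\ell[f]-Q_\ell-\E[\E_\ell[f]-Q_\ell\mid \xi_0^\ell,\eta_0^\ell]$, which has vanishing canonical means. Here $Q_\ell$ is already a function of the box averages, so this difference is just zero. Hence a different route is needed: the term is handled by a direct estimate. A crude Cauchy–Schwarz bound in time would only give $t\cdot\ell\cdot\ell^{-3}$ without the factor $N^{-a}$. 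I would therefore treat it as in the Gonçalves–Jara argument, where this error is absorbed by comparing the quadratic in box averages at scale $\ell$ with the decomposition already obtained. Concretely, $\E_\ell[f]$ is itself a function of the averages, and $\E_\ell[f]-\psi_f(\xi^\ell_0,\eta^\ell_0)+\frac1{2\ell}\mathrm{Tr}(\cdots)$ is $O(\ell^{-2})$ uniformly by \cref{prop:equivalenceofensembles}. The remaining cubic Taylor remainder is treated with the same multiscale trick applied to the products $\bar{\mathbf{u}}^{\ell}$.

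\textbf{Main obstacle.} The step I expect to be the main obstacle is Step 3. It requires controlling the Taylor remainder and the mismatch $\bar{\mathbf{u}}^\ell$ versus $\E_\ell$ with the $N^{-a}$ gain, which needs the careful renormalization of the quadratic term by the trace correction so that the centered objects have vanishing canonical means. If the direct approach fails, I would adapt the proof of the second-order Boltzmann–Gibbs principle in \cite{goncalves_nonlinear_2014}. That proof replaces squares of averages at scale $\ell$ by products of averages over disjoint boxes, which do have mean zero under canonical measures up to the computed trace correction, and then applies \cref{cor:spectral} again.
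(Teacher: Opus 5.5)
Your Steps 1 and 2 match the paper's one-block estimate, renormalization step and two-blocks estimate. Summing the $2^{k/2}$ geometrically, as you do, is what actually gives $c_\ell=\ell$; the paper's bound by $m^2(2^m\ell_0)^\beta$ would lose a factor $(\log\ell)^2$.

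The gap is Step 3, and it sits exactly where the paper's own proof fails. Set $g_\ell:=\E_\ell[f]-\frac12(\bar{\mathbf{u}}^\ell)^\top\mathrm{Hess}(\psi_f)(\rho,\mathcal{E})\bar{\mathbf{u}}^\ell+\frac1{2\ell}\mathrm{Tr}(\chi\mathrm{Hess}(\psi_f))(\rho,\mathcal{E})$. This is a function of $(\xi_0^\ell,\eta_0^\ell)$, hence equal to its own canonical mean, so \cref{cor:spectral} does not apply to it, even though \cref{lemma:laststep} asserts that it does.

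None of your repairs can supply the missing $N^{-a}$. Products of averages over disjoint sub-boxes are not canonically centred either. More fundamentally, for fixed $\ell$ and large $N$ the bound $C\ell N^{-a}\int_0^t\|v_s\|^2\diff s$ is false. Take $f=\bar\xi_0\bar\xi_1$, so that $\mathrm{Hess}(\psi_f)=\mathrm{diag}(2,0)$. Using $\bar\xi_z^2=(1-2\rho)\bar\xi_z+\rho(1-\rho)$ one gets
\begin{equation*}
f-(\bar\xi_0^\ell)^2+\frac{\rho(1-\rho)}{\ell}=\bar\xi_0\bar\xi_1-\frac{1}{\ell^2}\sum_{0\le z\neq z'\le\ell-1}\bar\xi_z\bar\xi_{z'}-\frac{1-2\rho}{\ell}\,\bar\xi_0^\ell ,
\end{equation*}
where $\bar\xi_0^\ell=\xi_0^\ell-\rho$. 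The last term is a linear functional of the conserved field with nonzero total mass when $\rho\neq\frac12$.

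The only dynamical input of the proof, \eqref{eq:kipnisvaradhan_ourcontext}, involves only $\mathcal{L}_S$, so it holds verbatim for the reversible dynamics $N^a\mathcal{L}_S$. Under that dynamics $\xi$ is an SSEP, which preserves chaos degree. With $v_s=\delta_0$, the degree-one term alone contributes a variance $\gtrsim(1-2\rho)^2\rho(1-\rho)\,\ell^{-2}t^{3/2}N^{-a/2}$ for $t\ge 4\ell^2N^{-a}$. The reason is that the heat kernel at time $N^ar\ge\ell^2$ is of order $(N^ar)^{-1/2}$ on $\{|z|<\ell\}$. This is not $O(\ell N^{-a}t)$. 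Heuristically, a weak asymmetry only transports the linear modes and should not restore a factor $N^{-a}$ for fixed $\ell$.

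So the statement itself must change, not only your Step 3. What is true, as in \cite{goncalves_nonlinear_2014}, is the bound $C\big(\frac{\ell}{N^a}+\frac{t}{\ell^2}\big)\int_0^t\|v_s\|^2\diff s$. The second term is precisely the ``crude'' estimate you discarded:
\begin{itemize}
\item $g_\ell$ is centred;
\item $\tau_xg_\ell$ and $\tau_yg_\ell$ are independent for $|x-y|\ge\ell$;
\item $\E_{\rho,\mathcal{E}}[g_\ell^2]\le C_3\ell^{-3}$ by \cref{cor:equivalenceofensembles}.
\end{itemize}
Cauchy--Schwarz in time then gives $Ct\ell^{-2}\int_0^t\|v_s\|^2\diff s$ for this piece. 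The same estimate also handles your case $\ell<\ell_0$, which is not ``trivial'' for the pure $N^{-a}$ bound: Cauchy--Schwarz and stationarity give no factor $N^{-a}$.

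Where the theorem is applied with $\ell=\varepsilon N$, $a=2$, $\gamma=\frac12$, one has $\|v_s\|^2$ of order $N$. The extra term is then $O(t^2\varepsilon^{-2}N^{-1})$ and vanishes in the limit.
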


We prove this result by a several step process, by following the strategy of \cite{goncalves_nonlinear_2014}. Recall the notation $\E_\ell [f] = \E_{\rho ,\mathcal{E}}[f|\xi_0^\ell ,\eta_0^\ell ]$. The first step is to replace the local function $f$ by $\E_{\ell _0}[f]$ where~$\ell_0$ is the size of the support of $f$, this is the aim of the one-block estimate of \cref{lemma:1BE}. Then, we make a renormalization step (\cref{lemma:renormalization}) showing that $\E_\ell [f]$ can by replaced by $\E_{2\ell}[f]$ for any $\ell\ge\ell_0$. Using repeatedly this renormalization step, we can prove the two-blocks estimate of \cref{lemma:2BE}, stating that $\E_{\ell_0}[f]$ can be replaced by $\E_{\ell }[f]$ for any $\ell\ge \ell_0$. Finally, we replace $\E_\ell [f]$ by the expected function of the local conserved quantities averages in \cref{lemma:laststep}.

\begin{lemma}[One-block estimate]
    \label{lemma:1BE}
    Consider a local function $f:\Omega\longrightarrow\R$ whose support is included in~$\{0,\hdots ,\ell_0-1\}$ for some $\ell_0\ge 1$. There exists a constant $C_{13}=C_{13}(\kappa ,\rho ,\mathcal{E} ,f)>0$ such that for any $t\in [0,T]$ and any measurable function $v:[0,T]\longrightarrow\ell^2(\Z )$, we have 
    \begin{equation}
        \E_{\rho ,\mathcal{E}}\left[\left(\int_0^t \sum_{x\in\Z}\tau_x \big\{ f(s) - \E_{\ell_0}[f](s)\big\} v_s(x)\diff s\right)^2\right] \le C_{13}\frac{\ell_0^3}{N^a}\int_0^t \| v_s\|^2\diff s.
    \end{equation}
\end{lemma}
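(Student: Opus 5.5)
The plan is to reduce the one-block estimate to \cref{cor:spectral}, by splitting the sum over $x\in\Z$ into $\ell_0$ sub-sums whose terms have disjoint supports. Set $g\coloneq f-\E_{\ell_0}[f]$. This is a local function supported in $\{0,\hdots ,\ell_0-1\}$, because $\E_{\ell_0}[f]$ depends only on $\xi_0^{\ell_0}$ and $\eta_0^{\ell_0}$.

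The crucial property is that $\psi_g(\rho',\mathcal{E}')=0$ for \emph{every} $\rho'\in[0,1]$ and $\mathcal{E}'\in[\rho',\kappa\rho']$, and not only at the fixed parameters. The reason is that the conditional law of $(\eta_0,\hdots,\eta_{\ell_0-1})$ given $(\xi_0^{\ell_0},\eta_0^{\ell_0})$ under $\mu_{\rho',\mathcal{E}'}$ does not depend on $(\rho',\mathcal{E}')$. This follows from the Gibbs form $\tilde\nu_{\beta_1,\beta_2}$ used in \cref{sec:proofdiagonalizability}: the weights $e^{\beta_1\xi_x+\beta_2\eta_x}$ are constant on the conditioning event. (At the boundary values of the parameters one argues by continuity, or directly.) Hence $\E_{\ell_0}[f]$ is the same function under every $\mu_{\rho',\mathcal{E}'}$, and $\E_{\rho',\mathcal{E}'}[g]=\E_{\rho',\mathcal{E}'}[f]-\E_{\rho',\mathcal{E}'}\big[\E_{\ell_0}[f]\big]=0$. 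Thus $g$, and each translate $\tau_xg$, meets the hypothesis of \cref{prop:spectralgap} and \cref{cor:spectral0}.

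Next, split the sum according to residues modulo $\ell_0$:
\begin{equation*}
\sum_{x\in\Z}\tau_xg\,v_s(x)=\sum_{j=0}^{\ell_0-1}\sum_{k\in\Z}\tau_{j+k\ell_0}g\,v_s(j+k\ell_0).
\end{equation*}
For fixed $j$, the functions $f_k(s,\cdot)\coloneq \tau_{j+k\ell_0}g\,v_s(j+k\ell_0)$ have pairwise disjoint supports of cardinality $\ell_0$, and each has zero mean under every invariant measure. By Cauchy--Schwarz,
\begin{equation*}
\Big(\sum_{j}a_j\Big)^2\le \ell_0\sum_j a_j^2 .
\end{equation*}
We then apply \cref{cor:spectral} to each $j$. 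The series $\sum_k\mathrm{Var}(f_k)=\mathrm{Var}_{\rho,\mathcal{E}}(g)\sum_k v_s(j+k\ell_0)^2$ converges because $v_s\in\ell^2(\Z)$. This gives
\begin{equation*}
\E_{\rho,\mathcal{E}}\Big[\Big(\int_0^t\sum_x\tau_xg(s)v_s(x)\diff s\Big)^2\Big]\le \ell_0\sum_{j=0}^{\ell_0-1}\frac{C_7}{N^a}\,\ell_0^2\,\mathrm{Var}_{\rho,\mathcal{E}}(g)\int_0^t\sum_k v_s(j+k\ell_0)^2\diff s=\frac{C_7\ell_0^3\mathrm{Var}_{\rho,\mathcal{E}}(g)}{N^a}\int_0^t\|v_s\|^2\diff s .
\end{equation*}

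Finally, $\mathrm{Var}_{\rho,\mathcal{E}}(g)\le \E_{\rho,\mathcal{E}}[f^2]\le\|f\|_\infty^2$, since $g=f-\E_{\ell_0}[f]$ is the residual of a conditional expectation. So $C_{13}=C_7\|f\|_\infty^2$ works, and the claimed bound $C_{13}\ell_0^3N^{-a}\int_0^t\|v_s\|^2\diff s$ follows.

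The only genuinely delicate point is checking that $g$ is centered under all invariant measures, which is needed to invoke the spectral gap through \cref{cor:spectral0}. The parameter-independence of the canonical measures settles it. There is also a minor technicality: \cref{cor:spectral} is stated for a supremum in time, and it applies a fortiori at fixed $t$, with time-dependent coefficients $v_s$ handled since it allows $f_i(t,\cdot)$.
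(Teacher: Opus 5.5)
Your proposal is correct and follows the paper's proof essentially step for step: the same splitting of $\sum_{x\in\Z}$ by residues modulo $\ell_0$, the same Cauchy--Schwarz factor $\ell_0$, and the same application of \cref{cor:spectral} to each family of disjointly supported translates, which gives the total factor $\ell_0\cdot\ell_0^2$. Two steps the paper asserts without comment you make explicit: that $f-\E_{\ell_0}[f]$ is centered under every $\mu_{\rho',\mathcal{E}'}$, via parameter-independence of the canonical measures, and the bound $\mathrm{Var}_{\rho,\mathcal{E}}(f-\E_{\ell_0}[f])\le\|f\|_\infty^2$.
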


\begin{proof}
    Note that for any $x\in\Z$, the function $\tau_x(f-\E_{\ell_0}[f])$ is supported in $\{x,\hdots ,x+\ell_0-1\}$. Rewrite the sum over $\Z$ as a sum over blocks of size $\ell_0$, and apply Cauchy-Schwarz inequality to get that
    \begin{multline*}
        \E_{\rho ,\mathcal{E}}\left[\left(\int_0^t \sum_{x\in\Z}\tau_x \big\{ f(s) - \E_{\ell_0}[f](s)\big\} v_s(x)\diff s\right)^2\right] \\ \le \ell_0\sum_{i=0}^{\ell_0-1} \E_{\rho ,\mathcal{E}}\left[ \left(\int_0^t\sum_{x\in\Z } \tau_{\ell_0x+i}\big\{ f(s) - \E_{\ell_0}[f](s)\big\} v_s(\ell_0x+i)\diff s\right)^2\right].
    \end{multline*}
    Now, all the terms in the right-hand side of this inequality satisfy the hypotheses of \cref{cor:spectral}, which gives that this is bounded from above by
    \begin{equation*}
        \frac{C_7\ell_0^3}{N^a}\sum_{i=0}^{\ell_0-1}\sum_{x\in\Z} \int_0^t v_s(\ell_0x+i)^2 \mathrm{Var}_{\rho ,\mathcal{E}}(f-\E_{\ell_0} [f])\diff s \le C_{13}\frac{\ell_0^3}{N^a}\int_0^t \| v_s\|^2\diff s
    \end{equation*}
    where $C_{13}$ is a positive constant that depends on $\kappa$, $\rho$, $\mathcal{E}$ and $f$.
\end{proof}

\begin{lemma}[Renormalization step]
    \label{lemma:renormalization}
    Let $f:\Omega\longrightarrow\R$ be a local function whose support is included in $\{0,\hdots ,\ell_0 -1\}$ for some $\ell_0\ge 1$. There exists a constant $C_{14}=C_{14}(\kappa ,\rho ,\mathcal{E} ,f)>0$ such that for any $t\in [0,T]$, and $\ell\ge \ell_0$ and any measurable function $v:[0,T]\longrightarrow\ell^2(\Z)$, we have 
    \begin{equation}
        \label{eq:renormalization}
        \E_{\rho ,\mathcal{E}}\left[\left(\int_0^t\sum_{x\in\Z}\tau_x\big\{ \E_{\ell }[f]-\E_{2\ell}[f]\big\} (s)v_s(x)\diff s\right)^2\right] \le C_{14} \frac{\ell^\beta}{N^a}\int_0^t\| v_s\|^2\diff s,
    \end{equation}
    where
    \begin{equation*}
        \beta = \begin{cases}
            2 & \mbox{ if }\nabla\psi_f(\rho ,\mathcal{E})\neq 0,\\
            1 & \mbox{ if } \nabla\psi_f(\rho ,\mathcal{E} )=0\mbox{ and }\mathrm{Hess}(\psi_f)(\rho ,\mathcal{E})\neq 0,\\
            0 & \mbox{ if } \nabla\psi_f(\rho ,\mathcal{E})=0\mbox{ and }\mathrm{Hess}(\psi_f)(\rho ,\mathcal{E})=0.
        \end{cases}
    \end{equation*}
\end{lemma}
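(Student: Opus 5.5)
The plan follows the proof of the one-block estimate in \cref{lemma:1BE}, with the block size $\ell_0$ replaced by $2\ell$. For each $x$, the function $\tau_x\{\E_\ell[f]-\E_{2\ell}[f]\}$ is supported in $\{x,\hdots ,x+2\ell -1\}$. I split the sum over $x\in\Z$ into the $2\ell$ residue classes modulo $2\ell$ and apply Cauchy--Schwarz, which costs a factor $2\ell$:
\begin{multline*}
\E_{\rho ,\mathcal{E}}\left[\left(\int_0^t\sum_{x\in\Z}\tau_x\big\{ \E_{\ell }[f]-\E_{2\ell}[f]\big\} (s)v_s(x)\diff s\right)^2\right]\\ \le 2\ell\sum_{i=0}^{2\ell -1}\E_{\rho ,\mathcal{E}}\left[\left(\int_0^t\sum_{x\in\Z}\tau_{2\ell x+i}\big\{ \E_{\ell }[f]-\E_{2\ell}[f]\big\} (s)v_s(2\ell x+i)\diff s\right)^2\right].
\end{multline*}
Within a fixed residue class, the functions have disjoint supports of cardinality $2\ell$.

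Next I check the hypothesis of \cref{cor:spectral0}, namely that $g\coloneq\E_\ell[f]-\E_{2\ell}[f]$ has zero mean under every $\mu_{\rho',\mathcal{E}'}$. The sigma-algebra generated by $(\xi_0^{2\ell},\eta_0^{2\ell})$ is coarser than the one generated by the variables in the box. The conditioning on $(\xi_0^\ell,\eta_0^\ell)$ and on $(\xi_0^{2\ell},\eta_0^{2\ell})$ does not depend on the parameters. Hence the canonical measure with fixed totals $(\xi_0^{2\ell},\eta_0^{2\ell})$ gives $g$ mean zero, and so does every $\mu_{\rho',\mathcal{E}'}$. Applying \cref{cor:spectral} to each residue class, the right-hand side above is bounded by
\[
2\ell\cdot\frac{C_7}{N^a}(2\ell)^2\,\mathrm{Var}_{\rho ,\mathcal{E}}(g)\sum_{i}\sum_x\int_0^t v_s(2\ell x+i)^2\diff s = \frac{8C_7\ell^3}{N^a}\mathrm{Var}_{\rho ,\mathcal{E}}(g)\int_0^t\|v_s\|^2\diff s .
\]

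It then remains to show that $\mathrm{Var}_{\rho ,\mathcal{E}}(g)\le C\ell^{\beta-3}$, that is, bounds of order $\ell^{-1}$, $\ell^{-2}$ and $\ell^{-3}$ in the three cases. This is the main step. I bound $\mathrm{Var}(g)\le 2\,\E[(\E_\ell[f]-\psi_f)^2]+2\,\E[(\E_{2\ell}[f]-\psi_f)^2]$, taking $\psi_f=\psi_f(\rho,\mathcal{E})$ without loss of generality; in the cases below one may subtract the constant and assume $\psi_f(\rho,\mathcal{E})=0$. The three cases are as follows.
\begin{itemize}
\item If $\nabla\psi_f\neq0$, the first bullet of \cref{cor:equivalenceofensembles} gives order $\ell^{-1}$.
\item If $\nabla\psi_f=0$, the second bullet gives order $\ell^{-2}$.
\item If the Hessian also vanishes, the third bullet gives order $\ell^{-3}$.
\end{itemize}
In the first two cases, the centering in the corollary is exactly of the stated orders, because $\bar{\mathbf{u}}^\ell$ has second moment $O(\ell^{-1})$ and the quadratic term minus its trace correction has second moment $O(\ell^{-2})$. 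Since these bounds follow directly from the corollary, the only delicate point is the exact mean-zero property for all parameters, which is needed to invoke the spectral gap. That property is ensured by the tower argument above. Combining the three cases gives \eqref{eq:renormalization} with $C_{14}$ depending on $\kappa,\rho,\mathcal{E},f$.
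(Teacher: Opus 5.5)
Your proposal is correct and follows the paper's proof step for step. You split into residue classes modulo $2\ell$ with Cauchy--Schwarz, apply \cref{cor:spectral} to obtain the factor $(2\ell)^3$, and bound $\mathrm{Var}_{\rho,\mathcal{E}}(\E_\ell[f]-\E_{2\ell}[f])\le C\ell^{\beta-3}$ via the three bullets of \cref{cor:equivalenceofensembles}; your explicit check that this difference has mean zero under every canonical, hence every grand canonical, measure is a useful addition, since the paper only asserts that the hypotheses of \cref{cor:spectral} hold.
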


\begin{proof}
    We use similar ideas. For each $x\in\Z$, the function $\tau_x (\E_\ell [f]-\E_{2\ell}[f])$ is supported in the box $\{ x,\hdots ,x+2\ell -1\}$ so now we split the sum over $\Z$ into a sum over blocks of size $2\ell$ and use Cauchy-Schwarz inequality to get that the left-hand side of \eqref{eq:renormalization} is bounded above by
    \begin{equation*}
        2\ell \sum_{i=0}^{2\ell -1} \E_{\rho ,\mathcal{E}}\left[\left( \int_0^t \sum_{x\in\Z}\tau_{2\ell x+i}\big\{ \E_\ell [f]-\E_{2\ell}[f]\big\} (s)v_s(2\ell x+i)\diff s\right)^2\right].
    \end{equation*}
    Each term inside the sum satisfies the hypotheses of \cref{cor:spectral}, which thus gives that this is bounded above by 
    \begin{equation*}
        C_7\frac{(2\ell)^3}{N^a}\sum_{i=0}^{2\ell -1} \sum_{x\in\Z} \int_0^t v_s(2\ell x+i)^2\mathrm{Var}_{\rho ,\mathcal{E}}(\E_\ell [f]-\E_{2\ell }[f])\diff s
    \end{equation*}
    According to \cref{cor:equivalenceofensembles}, we have that $\mathrm{Var}_{\rho ,\mathcal{E}}(\E_\ell [f]-\E_{2\ell}[f])\le C_3\ell^{\beta -3}$, so we easily deduce the result.
\end{proof}

\noindent We can use repeatedly \cref{lemma:renormalization} to obtain the following result.

\begin{lemma}[Two-blocks estimate]
    \label{lemma:2BE}
    Let $f:\Omega\longrightarrow\R$ be a local function whose support is included in $\{0,\hdots ,\ell_0-1\}$ for some $\ell_0\ge 1$. Then, there exists a constant $C_{15}=C_{15}(\kappa ,\rho ,\mathcal{E} ,f)>0$ such that for any $t\in [0,T]$, any $\ell\ge\ell_0$ and any measurable function $v:[0,T]\longrightarrow\ell^2(\Z )$, we have 
    \begin{equation}\label{eq:2BE}
        \E_{\rho ,\mathcal{E}}\left[\left( \int_0^t \sum_{x\in\Z}\tau_x\big\{ \E_{\ell_0}[f]-\E_\ell [f]\big\}(s)v_s(x)\diff s\right)^2\right] \le C_{15}\frac{c_\ell}{N^a}\int_0^t \| v_s\|^2\diff s,
    \end{equation}
    where
        \begin{equation*}
        c_\ell = \begin{cases}
            \ell^2 & \mbox{ if }\nabla\psi_f(\rho ,\mathcal{E})\neq 0,\\
            \ell & \mbox{ if } \nabla\psi_f(\rho ,\mathcal{E} )=0\mbox{ and }\mathrm{Hess}(\psi_f)(\rho ,\mathcal{E})\neq 0,\\
            (\log\ell)^2 & \mbox{ if } \nabla\psi_f(\rho ,\mathcal{E})=0\mbox{ and }\mathrm{Hess}(\psi_f)(\rho ,\mathcal{E})=0.
        \end{cases}
    \end{equation*}
\end{lemma}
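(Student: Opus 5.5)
We obtain the two-blocks estimate by telescoping along dyadic scales and applying the renormalization step (\cref{lemma:renormalization}) at each scale. First assume $\ell = 2^k\ell_0$ for some $k\ge 0$, and write
\begin{equation*}
    \E_{\ell_0}[f]-\E_\ell[f] = \sum_{j=0}^{k-1}\big(\E_{2^j\ell_0}[f]-\E_{2^{j+1}\ell_0}[f]\big).
\end{equation*}
Denote by $X_j$ the time integral $\int_0^t\sum_x\tau_x\{\E_{2^j\ell_0}[f]-\E_{2^{j+1}\ell_0}[f]\}(s)v_s(x)\diff s$. Minkowski's inequality in $L^2(\p)$ gives $\E[(\sum_j X_j)^2]^{1/2}\le\sum_j\E[X_j^2]^{1/2}$. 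By \cref{lemma:renormalization} applied at scale $2^j\ell_0\ge\ell_0$, we have $\E[X_j^2]\le C_{14}(2^j\ell_0)^\beta N^{-a}\int_0^t\|v_s\|^2\diff s$. Hence
\begin{equation*}
    \E\Big[\Big(\sum_j X_j\Big)^2\Big]\le \frac{C_{14}}{N^a}\Big(\sum_{j=0}^{k-1}(2^j\ell_0)^{\beta/2}\Big)^2\int_0^t\|v_s\|^2\diff s.
\end{equation*}
The sum is geometric when $\beta\in\{1,2\}$, so it is bounded by a constant times $\ell^{\beta/2}$, and its square is $O(\ell^\beta)$, which gives $\ell^2$ and $\ell$ respectively. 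When $\beta=0$ the sum equals $k\le\log_2\ell$, so its square is $O((\log\ell)^2)$. This matches $c_\ell$ in all three cases. The constants depend on $\ell_0$ and hence on $f$, which is allowed.

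For general $\ell\ge\ell_0$, choose $k$ with $2^k\ell_0\le\ell<2^{k+1}\ell_0$ and add one more term $\E_{2^k\ell_0}[f]-\E_\ell[f]$. This term is the main point needing care, because the renormalization step as stated only compares $\ell$ with $2\ell$. We re-run its proof with scales $\ell'=2^k\ell_0$ and $\ell\in[\ell',2\ell')$ in place of $\ell'$ and $2\ell'$. The function $\tau_x(\E_{\ell'}[f]-\E_\ell[f])$ is supported in a box of size $\ell\le 2\ell'$. Splitting the sum over $x$ into blocks, applying Cauchy--Schwarz and \cref{cor:spectral} gives a factor $\ell^3$. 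We need $\mathrm{Var}(\E_{\ell'}[f]-\E_\ell[f])\le C\ell^{\beta-3}$, which follows from \cref{cor:equivalenceofensembles}, whose expansion holds at every scale. After subtracting the two expansions, the remaining term of second order in $\bar{\mathbf{u}}^{\ell'}$ and $\bar{\mathbf{u}}^{\ell}$ has variance of order $\ell^{-2}$. If the gradient term survives, the variance of $\nabla\psi_f\cdot(\bar{\mathbf{u}}^{\ell'}-\bar{\mathbf{u}}^\ell)$ is of order $1/\ell'\sim 1/\ell$. This yields the same bound $C\ell^\beta N^{-a}\int_0^t\|v_s\|^2\diff s$ for the extra term. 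One final application of Minkowski's inequality combines it with the dyadic sum without changing the order $c_\ell$.

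In the case $\nabla\psi_f=0$ and $\mathrm{Hess}(\psi_f)=0$, the bound from \cref{cor:equivalenceofensembles} is $\mathrm{Var}\le C\ell^{-3}$, so each term contributes $O(1)$. This is consistent with $\beta=0$ and produces the $(\log\ell)^2$ factor.
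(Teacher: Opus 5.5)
Your proof is correct and follows the paper's argument: you telescope dyadically from $\ell_0$ to $2^k\ell_0$, using Minkowski's inequality and \cref{lemma:renormalization} at each scale, and then make one extra comparison of $\E_{2^k\ell_0}[f]$ with $\E_\ell[f]$ by re-running the renormalization argument. One point in your favour: summing the geometric series $\sum_j (2^j\ell_0)^{\beta/2}\lesssim \ell^{\beta/2}$ genuinely gives $c_\ell=\ell^\beta$ for $\beta\in\{1,2\}$, whereas the paper bounds the sum by $m$ times its largest term, which only yields $m^2\ell^\beta$, i.e.\ a spurious extra $(\log\ell)^2$ factor in those two cases.
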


\begin{proof}
    Assume first that $\ell$ is of the form $\ell = 2^m\ell_0$ for some $m\in\N$. Then, using Minkowski inequality, we have that the left-hand side of \eqref{eq:2BE} is bounded above by
    \begin{equation*}
        \left( \sum_{i=0}^{m-1} \sqrt{\E_{\rho ,\mathcal{E}}\left[\left( \int_0^t \sum_{x\in\Z}\tau_x\big\{ \E_{2^i\ell_0}[f]-\E_{2^{i+1}\ell_0}[f]\big\} (s)v_s(x)\diff s\right)^2\right]}\right)^2
    \end{equation*}
    The renormalization step of \cref{lemma:renormalization} allows to bound each term of this sum, so that this is bounded above by
    \begin{equation*}
        \left(\sum_{i=0}^{m-1} \sqrt{C_{14}\frac{(2^{i+1}\ell_0)^\beta }{N^a}\int_0^t \|v_s\|^2\diff s}  \right)^2 \le C_{14} m^2\frac{(2^m\ell_0)^\beta}{N^a}\int_0^t \| v_s\|^2\diff s
    \end{equation*}
    and this proves the result in the case where $\ell =2^m\ell_0$ for some $m\in\N$. To generalize it for any~$\ell\ge\ell_0$, we choose an integer $m\in\N$ such that $2^m\ell_0<\ell <2^{m+1}\ell_0$ and then compare $\E_\ell [f]$ with~$\E_{2^m\ell_0}[f]$ as in the renormalization \cref{lemma:renormalization}.
\end{proof}

The final step consists in replacing $\E_\ell [f]$ by a function of the local averages of the density of particles and energy.

\begin{lemma}
    \label{lemma:laststep}
    Let $f:\Omega\longrightarrow\R$ be a local function supported in $\{0,\hdots ,\ell_0-1\}$ for some $\ell_0\ge 1$, and assume that $\psi_f(\rho ,\mathcal{E})=0$ and $\nabla\psi_f(\rho ,\mathcal{E})=0$ for fixed $\rho\in [0,1]$ and $\mathcal{E}\in [\rho ,\kappa\rho]$. Then, there exists a constant~$C_{16}=C_{16}(\kappa ,\rho ,\mathcal{E} ,f)>0$ such that for any $t\in [0,T]$, any $\ell\ge\ell_0$ and any measurable function $v:[0,T]\longrightarrow\ell^2(\Z )$, we have 
    \begin{multline}\label{eq:laststep}
        \E_{\rho ,\mathcal{E}}\left[\left( \int_0^t \sum_{x\in\Z} \tau_x\left\{ \E_\ell [f](s)-\frac12 \bar{\mathbf{u}}^\ell (s)^\top\mathrm{Hess}(\psi_f)(\rho ,\mathcal{E})\bar{\mathbf{u}}^\ell (s)\right.\right.\right. \\
        \left.\left.\left. +\frac{1}{2\ell}\mathrm{Tr}(\chi\mathrm{Hess}(\psi_f))(\rho ,\mathcal{E})\right\} v_s(x)\diff s\right)^2\right] \le \frac{C_{16}}{N^a}\int_0^t\| v_s\|^2\diff s.
    \end{multline}
\end{lemma}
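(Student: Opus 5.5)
Write $F_\ell := \E_\ell[f] - \frac12 (\bar{\mathbf{u}}^\ell)^\top \mathrm{Hess}(\psi_f)(\rho,\mathcal{E})\bar{\mathbf{u}}^\ell + \frac{1}{2\ell}\mathrm{Tr}(\chi\mathrm{Hess}(\psi_f))(\rho,\mathcal{E})$. The plan is to handle $\sum_x \tau_x F_\ell\, v_s(x)$ directly in $L^2(\mu_{\rho,\mathcal{E}})$, with no time-averaging. We apply Cauchy--Schwarz in time, use stationarity of $\mu_{\rho,\mathcal{E}}$, and bound the space-time $L^2$ norm by a static variance computation.

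First we check centering. Since $\psi_f(\rho,\mathcal{E})=0$, we have $\E_{\rho,\mathcal{E}}[\E_\ell[f]]=0$. Moreover $\E_{\rho,\mathcal{E}}[\bar{\mathbf{u}}^\ell(\bar{\mathbf{u}}^\ell)^\top]=\chi/\ell$ by independence of sites, so $\E_{\rho,\mathcal{E}}[\frac12(\bar{\mathbf{u}}^\ell)^\top \mathrm{Hess}\,\bar{\mathbf{u}}^\ell]=\frac{1}{2\ell}\mathrm{Tr}(\chi\mathrm{Hess})$. Hence $F_\ell$ is mean zero. Because $\nabla\psi_f(\rho,\mathcal{E})=0$, \cref{cor:equivalenceofensembles} gives $\E_{\rho,\mathcal{E}}[F_\ell^2]\le C_3\ell^{-3}$. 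Also, $F_\ell$ is supported in $\{0,\dots,\ell-1\}$, so $\tau_xF_\ell$ and $\tau_yF_\ell$ are independent, hence orthogonal in $L^2(\mu_{\rho,\mathcal{E}})$, whenever $|x-y|\ge \ell$.

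Next we bound the integral. By Cauchy--Schwarz in time, the left-hand side of \eqref{eq:laststep} is at most
\[
t\int_0^t \E_{\rho,\mathcal{E}}\Big[\Big(\sum_x \tau_xF_\ell(\eta(s))v_s(x)\Big)^2\Big]\diff s .
\]
By stationarity, each expectation equals $\sum_{|x-y|<\ell}v_s(x)v_s(y)\E_{\rho,\mathcal{E}}[\tau_xF_\ell\,\tau_yF_\ell]$. Using $|v_s(x)v_s(y)|\le \frac12(v_s(x)^2+v_s(y)^2)$ and Cauchy--Schwarz on the covariances, this is bounded by $(2\ell-1)\E_{\rho,\mathcal{E}}[F_\ell^2]\,\|v_s\|^2\le C\ell^{-2}\|v_s\|^2$. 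So the left-hand side is at most $C T \ell^{-2}\int_0^t\|v_s\|^2\diff s$.

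The main obstacle is that this static bound carries no factor $N^{-a}$, which the statement requires. The bound $CT\ell^{-2}$ is not comparable to $C_{16}/N^a$ for $\ell$ of order one. Two readings are possible, and I would follow whichever the subsequent text supports.

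The first reading is that the lemma is used with $\ell$ large, of order $\varepsilon N$ with $a=2$. Then $\ell^{-2}\lesssim N^{-a}$ up to constants depending on $\varepsilon$, and the static bound suffices.

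Otherwise, the second reading is to recover $N^{-a}$ through \cref{cor:spectral}. Split the sum over $x$ into $2\ell$ residue classes, exactly as in the renormalization step \cref{lemma:renormalization}. For \cref{cor:spectral} we need the mean-zero property for every $(\rho',\mathcal{E}')$. This holds after replacing $F_\ell$ by $F_\ell-\E_{\rho',\mathcal{E}'}[F_\ell\mid\xi_0^\ell,\eta_0^\ell]$. Note that $F_\ell$ is itself a function of $(\xi_0^\ell,\eta_0^\ell)$, so one checks this against how the spectral gap was used. The resulting bound is $C\ell^3\cdot\ell^{-3}N^{-a}=C N^{-a}$, which is the claimed form.

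Then I would combine this with \cref{lemma:1BE} and \cref{lemma:2BE} to obtain \cref{thm:2BG-Principle}.
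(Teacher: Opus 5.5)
Your static estimate is correct: $F_\ell$ is centred, \cref{cor:equivalenceofensembles} gives $\E_{\rho,\mathcal{E}}[F_\ell^2]\le C_3\ell^{-3}$, and finite-range dependence yields the bound $C\,t\,\ell^{-2}\int_0^t\|v_s\|^2\diff s$. But that is not the claimed inequality, and neither of your patches produces the factor $N^{-a}$.

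\textbf{Reading (1).} This proves a different statement. The constant depends on $\varepsilon$ and $T$, and it only covers $\ell\asymp\varepsilon N$ with $a=2$. The lemma asserts a constant $C_{16}(\kappa,\rho,\mathcal{E},f)$ valid for every $\ell\ge\ell_0$.

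\textbf{Reading (2).} This breaks down exactly where you hesitated. Since $F_\ell$ is a function of $(\xi_0^\ell,\eta_0^\ell)$, the difference $F_\ell-\E_{\rho',\mathcal{E}'}[F_\ell\mid\xi_0^\ell,\eta_0^\ell]$ is identically zero, so nothing is left to estimate. Meanwhile $F_\ell$ itself violates the hypothesis of \cref{cor:spectral}. Indeed, $\psi_{F_\ell}$ does not vanish identically in $(\rho',\mathcal{E}')$; by completeness of the family of laws of $(\xi_0^\ell,\eta_0^\ell)$, that would force $F_\ell\equiv0$. Equivalently, $F_\ell$ is invariant under every jump and energy transfer inside its block. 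So the right-hand side of \eqref{eq:spectralgapineq} vanishes for it while its variance does not. The count $C\ell^3\cdot\ell^{-3}N^{-a}$ therefore has no justification.

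\textbf{The paper's proof has the same defect.} Reading (2) is precisely the paper's argument: blocks of size $\ell$, then \cref{cor:spectral}, then $\mathrm{Var}\le C_3\ell^{-3}$. Its assertion that these block functions satisfy the hypotheses of \cref{cor:spectral} is false, and the problem is not cosmetic. Take $f=\bar\xi_0\bar\xi_1$; a direct computation gives
\[
F_\ell=-\frac{1-2\rho}{\ell}\,\big(\xi_0^\ell-\rho\big)+\frac{1}{\ell^2(\ell-1)}\sum_{\substack{y,z\in\{0,\dots,\ell-1\}\\ y\neq z}}\bar\xi_y\bar\xi_z .
\]
On functions of $\xi$, $\mathcal{L}_S$ acts as an SSEP generator, which preserves the chaos decomposition. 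Hence, for $\rho\neq\frac12$ and $v=\ind_{\{0\}}$, $\big\|\sum_x\tau_xF_\ell\,v(x)\big\|_{-1}$ dominates the $H_{-1}$-norm of a linear field with non-zero total mass. That norm is infinite in dimension one, so Kipnis--Varadhan gives nothing. Under the symmetric dynamics, the variance of the time integral is at least of order $\ell^{-2}t^{3/2}N^{-a/2}$ once $N^at\gg\ell^2$. Since only $\mathcal{L}_S$ enters \cref{cor:spectral}, no argument of this type can yield an $N^{-a}$ bound uniform in $\ell$.

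\textbf{What is actually true.} The correct form of this step is your static bound, as in \cite{goncalves_nonlinear_2014}. Accordingly, \cref{thm:2BG-Principle} should carry an extra term $C\,t\,\ell^{-2}\int_0^t\|v_s\|^2\diff s$. This is harmless downstream: with $a=2$, $\gamma=\frac12$ and $\ell=\varepsilon N$, it only adds $O(t^2\varepsilon^{-2}N^{-1}\|\nabla\varphi\|_{L^2}^2)$ to \eqref{eq:boundvarianceR}. So your first half is a correct proof of the correct estimate, but not of the statement as written, which neither your route nor the paper's can deliver.
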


\begin{proof}
    Again, we split the sum over $\Z$ into a sum over blocks of size $\ell$ and use Cauchy-Schwarz inequality to bound the expectation in \eqref{eq:laststep} by
    \begin{multline*}
        \ell\sum_{i=0}^{\ell -1} \E_{\rho ,\mathcal{E}}\left[ \left( \int_0^t \sum_{x\in\Z} \tau_{\ell x +i}\left\{ \E_\ell [f]-\frac12 \bar{\mathbf{u}}^\ell (s)\mathrm{Hess}(\psi_f)(\rho ,\mathcal{E})\bar{\mathbf{u}}^\ell (s)\right.\right.\right.\\
        \left.\left.\left. +\frac{1}{2\ell }\mathrm{Tr}(\chi\mathrm{Hess}(\psi_f))(\rho ,\mathcal{E})\right\} v_s(\ell x+i)\diff s\right)^2\right].
    \end{multline*}
    The functions have now disjoint support and satisfy the hypotheses of \cref{cor:spectral}, which gives that this is bounded above by
    \begin{multline*}
        C_7 \frac{\ell^3}{N^a}\sum_{i=0}^{\ell -1}\sum_{x\in\Z}  \int_0^t v_s(\ell x+i)^2 \mathrm{Var}_{\rho ,\mathcal{E}}\left( \E_\ell [f] -\frac12 \bar{\mathbf{u}}^\ell \mathrm{Hess}(\psi_f)(\rho ,\mathcal{E})\bar{\mathbf{u}}^\ell \right. \\\left.+ \frac{1}{2\ell }\mathrm{Tr}(\chi\mathrm{Hess}(\psi_f))(\rho ,\mathcal{E})\right)\diff s
    \end{multline*}
    and according to \cref{cor:equivalenceofensembles}, this variance is bounded above by $C_3\ell^{-3}$ so we immediately deduce the result.
\end{proof}

\section{Martingale decomposition}
\label{sec:martingaledecomposition}

Recall the definition of the fluctuations fields $\mathcal{Y}^{p,N}$ and $\mathcal{Y}^{e,N}$ defined in \eqref{def:fluctuationfields} that are càdlàg stochastic processes with values in $\schwartzprime$. In order to show the convergence to the solution to the SBE, we need to introduce \emph{Dynkin's martingale} associated to the fluctuation fields. Take~$v\in\R$ and $\mathfrak{c}\in\R$, and consider a linear combination of these fluctuations fields of the form
\begin{equation}\label{def:Z}
    \mathcal{Z}_t^N(\varphi )= \mathfrak{c}\mathcal{Y}_t^{p,N}(T_{vN^{a-1}t}\varphi) + \mathcal{Y}_t^{e,N}(T_{vN^{a-1}t}\varphi)
\end{equation}
for any $t\in [0,T]$ and any~$\varphi\in\schwartzprime$. It is well-known (\textit{cf}.~\cite[Appendix 1.5]{kipnis_scaling_1999}) that for any~$\varphi\in\schwartz$, the process defined by 
\begin{equation}\label{def:martingale}
    \mathcal{M}_t^N(\varphi) = \mathcal{Z}_t^N(\varphi )-\mathcal{Z}_0^N(\varphi )-\int_0^t (\partial_t +N^a\mathcal{L})\mathcal{Z}_s^N(\varphi )\diff s ,\qquad t\in [0,T],
\end{equation}
is a mean-zero martingale with respect to the natural filtration of $(\eta (t))_{0\le t\le T}$, and its quadratic variation is given by
\begin{equation}\label{eq:dynkinquadraticvariation}
    \langle\mathcal{M}^N(\varphi )\rangle_t = N^a \int_0^t \big( \mathcal{L}\mathcal{Z}_s^N(\varphi )^2 - 2\mathcal{Z}_s^N(\varphi )\mathcal{L}\mathcal{Z}_s^N(\varphi )\big)\diff s.
\end{equation}
We start computing the integral term appearing in the definition of Dynkin's martingale \eqref{def:martingale}. First of all, it is immediate that 
\begin{equation}\label{eq:timederiv_martingaledecomp}
    \partial_t\mathcal{Z}_s^{N}(\varphi )= - vN^{a-1}\mathcal{Z}_s^{N}(\nabla\varphi ),
\end{equation}
so let us compute the term coming from the action of the generator $\mathcal{L}$. Using the gradient conditions \eqref{eq:gradientjump} and \eqref{eq:gradientenergy}, it is not hard to see that the action of the symmetric part on the fluctuation fields is given by 
\begin{equation}
    N^a\mathcal{L}_S\mathcal{Z}_s^N(\varphi ) = (\kappa -1)N^{a-2}\mathcal{Z}_s^N(\Delta_N\varphi )
\end{equation}
where $\Delta_N$ is the discrete Laplacian defined by
\begin{equation*}
        \Delta_N\varphi\left(\frac xN\right) = N^2\left(\varphi\left(\frac{x+1}{N}\right)-2\varphi\left(\frac xN\right)+\varphi\left(\frac{x-1}{N}\right)\right).
\end{equation*}
Let us now focus on the action of the antisymmetric part. After some computations, we see that for any $x\in\Z$,
\begin{equation*}
        \mathcal{L}_A\bar\xi_x = h_{x-1}^p(\eta )-h_x^p(\eta )\qquad\mbox{ and }\qquad \mathcal{L}_A\bar\eta_x = h_{x-1}^e(\eta )-h_x^e(\eta )
\end{equation*}
where the local function $h_x^p$ and $h_x^e$ are respectively given by
\begin{equation*}
    h_x^p(\eta )= \frac{\alpha_p+\alpha_e\eta_x}{2}c_{x\to x+1}^p(\eta )+ \frac{\alpha_p+\alpha_e\eta_{x+1}}{2}c_{x+1\to x}^p(\eta )
\end{equation*}
and
\begin{equation*}
    h_x^e(\eta )= \frac{\alpha_e+\alpha_e\eta_x }{2}\eta_xc_{x\to x+1}^p(\eta )+\frac{\alpha_p+\alpha_e\eta_{x+1}}{2}\eta_{x+1}c_{x+1\to x}^p (\eta )+ \frac{\alpha_e}{2} \big( c_{x\to x+1}^e(\eta )+c_{x+1\to x}^e(\eta )\big).
\end{equation*}
Therefore, after a summation by parts, we obtain that
\begin{equation*}
    N^{a-\gamma}\mathcal{L}_A\mathcal{Z}_s^N(\varphi ) = N^{a-\gamma -\frac32} \sum_{x\in\Z} \big( \mathfrak{c}h_x^p(s)+h_x^e(s)\big)\nabla_NT_{vN^{a-1}s}\varphi\Big( \frac xN\Big)
\end{equation*}
where the discrete gradient is defined by
\begin{equation*}
    \nabla_N\varphi\left(\frac xN\right) = N\left(\varphi\left(\frac{x+1}{N}\right)-\varphi\left(\frac xN\right)\right).
\end{equation*}
Recall the definition of the macroscopic current vector $\mathbf{j}(\rho ,\mathcal{E})$ given in \eqref{def:macroscopiccurrent}, and its Jacobian matrix $\mathbf{J}(\rho ,\mathcal{E})$, both are explicitly computed in \cref{appendix:couplingmatrices}. Notice that we have $\psi_{h^p}=\mathbf{j}_1$ and~$\psi_{h^e}=\mathbf{j}_2$. Then, redefining the local functions
\begin{equation*}
    \tilde{h}_x^p(\eta )= h_x^p(\eta )-\psi_{h^p}(\rho ,\mathcal{E}) - \nabla\psi_{h^p}(\rho ,\mathcal{E})\cdot\begin{pmatrix}
            \bar\xi_x \\
            \bar\eta_x
    \end{pmatrix},
\end{equation*}
\begin{equation*}
    \tilde{h}_x^e(\eta )= h_x^e(\eta )-\psi_{h^e}(\rho ,\mathcal{E}) - \nabla\psi_{h^e}(\rho ,\mathcal{E})\cdot\begin{pmatrix}
            \bar\xi_x \\
            \bar\eta_x
    \end{pmatrix},
\end{equation*}
the term coming from the antisymmetric part can be rewritten as
\begin{multline*}
    N^{a-\gamma -\frac32} (\mathfrak{c}\mathbf{J}_{11}+\mathbf{J}_{21})\sum_{x\in\Z}\bar\xi_x(s)\nabla_NT_{vN^{a-1}s}\varphi\Big( \frac xN\Big) \\ + N^{a-\gamma -\frac32} (\mathfrak{c}\mathbf{J}_{12}+\mathbf{J}_{22})\sum_{x\in\Z}\bar\eta_x(s)\nabla_NT_{vN^{a-1}s}\varphi\Big( \frac xN\Big) \\ + N^{a-\gamma -\frac32}\sum_{x\in\Z} \big( \mathfrak{c}\tilde{h}_x^p(s)+\tilde{h}_x^e(s)\big) \nabla_NT_{vN^{a-1}s}\varphi\Big( \frac xN\Big)
\end{multline*}
because the terms involving the averages $\psi_{h^p}(\rho ,\mathcal{E})$ and $\psi_{h^e}(\rho ,\mathcal{E})$ have no contribution as they are summed against a discrete gradient. If we replace the discrete gradient $\nabla_N$ by its continuous counterpart~$\nabla$ in the first two terms above, we can cancel them out with the time derivative term \eqref{eq:timederiv_martingaledecomp} by choosing $v$ and $\mathfrak{c}$ such that
\begin{equation}\label{eq:system_vc}
    \begin{cases}
        \frac{1}{N^\gamma}(\mathfrak{c}\mathbf{J}_{11}+\mathbf{J}_{21}) = v\mathfrak{c},\\
        \frac{1}{N^\gamma}(\mathfrak{c}\mathbf{J}_{12}+\mathbf{J}_{22}) = v.
    \end{cases}
\end{equation}
According to \cref{thm:diagonalizability}, under the conditions $\rho\in (0,1)$, $\mathcal{E}\in (\rho ,\kappa\rho )$ and $\alpha_e\neq 0$, this system admits two solutions because the matrix $\mathbf{J}(\rho ,\mathcal{E})$ is diagonalizable with two distinct real eigenvalues $\lambda_1>\lambda_2$. Moreover, since $\mathbf{J}_{12}\neq 0$ in this case (\textit{cf}.~\cref{appendix:couplingmatrices}), we can normalize the associated left eigenvectors under the form
\begin{equation*}
    \begin{pmatrix}
        \mathfrak{c}_i & 1
    \end{pmatrix} \mathbf{J}= \lambda_i \begin{pmatrix}
         \mathfrak{c}_i & 1
    \end{pmatrix}\qquad\mbox{ for }i=1,2.
\end{equation*}
More precisely, the two solutions of the system \eqref{eq:system_vc} are given by $v_i=\frac{\lambda_i}{N^\gamma}$ and $\mathfrak{c}_i$ for $i=1,2$.

Define $\mathcal{Z}^{N,i}$ for $i\in \{1,2\}$ to be the linear combination of the fluctuation fields by choosing the values~$v=v_i$ and $\mathfrak{c}=\mathfrak{c}_i$ in \eqref{def:Z}, and also $\mathcal{M}^{N,i}$ the associated martingale defined in \eqref{def:martingale}. Then, we obtained the following decomposition for these martingales.
\begin{proposition}[Martingale decomposition]
    For any $i\in \{1,2\}$, any test function $\varphi\in \schwartz$ and any $t\in [0,T]$, the martingale $\mathcal{M}^{N,i}$ decomposes as
    \begin{equation}\label{eq:martdecomp}
        \mathcal{M}_t^{N,i}(\varphi ) = \mathcal{Z}_t^{N,i}(\varphi )-\mathcal{Z}_0^{N,i}(\varphi ) - \mathcal{I}_t^{N,i}(\varphi ) - \Lambda_t^{N,i} (\varphi ) + E^{N,i}_t(\varphi)
    \end{equation}
    where
    \begin{align}
        & \mathcal{I}_t^{N,i}(\varphi ) = (\kappa -1)N^{a-2}\int_0^t \mathcal{Z}_s^{N,i}(\Delta_N\varphi )\diff s, \label{eq:It}\\
        & \Lambda_t^{N,i} (\varphi ) = N^{a-\gamma -\frac32} \int_0^t \sum_{x\in\Z} \big( \mathfrak{c}_i\tilde{h}_x^p(s)+\tilde{h}_x^e(s)\big) \nabla_NT_{v_iN^{a-1}s}\varphi\Big( \frac xN\Big)\diff s, \label{eq:Lambda}
    \end{align}
    and $E_t^{N,i}(\varphi )$ is the remainder term coming from the replacement of the discrete gradient by its continuous counterpart, whose $L^2$-norm is of order $O(N^{2a-2\gamma -4})$.
\end{proposition}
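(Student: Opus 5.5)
The plan is to start from the Dynkin formula \eqref{def:martingale} for the chosen $v=v_i$, $\mathfrak{c}=\mathfrak{c}_i$. We then compute the three contributions to $(\partial_s+N^a\mathcal{L})\mathcal{Z}^{N,i}_s(\varphi)$ one at a time: the time derivative \eqref{eq:timederiv_martingaledecomp}, the symmetric part, and the antisymmetric part. Afterwards we reorganise the terms so that the linear ones cancel by the eigenvector relation \eqref{eq:system_vc}.

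\textbf{Symmetric part.} Since $\mathcal{L}_S$ acts linearly on the fields, we use the gradient identities \eqref{eq:gradientjump} and \eqref{eq:gradientenergy}:
\begin{equation*}
\mathcal{L}_S\bar\xi_x=(\kappa-1)(\xi_{x+1}-2\xi_x+\xi_{x-1}),
\end{equation*}
and similarly for $\bar\eta_x$. A double summation by parts against $T_{v_iN^{a-1}s}\varphi(x/N)$ then produces $(\kappa-1)N^{a-2}\mathcal{Z}^{N,i}_s(\Delta_N\varphi)$, where $\Delta_N$ acts on the translated test function. This is exactly $\mathcal{I}^{N,i}$ after noting that translation commutes with $\Delta_N$ on the grid (up to our convention of evaluating at $x/N - v_iN^{a-1}s$).

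\textbf{Antisymmetric part.} We verify directly from the definition of $\mathcal{L}_A$ that $\mathcal{L}_A\bar\xi_x=h^p_{x-1}-h^p_x$ and $\mathcal{L}_A\bar\eta_x=h^e_{x-1}-h^e_x$, by listing the bonds $\{x-1,x\}$ and $\{x,x+1\}$ touching $x$. One summation by parts gives $N^{a-\gamma-3/2}\sum_x(\mathfrak{c}_ih^p_x+h^e_x)\nabla_NT\varphi(x/N)$. We subtract the constants $\psi_{h^p},\psi_{h^e}$, which sum to zero against a discrete gradient. We also subtract the linear parts, using $\nabla\psi_{h^p}=(\mathbf{J}_{11},\mathbf{J}_{12})$ and $\nabla\psi_{h^e}=(\mathbf{J}_{21},\mathbf{J}_{22})$, which follows from $\psi_{h^\cdot}=\mathbf{j}_\cdot$. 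This leaves $\Lambda^{N,i}$ plus the linear term
\begin{equation*}
N^{a-\gamma-1}\big[(\mathfrak{c}_i\mathbf{J}_{11}+\mathbf{J}_{21})\mathcal{Y}^{p,N}_s+(\mathfrak{c}_i\mathbf{J}_{12}+\mathbf{J}_{22})\mathcal{Y}^{e,N}_s\big](\nabla_NT\varphi).
\end{equation*}
The left eigenvector relation reduces this to $v_iN^{a-1}\mathcal{Z}^{N,i}_s(\nabla_N\varphi)$. This cancels $\partial_s\mathcal{Z}^{N,i}_s(\varphi)=-v_iN^{a-1}\mathcal{Z}^{N,i}_s(\nabla\varphi)$, except for the difference obtained when $\nabla_N$ is replaced by $\nabla$.

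\textbf{Remainder.} We set
\begin{equation*}
E^{N,i}_t=v_iN^{a-1}\int_0^t\mathcal{Z}^{N,i}_s(\nabla_N\varphi-\nabla\varphi)\,ds,
\end{equation*}
with the sign fixed by the decomposition. Its $L^2$ norm is the main estimate. By stationarity and product structure, $\mathbb{E}[\mathcal{Z}^{N,i}_s(g)^2]=\sigma_i^2N^{-1}\sum_xg(x/N)^2$. A Taylor expansion gives $|\nabla_N\varphi-\nabla\varphi|(x/N)\le N^{-1}\sup_{|y-x/N|\le1/N}|\varphi''(y)|$, and the Riemann sum is controlled by the Schwartz decay. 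Cauchy–Schwarz in time, together with $v_i=\lambda_iN^{-\gamma}$, then gives
\begin{equation*}
\mathbb{E}[(E^{N,i}_t)^2]\le C t^2N^{2a-2\gamma-2}\cdot N^{-2}=O(N^{2a-2\gamma-4}).
\end{equation*}
This is the only genuinely quantitative step, and everything else is bookkeeping. The main care is tracking the translation $T_{v_iN^{a-1}s}$ consistently inside the discrete operators, which we handle by always evaluating the translated function at grid points.
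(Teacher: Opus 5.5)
Your proposal is correct and follows essentially the same route as the paper. Both start from Dynkin's formula, use the gradient identities for the symmetric part, and for the antisymmetric part use $\mathcal{L}_A\bar\xi_x=h^p_{x-1}-h^p_x$ and its energy analogue with one summation by parts; both then subtract the constant and linear parts of $\mathfrak{c}_ih^p+h^e$ and cancel the linear term against $\partial_s\mathcal{Z}^{N,i}_s$ via the left-eigenvector relation \eqref{eq:system_vc}, leaving only the $\nabla_N$-versus-$\nabla$ error. Your explicit estimate of that error (which the paper only asserts) bounds the second moment $\E_{\rho,\mathcal{E}}[(E^{N,i}_t)^2]$ by $O(N^{2a-2\gamma-4})$; this is the sense in which the paper's stated order must be read, so that the $L^2$-norm itself is $O(N^{a-\gamma-2})$.
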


Define also the matrix $\mathbf{H}^i=\mathbf{H}^i(\rho ,\mathcal{E})$ to be the Hessian matrix of the $i$-th coordinate of the macroscopic current: $\mathbf{H}^i\coloneq \mathrm{Hess}(\mathbf{j}_i)$ for $i=1,2$. Notice that we can apply the second-order Boltzmann-Gibbs principle of \cref{thm:2BG-Principle} to the term $\Lambda^{N,i}_t(\varphi )$ since the local function ~$\mathfrak{c}_i\tilde{h}^p+\tilde{h}^e$ satisfies the required hypotheses. This yields that for any $t\in [0,T]$, any $\ell\ge 1$ and any $\varphi\in\schwartz$ we can write
\begin{equation}\label{eq:decompLambda}
    \Lambda_t^{N,i}(\varphi ) = \mathscr{B}_t^{N,i,\ell} (\varphi )+ \mathcal{R}_t^{N,i,\ell} (\varphi )
\end{equation}
where
\begin{equation}\label{def:Bt}
    \mathscr{B}_t^{N,i,\ell} (\varphi ) = \frac12 N^{a-\gamma -\frac 32} \int_0^t \sum_{x\in\Z} \mathscr{Q}_x^{i,\ell} (s) \nabla_NT_{v_iN^{a-1}s}\varphi \Big( \frac xN\Big)\diff s
\end{equation}
with
\begin{equation}\label{def:Q}
    \mathscr{Q}_x^{i,\ell} (\eta ) \coloneq \tau_x \left\{ (\bar{\mathbf{u}}^\ell)^\top (\mathfrak{c}_i\mathbf{H}^1+\mathbf{H}^2)\bar{\mathbf{u}}^\ell - \frac1\ell \mathrm{Tr}\big( \chi(\mathfrak{c}_i\mathbf{H^1}+\mathbf{H}^2)\big) \right\},
\end{equation}
and $\mathcal{R}_t^{N,i,\ell}(\varphi )$ is an error term whose variance is bounded as 
\begin{equation}\label{eq:boundvarianceR}
    \E_{\rho ,\mathcal{E}}\big[ \mathcal{R}_t^{N,i,\ell} (\varphi )^2\big] \le C_{17} tN^{a-2\gamma -2}\ell \|\nabla\varphi\|_{L^2}^2
\end{equation}
for some constant $C_{17}=C_{17}(\kappa ,\rho ,\mathcal{E} ,\varphi ) >0$.

Next, we take $\ell =\varepsilon N$ for some $\varepsilon >0$ and we give an alternative expression for the quadratic term $\mathscr{B}_t^{N,i,\varepsilon}(\varphi )\coloneq \mathscr{B}_t^{N,i,\varepsilon N}(\varphi )$ in terms of the two fields $\mathcal{Z}_t^{N,1}$ and $\mathcal{Z}_t^{N,2}$, which will be useful in the sequel to identify the limit points.

\begin{proposition}[Alternative expression of the quadratic term]
    Define the transformation matrix 
    \begin{equation*}
        R = \begin{pmatrix}
             \mathfrak{c}_1 & 1 \\
             \mathfrak{c}_2 & 1
        \end{pmatrix}\qquad \Longleftrightarrow\qquad R^{-1} = \frac{1}{\mathfrak{c}_1 - \mathfrak{c}_2}\begin{pmatrix}
            1 & -1 \\
            -\mathfrak{c}_2 & \mathfrak{c}_1
        \end{pmatrix}.
    \end{equation*}
    so that $R\mathbf{J}R^{-1}=\mathrm{diag}(\lambda_1,\lambda_2)$. Define also the coupling matrices 
    \begin{equation}\label{def:couplingmatrices}
        \mathbf{G}^i = \frac12 (R^{-1})^\top (\mathfrak{c}_i\mathbf{H}^1+\mathbf{H}^2)R^{-1} 
    \end{equation}
    for $i\in\{1,2\}$. Then, for any $\varphi\in\schwartz$, any $t\in [0,T]$ and any $\varepsilon >0$, the term $\mathscr{B}_t^{N,i,\varepsilon}(\varphi )$ rewrites as 
    \begin{align}
        \mathscr{B}_t^{N,i,\varepsilon}(\varphi ) & = N^{a-\gamma -\frac32} \mathbf{G}_{11}^i\int_0^t\frac1N \sum_{x\in\Z} \mathcal{Z}_s^{N,1}\left( \iota_\varepsilon \big( \cdot\, ;\tfrac{x-(v_1-v_i)N^as}{N} \big)\right)^2 \nabla_N\varphi\Big( \frac xN\Big)\diff s \label{eq:Bt_alternative1}\\
        & \quad +N^{a-\gamma-\frac32}(\mathbf{G}_{12}^i +\mathbf{G}_{21}^i)\int_0^t\frac1N\sum_{x\in\Z} \mathcal{Z}_s^{N,1}\left(\iota_\varepsilon \big( \cdot\, ; \tfrac{x-(v_1-v_i)N^as}{N}\big)\right) \notag \\
        & \hspace{6cm}\times \mathcal{Z}_s^{N,2}\left(\iota_\varepsilon \big(\cdot\, ; \tfrac{x-(v_2-v_i)N^as}{N}\big)\right) \nabla_N\varphi\Big( \frac xN\Big)\diff s \label{eq:Bt_alternative2}\\ 
        & \quad + N^{a-\gamma -\frac32} \mathbf{G}_{22}^i\int_0^t \frac1N\sum_{x\in\Z} \mathcal{Z}_s^{N,2}\left(\iota_\varepsilon \big(\cdot\, ; \tfrac{x-(v_2-v_i)N^as}{N} \big)\right)^2 \nabla_N\varphi\Big( \frac xN\Big)\diff s \label{eq:Bt_alternative3}
    \end{align}
\end{proposition}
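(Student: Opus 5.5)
The plan is to express the quadratic form inside $\mathscr{Q}_x^{i,\varepsilon N}$ in terms of the normal modes and then read off local averages of the fluctuation fields. Write $A_i\coloneq\mathfrak{c}_i\mathbf{H}^1+\mathbf{H}^2$, so that by \eqref{def:couplingmatrices} we have $A_i = 2R^\top\mathbf{G}^iR$. Hence, for any vector $\bar{\mathbf{u}}$,
\begin{equation*}
    \bar{\mathbf{u}}^\top A_i\bar{\mathbf{u}} = 2 (R\bar{\mathbf{u}})^\top\mathbf{G}^i(R\bar{\mathbf{u}}) = 2\Big(\mathbf{G}^i_{11}w_1^2 + (\mathbf{G}^i_{12}+\mathbf{G}^i_{21})w_1w_2 + \mathbf{G}^i_{22}w_2^2\Big),
\end{equation*}
where $w_j = \mathfrak{c}_j(\xi_0^\ell-\rho)+(\eta_0^\ell-\mathcal{E})$. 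The factor $2$ will cancel the $\frac12$ in \eqref{def:Bt}. This is pure linear algebra and is the first step.

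Next I will identify $\tau_x w_j$ with a smeared fluctuation field. With $\ell=\varepsilon N$,
\begin{equation*}
    \tau_x w_j = \frac{1}{\varepsilon N}\sum_{y=x}^{x+\varepsilon N-1}\big(\mathfrak{c}_j\bar\xi_y+\bar\eta_y\big) = \frac{1}{\sqrt N}\cdot\frac1N\sum_{y}\big(\mathfrak{c}_j\bar\xi_y+\bar\eta_y\big)\,\iota_\varepsilon\Big(\tfrac yN - \tfrac xN\Big)\cdot\varepsilon^{-1}\varepsilon ,
\end{equation*}
up to the orientation convention of $\iota_\varepsilon(u;\cdot)$. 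This means $\tau_xw_j = N^{-1/2}\mathcal{Y}^{j}(\iota_\varepsilon(\cdot;x/N))$, where $\mathcal{Y}^j\coloneq\mathfrak{c}_j\mathcal{Y}^{p,N}+\mathcal{Y}^{e,N}$ is the unshifted field. The definition \eqref{def:Z} involves the moving frame $T_{v_jN^{a-1}s}$. Since $\mathcal{Z}^{N,j}_s(\psi)=\mathcal{Y}^j_s(T_{v_jN^{a-1}s}\psi)$, we get $\mathcal{Y}^j_s(\phi)=\mathcal{Z}^{N,j}_s(T_{-v_jN^{a-1}s}\phi)$. Translating $\iota_\varepsilon(\cdot;x/N)$ by $-v_jN^{a-1}s$ moves its centre to $(x-v_jN^as)/N$.

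Then I will change the summation variable $x\mapsto x+v_iN^as$ so that the test function becomes $\nabla_N\varphi(x/N)$ instead of $\nabla_NT_{v_iN^{a-1}s}\varphi(x/N)$. After this shift the centres become $(x-(v_j-v_i)N^as)/N$, exactly as in \eqref{eq:Bt_alternative1}--\eqref{eq:Bt_alternative3}. The product of two factors $N^{-1/2}$ gives the prefactor $\frac1N$ in front of the sum. Collecting the terms yields the three lines.

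The main obstacle is bookkeeping rather than analysis. First, $v_iN^as$ need not be an integer, so the shift of the lattice sum has to be interpreted by extending $\nabla_N\varphi$ to real arguments, or absorbed into an error term. I would note that the statement implicitly uses the continuous translate. Second, the trace correction $-\frac1\ell\mathrm{Tr}(\chi A_i)$ in \eqref{def:Q} has to be handled. I would either keep it, since it is a constant summed against a discrete gradient and its sum vanishes exactly, or observe that it therefore contributes zero. Care is also needed to match the sign and orientation convention of $\iota_\varepsilon(u;\cdot)=\iota_\varepsilon(u-\cdot)$ with the block $\{x,\dots,x+\ell-1\}$. If the conventions disagree, this only reflects the block and does not affect the structure of the formula.
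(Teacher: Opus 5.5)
Your proposal is correct and follows the paper's (only sketched) argument: the change of basis $\mathfrak{c}_i\mathbf{H}^1+\mathbf{H}^2=2R^\top\mathbf{G}^iR$, identifying the block averages $\tau_x w_j$ with $N^{-1/2}$ times the fluctuation field tested against $\iota_\varepsilon$, and noting that the trace constant vanishes against a discrete gradient. Your handling of the moving frame, via the unshifted field $\mathfrak{c}_j\mathcal{Y}^{p,N}+\mathcal{Y}^{e,N}$ followed by reindexing by $v_iN^as$, is more careful than the identity quoted in the paper, which as written holds only for the unshifted field, and your caveat about non-integer $v_iN^as$ is legitimate; one fix is to delete the stray factor $N^{-1/2}$ in your first display for $\tau_x w_j$, since the correct right-hand side is $\frac1N\sum_y(\mathfrak{c}_j\bar\xi_y+\bar\eta_y)\,\iota_\varepsilon\big(\tfrac yN-\tfrac xN\big)$ (your stated conclusion $\tau_xw_j=N^{-1/2}\mathcal{Y}^j(\iota_\varepsilon(\cdot\,;x/N))$ is nonetheless correct).
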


We don't prove this proposition in details, as it is only a matter of rewriting based on a change of basis, and on the identity $\sqrt{N}\big(\mathfrak{c}_i\bar\xi_x^{\varepsilon N}(s) + \bar\eta_x^{\varepsilon N}(s)\big) = \mathcal{Z}_s^{N,i}\big(\iota_\varepsilon \big(\cdot\, ;\tfrac xN\big)\big)$ valid for any~$x\in\Z$ and any $s\in [0,T]$, together with the fact that the term $\frac{1}{\varepsilon N}\mathrm{Tr}\big( \chi (\mathfrak{c}_i\mathbf{H}^1+\mathbf{H}^2)\big)$ vanishes when summed against a discrete gradient.

Finally, we give an expression for the quadratic variation of the martingales $\mathcal{M}^{N,i}$, and detail about its asymptotic behaviour as $N$ goes to infinity.

\begin{proposition}[Quadratic variation]\label{prop:quadraticvariation}
    For $i\in\{1,2\}$, any $\varphi\in\schwartz$ and any $t\in [0,T]$, the quadratic variation writes under the form
    \begin{equation}\label{eq:quadraticvariation}
        \langle\mathcal{M}^{N,i}(\varphi )\rangle_t = N^{a-3}\int_0^t \sum_{x\in\Z} \begin{pmatrix}
         \mathfrak{c}_i & 1
        \end{pmatrix}\Upsilon_x(s)\begin{pmatrix}
            \mathfrak{c}_i \\
            1
        \end{pmatrix} \left( \nabla_NT_{v_iN^{a-1}s}\varphi \Big(\frac xN\Big)\right)^2\diff s
    \end{equation}
    where for any $x\in\Z$, $\Upsilon_x=\Upsilon_x(\eta )$ is the $2\times 2$ matrix given by
    \begin{multline*}
        \Upsilon_x(\eta )\coloneq \left[\left(1+\frac{\alpha_p+\alpha_e\eta_x}{2N^\gamma}\right)c_{x\to x+1}^p (\eta )+ \left(1-\frac{\alpha_p+\alpha_e\eta_{x+1}}{2N^\gamma}\right)c_{x+1\to x}^p(\eta )\right]\\
        \hspace{2cm}\times\begin{pmatrix}
            (\xi_x-\xi_{x+1})^2 & (\xi_x-\xi_{x+1})(\eta_x-\eta_{x+1}) \\
            (\xi_x-\xi_{x+1})(\eta_x-\eta_{x+1}) & (\eta_x-\eta_{x+1})^2
        \end{pmatrix}\\
            +\left[ \left( 1+\frac{\alpha_e}{2}\right)c_{x\to x+1}^e(\eta )+ \left( 1-\frac{\alpha_e}{2}\right) c_{x+1\to x}^e(\eta )\right] \begin{pmatrix}
                0 & 0 \\
                0 & 1
            \end{pmatrix}.
    \end{multline*}
    Moreover,  for any $t\in [0,T]$ and any $\varphi\in\schwartz$, we have the following $L^2$-convergence
    \begin{equation}\label{eq:limitquadraticvariation}
        N^{2-a}\langle\mathcal{M}^{N,i}(\varphi )\rangle_t  \xrightarrow[N\to \infty]{L^2} \sigma_i( t )
    \end{equation}
    where $\sigma_i : [0,T]\longrightarrow\R_+$ is the function given by
    \begin{equation}\label{def:sigmai}
       \sigma_i(t) =2(\kappa -1)t \begin{pmatrix}
        \mathfrak{c}_i & 1
       \end{pmatrix} \chi(\rho ,\mathcal{E})\begin{pmatrix}
        \mathfrak{c}_i\\
        1
       \end{pmatrix} \|\nabla\varphi\|_{L^2}^2
    \end{equation}
    and $\chi$ is the compressibility matrix given in \eqref{eq:compressibilitymatrix}.
\end{proposition}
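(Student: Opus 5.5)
The plan has two parts: first derive the explicit formula \eqref{eq:quadraticvariation} from the carré du champ formula \eqref{eq:dynkinquadraticvariation}, then prove the $L^2$ law of large numbers \eqref{eq:limitquadraticvariation} by a variance computation.

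\textbf{The formula.} For a generic jump generator, $\mathcal{L}F^2-2F\mathcal{L}F=\sum(\text{rate})\,(\nabla F)^2$, where the sum runs over the possible transitions and $\nabla F$ is the jump of $F$ along each one. We take $F=\mathcal{Z}_s^{N,i}(\varphi)$ and list the transitions.
\begin{itemize}
\item A particle jump across the bond $\{x,x+1\}$ changes $\bar\xi_x,\bar\xi_{x+1}$ by $\mp(\xi_x-\xi_{x+1})$ and $\bar\eta_x,\bar\eta_{x+1}$ by $\mp(\eta_x-\eta_{x+1})$. It therefore changes $F$ by $N^{-1/2}\big(\mathfrak{c}_i(\xi_x-\xi_{x+1})+(\eta_x-\eta_{x+1})\big)\big(T\varphi(\tfrac{x+1}{N})-T\varphi(\tfrac xN)\big)$, that is, by $N^{-3/2}(\cdots)\nabla_NT\varphi(\tfrac xN)$.
\item An energy transfer $x\to x+1$ changes $F$ by $N^{-3/2}\nabla_NT\varphi(\tfrac xN)$ times $1$ in the $\eta$ slot, and the transfer $x+1\to x$ gives the same square.
\end{itemize}
The rates are those of $\mathcal{L}_S+N^{-\gamma}\mathcal{L}_A$, with the weights read off from the definition of $\mathcal{L}_A$. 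Squaring, multiplying by $N^a$, and writing the result as a quadratic form in $(\mathfrak{c}_i,1)$ gives \eqref{eq:quadraticvariation} with the stated $\Upsilon_x$. This step is purely bookkeeping.

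\textbf{The limit.} Multiplying by $N^{2-a}$ gives $N^{-1}\int_0^t\sum_x(\mathfrak{c}_i,1)\Upsilon_x(\mathfrak{c}_i,1)^\top(\nabla_NT\varphi)^2\,ds$. We split $\Upsilon_x=\Upsilon_x^S+N^{-\gamma}\Upsilon_x^A$, where $\Upsilon_x^S$ is the part with symmetric rates and $\Upsilon_x^A$ the asymmetric correction. The asymmetric correction is a bounded local function, since $\kappa$ is finite. Its contribution is therefore bounded deterministically by $CN^{-\gamma}N^{-1}\sum_x(\nabla_N\varphi)^2$, which is $O(N^{-\gamma})$ and vanishes. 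For the symmetric part we compute the mean under $\mu_{\rho,\mathcal{E}}$. Because of the gradient structure and reversibility, the expected quadratic variation per bond equals $2\langle \bar{\mathbf{u}}\,\text{stuff}\rangle$. Concretely, stationarity of $F^2$ gives $\E[\mathcal{L}_SF^2-2F\mathcal{L}_SF]=-2\E[F\mathcal{L}_SF]$. Using $N^a\mathcal{L}_SF=(\kappa-1)N^{a-2}\mathcal{Z}(\Delta_N\varphi)$, this equals $2(\kappa-1)N^{-1}\sum_x(\nabla_N\varphi)^2\,(\mathfrak{c}_i,1)\chi(\mathfrak{c}_i,1)^\top$ after summation by parts. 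So the mean of the symmetric part is
\[
2(\kappa-1)t\,(\mathfrak{c}_i,1)\chi(\mathfrak{c}_i,1)^\top\,\frac1N\sum_x\big(\nabla_NT_{v_iN^{a-1}s}\varphi(\tfrac xN)\big)^2 .
\]
The Riemann sum is translation invariant up to an $O(N^{-1})$ error, uniformly in the shift, and it converges to $\|\nabla\varphi\|_{L^2}^2$. This gives exactly $\sigma_i(t)$.

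It remains to show that the fluctuation vanishes in $L^2$. We write the centered integrand as $N^{-1}\sum_x g_x(\eta(s))\,a_s(x)$, with $g_x=\tau_xg$ a mean-zero local function supported on $\{x,x+1\}$ and $a_s(x)=(\nabla_NT\varphi(\tfrac xN))^2$. By Cauchy--Schwarz in time and stationarity, it suffices to bound $\E\big[(N^{-1}\sum_xg_xa_s(x))^2\big]$. The product measure makes $g_x$ and $g_y$ independent for $|x-y|\ge2$, so this expectation is at most $CN^{-2}\sum_x a_s(x)^2=O(N^{-1})$, uniformly in $s$. Hence the fluctuation tends to $0$ in $L^2$.

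I expect the only delicate point to be the uniformity of the Riemann-sum approximations in the moving frame $T_{v_iN^{a-1}s}$. The shift is of size $N^{a-1-\gamma}s$ and is unbounded, but the sums $\frac1N\sum_x(\nabla_N\varphi(\tfrac{x}{N}-w))^2$ converge uniformly in $w$ because $\varphi$ is Schwartz. Everything else is routine.
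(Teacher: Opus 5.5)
Your argument is correct. It shares the paper's skeleton: carré du champ bookkeeping for \eqref{eq:quadraticvariation}, identification of the mean, and an $O(N^{-1})$ variance bound from the independence under $\mu_{\rho,\mathcal{E}}$ of bond terms at distance at least $2$. That last step is exactly the paper's.

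The genuine difference is how you obtain the mean. The paper computes $\E_{\rho,\mathcal{E}}[\Upsilon_x]=2(\kappa-1)\chi(\rho,\mathcal{E})$ directly, entry by entry. This identity holds exactly for the full $\Upsilon_x$, because the asymmetric corrections are odd under $\eta_x\leftrightarrow\eta_{x+1}$ and so have zero mean.

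You instead discard the asymmetric part with a deterministic $O(N^{-\gamma})$ bound. You then get the symmetric mean from three ingredients: the identity $\E[\mathcal{L}_SF^2-2F\mathcal{L}_SF]=-2\E[F\mathcal{L}_SF]$, the gradient identity $N^a\mathcal{L}_S\mathcal{Z}^N(\varphi)=(\kappa-1)N^{a-2}\mathcal{Z}^N(\Delta_N\varphi)$, and summation by parts.

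What each route buys:
\begin{itemize}
\item Your route spares you the moment computation. In particular it avoids the $(2,2)$ entry, which hinges on $(\mathcal{E}-\rho)(\kappa\rho-\mathcal{E})=(\kappa-1)(\rho\,\E_{\rho,\mathcal{E}}[\eta_x^2]-\mathcal{E}^2)$. It also shows structurally why $(\kappa-1)\chi$ appears.
\item The paper's exact per-bond identity is what it later reuses to get $\E[\langle\mathcal{M}^{N,1}(\varphi),\mathcal{M}^{N,2}(\psi)\rangle_t]=0$ for every $N$. Your method reproduces this by polarization, up to an $O(N^{-\gamma})$ term that is harmless.
\end{itemize}

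Two minor remarks:
\begin{itemize}
\item Applying invariance of $\mu_{\rho,\mathcal{E}}$ to the non-local function $F^2$ needs a routine truncation of $\varphi$.
\item Your bookkeeping will give $1\pm\frac{\alpha_e}{2N^\gamma}$ in the energy-transfer bracket of $\Upsilon_x$, not the printed $1\pm\frac{\alpha_e}{2}$; the printed form is a typo in the statement. Your deterministic $O(N^{-\gamma})$ bound relies on the correct form, whereas the paper's zero-mean argument is insensitive to it.
\end{itemize}
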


\begin{proof}
    The expression \eqref{eq:quadraticvariation} is obtained by a simple, though tedious, computation using the definition of the generator $\mathcal{L}$ and the formula \eqref{eq:dynkinquadraticvariation} of the quadratic variation. By a direct computation we get that $\E_{\rho, \mathcal{E}}\big[\Upsilon_x (\eta )\big] = 2(\kappa -1)\chi (\rho ,\mathcal{E})$, the function $\sigma_i(t)$ is thus exactly obtained as the limit 
    \begin{equation}\label{eq:limitexpectationquadraticvariation}
        \lim_{N\to +\infty} N^{2-a}\E_{\rho ,\mathcal{E}}\big[\langle\mathcal{M}^{N,i}(\varphi )\rangle_t \big] = \sigma_i(t).
    \end{equation}
    Therefore, in order to prove the convergence \eqref{eq:limitquadraticvariation}, it suffices to prove that the variance of the quadratic variation vanishes as $N$ goes to infinity. Using \eqref{eq:quadraticvariation}, Cauchy-Schwarz inequality and stationarity yield that
    \begin{multline*}
        \mathrm{Var}_{\rho ,\mathcal{E}}\big( N^{2-a}\langle\mathcal{M}^{N,i}(\varphi )\rangle_t\big)  \le \frac{t^2}{N^2} \sum_{x,y\in\Z} \nabla_NT_{v_iN^{a-1}s}\varphi\Big(\frac xN\Big)^2 \nabla_NT_{v_iN^{a-1}s}\varphi\Big(\frac yN\Big)^2 \times\\ 
        \E_{\rho ,\mathcal{E}} \left[ \begin{pmatrix}
             \mathfrak{c}_i & 1
        \end{pmatrix} \big(\Upsilon_x(\eta )-2(\kappa -1)\chi (\rho ,\mathcal{E})\big)\begin{pmatrix}
            \mathfrak{c}_i \\
            1
        \end{pmatrix} \begin{pmatrix}
            \mathfrak{c}_i & 1
        \end{pmatrix} \big(\Upsilon_y(\eta )-2(\kappa -1)\chi (\rho ,\mathcal{E})\big)\begin{pmatrix}
            \mathfrak{c}_i \\
            1
        \end{pmatrix} \right].
    \end{multline*}
    Note that the matrix $\Upsilon_x(\eta )$ is $(\eta_x,\eta_{x+1})$-measurable, so that under $\mu_{\rho ,\mathcal{E}}$, $\Upsilon_x (\eta )$ and $\Upsilon_y(\eta )$ are independent as soon as $|y-x|>1$. Therefore, the expectation above vanishes as soon as~$|y-x|>1$, and it is bounded by a constant otherwise. Hence, we get that the variance is of order $\frac{1}{N}$ as $N$ goes to infinity, which concludes the proof of the convergence \eqref{eq:limitquadraticvariation}.
\end{proof}

\section{Tightness}
\label{sec:tightness}

In this section, we prove tightness of the sequence of tempered distribution-valued processes~$\mathcal{Z}^{N,i}$. As the proof does not differ depending on the value of $i$, we omit the superscript/subscript for simplicity in the remaining of the section. To do so, the first result we rely on is \emph{Mitoma's criterion} \cite{mitoma_tightness_1983} that states that proving tightness of the sequence of càdlàg $\schwartzprime$-valued processes $\mathcal{Z}^N$ is equivalent to proving tightness of the sequence of càdlàg real-valued processes $\mathcal{Z}^N(\varphi )$ for any test function $\varphi\in\schwartz$. According to the martingale decomposition \eqref{eq:martdecomp}, $\mathcal{Z}^N(\varphi )$ can be decomposed as the sum of terms $\mathcal{I}^N(\varphi )$,~$\Lambda^{N}(\varphi )$,~$\mathcal{M}^N(\varphi )$,~$\mathcal{Z}_0^N(\varphi )$ and~$E^{N}(\varphi )$, so it suffices to prove tightness of each of these terms. Tightness for the latter two terms is rather immediate. Indeed, as the initial distribution is the product measure~$\mu_{\rho ,\mathcal{E}}$, the Central Limit Theorem yields that the initial field~$\mathcal{Z}_0^N(\varphi )$ is tight since it converges in distribution to a centered Gaussian variable with bounded variance. Meanwhile, the bound on the $L^2$-norm of the remainder term $E^N(\varphi )$ implies that this term vanishes in $L^2$ as $N\to \infty$ for a suitable choice of the parameters, so that it is also tight. Let us focus on the three other terms.

An equivalent of Prokhorov's theorem in Skorokhod spaces is well-known (\textit{cf.}~\cite[Section 4.1]{kipnis_scaling_1999}), so that to prove tightness of a sequence of a càdlàg real-valued process $(X_t^N)_{t\in [0,T]}$, it is enough to prove the following two conditions:
\begin{enumerate}[label=(\roman*)]
    \item\label{condition1tightness} For any $t\in [0,T]$ and any $\delta >0$, there exists a compact set $K(t,\delta )\subset\R$ such that for all $N\ge 1$, we have $\p (X_t^N\not\in K(t,\delta ))\le \delta$.
    \item\label{condition2tightness} For any $\delta >0$, we have 
    \begin{equation}\label{eq:conditiontightness}
        \lim_{\theta\to 0}\limsup_{N\to \infty} \p\left( \sup_{s,t\in [0,T]\colon |t-s|<\theta} |X_t^N - X_s^N| > \delta \right) = 0.
    \end{equation}
\end{enumerate}
In particular, showing \ref{condition2tightness} implies that any limiting distribution is supported on the space of continuous paths.
Condition \ref{condition1tightness} is rather immediate in our context so we focus on proving condition \ref{condition2tightness} for the three remaining terms in the martingale decomposition. We start by proving some estimates on their increment, starting with the diffusive term.

\begin{proposition}\label{prop:incrementIt}
    For any $\varphi\in\schwartz$, any $0\le s<t\le T$, the diffusive term defined in \eqref{eq:It} satisfies
    \begin{equation}
        \E_{\rho ,\mathcal{E}}\left[ \big( \mathcal{I}_t^N(\varphi ) - \mathcal{I}_s^N(\varphi )\big)^2\right] \le C_{18}N^{2a-4} (t-s)^2 \|\Delta\varphi\|_{L^2}^2
    \end{equation}
    for some constant $C_{18}=C_{18}(\kappa ,\rho ,\mathcal{E})>0$.
\end{proposition}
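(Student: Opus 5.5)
The plan is a direct second-moment computation that uses only stationarity and the product structure of $\mu_{\rho ,\mathcal{E}}$; no spectral estimate is required. By definition \eqref{eq:It},
\begin{equation*}
    \mathcal{I}_t^N(\varphi )-\mathcal{I}_s^N(\varphi ) = (\kappa -1)N^{a-2}\int_s^t \mathcal{Z}_r^N(\Delta_N\varphi )\diff r .
\end{equation*}
Applying the Cauchy--Schwarz inequality in time gives
\begin{equation*}
    \E_{\rho ,\mathcal{E}}\Big[\big(\mathcal{I}_t^N(\varphi )-\mathcal{I}_s^N(\varphi )\big)^2\Big] \le (\kappa -1)^2N^{2a-4}(t-s)\int_s^t \E_{\rho ,\mathcal{E}}\big[\mathcal{Z}_r^N(\Delta_N\varphi )^2\big]\diff r .
\end{equation*}
It therefore suffices to bound $\E_{\rho ,\mathcal{E}}[\mathcal{Z}_r^N(\Delta_N\varphi )^2]$ uniformly in $r$ by a constant times $\|\Delta\varphi\|_{L^2}^2$, possibly up to a vanishing error.

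To control this second moment, use that the law of $\eta (r)$ is $\mu_{\rho ,\mathcal{E}}$ for every $r$. Recall that $\mathcal{Z}_r^N$ is evaluated at the translated test function $T_{vN^{a-1}r}\Delta_N\varphi$, following \eqref{def:Z}; here $\Delta_N$ is understood as acting on the grid at the shifted points. Under the product measure, the centered vectors $(\bar\xi_x,\bar\eta_x)$ are independent across sites, with covariance $\chi (\rho ,\mathcal{E})$. Hence
\begin{equation*}
    \E_{\rho ,\mathcal{E}}\big[\mathcal{Z}_r^N(\Delta_N\varphi )^2\big] = \begin{pmatrix}\mathfrak{c} & 1\end{pmatrix}\chi \begin{pmatrix}\mathfrak{c}\\ 1\end{pmatrix}\frac1N\sum_{x\in\Z}\Big(\Delta_N\varphi\Big(\frac{x}{N}-vN^{a-1}r\Big)\Big)^2 .
\end{equation*}

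The only real step is to compare the Riemann sum $\frac1N\sum_x (\Delta_N\varphi (\frac xN - w))^2$ with $\|\Delta\varphi\|_{L^2}^2$, uniformly in the shift $w\in\R$. Write the discrete Laplacian as an average of second derivatives,
\begin{equation*}
    \Delta_N\varphi (u) = \int_{-1/N}^{1/N}\Big(\tfrac1N-|h|\Big)N^2\,\Delta\varphi (u+h)\diff h .
\end{equation*}
Jensen's inequality then gives $(\Delta_N\varphi (u))^2\le \int_{-1/N}^{1/N} (\frac1N-|h|)N^2 (\Delta\varphi (u+h))^2\diff h$. Summing over $x$ with weight $\frac1N$, each point of $\R$ is covered with total weight at most $1$. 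This is because the kernel $(\frac1N-|h|)N^2$ has mass $1$ and its translates at spacing $\frac1N$ sum to a constant. Consequently
\begin{equation*}
    \frac1N\sum_x \Big(\Delta_N\varphi \Big(\frac xN-w\Big)\Big)^2\le \int_{\R}(\Delta\varphi)^2\diff u = \|\Delta\varphi\|_{L^2}^2 ,
\end{equation*}
uniformly in $w$ and $N$; the second equality is just the definition of the norm.

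Plugging this bound back in yields $(t-s)^2N^{2a-4}$ times $(\kappa -1)^2(\mathfrak{c},1)\chi (\mathfrak{c},1)^\top\|\Delta\varphi\|_{L^2}^2$, which is the claim with $C_{18}$ depending on $\kappa ,\rho ,\mathcal{E}$ (and on $\mathfrak{c}$, which is itself a function of these parameters). The shift uniformity is the one point that needs care, and the averaging identity handles it cleanly.
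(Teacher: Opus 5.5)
Your proposal is correct and follows essentially the same route as the paper: Cauchy--Schwarz in time, then stationarity and the product structure of $\mu_{\rho ,\mathcal{E}}$ to reduce the second moment to $(\mathfrak{c},1)\chi(\mathfrak{c},1)^\top\,\frac1N\sum_x\big(\Delta_N T_{vN^{a-1}r}\varphi(\tfrac xN)\big)^2$. The only difference is that your tent-kernel representation of $\Delta_N\varphi$, combined with Jensen and the partition-of-unity property of the translated hat functions, gives an explicit bound of this Riemann sum by $\|\Delta\varphi\|_{L^2}^2$, uniformly in the shift and in $N$; the paper leaves this step implicit when it says the result follows easily.
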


\begin{proof}
    Applying Cauchy-Schwarz inequality to the time integral, we get that the $L^2$-norm of the increment satisfies
    \begin{align*}
        \E_{\rho ,\mathcal{E}}\big[ \big( \mathcal{I}_t^N(\varphi )&- \mathcal{I}_s^N(\varphi )\big)^2\big]  \le (\kappa -1)^2N^{2a-4}(t-s)\int_s^t \E_{\rho ,\mathcal{E}}\left[ \mathcal{Z}_\tau^N(\Delta_N\varphi )^2\right]\diff \tau \\
        & \le (\kappa -1)^2N^{2a-4} (t-s) \int_s^t \frac1N\sum_{x\in\Z}\left[ \Delta_NT_{vN^{a-1}\tau}\varphi\Big(\frac xN\Big)\right]^2\E_{\rho ,\mathcal{E}}\big[ (\mathfrak{c}\bar\xi_x + \bar\eta_x)^2\big]\diff \tau
    \end{align*}
    where we used the fact that the measure $\mu_{\rho ,\mathcal{E}}$ is product, and invariant. As the expectation on the right-hand side is bounded by a constant, we easily deduce the result.
\end{proof}

\noindent We have a similar result for the quadratic term.

\begin{proposition}\label{prop:incrementLambda}
    For any $\varphi\in\schwartz$, any $0\le s<t\le T$ and any $\varepsilon >0$, the quadratic term defined in \eqref{eq:Lambda} satisfies
    \begin{equation}
        \E_{\rho ,\mathcal{E}}\left[ \big( \Lambda_t^N(\varphi )-\Lambda_s^N(\varphi )\big)^2\right] \le C_{19} N^{\frac32 a-2\gamma -2}(t-s)^{3/2} \|\nabla\varphi\|_{L^2}^2
    \end{equation}
    for some constant $C_{19}=C_{19}(\kappa ,\rho ,\mathcal{E})>0$. 
\end{proposition}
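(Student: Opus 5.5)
The plan is the standard argument from \cite{goncalves_nonlinear_2014}: interpolate between two bounds on the increment, one from the second-order Boltzmann-Gibbs principle (good for large $t-s$) and one from a crude Cauchy--Schwarz estimate (good for small $t-s$), and optimize the box size $\ell$ in terms of $t-s$.

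\textbf{Step 1: splitting.} Fix $0\le s<t\le T$ and $1\le\ell\le N$. By the decomposition \eqref{eq:decompLambda}, applied on the interval $[s,t]$ thanks to stationarity, we write
\[
\Lambda_t^N(\varphi)-\Lambda_s^N(\varphi)=\big(\mathscr{B}_t^{N,\ell}-\mathscr{B}_s^{N,\ell}\big)(\varphi)+\big(\mathcal{R}_t^{N,\ell}-\mathcal{R}_s^{N,\ell}\big)(\varphi).
\]

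\textbf{Step 2: the error term.} By \cref{thm:2BG-Principle}, in the form \eqref{eq:boundvarianceR} with $t$ replaced by $t-s$, its second moment is at most $C(t-s)N^{a-2\gamma-2}\ell\|\nabla\varphi\|_{L^2}^2$. The sum $N^{-1}\sum_x(\nabla_N T\varphi)^2$ is bounded by $C\|\nabla\varphi\|_{L^2}^2$ uniformly in the translation.

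\textbf{Step 3: the quadratic term.} Apply Cauchy--Schwarz in time:
\[
\E\big[(\mathscr{B}_t-\mathscr{B}_s)^2\big]\le (t-s)^2\,N^{2a-2\gamma-3}\,\E\Big[\Big(\sum_x \mathscr{Q}_x^{\ell}\,\nabla_N T\varphi(\tfrac xN)\Big)^2\Big].
\]
The variables $\mathscr{Q}_x^{\ell}$ are centered quadratic forms in the block averages over $\{x,\dots,x+\ell-1\}$. Hence $\E[\mathscr{Q}_x^\ell\mathscr{Q}_y^\ell]=0$ for $|x-y|\ge\ell$, because the blocks are disjoint, the measure is product, and $\mathscr{Q}$ is centered. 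For overlapping blocks the covariance is at most $\mathrm{Var}(\mathscr{Q}^\ell)\le C\ell^{-2}$, since each coordinate of $\bar{\mathbf{u}}^\ell$ has variance of order $\ell^{-1}$. Summing, the expectation is at most $C\ell\cdot\ell^{-2}\sum_x(\nabla_N T\varphi)^2\le C N\ell^{-1}\|\nabla\varphi\|_{L^2}^2$. This gives the bound $C(t-s)^2N^{2a-2\gamma-2}\ell^{-1}\|\nabla\varphi\|_{L^2}^2$.

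\textbf{Step 4: optimization.} Adding Steps 2 and 3 gives
\[
C\|\nabla\varphi\|_{L^2}^2N^{a-2\gamma-2}(t-s)\Big(\ell+(t-s)N^{a}\ell^{-1}\Big).
\]
We choose $\ell=\sqrt{t-s}\,N^{a/2}$, which yields $C(t-s)^{3/2}N^{\frac32a-2\gamma-2}\|\nabla\varphi\|_{L^2}^2$, as claimed.

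\textbf{Main obstacle.} The constraint $1\le\ell$ (and $\ell\le N$ if we want to stay in the regime where the estimates were stated) must be handled, and this is the step needing care. If $t-s<N^{-a}$, the optimal $\ell$ is below $1$. In that case we bound the increment directly by Cauchy--Schwarz on the full $\Lambda$ integrand, whose $\ell^2$-type variance is $O(N)$ since $\tilde h$ is centered and local. This gives $C(t-s)^2N^{2a-2\gamma-2}\|\nabla\varphi\|^2$, which is at most $(t-s)^{3/2}N^{\frac32a-2\gamma-2}$ precisely because $(t-s)^{1/2}N^{a/2}\le1$. For larger $t-s$, the optimal $\ell$ may exceed $N$, but $\ell$ is not actually restricted by $N$ in the Boltzmann-Gibbs bounds, so the same choice works; alternatively we cap $\ell$ and use $T$-boundedness of $t-s$. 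Rounding $\ell$ to an integer only affects constants.
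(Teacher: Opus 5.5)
Your proposal is correct and follows the paper's proof essentially step for step: the Boltzmann--Gibbs error bound linear in $\ell$, the Cauchy--Schwarz bound on the increment of $\mathscr{B}^{N,\ell}$ using that $\mathscr{Q}_x^\ell$ and $\mathscr{Q}_y^\ell$ are uncorrelated once their blocks are disjoint and that $\E[(\mathscr{Q}_0^\ell)^2]=O(\ell^{-2})$, and the choice $\ell=\sqrt{t-s}\,N^{a/2}$. Your extra treatment of the regime $t-s<N^{-a}$, where this $\ell$ falls below $1$ and a direct Cauchy--Schwarz bound on $\Lambda^N$ is needed, covers a case the paper does not address, and your argument for it is right.
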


\begin{proof}
    Recall the expression of the quantity $\mathscr{B}_t^{N,\ell}(\varphi )$ defined in \eqref{def:Bt}. On the one hand, the second-order Boltzmann-Gibbs principle stated in \cref{thm:2BG-Principle} implies that
    \begin{equation*}
        \E_{\rho ,\mathcal{E}}\left[ \big( \Lambda_t^N(\varphi )-\Lambda_s^N(\varphi ) - (\mathscr{B}_t^{N,\ell}(\varphi ) - \mathscr{B}_s^{N,\ell}(\varphi ) )\big)^2\right] \le C_{12} (t-s) N^{a-2\gamma -2}\ell \|\nabla\varphi\|_{L^2}^2.
    \end{equation*}
    On the other hand, a direct computation using Cauchy-Schwarz inequality and stationarity yields that 
    \begin{multline*}
        \E_{\rho ,\mathcal{E}} \left[ \big( \mathscr{B}_t^{N,\ell}(\varphi ) - \mathscr{B}_s^{N,\ell}(\varphi )\big)^2\right] \\ \le N^{2a-2\gamma -3}(t-s)\sum_{x,y\in\Z} \E_{\rho ,\mathcal{E}} \left[ \mathscr{Q}_x^\ell (\eta )\mathscr{Q}_y^\ell (\eta )\right] \int_s^t\nabla_NT_{vN^{a-1}\tau}\varphi\Big(\frac xN\Big)\nabla_NT_{vN^{a-1}\tau}\varphi\Big(\frac yN\Big) \diff\tau.
    \end{multline*}
    Note that for $|x-y|>\ell$, the expectation above vanishes since the terms $\mathscr{Q}_x^\ell$ and $\mathscr{Q}_y^\ell$ are independent and centered under the stationary measure $\mu_{\rho ,\mathcal{E}}$. Therefore, we can restrict the sum to $|x-y|\le \ell$, and an application of the inequality $2ab \le a^2+b^2$ then yields that this is bounded above by 
    \begin{equation*}
        N^{2a-2\gamma -3}(t-s)^2 \ell N\|\nabla\varphi\|_{L^2}^2 \E_{\rho ,\mathcal{E}} \big[ \mathscr{Q}_0^\ell (\eta )^2\big].
    \end{equation*}
    The term $\E_{\rho ,\mathcal{E}} \big[ \mathscr{Q}_0^\ell (\eta )^2\big]$ is the fourth moment of an empirical average of centered variables, so a basic calculation shows that it is of order $O(\ell^{-2})$. Choosing $\ell = \sqrt{t-s}N^{a/2}$ in these two estimates concludes the proof.
\end{proof}

\noindent Finally, we turn to the martingale term.

\begin{proposition}\label{prop:incrementMn}
    For any $\varphi\in\schwartz$ and any $0\le s<t\le T$, the martingale term satisfies 
    \begin{equation}
        \E_{\rho ,\mathcal{E}}\left[ \big( \mathcal{M}_t^N(\varphi )-\mathcal{M}_s^N(\varphi )\big)^2\right] \le C_{20} N^{a-2}(t-s) \|\nabla\varphi\|_{L^2}^2
    \end{equation}
    for some constant $C_{20}=C_{20}(\kappa ,\rho ,\mathcal{E})>0$.
\end{proposition}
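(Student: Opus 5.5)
The plan is to use the fact that $\mathcal{M}^N(\varphi)$ is a square-integrable martingale. Its increments are therefore orthogonal, and for $0\le s<t\le T$ we have
\begin{equation*}
    \E_{\rho ,\mathcal{E}}\big[ (\mathcal{M}_t^N(\varphi )-\mathcal{M}_s^N(\varphi ))^2\big] = \E_{\rho ,\mathcal{E}}\big[ \langle\mathcal{M}^N(\varphi )\rangle_t - \langle\mathcal{M}^N(\varphi )\rangle_s\big].
\end{equation*}
Once this identity is in place, the statement reduces to bounding the expected increment of the quadratic variation. For that I would use the explicit formula \eqref{eq:quadraticvariation} from \cref{prop:quadraticvariation}:
\begin{equation*}
    \langle\mathcal{M}^N(\varphi )\rangle_t - \langle\mathcal{M}^N(\varphi )\rangle_s = N^{a-3}\int_s^t \sum_{x\in\Z} \begin{pmatrix} \mathfrak{c} & 1\end{pmatrix}\Upsilon_x(\tau)\begin{pmatrix} \mathfrak{c}\\ 1\end{pmatrix} \Big( \nabla_NT_{vN^{a-1}\tau}\varphi \big(\tfrac xN\big)\Big)^2\diff\tau .
\end{equation*}

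Next I take expectations and use stationarity of $\mu_{\rho ,\mathcal{E}}$, so that the law of $\eta(\tau)$ does not depend on $\tau$. The entries of $\Upsilon_x$ are bounded uniformly in $\eta$ and $x$ by a constant depending only on $\kappa$, once $N$ is large enough that the asymmetric factors lie in $[0,2]$; alternatively one can invoke $\E_{\rho ,\mathcal{E}}[\Upsilon_x]=2(\kappa-1)\chi+O(N^{-\gamma})$. Either way, $\E_{\rho ,\mathcal{E}}\big[(\mathfrak{c},1)\Upsilon_x(\mathfrak{c},1)^\top\big]\le C(\kappa,\rho,\mathcal{E})$ uniformly in $x$, where $C$ also absorbs the fixed $\mathfrak{c}$. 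The expected increment is then at most
\begin{equation*}
    C\,N^{a-2}\int_s^t \frac1N\sum_{x\in\Z}\Big(\nabla_N T_{vN^{a-1}\tau}\varphi\big(\tfrac xN\big)\Big)^2\diff\tau .
\end{equation*}

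The only point that needs care is bounding the Riemann sum uniformly in the shift $vN^{a-1}\tau$ and in $N$. I would write $\nabla_N\psi(x/N)=\int_0^1\psi'((x+r)/N)\diff r$ and apply Cauchy--Schwarz, which gives $\frac1N\sum_x(\nabla_N\psi(x/N))^2\le\|\psi'\|_{L^2}^2$ exactly. This holds for $\psi=T_{vN^{a-1}\tau}\varphi$, whose derivative has the same $L^2$ norm as $\nabla\varphi$ because translation preserves $L^2$ norms. Integrating over $[s,t]$ then yields $C_{20}N^{a-2}(t-s)\|\nabla\varphi\|_{L^2}^2$, which is the claimed bound.
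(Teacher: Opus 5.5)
Your proposal is correct and follows the paper's proof: you identify the second moment of the increment with the expected increment of the quadratic variation via \eqref{eq:quadraticvariation}, and you bound the expectation of the $\Upsilon_x$ term by a constant. Your Cauchy--Schwarz bound $\frac1N\sum_x(\nabla_N\psi(x/N))^2\le\|\psi'\|_{L^2}^2$, applied to the translated test function, spells out the Riemann-sum step that the paper leaves as ``follows easily''.
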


\begin{proof}
    Recall the expression of the quadratic variation of the martingale given in \eqref{eq:quadraticvariation}. Then, the $L^2$-norm of the increment of the martingale writes
    \begin{equation*}
        \E_{\rho ,\mathcal{E}}\left[ \big( \mathcal{M}_t^N(\varphi )-\mathcal{M}_s^N(\varphi )\big)^2\right] = N^{a-3}\E_{\rho ,\mathcal{E}} \left[ \int_s^t \sum_{x\in\Z} \begin{pmatrix} 
            \mathfrak{c} & 1
        \end{pmatrix}\Upsilon_x(\tau )\begin{pmatrix}
            \mathfrak{c} \\
            1
        \end{pmatrix}
        \left(\nabla_NT_{vN^{a-1}\tau}\varphi\Big(\frac xN\Big)\right)^2\diff\tau\right].
    \end{equation*}
    The expectation of the term involving the matrix $\Upsilon_x$ is bounded by a constant depending on $\kappa$, $\rho$, and $\mathcal{E}$ so the result follows easily.
\end{proof}

Thanks to the previous estimates, we easily deduce by Markov's inequality that condition \ref{condition2tightness} holds for those three terms when $a=2$ and $\gamma =\frac12$. Therefore, we have proved the following result.

\begin{corollary}[Tightness]
    The sequences of processes $(\mathcal{I}^N)_{N\ge 1}$,~$(\Lambda^N)_{N\ge 1}$,~$(\mathcal{M}^N)_{N\ge 1}$ and hence~$(\mathcal{Z}^N)_{N\ge 1}$ are tight in the uniform topology of the Skorokhod space~$\mathcal{D}([0,T],\schwartzprime)$ provided that $a=2$ and $\gamma =\frac12$.
\end{corollary}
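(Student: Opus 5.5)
The corollary follows from the decomposition \eqref{eq:martdecomp} and the increment estimates of \cref{prop:incrementIt}, \cref{prop:incrementLambda} and \cref{prop:incrementMn}. By Mitoma's criterion it suffices to prove tightness of the real-valued processes $\mathcal{Z}^N(\varphi)$ for each fixed $\varphi\in\schwartz$. Since a finite sum of tight sequences of processes that satisfy condition \ref{condition2tightness} is again tight with continuous limit points, it is enough to treat each term $\mathcal{Z}_0^N(\varphi)$, $E^N(\varphi)$, $\mathcal{I}^N(\varphi)$, $\Lambda^N(\varphi)$ and $\mathcal{M}^N(\varphi)$ separately.

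\emph{Condition \ref{condition1tightness}.} For every term, the $L^2(\p)$ norm at a fixed time is bounded uniformly in $N$.
\begin{itemize}
\item For $\mathcal{Z}_0^N(\varphi)$, this follows from the product structure of $\mu_{\rho,\mathcal{E}}$.
\item For $\mathcal{I}^N$, $\Lambda^N$ and $\mathcal{M}^N$, take $s=0$ in the three propositions. With $a=2$ and $\gamma=\frac12$, the prefactors are $N^{2a-4}=1$, $N^{\frac32a-2\gamma-2}=1$ and $N^{a-2}=1$.
\item For $E^N$, the bound $O(N^{2a-2\gamma-4})=O(N^{-1})$ tends to zero, so we may even assume its supremum in time vanishes. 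Bounding the error pathwise uniformly in $t$, since it is a time integral of a Taylor remainder, gives $\E[\sup_t (E^N_t)^2]\to 0$.
\end{itemize}
Chebyshev's inequality then gives the compact sets $K(t,\delta)$.

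\emph{Condition \ref{condition2tightness}.} For $\mathcal{I}^N$ and $\Lambda^N$, the increment bounds are of Kolmogorov type in the critical scaling:
\[
\E\big[(X_t^N-X_s^N)^2\big]\le C|t-s|^{1+\theta}, \qquad \theta=1 \text{ for } \mathcal{I}^N,\quad \theta=\tfrac12 \text{ for } \Lambda^N,
\]
uniformly in $N$. These are continuous processes, being time integrals. The Kolmogorov–Chentsov criterion, or Garsia–Rodemich–Rumsey, then yields a uniform modulus-of-continuity estimate, and hence \eqref{eq:conditiontightness}.

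The martingale needs separate care, because its second-moment bound is only linear in $t-s$. Here I would use the Aldous criterion for martingales:
\begin{itemize}
\item The quadratic variation increments $\langle\mathcal{M}^N(\varphi)\rangle_{\tau+\theta}-\langle\mathcal{M}^N(\varphi)\rangle_\tau$ are at most $C\theta$ in expectation, uniformly over stopping times $\tau$. The integrand in \eqref{eq:quadraticvariation} is bounded deterministically, since the entries of $\Upsilon_x$ are bounded and $\varphi\in\schwartz$. This gives condition (ii) for the predictable part.
\item The jumps of $\mathcal{M}^N(\varphi)$ equal those of $\mathcal{Z}^N(\varphi)$. These are bounded by $CN^{-1/2}\sup|\varphi|$, so the limit points are continuous.
\end{itemize}
Alternatively, the convergence \eqref{eq:limitquadraticvariation} of the quadratic variation to the deterministic continuous function $\sigma(t)$, together with vanishing jumps, implies tightness via the martingale functional CLT.

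\emph{Main obstacle.} The delicate point is passing from second-moment increment bounds to control of the supremum over $|t-s|<\theta$, since Markov's inequality on single increments does not by itself control the supremum. For $\mathcal{I}^N$ and $\Lambda^N$ this is handled by the exponent exceeding $1$ in the Kolmogorov bounds. For the martingale, the argument goes through Aldous' criterion or the quadratic-variation convergence, as explained above.
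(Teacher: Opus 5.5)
Your argument is correct, and its skeleton is the paper's: Mitoma's criterion, the decomposition \eqref{eq:martdecomp}, the CLT for $\mathcal{Z}_0^N(\varphi)$, the vanishing of $E^N(\varphi)$, and the three increment estimates of \cref{prop:incrementIt,prop:incrementLambda,prop:incrementMn}, whose prefactors all become $N^0$ at $a=2$, $\gamma=\frac12$. The two proofs part ways at the last step.

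\emph{The paper's last step.} The paper asserts that condition \ref{condition2tightness} follows ``by Markov's inequality'' from these second-moment bounds. Taken literally, this only controls single increments $|X_t^N-X_s^N|$, not the supremum over $|t-s|<\theta$.

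\emph{Your last step.} You supply the missing chaining, and you correctly split the terms.
\begin{itemize}
\item For $\mathcal{I}^N$ and $\Lambda^N$ you use Kolmogorov--Chentsov, which applies because the exponents $2$ and $\frac32$ exceed $1$. The paper itself appeals to Kolmogorov--Chentsov for these two terms, but only later, when bounding $\sup_t$ in \cref{sec:identification}.
\item For $\mathcal{M}^N$ the bound $C(t-s)$ is borderline and no Kolmogorov argument applies. Here you use Aldous--Rebolledo, based on the deterministic Lipschitz bound on $\langle\mathcal{M}^N(\varphi)\rangle$ and the vanishing jumps. Your alternative is the martingale functional CLT, which the paper invokes only in \cref{sec:identification}.
\end{itemize}

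\emph{What each buys.} Your route gives a complete proof of the modulus-of-continuity condition, at the price of citing Aldous's criterion. The paper's one line is shorter, but as written it is incomplete for the martingale term. A linear second-moment bound alone does not rule out macroscopic jumps, so jump control and the martingale structure are genuinely needed. Both ingredients are present in the paper, but only later.

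\emph{One caveat for your Kolmogorov step.} The proof of \cref{prop:incrementLambda} takes $\ell=\sqrt{t-s}\,N^{a/2}$. This choice is admissible only when $t-s\gtrsim \ell_0^2N^{-2}$. Below that scale, use the direct bound $\E_{\rho,\mathcal{E}}[(\Lambda_t^N(\varphi)-\Lambda_s^N(\varphi))^2]\le C(t-s)^2N^{2a-2\gamma-2}=C(t-s)^2N$, which is still at most $C\ell_0(t-s)^{3/2}$ in that regime. So the Kolmogorov--Chentsov hypothesis does hold at all scales.
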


\section{Identification of limit points}
\label{sec:identification}

Set $a=2$ and $\gamma =\frac12$. We have just proved that the sequence $(\mathcal{Z}^{N,i},\mathcal{M}^{N,i},\mathcal{I}^{N,i},\Lambda^{N,i})_{N\ge 1}$ is tight with respect to the uniform topology of $\mathcal{D}([0,T],\schwartzprime)^4$ for $i=1,2$. Therefore, we can extract a converging subsequence (which we do not relabel for simplicity) whose limit in distribution is denoted $(\mathcal{Z}^i,\mathcal{M}^i,\mathcal{I}^i,\Lambda^i)$. Moreover, these limiting processes almost-surely have continuous trajectories thanks to the uniform topology, and to the fact that the size of the jumps of $\mathcal{Z}^{N,i}$ vanish as $N$ goes to infinity (see \eqref{eq:jumpmartingale} below). In particular, the couple~$(\mathcal{Z}^{N,1},\mathcal{Z}^{N,2})_{N\ge 1}$ is tight in $\mathcal{D}([0,T],\schwartzprime)^2$, and we can extract a subsequence converging in distribution to~$(\mathcal{Z}^1,\mathcal{Z}^2)$ in this space. Our goal is to identify the limit point $(\mathcal{Z}^1,\mathcal{Z}^2)$ as the unique (up to indistinguishability) energy solution to the two-component uncoupled SBEs given by \eqref{eq:SBE_main} in \cref{thm:main}. This solution being unique (\textit{cf.}~\cref{remark:uncoupledSBEs}), we deduce that the whole sequence $(\mathcal{Z}^{N,1},\mathcal{Z}^{N,2})_{N\ge 1}$ converges in distribution to this unique limit point. To this end, it suffices to check that each coordinate $\mathcal{Z}^i$ is an energy solution to the SBE given in \eqref{eq:SBE_main} in the sense of \cref{defin:energysolution}, that they start from independent initial conditions, that they are driven by independent $\schwartzprime$-valued Brownian motions, and that the initial condition of one is independent of the driving noise of the other. By \cref{remark:uncoupledSBEs} again, this will automatically imply that both processes $\mathcal{Z}^1$ and $\mathcal{Z}^2$ are independent.

\medskip

First, as the measure $\mu_{\rho ,\mathcal{E}}$ is product and stationary, computing the characteristic function of~$\mathcal{Z}_t^{N,i}$ at any time $t\in [0,T]$ and letting $N$ go to infinity, one can deduce that $\mathcal{Z}^i$ is a white noise with variance given by
\begin{equation*}
    \sigma_i^2 = \mathrm{Var}_{\rho ,\mathcal{E}} (\mathfrak{c}_i\xi_x+\eta_x) = \begin{pmatrix}
        \mathfrak{c}_i & 1
    \end{pmatrix} \chi (\rho ,\mathcal{E})\begin{pmatrix}
        \mathfrak{c}_i \\
        1
    \end{pmatrix}
\end{equation*}
so condition \ref{energysol_item1} in \cref{defin:energysolution} is satisfied. Similarly, consider the characteristic function of the couple~$(\mathcal{Z}_0^{1},\mathcal{Z}_0^{2})$ which is defined for any $\varphi,\psi\in\schwartz$ by
\begin{equation*}
    \Phi_N(\varphi ,\psi) = \E_{\rho ,\mathcal{E}}\left[ \exp\left( \mathbf{i} \left(  \mathcal{Z}_0^{N,1}(\varphi ) +   \mathcal{Z}_0^{N,2}(\psi )\right)\right)\right] .
\end{equation*}
By performing a similar proof as the one of the Central Limit Theorem, and relying on the identity $\mathrm{Cov}_{\rho ,\mathcal{E}} (\mathfrak{c}_1\xi_x+\eta_x,\mathfrak{c}_2\xi_x+\eta_x) = 0$ for any $x\in\Z$ (which is a corollary of the orthogonality relation \eqref{eq:orthogonalityrelation}), one can check that
\begin{equation*}
\Phi_N(u,v) \xrightarrow[N\to \infty]{} \exp\left( -\frac12 \sigma_1^2\|\varphi\|_{L^2}^2 - \frac12 \sigma_2^2\|\psi\|_{L^2}^2\right) ,
\end{equation*}
for any $\varphi ,\psi\in\schwartz$. Therefore, the initial conditions $\mathcal{Z}_0^1$ and $\mathcal{Z}_0^2$ are independent spatial white noises with respective variances~$\sigma_1^2$ and $\sigma_2^2$.

\medskip

Let us now check condition \ref{energysol_item3} of \cref{defin:energysolution} for each process. First of all, it is not hard to see that the limit~$\mathcal{I}^i$ of the diffusive term writes as
\begin{equation*}
    \mathcal{I}_t^i(\varphi ) = (\kappa -1)\int_0^t \mathcal{Z}_s^i(\Delta\varphi )\diff s
\end{equation*}
for any $t\in [0,T]$ and any $\varphi\in\schwartz$. Second, to treat the limit $\mathcal{M}^i$ of the martingale term, we use the following general result about convergence of martingales in $\mathcal{D}([0,T],\R )$. It can be found in \cite{jacod_limit_2003} but we state it here for convenience and completeness.

\begin{theorem}[About convergence of martingales, \cite{jacod_limit_2003}]
    Let $(M^N)_{N\ge 1}$ be a sequence of càdlàg real-valued martingales, and let $\langle M^N\rangle$ be the quadratic variation of $M^N$ for any $N\ge 1$. Assume that
    \begin{enumerate}
        \item For any $N\ge 1$, the process $t\longmapsto \langle M^N\rangle_t$ has almost-surely continuous trajectories ;
        \item We have
        \begin{equation*}
            \lim_{N\to \infty}\E \left[\sup_{t\in [0,T]} \big| M_t^N - M_{t^-}^N\big|\right] = 0 \; ;
        \end{equation*}
        \item For any $t\in [0,T]$, the sequence of random variables $\big( \langle M^N\rangle_t\big)_{N\ge 1}$ converges in probability to $c(t)$, where $c:[0,T]\longrightarrow\R_+$ is a deterministic continuous function.
    \end{enumerate}
    Then, $(M^N)_{N\ge 1}$ converges in distribution in the Skorokhod space $\mathcal{D}([0,T],\R )$ to a mean-zero Gaussian process $M$ which is a martingale with continuous trajectories, and whose quadratic variation is given by the function $c$.
\end{theorem}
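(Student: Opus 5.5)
The statement is the classical martingale functional central limit theorem (Jacod--Shiryaev, Theorem VIII.3.11 / Corollary VIII.3.24). My plan is to deduce it from Lévy's characterization of Brownian motion, in three steps: tightness, continuity of limit points, and identification of the limit as a martingale with the prescribed bracket.

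\emph{Tightness.} By (1) and (3), $\langle M^N\rangle$ is continuous and increasing, and it converges pointwise in probability to the continuous deterministic function $c$. Monotonicity upgrades this to uniform convergence on $[0,T]$ in probability, by a Dini/Pólya argument on a finite grid. I then apply the Aldous--Rebolledo criterion: for stopping times $\tau_N\le T$ and $\theta\to0$,
\begin{equation*}
\E\big[(M^N_{\tau_N+\theta}-M^N_{\tau_N})^2\wedge 1\big]\le \E\big[(\langle M^N\rangle_{\tau_N+\theta}-\langle M^N\rangle_{\tau_N})\wedge 1\big]+o(1).
\end{equation*}
The right-hand side tends to zero because $\langle M^N\rangle\to c$ uniformly and $c$ is uniformly continuous. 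Compact containment at each fixed $t$ follows from Lenglart's inequality, $\p(\sup_t|M^N_t|>K)\le \eta/K^2+\p(\langle M^N\rangle_T>\eta)$. Hence $(M^N)$ is tight in $\mathcal{D}([0,T],\R)$. By condition (2), the maximal jump tends to zero in $L^1$, so every limit point $M$ has continuous paths.

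\emph{Identification.} I take a subsequence $M^N\Rightarrow M$ and use Skorokhod representation; since $M$ is continuous, the convergence is uniform on $[0,T]$. The goal is to show that both $M_t$ and $M_t^2-c(t)$ are martingales for the filtration generated by $M$. Fix $s<t$ and a bounded continuous functional $F$ of the path on $[0,s]$. Then
\begin{equation*}
\E[(M^N_t-M^N_s)F(M^N)]=0,\qquad \E[(M^N_t{}^2-\langle M^N\rangle_t-M^N_s{}^2+\langle M^N\rangle_s)F(M^N)]=0.
\end{equation*}
Passing to the limit requires uniform integrability of $M^N_t$ and $(M^N_t)^2$, and this is the step I expect to be the main obstacle.

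If $\E\langle M^N\rangle_T$ and the jumps are uniformly bounded, the Burkholder--Davis--Gundy inequality gives bounded fourth moments, which suffice. In general I would localize. Let $\sigma_N=\inf\{t:\langle M^N\rangle_t>c(T)+1\}$. Then $\p(\sigma_N<T)\to0$, the stopped martingales have brackets bounded by $c(T)+1$, and their jumps are controlled in $L^1$ by (2). Under these bounds, BDG yields uniform integrability of $\sup_t(M^N_{t\wedge\sigma_N})^2$, at least after an additional truncation of large jumps using (2); if needed, I would work with $M^2$ stopped at a level $K$ and let $K\to\infty$. Passing to the limit then shows that $M$ is a continuous square-integrable martingale with $\langle M\rangle=c$.

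\emph{Conclusion.} By Lévy's characterization, applied to $M$ time-changed by $c$ (equivalently, via the exponential martingale $\exp(\mathbf{i}\lambda M_t+\tfrac{\lambda^2}{2}c(t))$), $M$ is a centred Gaussian process with independent increments and variance function $c$. Its law is therefore uniquely determined. Since every subsequential limit has this same law, tightness gives convergence of the whole sequence in distribution to $M$.
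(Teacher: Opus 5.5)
The step you flagged as the main obstacle is a genuine gap, and under hypotheses (1)--(3) as stated it cannot be closed. To pass $\E[((M^N_t)^2-\langle M^N\rangle_t-(M^N_s)^2+\langle M^N\rangle_s)F(M^N)]=0$ to the limit you need uniform integrability of $(M^N_t)^2$. After stopping at $\sigma_N$, Doob's inequality gives only $\sup_N\E[\sup_{t\le T}(M^N_{t\wedge\sigma_N})^2]\le 4(c(T)+1)$, which is $L^1$-boundedness and not uniform integrability. Any higher-moment bound needs control of the large jumps beyond (2). Truncating the large jumps does not help either: it changes the predictable bracket by an amount that (2) cannot see, because $\langle M^N\rangle$ weighs jumps quadratically while (2) weighs them linearly.

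Here $\langle M^N\rangle$ must be the predictable bracket, as you use it. If it meant $[M^N]$, condition (1) would force $M^N$ to be continuous; (2) would then be void, and your argument would be correct but would not cover the paper's jump martingales. With the predictable-bracket reading the statement is false. Let $\Pi^N$ be a Poisson process of rate $N^{-2}$ and set $M^N_t=N\Pi^N_t-t/N$. Then $\langle M^N\rangle_t=t$ and $\E[\sup_{t\le T}|\Delta M^N_t|]=N(1-e^{-T/N^2})\le T/N\to0$, so (1)--(3) hold with $c(t)=t$. Yet $\p(\Pi^N_T=0)\to1$, and on that event $\sup_{t\le T}|M^N_t|\le T/N$, so $M^N\Rightarrow0$. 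In this example your tightness and continuity steps are fine, and $0$ is indeed a martingale. What fails is exactly the identification $\langle M\rangle=c$: $\E[(M^N_t)^2]=t$ while $(M^N_t)^2\to0$ in probability, and discarding jumps larger than $\varepsilon$ leaves a martingale with bracket $0$.

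The missing ingredient is a Lindeberg-type control of the large jumps, for instance $\E[\sup_{t\le T}|\Delta M^N_t|^2]\to0$ (Ethier--Kurtz, Theorem 7.1.4(a)). Alternatively, keep the $L^1$ jump condition, state (3) for the optional quadratic variation $[M^N]$, and drop (1) (Ethier--Kurtz, 7.1.4(b)). The versions in Jacod--Shiryaev likewise either use $[M^N]$ or impose a Lindeberg condition on the compensator of the jumps. You also use $M^N_0=0$ tacitly in Lenglart's inequality; it must be assumed, since $M^N\equiv N$ satisfies (1)--(3) with $c=0$. None of this affects the paper's application, where $\mathcal{M}^{N,i}_0=0$ and the jumps are deterministically bounded by $CN^{-3/2}\|\nabla\varphi\|_{L^\infty}$.

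Under a uniform jump bound $b_N$, your plan works as written. The process $L=[M^N]-\langle M^N\rangle$ is a finite-variation local martingale with $[L]=\sum_s(\Delta M^N_s)^4\le b_N^2[M^N]$. So BDG with $p=4$ at $\sigma_N$ gives $\sup_N\E[\sup_{t\le T}(M^N_{t\wedge\sigma_N})^4]<\infty$, and hence the required uniform integrability.

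Under the $L^2$ jump condition alone, it is cleaner to avoid moments and identify the limit with bounded test functions. By It\^o's formula, $f(M^N_t)-\frac12\int_0^tf''(M^N_{s-})\diff\langle M^N\rangle_s$ is a martingale up to an error bounded by $C_f\sum_{s\le t}|\Delta M^N_s|^2(1\wedge|\Delta M^N_s|)$. After stopping also at the first jump exceeding $\varepsilon$, the expectation of this error is at most $C_f\big(\varepsilon(c(T)+1)+\E[\sup_{t\le T}|\Delta M^N_t|^2]\big)$. In the counterexample, $\E[\sup_{t\le T}|\Delta M^N_t|^2]=N^2(1-e^{-T/N^2})\to T\neq0$, which is exactly why it escapes.
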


The first point of this theorem is clearly satisfied in our context thanks to the expression of the quadratic variation given in \eqref{eq:quadraticvariation}. The second point is also satisfied since the size of the jumps of $\mathcal{M}^{N,i}(\varphi )$ is of order $N^{-3/2}$. Indeed, since when a particle jump or an energy transfer occurs, say at time $t$ through a bond $\{x,x+1\}$, $\xi_x$ or $\eta_x$ change only by plus or minus one, we have
\begin{align}
    \big| \mathcal{M}_t^{N,i}(\varphi ) - \mathcal{M}_{t^-}^{N,i}(\varphi )\big| = \big| \mathcal{Z}_t^{N,i}(\varphi ) - \mathcal{Z}_{t^-}^{N,i}(\varphi )\big| & \le \frac{C}{\sqrt{N}}\left|\varphi\Big( \frac{x+1}{N}\Big)-\varphi\Big( \frac xN\Big)\right| \notag \\
    & \le \frac{C}{N^{3/2}}\|\nabla\varphi\|_{L^\infty} \label{eq:jumpmartingale}
\end{align}
for some constant $C>0$. The third point of the theorem is directly given by Markov's inequality together with the convergence \eqref{eq:limitquadraticvariation} proved in \cref{prop:quadraticvariation}. Therefore, we deduce that for each $i\in\{1,2\}$, the limit $\mathcal{M}^i(\varphi )$ of the martingale term is a mean-zero Gaussian process with continuous trajectories, and also a martingale whose quadratic variation is given by the deterministic function $\sigma_i$ defined in \eqref{def:sigmai}. The martingale property of $\mathcal{M}^i$ holds with respect to the filtration generated by $(\mathcal{Z}^1,\mathcal{Z}^2)$. Indeed, as $\mathcal{M}^{N,i}$ is a martingale with respect to the natural filtration of $(\eta (t))_{t\in [0,T]}$, it is also a martingale with respect to the filtration generated by $(\mathcal{Z}^{N,1},\mathcal{Z}^{N,2})$. Therefore, for any $0\le t_1<\hdots <t_n <s$, if $H$ denotes a bounded continuous function of $\{\mathcal{Z}_{t_j}^{N,i}\colon 1\le j\le n, \; i=1,2\}$, we have
\begin{equation*}
    \E_{\rho ,\mathcal{E}}\big[ \mathcal{M}_t^{N,i}(\varphi )H\big] = \E_{\rho ,\mathcal{E}}\big[ \mathcal{M}_s^{N,i}(\varphi )H\big]
\end{equation*}
for any $s<t$ and any $\varphi\in\schwartz$. Letting $N$ go to infinity, noting that the limiting distribution is concentrated on continuous paths, we deduce the desired martingale property for $\mathcal{M}^i(\varphi )$.

We can check at this point that the processes $\mathcal{M}^1$ and~$\mathcal{M}^2$ are independent Brownian motions. By Lévy's characterization of Brownian motion, it suffices to check that the covariation of martingales $\mathcal{M}^1(\varphi )$ and $\mathcal{M}^2(\psi )$ vanishes for any test functions $\varphi ,\psi\in\schwartz$. By similar computations as those made in \cref{prop:quadraticvariation} and thanks to the polarization formula, for two test functions $\varphi ,\psi\in\schwartz$, we have the following expression for the covariation of the martingales (again in matrix form)
\begin{multline*}
     \big\langle \mathcal{M}^{N,1}(\varphi ),\mathcal{M}^{N,2}(\psi )\big\rangle_t \\ = N^{a-3}\int_0^t \sum_{x\in\Z} \begin{pmatrix}
        \mathfrak{c}_1 & 1
    \end{pmatrix}\Upsilon_x(s)\begin{pmatrix}
        \mathfrak{c}_2 \\
        1
    \end{pmatrix} T_{v_1N^{a-1}s}\nabla_N\varphi\Big( \frac xN\Big)T_{v_2N^{a-1}s}\nabla_N\psi\Big( \frac xN\Big)\diff s
\end{multline*}
where $\Upsilon_x$ is the matrix obtained in \cref{prop:quadraticvariation}. Since $\E_{\rho ,\mathcal{E}}\big[ \Upsilon_x(\eta )\big]= 2(\kappa -1)\chi(\rho ,\mathcal{E})$ and thanks to the orthogonality relation in \cref{thm:diagonalizability}, we have that 
\begin{equation*}
    \E_{\rho ,\mathcal{E}} \Big[ \big\langle \mathcal{M}^{N,1}(\varphi ),\mathcal{M}^{N,2}(\psi )\big\rangle_t\Big]=0
\end{equation*}
for any $N\ge 1$. Using the same techniques as those used in \cref{prop:quadraticvariation}, one can show that the variance of this covariation vanishes as $N$ goes to infinity. Therefore, we deduce that the covariation itself vanishes in the limit, which concludes the proof of the independence of the martingales. We also need to check that the initial condition $\mathcal{Z}_0^1$ is independent of the martingale~$\mathcal{M}^2$ (and likewise by swapping the indices). This is an immediate consequence of the martingale property of $\mathcal{M}^2$ with respect to the filtration $\mathcal{F}_t = \sigma (\mathcal{Z}_s^1,\mathcal{Z}_s^2 ; s\le t)$. Indeed, as~$\mathcal{M}^2$ is a martingale with deterministic linear quadratic variation, and it starts from zero, it is independent of any element of $\mathcal{F}_0$.

Let us now turn to the quadratic term $\Lambda^{N,i}$, and the first objective is to obtain an expression for its limit $\Lambda^i$ by using the decomposition \eqref{eq:decompLambda} with $\ell =\varepsilon N$. To do so, we make use of the following general result about the convergence of the crossed field terms. This theorem is adapted from the one in \cite[Section 6]{cannizzaro_equilibrium_2026}, we do not repeat the proof here but we state it for reader's convenience.

\begin{theorem}[About crossed fields, \cite{cannizzaro_equilibrium_2026}]\label{thm:crossedfields}
    Let $(\mathscr{X}^{N,1})_{N\ge 1}$ and $(\mathscr{X}^{N,2})_{N\ge 1}$ be two sequences of random fields taking valued in the Skorokhod space $\mathcal{D}([0,T],\schwartzprime)$, and defined on a same probability space. For $i=1,2$, consider also sequences $(v_i^N)_{N\ge 1}$ of real numbers. Assume that
    \begin{enumerate}[label=(\alph*)]
        \item \label{itema} For almost-every $(k_1,k_2)\in\R^2$, we have
            \begin{equation*}
                \lim_{N\to\infty} \frac{|k_1v_1^N+k_2v_2^N|}{N} = \infty .
            \end{equation*}
        \item \label{itemb} There exist $\alpha\in (0,1)$ and a constant $C>0$ such that for any $f_1,f_2\in\schwartz$, the following bounds hold
        \begin{equation*}
            \sup_{t\in [0,T]} \E\left[ \big| \mathscr{X}_t^{N,1}(f_1)\mathscr{X}_t^{N,2}(f_2)\big|\right] \le C \prod_{i=1}^2 \|\Delta f_i\|_{L^2}\vee\|\nabla f_i\|_{L^2} ,
        \end{equation*}
        \begin{equation*}
            \E \left[\big| \mathscr{X}_t^{N,1}(f_1)\mathscr{X}_t^{N,2}(f_2) - \mathscr{X}_s^{N,1}(f_1)\mathscr{X}_s^{N,2}(f_2)\big|\right] \le C (t-s)^\alpha \prod_{i=1}^2 \|\Delta f_i\|_{L^2}\vee\|\nabla f_i\|_{L^2}
        \end{equation*}
        uniformly in $N\ge 1$, for any $0\le s\le t\le T$.
    \end{enumerate}
    Then, for any $t\in [0,T]$, any $\varphi\in\schwartz$ and any~$f_1,f_2\in C_c^\infty (\R )$, we have
    \begin{equation*}
        \lim_{N\to\infty} \E\left[\left| \int_0^t \frac1N \sum_{x\in\Z} \nabla_N\varphi\left(\frac xN\right) \prod_{i=1}^2 \mathscr{X}_s^{N,i}\left( f_i\left(\cdot\, ; \frac{x+v_i^Ns}{N}\right)\right)\diff s\right|\right] =0,
    \end{equation*}
    where we recall that for $u\in\R$, we denote by $f(\cdot\, ; u)$ the function $v\longmapsto f(v-u)$.
\end{theorem}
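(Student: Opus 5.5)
The plan is a time-averaging argument in Fourier variables: the two fields are observed in frames moving apart at relative speed $|k_1v_1^N+k_2v_2^N|/N\to\infty$, so the cross term oscillates quickly in time and averages out.

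\textbf{Step 1 (reduction to exponentials).} By linearity and density (using the first bound in \ref{itemb}) we reduce to nice $f_1,f_2$ and write, via Fourier inversion, $f_i(\cdot\,;u)=\int \hat f_i(k)e^{\mathbf{i}k(\cdot-u)}\diff k$. The functions $e_k(w)=e^{\mathbf{i}kw}$ are not in $\schwartz$, so we localize: multiply by a cutoff $\chi$ equal to $1$ on a large window. The tails in $x$ are controlled because $\nabla_N\varphi$ decays fast and the first bound in \ref{itemb} gives uniform integrability. After this, the quantity inside the expectation becomes
\[
\int\!\!\int \hat f_1(k_1)\hat f_2(k_2)\int_0^t \Phi_N(s,k_1,k_2)\,e^{-\mathbf{i}(k_1v_1^N+k_2v_2^N)s/N}\diff s\,\diff k_1\diff k_2,
\]
where $\Phi_N(s,k_1,k_2)=\frac1N\sum_x\nabla_N\varphi(\tfrac xN)e^{-\mathbf{i}(k_1+k_2)x/N}\,\mathscr{X}_s^{N,1}(e_{k_1}\chi)\,\mathscr{X}_s^{N,2}(e_{k_2}\chi)$. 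The $x$-sum is then deterministic and bounded by $\|\nabla\varphi\|_{L^1}$ up to a vanishing error.

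\textbf{Step 2 (oscillation).} For fixed $(k_1,k_2)$, set $\omega_N=(k_1v_1^N+k_2v_2^N)/N$. Divide $[0,t]$ into intervals of length $\delta$ and freeze $\Phi_N$ on each interval. The Hölder bound in \ref{itemb} controls the freezing error in $L^1$ by $C\delta^\alpha$, times polynomial weights in $k_i$ coming from $\|\Delta(e_{k}\chi)\|_{L^2}\lesssim 1+k^2$. On each interval, $\int e^{-\mathbf{i}\omega_N s}\diff s$ is bounded by $2/|\omega_N|$, and summing over the $t/\delta$ intervals gives $C t\delta^{-1}|\omega_N|^{-1}$ times the first moment bound. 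This yields
\[
\E|\cdot|\le C(k_1,k_2)\big(\delta^\alpha+\delta^{-1}|\omega_N|^{-1}\big).
\]
By \ref{itema}, $|\omega_N|\to\infty$ for a.e. $(k_1,k_2)$. Letting $N\to\infty$ and then $\delta\to0$ shows the pointwise-in-$k$ limit is zero.

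\textbf{Step 3 (integrating in $k$).} Since $\hat f_i$ is Schwartz, it absorbs the polynomial weights. Dominated convergence in $(k_1,k_2)$, with the uniform bound $C(k_1,k_2)(T^\alpha+T)$ obtained by taking $\delta=T$, then gives the result.

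The main obstacle is Step 1. The hypotheses are stated only for Schwartz test functions, so the Fourier decomposition must be done with localized plane waves. The cutoff error must be kept uniform in $N$: the shifts $(x+v_i^Ns)/N$ may move far, and the cutoff window has to follow these shifts. I would first try to avoid plane waves by replacing them with a translated Gabor-type family, $f_i(\cdot;u)=\int\hat f_i(k)e^{-\mathbf{i}ku}g_k\diff k$ with $g_k$ Schwartz, which keeps everything inside \ref{itemb}.
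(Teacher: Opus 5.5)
\textbf{The gap is in Steps 1--2, and it is not just technical.} Step 2 needs $\Phi_N$ to be built from test functions $e_k\chi$ whose norms in (b) do not depend on $N$. Step 1 cannot provide such functions.

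By (a), $\max_i|v_i^N|/N\to\infty$. So the support of $f_i(\cdot\,;\frac{x+v_i^Ns}{N})$ sweeps a window of length $L_N\gtrsim |v_i^N|t/N\to\infty$ as $s$ runs over $[0,t]$. A cutoff $\chi$ that is fixed in time must equal $1$ on that whole window, and then $\|\nabla(e_k\chi)\|_{L^2}\asymp |k|L_N^{1/2}$. This loss is genuine: already for $\mathscr{X}=\nabla W$, with $W$ a white noise, one has $\E|\mathscr{X}(e_k\chi)|^2=\|\nabla(e_k\chi)\|_{L^2}^2$.

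The resulting budget does not close, even with only one moving frame:
\begin{itemize}
\item each moving factor costs $L_N^{1/2}$;
\item the oscillation gains only $|\omega_N|^{-1}\gtrsim L_N^{-1}/(|k_1|+|k_2|)$;
\item keeping the freezing error $L_N^{1/2}\delta^\alpha$ small forces $\delta\ll L_N^{-1/(2\alpha)}$;
\item then the oscillation term satisfies $L_N^{1/2}\delta^{-1}L_N^{-1}\gg L_N^{\frac{1}{2\alpha}-\frac12}\to\infty$.
\end{itemize}

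Transporting the cutoff with frame $i$ does not help either. Then $e_k\chi(\cdot-\tfrac{v_i^Ns}{N})=e^{\mathbf{i}kv_i^Ns/N}(e_k\chi)(\cdot-\tfrac{v_i^Ns}{N})$, and the phase you want to exploit cancels identically. It turns back into a fast translation of the test function, and (b), which is stated for fixed $f_1,f_2$, says nothing about that.

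The Gabor-type fix cannot exist: a representation $f(\cdot-u)=\int\hat f(k)e^{-\mathbf{i}ku}g_k\diff k$ with $u$-independent Schwartz $g_k$ fails for $u$ in an unbounded set. More basically, putting absolute values inside $\diff k_1\diff k_2$ discards all orthogonality between frequencies. An almost-everywhere condition like (a) cannot prevent the cross-correlations from concentrating, as $N\to\infty$, near the null set where $k_1v_1^N+k_2v_2^N$ stays bounded.

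\textbf{No argument using only (a)--(b) can close this when both frames move.} The paper only cites this statement, and as written it is false. Here is a counterexample.
\begin{itemize}
\item Setup: let $\tilde g(y)=g(-y)$ and $\Theta(f)=W(\widetilde{f'}-f')$. Take $\mathscr{X}^{N,1}_t=\mathscr{X}^{N,2}_t=\Theta$ for all $t,N$, and $v_1^N=-v_2^N=N^2$.
\item Hypotheses: (a) holds off the diagonal $\{k_1=k_2\}$. (b) holds with $C=4$, since the time increments vanish.
\item Reflection: $\Theta(g)=\Theta(\tilde g)$. Choosing $f_2=\tilde f_1$ gives $\mathscr{X}^{N,2}_s(f_2(\cdot\,;\tfrac{x-N^2s}{N}))=G(Ns-\tfrac xN)$ and $\mathscr{X}^{N,1}_s(f_1(\cdot\,;\tfrac{x+N^2s}{N}))=G(Ns+\tfrac xN)$, where $G(p)=\Theta(f_1(\cdot-p))$. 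The two separating frames thus probe the same region.
\item Computation: $\E[G(Ns+u)G(Ns-u)]=2R(2u)+T_N(s)$, with $R=f_1'*\widetilde{f_1'}$ and $T_N$ depending on $s$ only. The $T_N$ part is killed by $\sum_x\nabla_N\varphi(\tfrac xN)=0$.
\item Conclusion: the expectation of the integral in the statement equals $2t\,\frac1N\sum_x\nabla_N\varphi(\tfrac xN)R(\tfrac{2x}{N})$. This tends to $2t\int\nabla\varphi(u)R(2u)\diff u$, which for $\varphi(u)=ue^{-u^2}$ is a positive multiple of $t\int k^4e^{-k^2}|\hat f_1(k)|^2\diff k>0$.
\end{itemize}
So some extra structure on the fields, or a restriction on the velocities, is needed.

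\textbf{What does work is the two cases the paper actually uses.} These are $v_1^N=0$ with $|v_2^N|/N\to\infty$ (the crossed term), and $v_1^N=v_2^N$ (the $\mathbf{G}^i_{jj}$ term). Both follow from a real-space argument with no Fourier analysis.
\begin{itemize}
\item Freeze the fields on intervals of length $\delta$ by applying (b) to the translated test functions themselves. Their norms are translation invariant, so this costs $O(\delta^\alpha)$ uniformly in $N$.
\item With the fields frozen, move the time integral onto whatever moves. When $v_1^N=0$, the function $\int_{I_j}f_2(\cdot-\tfrac xN-\tfrac{v_2^Ns}{N})\diff s$ is in $C_c^\infty$. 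Its gradient is $\tfrac{N}{v_2^N}$ times a difference of two translates of $f_2$, so its norms in (b) are $O(N/|v_2^N|)$.
\item In the equal-speed case, change variables to $w=\frac{x+v^Ns}{N}$. The weight becomes $\frac{N}{v^N}$ times a telescoping sum of $\nabla_N\varphi$, which again produces a factor $N/|v^N|$.
\item Summing over the $t/\delta$ intervals gives $O(\delta^\alpha+\delta^{-1}N/|v^N|)$. This vanishes for $\delta=(N/|v^N|)^{1/2}$.
\end{itemize}
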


Consider two distinct indices $i,j\in\{1,2\}$. We will apply this theorem to the terms appearing in the alternative expression of~$\mathscr{B}_t^{N,i,\varepsilon}(\varphi )$ given in \eqref{eq:Bt_alternative1}-\eqref{eq:Bt_alternative3} in order to determine an expression for its limit as $N$ is sent to infinity. Assume for now that condition \ref{itemb} of \cref{thm:crossedfields} holds for the sequences of fields $(\mathcal{Z}^{N,i})_{N\ge 1}$ and $(\mathcal{Z}^{N,j})_{N\ge 1}$, but we will verify it later. The function~$\iota_\varepsilon$ does not belong to $\schwartz$ so we need first to approximate it by a Schwartz function $\iota_{\varepsilon ,\delta}$ such that $\|\iota_{\varepsilon ,\delta} -\iota_\varepsilon\|_{L^2}\le\delta$ for some arbitrary $\delta >0$. Then, we can decompose the term $\mathscr{B}_t^{N,i,\varepsilon}(\varphi )$ as the sum of the three terms \eqref{eq:Bt_alternative1}-\eqref{eq:Bt_alternative3} with $\iota_\varepsilon$ replaced by $\iota_{\varepsilon ,\delta}$ up to a remainder term which vanishes in $L^2$ as $\delta$ goes to zero uniformly in $N\ge 1$. Then, we proceed as follows. First, the crossed term \eqref{eq:Bt_alternative2} writes as
\begin{equation*}
    (\mathbf{G}_{12}^i+\mathbf{G}_{21}^i)\int_0^t \frac1N\sum_{x\in \Z} \mathcal{Z}_s^{N,i}\big( \iota_{\varepsilon ,\delta}\big( \cdot\, ; \tfrac xN\big)\big)\mathcal{Z}_s^{N,j} \Big( \iota_{\varepsilon ,\delta}\big( \cdot\, ; \tfrac{x-(\lambda_j-\lambda_i)N^{3/2}s}{N}\big)\Big)\nabla_N\varphi\Big(\frac xN\Big)\diff s.
\end{equation*}
Therefore, since $\lambda_i\neq\lambda_j$, condition \ref{itema} of \cref{thm:crossedfields} is satisfied so this term vanishes as $N$ goes to infinity. Then, it remains to treat the two other terms \eqref{eq:Bt_alternative1} and \eqref{eq:Bt_alternative3}, whose sum writes as follows
\begin{equation*}
    \mathbf{G}_{ii}^i \mathcal{A}_t^{N,i,\varepsilon ,\delta}(\varphi ) +     \mathbf{G}_{jj}^i\int_0^t \frac1N  \sum_{x\in\Z}\mathcal{Z}_s^{N,j}\left( \iota_{\varepsilon ,\delta}\big( \cdot\, ;\tfrac{x-(\lambda_j-\lambda_i)N^{3/2}s}{N}\big)\right)^2\nabla_N\varphi\Big( \frac xN\Big)\diff s.
\end{equation*}
where
\begin{equation*}
    \mathcal{A}_t^{N,i,\varepsilon ,\delta}(\varphi )\coloneq \int_0^t \frac1N  \sum_{x\in\Z}\mathcal{Z}_s^{N,i}\left( \iota_{\varepsilon ,\delta}\big( \cdot\, ;\tfrac xN\big)\right)^2\nabla_N\varphi\Big( \frac xN\Big)\diff s.
\end{equation*}
Condition \ref{itema} is satisfied for the second term above, so it vanishes as $N$ goes to infinity by \cref{thm:crossedfields}. Therefore, sending $N\to\infty$ and $\delta\to 0$, we get that the limit $\mathscr{B}_t^{i ,\varepsilon}(\varphi )$ of the quadratic term $\mathscr{B}_t^{N,i,\varepsilon}(\varphi )$ is given by $\mathbf{G}_{ii}^i\mathcal{A}_t^{i,\varepsilon}(\varphi )$ where we recognize the term defined in \eqref{eq:A_energysol}
\begin{equation*}
    \mathcal{A}_t^{i,\varepsilon}(\varphi ) = \lim_{N\to\infty }\mathcal{A}_t^{N,i,\varepsilon}(\varphi )=\int_0^t \int_\R\mathcal{Z}_s^{i}\left( \iota_{\varepsilon }( \cdot\, ; u)\right)^2\nabla\varphi (u)\diff u\diff s.
\end{equation*}
Condition \ref{energysol_item3} of \cref{defin:energysolution} readily follows by letting $\varepsilon$ go to zero once we have shown that the sequence of fields $(\mathcal{Z}^{N,i})_{N\ge 1}$ satisfies condition \ref{energysol_item2} of \cref{defin:energysolution}, namely the energy estimate \eqref{eq:energyestimate} which ensures the convergence of $\mathcal{A}_t^{i,\varepsilon}(\varphi )$ as $\varepsilon$ goes to zero.

\medskip

Let us then check the energy estimate. Take $0<\delta <\varepsilon <1$. The decomposition \eqref{eq:decompLambda} together with the bound \eqref{eq:boundvarianceR} ensures that for any $0\le s\le t\le T$, we have
\begin{equation*}
    \E_{\rho ,\mathcal{E}}\left[\big( \mathscr{B}_t^{N,i,\varepsilon}(\varphi ) - \mathscr{B}_s^{N,i,\varepsilon}(\varphi ) - (\mathscr{B}_t^{N,i,\delta}(\varphi )-\mathscr{B}_s^{N,i,\delta}(\varphi))\big)^2\right] \le C(t-s)\varepsilon\|\nabla\varphi\|_{L^2}^2
\end{equation*}
for some constant $C>0$, so standard manipulations allow to obtain the same inequality for~$\mathcal{A}_t^{N,i,\varepsilon}(\varphi )$ instead of $\mathscr{B}_t^{N,i,\varepsilon}(\varphi )$. Then, letting $N$ go to infinity yields the desired energy estimate. This concludes the proof of both conditions \ref{energysol_item2} and \ref{energysol_item3} of \cref{defin:energysolution}.

\medskip

We can repeat these exact same arguments with the reversed processes $(\mathcal{Z}_{T-t}^{N,i})_{0\le t\le T}$ whose dynamics is driven by the adjoint $\mathcal{L}^*$ of the generator $\mathcal{L}$ is $L^2(\mu_{\rho ,\mathcal{E}})$ in order to prove item \ref{energysol_item4} of \cref{defin:energysolution}. We omit it for the sake of brevity.

\medskip

The only missing point is to verify that the sequences of fields $(\mathcal{Z}^{N,1})_{N\ge 1}$ and $(\mathcal{Z}^{N,2})_{N\ge 1}$ satisfy condition \ref{itemb} of \cref{thm:crossedfields}. This is a direct consequence of the following result together with Cauchy-Schwarz inequality.

\begin{proposition}
    There exists a constant $C_{21}=C_{21}(\kappa ,\rho ,\mathcal{E})>0$ such that for any $i\in \{1,2\}$, any $\varphi \in \schwartz$ and any $0\le s<t\le T$, we have 
    \begin{align}
        & \E_{\rho ,\mathcal{E}} \left[\sup_{0\le t\le T} \big|\mathcal{Z}_t^{N,i}(\varphi )\big|^2\right] \le C_{21}  \|\Delta\varphi\|_{L^2}^2 \vee \|\nabla\varphi\|_{L^2}^2 , \label{eq:boundZ}\\ 
        & \E_{\rho ,\mathcal{E}}\left[ \big| \mathcal{Z}_t^{N,i}(\varphi ) - \mathcal{Z}_s^{N,i}(\varphi )\big|^2\right] \le C_{21} (t-s) \|\Delta\varphi\|_{L^2}^2\vee \|\nabla\varphi\|_{L^2}^2.\label{eq:timeincrementZ}
    \end{align}
\end{proposition}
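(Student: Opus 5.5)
The plan is to use the martingale decomposition \eqref{eq:martdecomp} with $a=2$, $\gamma=\frac12$. Both bounds then follow from the increment estimates of \cref{sec:tightness}, plus a maximal inequality for the martingale part. Write
\begin{equation*}
\mathcal{Z}_t^{N,i}(\varphi)=\mathcal{Z}_0^{N,i}(\varphi)+\mathcal{M}_t^{N,i}(\varphi)+\mathcal{I}_t^{N,i}(\varphi)+\Lambda_t^{N,i}(\varphi)-E_t^{N,i}(\varphi).
\end{equation*}
By $(a+b+c+d+e)^2\le 5(a^2+\dots+e^2)$ it suffices to bound each piece separately.

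For the time increment \eqref{eq:timeincrementZ} I would handle the terms as follows.
\begin{itemize}
\item \cref{prop:incrementIt} gives a bound $C(t-s)^2\|\Delta\varphi\|_{L^2}^2\le CT(t-s)\|\Delta\varphi\|_{L^2}^2$.
\item \cref{prop:incrementMn} gives $C(t-s)\|\nabla\varphi\|_{L^2}^2$.
\item \cref{prop:incrementLambda} with $a=2$, $\gamma=\frac12$ gives $C(t-s)^{3/2}\|\nabla\varphi\|_{L^2}^2\le CT^{1/2}(t-s)\|\nabla\varphi\|_{L^2}^2$, since $N^{\frac32a-2\gamma-2}=N^0$.
\item The remainder $E^{N,i}$ has $L^2$-norm of order $N^{2a-2\gamma-4}=N^{-1}$. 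I need its increment bounded by $(t-s)$ times a Sobolev norm of $\varphi$. Since $E^{N,i}_t$ is a time integral over $[0,t]$ of a Taylor error, Cauchy--Schwarz in time together with stationarity gives a bound of order $(t-s)^2N^{-2}$ times a norm involving $\nabla^2\varphi$.
\end{itemize}
The last point is the one spot where the statement's norms $\|\Delta\varphi\|\vee\|\nabla\varphi\|$ must absorb the error. The plan is to observe that $\nabla_N\varphi-\nabla\varphi$ is pointwise $O(N^{-1}\|\varphi''\|_\infty)$, but in the $\ell^2$-sum sense it is controlled by $N^{-1}\|\Delta\varphi\|_{L^2}$ via the integral form of Taylor's remainder. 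That also handles the translated test functions, since $T_v$ preserves $L^2$ norms.

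For the supremum bound \eqref{eq:boundZ} the terms are handled as follows.
\begin{itemize}
\item The initial term has $\E[\mathcal{Z}_0^{N,i}(\varphi)^2]=\sigma_i^2\frac1N\sum_x\varphi(x/N)^2$. This is bounded by $C\|\varphi\|_{L^2}^2$ plus a discretization error, which is not quite of the form stated. I would absorb it by noting that by stationarity each fixed-time variance is bounded the same way; the constant may depend on $\varphi$'s $L^2$ norm. I expect the intended reading is that the bound holds with constant possibly depending on $\varphi$ through such lower-order norms.
\item The martingale is handled by Doob's inequality: $\E[\sup_t\mathcal{M}_t^2]\le 4\E[\mathcal{M}_T^2]\le CT\|\nabla\varphi\|_{L^2}^2$ by \cref{prop:incrementMn}.
\item For $\mathcal{I}^{N,i}$, Cauchy--Schwarz in time makes the supremum harmless: $\sup_t|\mathcal{I}_t|^2\le T\int_0^T(\kappa-1)^2\mathcal{Z}_s(\Delta_N\varphi)^2\,ds$, whose expectation is $\le CT^2\|\Delta\varphi\|_{L^2}^2$.
\item For $\Lambda^{N,i}$, the supremum is the main obstacle, since \cref{prop:incrementLambda} only controls fixed-time increments. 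Here I would apply the second-order Boltzmann--Gibbs principle in its $\sup_t$ form. Its proof goes through \cref{cor:spectral}, which already contains $\sup_{0\le t\le T}$ via the Kipnis--Varadhan inequality. This bounds $\E[\sup_t(\Lambda_t-\mathscr{B}_t^{N,\ell})^2]\le CT N^{-1}\ell\|\nabla\varphi\|_{L^2}^2$. The quadratic term $\mathscr{B}^{N,\ell}$ is an absolutely continuous time integral, so Cauchy--Schwarz in time as in the proof of \cref{prop:incrementLambda} gives $\E[\sup_t\mathscr{B}_t^2]\le CT^2N\ell^{-1}\|\nabla\varphi\|_{L^2}^2$. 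Choosing $\ell=\sqrt N$ balances both contributions to $O(1)$.
\item The remainder $E^{N,i}$ is again an integral and is treated by Cauchy--Schwarz in time.
\end{itemize}
Collecting all terms gives both estimates with a constant depending only on $\kappa,\rho,\mathcal{E},T$.
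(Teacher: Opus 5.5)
Your decomposition and your treatment of $\mathcal{M}^{N,i}$, $\mathcal{I}^{N,i}$ and $E^{N,i}$ are fine. The step for $\sup_t|\Lambda_t^{N,i}(\varphi)|$, however, fails as written. Your two bounds are $CTN^{-1}\ell\|\nabla\varphi\|_{L^2}^2$ for $\Lambda^{N,i}-\mathscr{B}^{N,i,\ell}$ and $CT^2N\ell^{-1}\|\nabla\varphi\|_{L^2}^2$ for $\mathscr{B}^{N,i,\ell}$. The choice $\ell=\sqrt N$ turns them into $CTN^{-1/2}$ and $CT^2N^{1/2}$, and the second diverges. The balancing choice is $\ell$ of order $N$ (say $\ell=N$), which makes both contributions $O\big((T+T^2)\|\nabla\varphi\|_{L^2}^2\big)$. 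This is exactly the paper's choice $\ell=\sqrt{t-s}\,N^{a/2}$ in \cref{prop:incrementLambda} with $a=2$, where $N^{a/2}=N$, not $\sqrt N$. With this correction your argument works.

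Your route to the supremum bound differs from the paper's. The paper derives the bounds on $\sup_t|\mathcal{I}_t^{N,i}|$ and $\sup_t|\Lambda_t^{N,i}|$ from the fixed-time increment estimates of \cref{prop:incrementIt,prop:incrementLambda} via Kolmogorov--Chentsov; this works because the exponents $(t-s)^2$ and $(t-s)^{3/2}$ both exceed $1$. It handles the martingale with Burkholder--Davis--Gundy followed by Doob. You use Cauchy--Schwarz in time for $\mathcal{I}^{N,i}$ and $E^{N,i}$, and plain Doob for $\mathcal{M}^{N,i}$, which is cleaner than the paper's detour. For $\Lambda^{N,i}$ you use a supremum-in-time version of \cref{thm:2BG-Principle}. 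That version is legitimate, since the Kipnis--Varadhan inequality is quoted with $\sup_{t\le T}$ and the blockwise Cauchy--Schwarz and Minkowski steps commute with the supremum. It does, however, require re-running the proof of the Boltzmann--Gibbs principle. The Kolmogorov--Chentsov route needs only \cref{prop:incrementLambda} as stated, with no extra choice of $\ell$ on your side.

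Your worry about $\mathcal{Z}_0^{N,i}$ is justified, and it is not a matter of interpretation: \eqref{eq:boundZ} is false as written. Stationarity gives
$\E_{\rho,\mathcal{E}}\big[\sup_t|\mathcal{Z}_t^{N,i}(\varphi)|^2\big]\ge\sigma_i^2\frac1N\sum_x\varphi(x/N)^2$.
Replacing $\varphi$ by $\varphi(\cdot/\lambda)$ makes this grow like $\lambda\|\varphi\|_{L^2}^2$, while $\|\Delta\varphi\|_{L^2}^2\vee\|\nabla\varphi\|_{L^2}^2$ decays like $\lambda^{-1}$. The paper's proof never treats $\mathcal{Z}_0^{N,i}$, nor $E^{N,i}$. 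The correct statement adds $\|\varphi\|_{L^2}^2$ on the right-hand side. You obtain it from
$\frac1N\sum_x\varphi(x/N)^2\le 2\|\varphi\|_{L^2}^2+2N^{-2}\|\nabla\varphi\|_{L^2}^2$,
and you should state the corrected bound explicitly rather than hedging. The increment bound \eqref{eq:timeincrementZ} does not involve $\mathcal{Z}_0^{N,i}$ and is unaffected.
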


\begin{proof}
    Recall the decomposition \eqref{eq:martdecomp} of the martingale $\mathcal{M}^{N,i}$. First of all, in view of \cref{prop:incrementIt,prop:incrementLambda,prop:incrementMn}, the bound \eqref{eq:timeincrementZ} readily follows. We now prove the bound \eqref{eq:boundZ}. It suffices to prove it for each term in the decomposition \eqref{eq:martdecomp}. These bounds for the diffusive term~$\mathcal{I}_t^{N,i}(\varphi )$ and the quadratic term $\Lambda_t^{N,i}(\varphi )$ are consequences of \cref{prop:incrementIt,prop:incrementLambda} together with Kolmogorov-Chentsov theorem. In order to prove \eqref{eq:boundZ} for the martingale term $\mathcal{M}_t^{N,i}(\varphi )$, we use the Burkholder-Davis-Gundy inequality for càdlàg martingales to get
    \begin{equation*}
        \E_{\rho ,\mathcal{E}}\left[\sup_{0\le t\le T}\big|\mathcal{M}_t^{N,i}(\varphi )\big|^2\right]  \le C\left( \E_{\rho ,\mathcal{E}}\Big[ \big\langle \mathcal{M}^{N,i}(\varphi) \big\rangle_T\Big] + \E_{\rho ,\mathcal{E}} \left[ \sup_{0\le t\le T} \big|\mathcal{M}_t^{N,i}(\varphi ) - \mathcal{M}_{t^-}^{N,i}(\varphi )\big|^2\right]\right).
    \end{equation*}
    Then, applying the inequality $(a+b)^2\le 2(a^2+b^2)$ together with Doob's $L^2$ inequality yield
    \begin{align*}
        \E_{\rho ,\mathcal{E}} \left[ \sup_{0\le t\le T} \big|\mathcal{M}_t^{N,i}(\varphi ) - \mathcal{M}_{t^-}^{N,i}(\varphi )\big|^2\right] \le 2\E_{\rho ,\mathcal{E}}\left[ \sup_{0\le t\le T}\big|\mathcal{M}_t^{N,i}(\varphi) \big|^2\right] \le 2 \E_{\rho ,\mathcal{E}}\Big[ \big\langle \mathcal{M}^{N,i}(\varphi )\big\rangle_T\Big].
    \end{align*}
    Using \eqref{eq:limitexpectationquadraticvariation}, we conclude that this is bounded above by $\|\nabla\varphi\|_{L^2}^2$ up to a multiplicative constant that depends on the parameters $\kappa ,\rho ,\mathcal{E}$.
\end{proof}

\noindent This concludes the proof of \cref{thm:main}.

\appendix
\crefalias{section}{appendix}
\section{Coupling matrices and emerging universality classes}
\label{appendix:couplingmatrices}

In this appendix, we give the explicit expression of the Jacobian, Hessian and coupling matrices that are considered in this paper. A direct computation shows that the macroscopic current vector defined in \eqref{def:macroscopiccurrent} writes under the form
\begin{equation*}
    \mathbf{j}(\rho ,\mathcal{E}) = \begin{pmatrix}
        (\kappa -1)(\alpha_p\rho +\alpha_e\mathcal{E})(1-\rho )\\
        (\kappa -1)\big( \alpha_p + \alpha_e\frac{\mathcal{E}}{\rho }\big)\mathcal{E}(1-\rho ) + \alpha_e\tfrac{(\mathcal{E}-\rho )(\kappa\rho -\mathcal{E})}{\rho}
    \end{pmatrix}
\end{equation*}
for any $\rho\in (0,1)$ and $\mathcal{E}\in (\rho ,\kappa\rho )$. Hence we have the following expression of the Jacobian matrix
\begin{multline*}
    \mathbf{J}(\rho ,\mathcal{E}) =\\ \begin{pmatrix}
        (\kappa -1)\big( \alpha_p(1-2\rho )-\alpha_e \mathcal{E}\big) & \alpha_e(\kappa -1)(1-\rho )\\
        -\alpha_p (\kappa -1)\mathcal{E} -\alpha_e\big( \kappa +(\kappa -2)\frac{\mathcal{E}^2}{\rho^2}\big ) & \alpha_p(\kappa -1)(1-\rho )+\alpha_e\big( \kappa +1-2(\kappa -1)\mathcal{E}+2(\kappa -2)\frac{\mathcal{E}}{\rho }\big)
    \end{pmatrix}.
\end{multline*}
The Hessian matrices, defined by $\mathbf{H}^i = \mathrm{Hess}(\mathbf{j}_i)$ for $i=1,2$, are respectively given by
\begin{equation*}
    \mathbf{H^1}= \alpha_p(\kappa -1)\begin{pmatrix}
        -2 & 0 \\
        0 & 0
    \end{pmatrix} + \alpha_e(\kappa -1)\begin{pmatrix}
        0 & -1\\
        -1 & 0
    \end{pmatrix},
\end{equation*}
and
\begin{equation*}
    \mathbf{H}^2 = \alpha_p(\kappa -1)\begin{pmatrix}
        0 & -1\\
        -1 & 0
    \end{pmatrix} + \alpha_e(\kappa -1)\begin{pmatrix}
        0 & 0 \\
        0 & -2
    \end{pmatrix} + 2\alpha_e (\kappa -2)\begin{pmatrix}
        \frac{\mathcal{E}^2}{\rho ^3} & -\frac{\mathcal{E}}{\rho ^2}\\
        -\frac{\mathcal{E}}{\rho^2} & \frac{1}{\rho}
    \end{pmatrix}.
\end{equation*}
Thanks to \cref{thm:diagonalizability}, we know that $\mathbf{J}$ is diagonalizable with distinct real eigenvalues, so denote by $R$ the transformation matrix so that $R\mathbf{J}R^{-1}=\mathrm{diag}(\lambda_1,\lambda_2)$ where, without loss of generality, we can assume $\lambda_1>\lambda_2$. Define the coupling matrices by 
\begin{equation}\label{def:couplingmatrices_appendix}
    \mathbf{G}^i = \frac12 \sum_{j=1}^2 R_{ij}(R^{-1})^\top \mathbf{H}^j R^{-1}.
\end{equation}
We now prove some properties about the diagonal elements of the coupling matrices, which are the ones that determine the universality class of the fluctuations according to NLFH theory. We start with the case where $\alpha_e=0$.

\begin{proposition}\label{prop:couplingmatricesalpha_e0}
    Assume that $\rho\in (0,1)$, $\mathcal{E}\in (\rho ,\kappa\rho )$ and $\alpha_e=0$. Then, we have 
    \begin{equation*}
        \mathbf{G}_{11}^1 = \mathbf{G}_{22}^1=\mathbf{G}_{11}^2=0 \qquad\mbox{ while }\qquad \mathbf{G}_{22}^2\neq 0
    \end{equation*}
    Therefore, according to NLFH theory the first mode is expected to behave diffusively, while the second mode is expected to display KPZ behaviour (\textit{cf.}~the orange cell in \cref{table:classification}).
\end{proposition}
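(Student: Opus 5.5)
When $\alpha_e=0$ every quantity involved is explicit, so the plan is to compute $R$, $R^{-1}$ and the diagonal entries of $\mathbf{G}^1,\mathbf{G}^2$ directly from the formulas above. Since $(\alpha_p,\alpha_e)\neq(0,0)$, the standing assumption gives $\alpha_p\neq 0$, and we set $c\coloneq\alpha_p(\kappa-1)\neq0$. Putting $\alpha_e=0$ in the explicit Jacobian gives
\begin{equation*}
    \mathbf{J}=c\begin{pmatrix} 1-2\rho & 0\\ -\mathcal{E} & 1-\rho\end{pmatrix},
\end{equation*}
which is lower triangular with distinct eigenvalues $c(1-\rho)$ and $c(1-2\rho)$, since $\rho\in(0,1)$.

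\textbf{Step 1: left eigenvectors.} I would solve $(a,b)\mathbf{J}=\lambda(a,b)$ for each eigenvalue.
\begin{itemize}
    \item For $\lambda=c(1-\rho)$, the first column gives $a(1-2\rho)-b\mathcal{E}=a(1-\rho)$. Hence $-a\rho=b\mathcal{E}$, and the eigenvector is $(1,-\rho/\mathcal{E})$.
    \item For $\lambda=c(1-2\rho)$, the second column forces $b\rho=0$, so $b=0$ and the eigenvector is $(1,0)$.
\end{itemize}
These are exactly the modes used in \cref{thm:main2}. I label them as there: mode 1 is $(1,-\rho/\mathcal{E})$ and mode 2 is $(1,0)$. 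The statement does not depend on which eigenvalue is called larger, since swapping the labels just relabels the modes.

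\textbf{Step 2: the change of basis.} With this labelling,
\begin{equation*}
    R=\begin{pmatrix}1 & -\rho/\mathcal{E}\\ 1 & 0\end{pmatrix},\qquad R^{-1}=\begin{pmatrix}0 & 1\\ -\mathcal{E}/\rho & \mathcal{E}/\rho\end{pmatrix}.
\end{equation*}
Denote the columns of $R^{-1}$ by $r_1=(0,-\mathcal{E}/\rho)^\top$ and $r_2=(1,\mathcal{E}/\rho)^\top$. By \eqref{def:couplingmatrices_appendix}, the diagonal entries are
\begin{equation*}
    \mathbf{G}^i_{kk}=\tfrac12\, r_k^\top M_i\, r_k,\qquad M_i\coloneq R_{i1}\mathbf{H}^1+R_{i2}\mathbf{H}^2.
\end{equation*}
At $\alpha_e=0$ the Hessians reduce to
\begin{equation*}
    \mathbf{H}^1=c\begin{pmatrix}-2&0\\0&0\end{pmatrix},\qquad \mathbf{H}^2=c\begin{pmatrix}0&-1\\-1&0\end{pmatrix}.
\end{equation*}

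\textbf{Step 3: evaluating the quadratic forms.} First,
\begin{equation*}
    M_1=\mathbf{H}^1-\tfrac{\rho}{\mathcal{E}}\mathbf{H}^2=c\begin{pmatrix}-2&\rho/\mathcal{E}\\ \rho/\mathcal{E}&0\end{pmatrix}.
\end{equation*}
\begin{itemize}
    \item $r_1$ has only a second coordinate and $(M_1)_{22}=0$, so $\mathbf{G}^1_{11}=0$.
    \item $r_2^\top M_1 r_2=c\bigl(-2+2\cdot\tfrac{\rho}{\mathcal{E}}\cdot\tfrac{\mathcal{E}}{\rho}\bigr)=0$, so $\mathbf{G}^1_{22}=0$.
\end{itemize}
Next, $M_2=\mathbf{H}^1$.
\begin{itemize}
    \item Again $(M_2)_{22}=0$, so $\mathbf{G}^2_{11}=0$.
    \item $\mathbf{G}^2_{22}=\tfrac12 r_2^\top\mathbf{H}^1 r_2=-c=-\alpha_p(\kappa-1)\neq0$.
\end{itemize}
This value matches $\mathfrak{g}_2$ in \cref{thm:main2}. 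The NFH conclusion then follows by reading off the orange cell of \cref{table:classification}.

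\textbf{Main obstacle.} There is no conceptual difficulty. The only delicate point is consistency of conventions: $R$ must be built from the left eigenvectors (rows), and the coupling matrix uses $R^{-1}$ with its transpose as in \eqref{def:couplingmatrices_appendix}. I would double-check this by verifying $R\mathbf{J}R^{-1}=\mathrm{diag}(c(1-\rho),c(1-2\rho))$ before evaluating the quadratic forms. The cancellation $\mathbf{G}^1_{22}=0$ relies on the exact normalisation $-\rho/\mathcal{E}$, so this is where a sign or labelling slip would show up.
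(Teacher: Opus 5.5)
Your proposal is correct and is essentially the paper's proof: you reduce $\mathbf{J}$ to the lower-triangular matrix, take the left eigenvectors $(1,-\rho/\mathcal{E})$ and $(1,0)$, and evaluate the coupling matrices directly, recovering $\mathbf{G}^2_{22}=-\alpha_p(\kappa-1)$. The paper writes out the full matrices $\mathbf{G}^1,\mathbf{G}^2$ rather than computing only the diagonal quadratic forms $\tfrac12 r_k^\top M_i r_k$, and the vanishing pattern depends only on the eigenvector directions, not on their normalisation.
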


\begin{proof}
    Assume $\alpha_e=0$, hence $\alpha_p\neq 0$ as we exclude the trivial case. Then, the Jacobian matrix reduces to
    \begin{equation*}
        \mathbf{J}(\rho ,\mathcal{E}) = \alpha_p(\kappa -1)\begin{pmatrix}
            1-2\rho & 0 \\
            -\mathcal{E} & 1-\rho 
        \end{pmatrix},
    \end{equation*}
    The eigenvalues of this matrix are $\lambda_1 = \alpha_p(\kappa -1)(1-\rho )$ and $\lambda_2 = \alpha_p(\kappa -1)(1-2\rho )$, with respective associated eigenvectors $\begin{pmatrix} 1 & -\frac{\rho}{\mathcal{E}}\end{pmatrix}$ and $\begin{pmatrix} 1 & 0\end{pmatrix}$. After computation, the coupling matrices write
    \begin{equation*}
        \mathbf{G}^1= \frac{\alpha_p(\kappa-1)}{2}\begin{pmatrix}
        0 & -1\\
        -1 & 0
        \end{pmatrix} \qquad\mbox{ and }\qquad \mathbf{G}^2 = \frac{\alpha_p(\kappa-1)}{2}\begin{pmatrix}
            0 & 0 \\
            0 & -2
        \end{pmatrix}
    \end{equation*}
    so this proves the result.
\end{proof}

Let us now turn to the case where $\alpha_e\neq 0$. In this case, the coefficient $\mathbf{J}_{12}$ of the Jacobian matrix is non-zero, so $\begin{pmatrix} 1 & 0\end{pmatrix}$ is not an eigenvector. As a consequence, the eigenvectors of~$\mathbf{J}$ can be normalized under the form
\begin{equation*}
    \begin{pmatrix}
        \mathfrak{c}_i & 1
    \end{pmatrix} \mathbf{J}(\rho ,\mathcal{E}) = \lambda_i \begin{pmatrix}
        \mathfrak{c}_i & 1
    \end{pmatrix}
\end{equation*}
for $i=1,2$. Even though we do not make it explicit in the notation for simplicity, the coefficients~$\mathfrak{c}_i$ strongly depend on the parameters. Moreover, as the eigenvalues are distinct, we also have $\mathfrak{c}_1\neq\mathfrak{c}_2$. In this case, the transformation matrix write
\begin{equation*}
    R = \begin{pmatrix}
        \mathfrak{c}_1 & 1 \\
        \mathfrak{c}_2 & 1
    \end{pmatrix}\qquad\Longleftrightarrow\qquad R^{-1} = \frac{1}{\mathfrak{c}_1-\mathfrak{c}_2}\begin{pmatrix}
        1 & -1 \\
        -\mathfrak{c}_2 & \mathfrak{c}_1
    \end{pmatrix}.
\end{equation*}
After tedious computations, one can check that the coupling matrices write
\begin{multline*}
    \mathbf{G}^1 = \frac{\alpha_p(\kappa -1)}{2(\mathfrak{c}_1-\mathfrak{c}_2)}\begin{pmatrix}
        -2 & 1 \\
        1 & 0
    \end{pmatrix} + \frac{\alpha_e(\kappa -1)}{2(\mathfrak{c}_1-\mathfrak{c}_2)}\begin{pmatrix}
        2\mathfrak{c}_2 & - \mathfrak{c}_1 \\
        -\mathfrak{c}_1 & 0
    \end{pmatrix} \\ 
    + \frac{\alpha_e(\kappa -2)}{\rho (\mathfrak{c}_1-\mathfrak{c}_2)^2}\begin{pmatrix}
        \big( \frac{\mathcal{E}}{\rho} + \mathfrak{c}_2\big)^2 & -\big( \frac{\mathcal{E}}{\rho} +\mathfrak{c}_1\big)\big( \frac{\mathcal{E}}{\rho} +\mathfrak{c}_2\big) \\
        -\big( \frac{\mathcal{E}}{\rho} +\mathfrak{c}_1\big)\big( \frac{\mathcal{E}}{\rho} +\mathfrak{c}_2\big) & \big( \frac{\mathcal{E}}{\rho} +\mathfrak{c}_1\big)^2
    \end{pmatrix},
\end{multline*}

\begin{multline*}
    \mathbf{G}^2 = \frac{\alpha_p(\kappa -1)}{2(\mathfrak{c}_1-\mathfrak{c}_2)}\begin{pmatrix}
        0 & -1 \\
        -1 & 2
    \end{pmatrix} + \frac{\alpha_e(\kappa -1)}{2(\mathfrak{c}_1-\mathfrak{c}_2)}\begin{pmatrix}
        0 & \mathfrak{c}_2 \\
        \mathfrak{c}_2 & -2\mathfrak{c}_1
    \end{pmatrix} \\ 
    + \frac{\alpha_e(\kappa -2)}{\rho (\mathfrak{c}_1-\mathfrak{c}_2)^2}\begin{pmatrix}
        \big( \frac{\mathcal{E}}{\rho} + \mathfrak{c}_2\big)^2 & -\big( \frac{\mathcal{E}}{\rho} +\mathfrak{c}_1\big)\big( \frac{\mathcal{E}}{\rho} +\mathfrak{c}_2\big) \\
        -\big( \frac{\mathcal{E}}{\rho} +\mathfrak{c}_1\big)\big( \frac{\mathcal{E}}{\rho} +\mathfrak{c}_2\big) & \big( \frac{\mathcal{E}}{\rho} +\mathfrak{c}_1\big)^2
    \end{pmatrix}.
\end{multline*}

We immediately notice that some cancellations occur when $\kappa =2$, so we have to treat this case separately. In this case, we always have~$\mathbf{G}_{22}^1=\mathbf{G}_{11}^2=0$. Therefore, the universality class depends only on the self-coupling terms~$\mathbf{G}_{11}^1$ and $\mathbf{G}_{22}^2$ according to the green cells of \cref{table:classification}, as it was already predicted in \cite{cannizzaro_abc_2025}. Finally, we turn to the general case $\kappa >2$.

\begin{proposition}\label{prop:noncancellation}
    Assume that $\rho\in (0,1)$, $\mathcal{E}\in (\rho ,\kappa\rho )$, $\kappa >2$ and $\alpha_e\neq 0$. Then, the coefficients~$\mathbf{G}_{22}^1$ and $\mathbf{G}_{11}^2$ are always non-zero. Moreover, there exists some choice of parameters that makes all the diagonal entries of the coupling matrices non-zero, leading to the emergence of KPZ behaviour for both modes according to NLFH theory (\textit{cf.}~the red cell in \cref{table:classification}).
\end{proposition}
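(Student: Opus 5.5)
The plan is to read everything off the explicit expressions of $\mathbf{G}^1$ and $\mathbf{G}^2$ displayed just before the statement. For $\kappa>2$, the only contributions to $\mathbf{G}^1_{22}$ and $\mathbf{G}^2_{11}$ come from the last ($\kappa-2$) matrix. Writing $e:=\mathcal{E}/\rho\in(1,\kappa)$, this gives
\[
\mathbf{G}^1_{22}=\frac{\alpha_e(\kappa-2)}{\rho(\mathfrak{c}_1-\mathfrak{c}_2)^2}\,(e+\mathfrak{c}_1)^2,\qquad
\mathbf{G}^2_{11}=\frac{\alpha_e(\kappa-2)}{\rho(\mathfrak{c}_1-\mathfrak{c}_2)^2}\,(e+\mathfrak{c}_2)^2 .
\]
Since $\alpha_e\neq0$ and $\kappa>2$, the first claim reduces to showing $\mathfrak{c}_i\neq -e$ for $i=1,2$. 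In other words, the row vector $\begin{pmatrix}-e & 1\end{pmatrix}$ must not be a left eigenvector of $\mathbf{J}$.

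To check this, I will use the two component equations of $\begin{pmatrix}-e & 1\end{pmatrix}\mathbf{J}=\lambda\begin{pmatrix}-e&1\end{pmatrix}$ and eliminate $\lambda$. The second equation gives $\lambda=-e\mathbf{J}_{12}+\mathbf{J}_{22}$. Substituting into the first, the vector is an eigenvector if and only if
\[
\Phi:=\mathbf{J}_{21}+e(\mathbf{J}_{22}-\mathbf{J}_{11})-e^2\mathbf{J}_{12}=0 .
\]
Plugging in the explicit Jacobian from this appendix, the $\alpha_p$-terms cancel: one gets $-\alpha_p(\kappa-1)\mathcal{E}+e\,\alpha_p(\kappa-1)\rho=0$. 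In the $\alpha_e$-terms, the pieces involving $(\kappa-1)\mathcal{E}$ cancel against $e^2(\kappa-1)\rho$, using $\mathcal{E}=e\rho$. What remains is
\[
\Phi=\alpha_e\big(-\kappa+(\kappa+1)e-e^2\big)=-\alpha_e(e-1)(e-\kappa).
\]
This is nonzero because $\alpha_e\neq0$ and $1<e<\kappa$, which is exactly the standing assumption $\mathcal{E}\in(\rho,\kappa\rho)$. Hence $\mathfrak{c}_1,\mathfrak{c}_2\neq -e$, and so $\mathbf{G}^1_{22}\neq0$ and $\mathbf{G}^2_{11}\neq0$.

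For the second claim, I would argue by analyticity. On the connected open set $\{\rho\in(0,1),\ \mathcal{E}\in(\rho,\kappa\rho),\ \alpha_e\neq0\}$ (fixing the sign of $\alpha_e$), the eigenvalues $\lambda_1>\lambda_2$ are distinct by \cref{thm:diagonalizability}. They are therefore real-analytic in the parameters, and so are $\mathfrak{c}_i=(\lambda_i-\mathbf{J}_{22})/\mathbf{J}_{12}$ (with $\mathbf{J}_{12}\neq0$) and the entries $\mathbf{G}^1_{11}$, $\mathbf{G}^2_{22}$. The product $\mathbf{G}^1_{11}\mathbf{G}^2_{22}$ is thus analytic, so either it vanishes identically or its zero set is nowhere dense. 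It then suffices to exhibit a single parameter point where both entries are nonzero. I would take, say, $\kappa=3$, $\rho=\tfrac12$, $\mathcal{E}=1$, $\alpha_e=1$ and a suitable $\alpha_p$, and compute $\lambda_i$, $\mathfrak{c}_i$ and the diagonal entries by hand.

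The main obstacle is making this last step convincing. The $\mathfrak{c}_i$ involve square roots, so the explicit evaluation is somewhat messy. An alternative I would try first is a perturbative argument: note that whether $\mathbf{G}^i_{ii}$ vanishes does not depend on how the eigenvectors are normalized, and renormalize them continuously as $\alpha_e\to0$, where they tend to those of \cref{prop:couplingmatricesalpha_e0}. This immediately gives $\mathbf{G}^2_{22}\neq0$ for small $\alpha_e$. However, it only gives $\mathbf{G}^1_{11}\to0$, so a first-order expansion in $\alpha_e$ would then be needed; if that becomes too messy, I will fall back on the numerical point.
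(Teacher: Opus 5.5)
Your treatment of $\mathbf{G}^1_{22}$ and $\mathbf{G}^2_{11}$ is exactly the paper's argument. Both entries reduce to the rank-one $(\kappa-2)$ block, so the task is to show that $\begin{pmatrix}-\mathcal{E}/\rho & 1\end{pmatrix}$ is not a left eigenvector of $\mathbf{J}$. Your $\Phi=-\alpha_e(e-1)(e-\kappa)$ equals the paper's factor $\alpha_e\bigl(\kappa\frac{\rho}{\mathcal{E}}-1\bigr)\bigl(\frac{\rho}{\mathcal{E}}-1\bigr)$ up to the nonzero factor $-e^2$, and your algebra checks out.

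For the existence claim, the paper also just evaluates at one point, $(\alpha_p,\alpha_e,\kappa,\rho,\mathcal{E})=(0,1,3,\tfrac12,1)$. This is your proposed point with $\alpha_p=0$. As written, though, you never carry out this evaluation, and your perturbative fallback is inconclusive without the first-order term. So the second half is not yet proved.

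The computation is short and not messy. At that point:
\begin{itemize}
\item $\mathbf{J}=\begin{pmatrix}-2&1\\-7&4\end{pmatrix}$, so $\lambda_{1,2}=1\pm\sqrt2$;
\item $\mathfrak{c}_i=\lambda_i-4=-3\pm\sqrt2$, so $\mathfrak{c}_1-\mathfrak{c}_2=2\sqrt2$;
\item the displayed formulas give $\mathbf{G}^1_{11}=\frac{\mathfrak{c}_2}{\sqrt2}+\frac14(2+\mathfrak{c}_2)^2=-\frac14-\sqrt2$;
\item and $\mathbf{G}^2_{22}=-\frac{\mathfrak{c}_1}{\sqrt2}+\frac14(2+\mathfrak{c}_1)^2=-\frac14+\sqrt2$.
\end{itemize}
Both are nonzero, and together with your first part all four diagonal entries are nonzero there.

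Your analyticity argument is logically unnecessary for the proposition: a single witness already proves existence. What it does buy is genericity, which is stronger than what the statement claims. Specifically, for fixed $\kappa$ (here $\kappa=3$) and on the connected component containing the witness, the (KPZ,KPZ) case holds off a nowhere-dense set of $(\alpha_p,\alpha_e,\rho,\mathcal{E})$.
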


\begin{proof}
    In view of the expression of the coupling matrices, in order to show that $\mathbf{G}_{22}^1$ and $\mathbf{G}_{11}^2$ are non-zero, it is enough to show that the vector 
    \begin{equation*}
    \begin{pmatrix} -\frac{\mathcal{E}}{\rho} & 1\end{pmatrix}
    \end{equation*}
    cannot be an eigenvector of the Jacobian. If it was, then computing the product between the vector and matrix would give on the first hand that the corresponding eigenvalue should be equal to $\lambda = \mathbf{J}_{22}-\frac{\mathcal{E}}{\rho}\mathbf{J}_{12}$, and on the other hand that we should have $\mathbf{J}_{21}-\frac{\mathcal{E}}{\rho}\mathbf{J}_{11} = -\frac{\mathcal{E}}{\rho}\lambda$. By plugging the values of the coefficients of the Jacobian in this last identity, and after some simplifications, one gets that we should have
    \begin{equation*}
        \alpha_e\left( \kappa  \frac{\rho}{\mathcal{E}}-1\right)\left(\frac{\rho}{\mathcal{E}}-1\right) =0
    \end{equation*}
    and this is excluded by the choice of parameters. 
    
    Finally, the existence of some choice of parameters that makes the diagonal entries of the coupling matrices non-zero can be checked by direct computations. Indeed, by choosing for instance~$(\alpha_p,\alpha_e,\kappa ,\rho ,\mathcal{E}) = (0,1,3,\frac12 ,1)$, one finds
    \begin{equation*}
        \mathbf{G}_{11}^1 = -\frac14-\sqrt{2} \qquad\mbox{ and }\qquad \mathbf{G}_{22}^2 = -\frac14 +\sqrt{2}.
    \end{equation*}
\end{proof}

The following proposition states that, nonetheless, one can cancel one of the coefficients~$\mathbf{G}_{ii}^i$, which according to NFLH theory, gives rise to KPZ behaviour for one mode, and $\frac53$-Lévy behaviour for the second one (\textit{cf.}~purplish-blue cells in \cref{table:classification}). Without loss of generality, we focus on the coefficient $\mathbf{G}_{11}^1$.

\begin{proposition}\label{prop:vanishingG11}
    Assume that $(\alpha_p,\alpha_e)=(0,1)$ and $\kappa >2$. Then, we have $\mathbf{G}_{22}^2(\kappa ,\rho ,\mathcal{E} )> 0$ for any $\rho \in (0,1)$ and $\mathcal{E} \in (\rho,\kappa\rho)$. Moreover, if $0<\rho <1-\frac{2}{\kappa}$, there exists an energy value~$\mathcal{E}^\star = \mathcal{E}^\star (\kappa ,\rho )\in (\rho ,\kappa\rho )$ for which $\mathbf{G}_{11}^1(\kappa ,\rho ,\mathcal{E}^\star )=0$. 
\end{proposition}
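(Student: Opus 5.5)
The plan is to make everything explicit in terms of the single variable $a\coloneq\mathcal{E}/\rho\in(1,\kappa)$ together with $\rho$ and $\kappa$, using the expressions for $\mathbf{J}$ and $\mathbf{G}^i$ given above with $(\alpha_p,\alpha_e)=(0,1)$.

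\emph{Step 1: locate $\mathfrak{c}_1,\mathfrak{c}_2$.} From $(\mathfrak{c},1)\mathbf{J}=\lambda(\mathfrak{c},1)$ the two components give $\lambda=\mathfrak{c}\mathbf{J}_{12}+\mathbf{J}_{22}$ and $\mathfrak{c}\mathbf{J}_{11}+\mathbf{J}_{21}=\lambda\mathfrak{c}$. Eliminating $\lambda$, the coefficients $\mathfrak{c}_1,\mathfrak{c}_2$ are the roots of
\begin{equation*}
P(\mathfrak{c})\coloneq\mathbf{J}_{12}\mathfrak{c}^2+(\mathbf{J}_{22}-\mathbf{J}_{11})\mathfrak{c}-\mathbf{J}_{21}=0 .
\end{equation*}
Here $\mathbf{J}_{12}=(\kappa-1)(1-\rho)>0$ and $\mathbf{J}_{21}=-(\kappa+(\kappa-2)a^2)<0$. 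By Vieta, $\mathfrak{c}_1\mathfrak{c}_2=\mathbf{J}_{21}/\mathbf{J}_{12}<0$, so the roots have opposite signs. Since $\lambda=\mathfrak{c}\mathbf{J}_{12}+\mathbf{J}_{22}$ is increasing in $\mathfrak{c}$, the convention $\lambda_1>\lambda_2$ forces $\mathfrak{c}_1>0>\mathfrak{c}_2$. Set $D=\mathfrak{c}_1-\mathfrak{c}_2>0$. The formulas above then reduce to
\begin{equation*}
\mathbf{G}^1_{11}=\frac{(\kappa-1)\mathfrak{c}_2}{D}+\frac{(\kappa-2)(a+\mathfrak{c}_2)^2}{\rho D^2},\qquad
\mathbf{G}^2_{22}=-\frac{(\kappa-1)\mathfrak{c}_1}{D}+\frac{(\kappa-2)(a+\mathfrak{c}_1)^2}{\rho D^2}.
\end{equation*}
In each expression the first term is negative and the second is positive. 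So $\mathbf{G}^2_{22}>0$ is equivalent to
\begin{equation*}
(\kappa-2)(a+\mathfrak{c}_1)^2>\rho(\kappa-1)\mathfrak{c}_1 D .
\end{equation*}

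\emph{Step 2: positivity of $\mathbf{G}^2_{22}$.} I would evaluate the polynomial $P$ at the point $\mathfrak{c}=-a$, which is natural because $a+\mathfrak{c}_i$ appears above and, by the proof of \cref{prop:noncancellation}, $-a$ is never a root. Writing $P(\mathfrak{c})=\mathbf{J}_{12}(\mathfrak{c}-\mathfrak{c}_1)(\mathfrak{c}-\mathfrak{c}_2)$ gives $P(-a)=\mathbf{J}_{12}(a+\mathfrak{c}_1)(a+\mathfrak{c}_2)$, and that proof computes $P(-a)$ as a multiple of $(\kappa\rho/\mathcal{E}-1)(\rho/\mathcal{E}-1)$. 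This should give the sign of $(a+\mathfrak{c}_1)(a+\mathfrak{c}_2)$ and a closed form for it.

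Next I would replace $D^2=(\mathfrak{c}_1+\mathfrak{c}_2)^2-4\mathfrak{c}_1\mathfrak{c}_2$ via Vieta. I would then eliminate $\mathfrak{c}_1^2$ using $P(\mathfrak{c}_1)=0$, leaving a polynomial inequality in $(\rho,a,\kappa)$ that is affine in $\mathfrak{c}_1$ and in $D$. I would isolate the terms with $D$ and square, keeping track of signs, to reach a polynomial inequality in $\rho\in(0,1)$, $a\in(1,\kappa)$ that I expect to factor into manifestly positive terms. The example $(0,1,3,\frac12,1)$ from \cref{prop:noncancellation}, where $\mathbf{G}^2_{22}=-\frac14+\sqrt2$, serves as a sanity check. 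This algebra is the main obstacle. If no clean factorization appears, the fallback is to use $\mathfrak{c}_1=(-(\mathbf{J}_{22}-\mathbf{J}_{11})+\sqrt{\Delta})/(2\mathbf{J}_{12})$ directly and show the resulting expression is positive on the rectangle $(0,1)\times(1,\kappa)$ by monotonicity in $\rho$.

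\emph{Step 3: vanishing of $\mathbf{G}^1_{11}$.} The coefficients of $P$ are continuous in $a$ and its discriminant stays positive by \cref{thm:diagonalizability}, so $\mathfrak{c}_2$ is a continuous function of $a$ on the closed interval $[1,\kappa]$, and hence so is $\rho D^2\mathbf{G}^1_{11}/(\kappa-1)$. I would compute its limits at $a\to1$ (that is, $\mathcal{E}\to\rho$) and $a\to\kappa$ (that is, $\mathcal{E}\to\kappa\rho$), where $P(-a)=0$ makes $-a$ a root. I expect $-a$ to coincide with $\mathfrak{c}_2$ at one endpoint, making the positive term vanish so that $\mathbf{G}^1_{11}\to(\kappa-1)\mathfrak{c}_2/D<0$. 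At the other endpoint, $-a=\mathfrak{c}_1$ or $\mathfrak{c}_2$ is separated from $-a$, and the sign of $\rho(\kappa-1)\mathfrak{c}_2D+(\kappa-2)(a+\mathfrak{c}_2)^2$ should reduce to the sign of $\kappa-2-\kappa\rho$, i.e.\ positive exactly when $\rho<1-\frac2\kappa$. If the leading-order limits at both endpoints are negative, I would instead carry out a first-order expansion at the degenerate endpoint.

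Once opposite signs at the two ends are established, the intermediate value theorem yields $\mathcal{E}^\star\in(\rho,\kappa\rho)$ with $\mathbf{G}^1_{11}(\kappa,\rho,\mathcal{E}^\star)=0$.
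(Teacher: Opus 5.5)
Your Step 1 has a sign error that derails the first half of the proof. For $P(\mathfrak{c})=\mathbf{J}_{12}\mathfrak{c}^2+(\mathbf{J}_{22}-\mathbf{J}_{11})\mathfrak{c}-\mathbf{J}_{21}$, Vieta gives $\mathfrak{c}_1\mathfrak{c}_2=-\mathbf{J}_{21}/\mathbf{J}_{12}=\frac{\kappa+(\kappa-2)a^2}{(\kappa-1)(1-\rho)}>0$. So the roots have the \emph{same} sign, and in fact both are negative.

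Your own idea of evaluating at $-a$ shows this at once. A direct computation gives $P(-a)=a^2-(\kappa+1)a+\kappa=(a-1)(a-\kappa)<0$ for $a\in(1,\kappa)$, while $P(0)=-\mathbf{J}_{21}>0$. Since $\mathbf{J}_{12}>0$, this forces $\mathfrak{c}_2<-a<\mathfrak{c}_1<0$. At your sanity-check point $(\kappa,\rho,a)=(3,\frac12,2)$ one gets $P(\mathfrak{c})=\mathfrak{c}^2+6\mathfrak{c}+7$, with roots $-3\pm\sqrt2$.

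Hence your claim that the first term is negative holds for $\mathbf{G}^1_{11}$ but fails for $\mathbf{G}^2_{22}$. There $-(\kappa-1)\mathfrak{c}_1/D>0$, so the right-hand side of your reduced inequality $(\kappa-2)(a+\mathfrak{c}_1)^2>\rho(\kappa-1)\mathfrak{c}_1D$ is negative, and $\mathbf{G}^2_{22}>0$ is immediate. As written, though, Step 2 treats this as a genuine competition between terms and only describes a hoped-for factorization, with a monotonicity fallback. So the first assertion of the proposition is not actually proved.

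The missing ingredient is exactly $\mathfrak{c}_1<0$, and this is how the paper argues. It gets $\mathfrak{c}_i\neq0$ from $\mathbf{J}_{21}\neq0$, then uses continuity in $\mathcal{E}$ and the explicit values $\mathfrak{c}_1<0$ at $\mathcal{E}=\rho,\kappa\rho$. The $P(0)$, $P(-a)$ argument above is a shorter route to the same fact.

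Step 3 is the paper's argument: endpoint values plus the intermediate value theorem. Your prediction at $a=1$ is right: $\mathfrak{c}_1=-1=-a$, $\mathfrak{c}_2=\frac{2}{\rho-1}$, and $\mathbf{G}^1_{11}=\frac{\kappa-2-\kappa\rho}{\rho(1+\rho)}$.

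At $a=\kappa$ your expectations are incomplete. The roots there are $-\kappa$ and $\frac{\kappa-1}{\rho-1}$, and $-\kappa=\mathfrak{c}_2$ only when $\rho<1/\kappa$; then $\mathbf{G}^1_{11}=-\kappa(\kappa-1)/D<0$, as you expected.

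For $1/\kappa<\rho<1-\frac2\kappa$, a nonempty range once $\kappa\ge4$, one has $-a=\mathfrak{c}_1$ at both endpoints. Negativity at $a=\kappa$ then comes from the direct computation $\mathbf{G}^1_{11}=\frac{2-\kappa-\rho}{\rho(\kappa\rho-1)}<0$, not from a vanishing square.

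At $\rho=1/\kappa$ the two roots merge at $a=\kappa$. \cref{thm:diagonalizability} only covers the open interval, because $\chi$ is singular at $\mathcal{E}\in\{\rho,\kappa\rho\}$, so the discriminant does vanish there; continuity of $\mathfrak{c}_2$ nevertheless survives. Then $D\to0$ and your cleared quantity $\rho D^2\mathbf{G}^1_{11}$ tends to $0$, which gives no strict sign. Instead, use $|a+\mathfrak{c}_2|<D$, which follows from $\mathfrak{c}_2<-a<\mathfrak{c}_1$, to bound the second term of $\mathbf{G}^1_{11}$ by $(\kappa-2)/\rho$. This gives $\mathbf{G}^1_{11}\to-\infty$. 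The paper also passes over this subcase, simply asserting that the endpoint value is negative.
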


\begin{proof}
    We first justify that, for fixed values of $\rho$ and $\kappa$, the function defined by
    \begin{equation*}
        \mathcal{E}\longmapsto \mathbf{G}_{11}^1(\kappa ,\rho ,\mathcal{E}) = (\kappa -1)\frac{\mathfrak{c}_2}{\mathfrak{c}_1-\mathfrak{c}_2} + \frac{\kappa -2}{\rho} \left(\frac{\frac{\mathcal{E}}{\rho}+\mathfrak{c}_2}{\mathfrak{c}_1-\mathfrak{c}_2}\right)^2
    \end{equation*}
    is a continuous function of $\mathcal{E}$, that changes sign on the interval $(\rho ,\kappa\rho )$, so that the intermediate value theorem ensures the existence of a root $\mathcal{E}^\star$ in this interval. Notice that the eigenvalues~$\lambda_1$ and $\lambda_2$ are continuous functions of $\mathcal{E}$. Since for any $i\in \{1,2\}$, we have $\mathfrak{c}_i = \frac{\lambda_i-\mathbf{J}_{22}}{\mathbf{J}_{12}}$ and the coefficient $\mathbf{J}_{12}$ does not vanish, we deduce that both $\mathbf{c}_1$ and $\mathbf{c}_2$ are continuous functions of $\mathcal{E}$. The continuity of $\mathbf{G}_{11}^1$ then follows.
    On the one hand, when $\mathcal{E}=\rho$, a direct computation shows that the left eigenvectors of the Jacobian matrix are given by 
    \begin{equation*}
        \begin{pmatrix}
        -1 & 1
        \end{pmatrix}\qquad\mbox{ and }\qquad \begin{pmatrix}
            \frac{2}{\rho -1} & 1
        \end{pmatrix},
    \end{equation*}
    hence  $\mathfrak{c}_1 =-1$ and $\mathfrak{c}_2 = \frac{2}{\rho -1}$. Therefore, we have that
    \begin{equation*}
        \mathbf{G}_{11}^1(\kappa ,\rho ,\rho )= \frac{\kappa -2-\kappa\rho}{\rho (1+\rho )}
    \end{equation*}
    which is positive as we chose $\rho <1-\frac{2}{\kappa}$. On the other hand, when $\mathcal{E}=\kappa\rho$, the left eigenvectors of the Jacobian are given by
    \begin{equation*}
        \begin{pmatrix}
           \frac{\kappa -1}{\rho -1} & 1
        \end{pmatrix}\qquad\mbox{ and }\qquad\begin{pmatrix}
           -\kappa & 1
        \end{pmatrix}.
    \end{equation*}
    Putting these values in the expression of $\mathbf{G}_{11}^1$, we find that $\mathbf{G}_{11}^1(\kappa ,\rho ,\kappa\rho )$ is always negative. 
    
    On the other hand, $\mathcal{E}\longmapsto \mathbf{G}_{22}^2(\kappa ,\rho ,\mathcal{E})$ is also continuous and if $\mathfrak{c_1}<0$, then $\mathbf{G}_{22}^2(\kappa ,\rho ,\mathcal{E}) >0$ by the explicit form of $\mathbf{G}_{22}^2$ with $(\alpha_p,\alpha_e)=(0,1)$. Notice that $(\alpha_p,\alpha_e)=(0,1)$ also implies that~$\mathbf{J}_{12}>0$. Now, since $\mathbf{J}_{12}\mathbf{J}_{21} \neq 0$, we have $\lambda_i \neq \mathbf{J}_{22}$ and so $\mathfrak{c_i} \neq 0$ for any $i\in\{1,2\}$, $\rho \in (0,1)$ and~${\mathcal{E} \in (\rho,\kappa\rho)}$. Therefore, as $\mathfrak{c_1}<0$ when $\mathcal{E}=\rho$ and $\mathcal{E}=\kappa\rho$, by the above computation, we conclude that $\mathfrak{c_1} <0$ for any $\mathcal{E} \in (\rho,\kappa\rho)$. This concludes the proof. 
\end{proof}

It is not clear whether some choice of parameters would allow to cancel $\mathbf{G}_{11}^1$ and $\mathbf{G}_{22}^2$ simultaneously, which would lead to a Lévy behaviour whose exponent is the golden ration for both modes (\textit{cf.}~\cref{table:classification}). We leave this question open for future research.

\section*{Declarations}
\subsection*{Competing interests}
The authors have no competing interests to declare that are relevant to the content of this article.
\subsection*{Data availability}
No datasets were generated or analysed during the current study.
\section*{Acknowledgments}
H.D.C. gratefully acknowledges The University of Tokyo for its hospitality, and Japan Society for the Promotion of Science for its support. Both authors would like to warmly thank Kohei Hayashi for many stimulating discussions, and Tadahisa Funaki for his helpful comment about the uniqueness of energy solution for uncoupled SBEs.

\bibliographystyle{plain}
\bibliography{biblio}
\end{document}